\documentclass{article}
\usepackage{lmodern}
\usepackage{amsmath}
\usepackage{amsfonts}
\usepackage{amssymb}
\usepackage{graphicx}%
\providecommand{\U}[1]{\protect\rule{.1in}{.1in}}
\newtheorem{theorem}{Theorem}

\newtheorem{claim}[theorem]{Claim}

\newtheorem{corollary}[theorem]{Corollary}

\newtheorem{definition}[theorem]{Definition}

\newtheorem{lemma}[theorem]{Lemma}

\newtheorem{problem}[theorem]{Problem}
\newtheorem{proposition}[theorem]{Proposition}

\newenvironment{proof}[1][Proof]{\noindent\textbf{#1.} }{\ \rule{0.5em}{0.5em}}
\begin{document}

\date{}
\title{Square-bracket operations clubs}
\author{Osvaldo Guzm\'{a}n \thanks{\textit{keywords: }Square-bracket operations, walks
on ordinals, Proper Forcing Axiom, Martin%
\'{}%
s axiom, Continuum Hypothesis. \newline\textit{AMS Classification:} 03E05,
3E50, 03E57, 03E35.}
\and Stevo Todorcevic \thanks{The first author was supported by the PAPIIT grants
IA 104124, IN105926and the SECIHTI grant CBF2023-2024-903.}}
\maketitle

\begin{abstract}
This paper continues the investigation of the three square-bracket operations
$[\cdot\cdot]$ from chapter 5 of \cite{Walks}. \ We say that a square-bracket
operation $[\cdot\cdot]$ has the \emph{Ramsey club property} if for every club
$C\subseteq\omega_{1}$, there is an uncountable subset $W$ $\subseteq
\omega_{1}$ such that $\left[  \alpha\beta\right]  \in C$ for every
$\alpha,\beta\in W.$ \ The second author proved that the Proper Forcing
Axiom\textsf{ }implies that all the square-bracket operations induced by
Aronszajn trees have this property. We extend this result to the other two
classes. We conclude that each of the statements \textquotedblleft all
square-bracket operations have the Ramsey club property\textquotedblright\ and
\textquotedblleft No square-bracket operation has the Ramsey club
property\textquotedblright\ are consistent with \textsf{ZFC. }In other words,
\textsf{ZFC }is unable to decide the status of the Ramsey club property for
any square-bracket operation. Furthermore, we analyze the status of the Ramsey
club property for square-bracket operations under Martin's Axiom and the
Continuum Hypothesis.

\end{abstract}

\section{Introduction}

The famous Ramsey's theorem states that every graph on $\omega$ contains
either a countably infinite complete subgraph or a countably infinite
independent subgraph\footnote{Recall that a graph is complete (independent) if
any two vertices are (not) connected.}. The straightforward generalization to
$\omega_{1}$ would assert that every graph on $\omega_{1}$ contains either an
uncountable complete subgraph or an uncountable independent subgraph. However,
as noted by Sierpi\'{n}ski, this generalization is false. In fact, it in a
spectacular way, as can be seen by the following theorem of the second author:

\begin{theorem}
[T. \cite{Walks}, \cite{PartitioningPairs}]There is $c:\left[  \omega
_{1}\right]  ^{2}\longrightarrow\omega_{1}$ such that for every\newline%
$A\in\left[  \omega_{1}\right]  ^{\omega_{1}}$ and $\gamma\in\omega_{1}$,
there are $\alpha,\beta\in A$ such that $c\left(  \alpha,\beta\right)
=\gamma.$ \label{Falla Ramsey}
\end{theorem}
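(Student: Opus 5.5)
The plan is to obtain $c$ as a composition $c=h\circ[\cdot\cdot]$. Here $[\cdot\cdot]:[\omega_{1}]^{2}\to\omega_{1}$ is a square-bracket operation built from walks on ordinals, and $h:\omega_{1}\to\omega_{1}$ is a function all of whose fibers are stationary; such an $h$ exists by Ulam's (or Solovay's) partition of $\omega_{1}$ into $\aleph_{1}$ disjoint stationary sets. The whole weight of the proof then rests on one combinatorial lemma:
\begin{center}
for every uncountable $A\subseteq\omega_{1}$, the set $\{[\alpha\beta]:\alpha<\beta,\ \alpha,\beta\in A\}$ contains a club.
\end{center}
Granting the lemma, fix $\gamma$ and $A$. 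The stationary fiber $h^{-1}(\gamma)$ meets that club, so some $[\alpha\beta]$ lies in $h^{-1}(\gamma)$, and then $c(\alpha,\beta)=\gamma$.

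\textbf{Definitions.} Fix a $C$-sequence $\langle C_{\xi}:\xi<\omega_{1}\rangle$ with $C_{\xi+1}=\{\xi\}$ and, for limit $\xi$, $C_{\xi}\subseteq\xi$ cofinal of order type $\omega$.
\begin{itemize}
\item For $\alpha<\beta$, the walk from $\beta$ to $\alpha$ is $\beta=\beta_{0}>\beta_{1}>\dots>\beta_{n}=\alpha$, where $\beta_{i+1}=\min(C_{\beta_{i}}\setminus\alpha)$.
\item $\mathrm{tr}(\alpha,\beta)=\{\beta_{0},\dots,\beta_{n}\}$ is the trace of this walk.
\item $\rho_{0}(\alpha,\beta)$ is the finite sequence of order types $\mathrm{otp}(C_{\beta_{i}}\cap\alpha)$.
\item For $\alpha<\beta$, set $\Delta(\alpha,\beta)=\min\{\xi\le\alpha:\rho_{0}(\xi,\alpha)\ne\rho_{0}(\xi,\beta)\}$.
\item Following chapter 5 of \cite{Walks}, define $[\alpha\beta]=\min\bigl(\mathrm{tr}(\Delta(\alpha,\beta),\beta)\setminus\alpha\bigr)$.
\end{itemize}
So $[\alpha\beta]$ is the first point at or above $\alpha$ on the walk from $\beta$ down to the splitting point $\Delta(\alpha,\beta)$.

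\textbf{Proof of the lemma.} I would use elementary submodels. Let $\theta$ be large and let $\langle M_{\nu}:\nu<\omega_{1}\rangle$ be a continuous $\in$-chain of countable elementary submodels of $H(\theta)$ containing $A$ and the $C$-sequence. The ordinals $\delta_{\nu}=M_{\nu}\cap\omega_{1}$ form a club $D$. I will show that every $\delta\in D$ is of the form $[\alpha\beta]$ with $\alpha,\beta\in A$.

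Fix $\delta=M\cap\omega_{1}$ and pick $\beta\in A$ with $\beta>\delta$. Walk from $\beta$ to $\delta$ and let $\bar\beta$ be the last step of this walk strictly above $\delta$. Then $\delta\in C_{\bar\beta}$ or $\delta$ is hit directly. Let $F$ be the finite set of ordinals below $\delta$ that appear as $\max(C_{\beta_{i}}\cap\delta)$ along the walk, and let $\eta<\delta$ bound $F$.

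The aim is to find $\alpha\in A\cap M$ with $\eta<\alpha<\delta$ satisfying two conditions:
\begin{enumerate}
\item the walk from $\beta$ to $\alpha$ agrees with the walk from $\beta$ to $\delta$ down to $\delta$, so that $\delta\in\mathrm{tr}(\alpha,\beta)$ is the first point at or above $\alpha$;
\item $\Delta(\alpha,\beta)$ lies in $(\eta,\alpha]$ and its walk from $\beta$ also passes through $\delta$.
\end{enumerate}
Condition 1 holds as soon as $\alpha>\eta$, because the $C_{\beta_{i}}$ above $\delta$ have no points in $(\eta,\delta)$. Condition 2 is obtained by elementarity. The statement "there are unboundedly many $\alpha'\in A$ whose $\rho_{0}$-behaviour on $[0,\eta]$ matches that of $\beta$" is a property of the finite parameters $\rho_{0}(\cdot,\beta)\restriction(\eta+1)$ and $\eta$, all of which lie in $M$. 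It is witnessed by $\beta$ above any bound, so it reflects into $M$ and yields such $\alpha<\delta$ above any prescribed point. Since $\rho_{0}(\cdot,\delta)$ and $\rho_{0}(\cdot,\beta)$ must eventually differ below $\delta$ while agreeing through $\eta$, the coherence of $\rho_{0}$ places $\Delta(\alpha,\beta)$ in the desired interval.

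\textbf{Main obstacle.} The hard part is this last step: choosing $\alpha$ so that $\Delta(\alpha,\beta)$ is large enough (above $\eta$) for the walk to it to pass through $\delta$, yet at most $\alpha$. That requires a careful use of the coherence of $\rho_{0}$ (its finite-to-one, local-agreement properties) together with a pressing-down or elementarity argument. If the direct approach is messy, I would first thin $A$ using a $\Delta$-system-type argument on the finite sequences $\rho_{0}(\cdot,\beta)$, and then run the reflection argument above.
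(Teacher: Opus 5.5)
Your outer reduction is correct and matches how the paper gets this theorem: compose with a map whose fibres are stationary and invoke Theorem \ref{sq bracket general contiene club}. The paper cites that club lemma from \cite{Walks} rather than proving it.

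The gap is in your proof of that lemma, and the obstacle you flag (getting $\Delta(\alpha,\beta)>\eta$) is the easy half. Take $\alpha<\delta<\beta$ and $\Delta=\Delta(\alpha,\beta)$ with $\lambda(\delta,\beta)<\Delta$. Proposition \ref{Prop juntar trazas} gives $\mathsf{Tr}(\Delta,\beta)=\mathsf{Tr}(\delta,\beta)\cup\mathsf{Tr}(\Delta,\delta)$. So $[\alpha\beta]=\delta$ holds if and only if $\mathsf{Tr}(\Delta,\delta)\setminus\{\delta\}\subseteq\alpha$, and in particular you need $\Delta<\alpha$ strictly. Your conditions 1 and 2 only give $\delta\in\mathsf{Tr}(\Delta,\beta)$. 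Condition 1 concerns $\mathsf{Tr}(\alpha,\beta)$, which is irrelevant, since that walk always ends at $\alpha$.

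Nothing prevents $\Delta$ from lying so close to $\alpha$ that $C_{\delta}\cap[\Delta,\alpha)=\emptyset$. Then the first step $\min(C_{\delta}\setminus\Delta)$ of the walk from $\delta$ lands in $[\alpha,\delta)$, and $[\alpha\beta]<\delta$. Because $\Delta(\alpha,\beta)$ depends on $\alpha$, you cannot fix this by choosing $\alpha$ larger afterwards. Agreement of $\rho_{0\alpha}$ with $\rho_{0\beta}$ through $\eta$ does not even guarantee $\Delta(\alpha,\beta)<\alpha$.

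Your justification is also wrong. For $\lambda(\delta,\beta)<\xi<\delta$ one has $\rho_{0}(\xi,\beta)=\rho_{0}(\delta,\beta)^{\frown}\rho_{0}(\xi,\delta)$, which is strictly longer than $\rho_{0}(\xi,\delta)$. So $\rho_{0}(\cdot,\delta)$ and $\rho_{0}(\cdot,\beta)$ do not agree through $\eta$, and for the same reason $\rho_{0}$ is not coherent.

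The missing idea is to fix the splitting point inside $M$ \emph{before} choosing $\alpha$, using that $T(\rho_{0})$ is Aronszajn.
\begin{enumerate}
\item Let $A_{0}\in M$ be the set of $u\in T(\rho_{0})$ with $\rho_{0\beta}\upharpoonright(\eta+1)\subseteq u$ and $u\subseteq\rho_{0\alpha'}$ for uncountably many $\alpha'\in A$.
\item $A_{0}$ contains $\rho_{0\beta}\upharpoonright\xi$ for every $\xi\in M\cap\omega_{1}$ above $\eta$, because the relevant set of $\alpha'$ lies in $M$ and contains $\beta\notin M$.
\item Each node of $A_{0}$ has extensions in $A_{0}$ on every higher level, since levels are countable. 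So $A_{0}$ must split above each node, or $T(\rho_{0})$ would have an uncountable branch.
\item By elementarity, $M$ contains two incomparable members of $A_{0}$. One of them, say $t$, is incomparable with $\rho_{0\beta}$ (compare Proposition \ref{Prop Aronszajn y submodelo}).
\item Let $\Delta$ be the least $\xi$ with $t(\xi)\neq\rho_{0\beta}(\xi)$. Then $\Delta\in(\eta,\delta)$ is fixed, and $\Delta(\alpha,\beta)=\Delta$ for every $\alpha$ with $t\subseteq\rho_{0\alpha}$.
\item Only now pick $\alpha\in A\cap M$ with $t\subseteq\rho_{0\alpha}$ and $\alpha>\max(\mathsf{Tr}(\Delta,\delta)\cap\delta)$. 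By the computation above, $[\alpha\beta]=\delta$.
\end{enumerate}
This two-stage choice is essentially what the paper does in the $\mathcal{C}$-sequence case of the proof of Theorem \ref{Teo BC destruve V}.
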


\qquad\qquad\qquad\ \ \qquad\ \ \ \ \qquad\ \ \ \ \ \ \ \ \ 

In other words, the pairs of $\omega_{1}$ can be colored with uncountably many
colors in such a way that every color appears in every uncountable set (for
more results on this type, see \cite{Lspace} or
\cite{RectangularSquareBracket}). Given a function $F:\left[  \omega
_{1}\right]  ^{2}\longrightarrow\omega_{1}$ and a subset $X\subseteq\omega
_{1},$ define the graph $G\left(  F,X\right)  $ on $\omega_{1}$ by connecting
$\alpha$ and $\beta$ with an edge in case $F\left(  \alpha,\beta\right)  \in
X.$ This family of graphs can be very interesting when $F$ has strong
combinatorial properties. In chapter 5 of the book \cite{Walks}, the
\emph{square-bracket operations} $[\cdot\cdot]_{\mathcal{C}},$ $[\cdot
\cdot]_{T,a}$ and $[\cdot\cdot]_{\mathcal{R},e}$ are introduced, where
$\mathcal{C}$ is a $C$-sequence, $T$ is a special Aronszajn tree with an
specializing mapping $a$, $\mathcal{R}$ is an $\omega_{1}$-sequence of reals
and $e$ is a sequence of mappings. The definitions and main properties of
these operations will be reviewed in the Preliminaries Section. In this paper,
the term \textquotedblleft square-bracket operation\textquotedblright\ will
always refer to one of these three functions. The following is another result
of the second author:

\begin{theorem}
[T. \cite{Walks}]Let $F$ be a square-bracket operation and $S,Z\subseteq
\omega_{1}.$ If the symmetric difference $S\triangle Z$ is stationary, then
the graphs $G\left(  F,S\right)  $ and $G\left(  F,Z\right)  $ are orthogonal
to each other (they do not contain uncountable isomorphic subgraphs).
\end{theorem}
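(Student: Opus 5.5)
Without loss of generality $S\setminus Z$ is stationary; otherwise we swap the roles of $S$ and $Z$. Suppose toward a contradiction that there are uncountable $A,B\subseteq\omega_{1}$ and an isomorphism $h:A\longrightarrow B$ from $G(F,S)\restriction A$ onto $G(F,Z)\restriction B$. Thinning out, we may assume $h$ is strictly increasing (or at least that $A$ and $B$ are cofinally interleaved in a controlled way). Using a $\Delta$-system type argument, we may also assume $h(\alpha)\neq\alpha$ arranges nicely. The key property of a square-bracket operation $F=[\cdot\cdot]$ from chapter 5 of \cite{Walks} that we will invoke is the \emph{unboundedness/club-guessing} property: for every uncountable family $\mathcal{A}$ of pairwise disjoint finite sets (here the pairs $\{\alpha,h(\alpha)\}$), the set of $\delta$ such that $\delta=[\alpha\beta]$ for suitable $a,b\in\mathcal{A}$ contains a club. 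More precisely, we use the standard fact that for uncountable $\mathcal{A}\subseteq[\omega_1]^{n}$ of disjoint sets there is a club $C$ such that for every $\delta\in C$ there are $a<b$ in $\mathcal{A}$ with $[a(i)b(j)]=\delta$ for all $i,j$. This is exactly the form in which the square-bracket operations are shown to be ``stationary-hitting'' in \cite{Walks}.

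Apply this to $\mathcal{A}=\{\{\alpha,h(\alpha)\}:\alpha\in A\}$, after refining to a disjoint uncountable subfamily with $\alpha<h(\alpha)$ or $\alpha>h(\alpha)$ uniformly; write $a_\alpha$ for the increasing enumeration of $\{\alpha,h(\alpha)\}$. We obtain a club $C$ such that for every $\delta\in C$ there are $\alpha<\beta$ in the refined index set with all four cross values equal to $\delta$; in particular $[\alpha\beta]=\delta=[h(\alpha)h(\beta)]$. Choose $\delta\in C\cap(S\setminus Z)$, which exists since $S\setminus Z$ is stationary. Then $\{\alpha,\beta\}$ is an edge of $G(F,S)$, since $[\alpha\beta]=\delta\in S$, while $\{h(\alpha),h(\beta)\}$ is not an edge of $G(F,Z)$, since $[h(\alpha)h(\beta)]=\delta\notin Z$. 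This contradicts that $h$ is an isomorphism.

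The main obstacle is justifying the ``all cross values equal $\delta$ on a club'' property uniformly for the three operations $[\cdot\cdot]_{\mathcal{C}}$, $[\cdot\cdot]_{T,a}$ and $[\cdot\cdot]_{\mathcal{R},e}$. For the walks-based operation $[\cdot\cdot]_{\mathcal{C}}$ this is the classical argument: take a countable elementary submodel $M$ with $\delta=M\cap\omega_1$, and pick $b$ above $\delta$ whose walks to $\delta$ are controlled. Then use elementarity to find $a\in M$ whose cross walks all pass through $\delta$ as the first point where the trace meets $\Delta$, so that they all compute value $\delta$. For the other two operations the analogous argument uses the tree/real structure at level $\delta$ in place of walks. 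I would take these as the basic properties established in chapter 5 of \cite{Walks}, which the Preliminaries will review. The only point needing care is that the simultaneous version works for pairs rather than single points, and this comes from the usual finite-set formulation of those lemmas.
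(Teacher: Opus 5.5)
The paper does not prove this theorem; it only quotes it from \cite{Walks}. Your route is the standard one: apply the finite-set unboundedness lemma to the pairs $\{\alpha,h(\alpha)\}$ and pick a common value in $S\setminus Z$. However, the lemma you invoke is misstated, and as stated it is false. One cannot in general get $[a(i)b(j)]=\delta$ for \emph{all} $i,j$.

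For example, take $[\cdot\cdot]_{\mathcal{R},e}$ and an uncountable family of pairwise disjoint pairs with $r_{a(0)}(0)\neq r_{a(1)}(0)$ for every $a$. Such a family exists whenever uncountably many $r_{\alpha}$ begin with $0$ and uncountably many begin with $1$. Given $a<b$ and $j<2$, some $i<2$ has $\triangle_{\mathcal{R}}(a(i),b(j))=0$. Since $e_{b(j)}$ is injective with $e_{b(j)}(b(j))=0$, this forces $[a(i)b(j)]_{\mathcal{R},e}=b(j)$. So the four cross values include both $b(0)$ and $b(1)$, and they are never all equal.

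What is true, and is the form established in Chapter 5 of \cite{Walks}, is the diagonal version. For every uncountable family of pairwise disjoint $n$-element sets, club many $\delta$ admit $a<b$ in the family with $[a(i)b(i)]=\delta$ for every $i<n$.

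This does not damage your argument, because you only use diagonal entries. First refine to an uncountable $<$-increasing family of pairs $\{\alpha,h(\alpha)\}$ on which exactly one of $\alpha<h(\alpha)$, $\alpha=h(\alpha)$, $\alpha>h(\alpha)$ holds. This is possible because $h$ is injective, so for each countable $\gamma$ only countably many $\alpha$ have $h(\alpha)<\gamma$. Then
$\{[a(0)b(0)],[a(1)b(1)]\}=\{[\alpha\beta],[h(\alpha)h(\beta)]\}$,
using $n=1$ in the case $h(\alpha)=\alpha$. The rest of your proof then goes through unchanged.

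The sketch in your last paragraph should also target only the diagonal requirement. For each $i$ you need $\triangle_{\mathcal{C}}(a(i),b(i))>\lambda(\delta,b(i))$, so that $\delta\in\textsf{Tr}(\triangle_{\mathcal{C}}(a(i),b(i)),b(i))$. You also need $\textsf{Tr}(\triangle_{\mathcal{C}}(a(i),b(i)),\delta)\cap\delta\subseteq a(i)$. This is exactly the single-pair computation carried out in the proof of Theorem~\ref{Teo BC destruve V}.

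For $[\cdot\cdot]_{T,a}$, recall that the vertices are nodes of $T$ and the relevant value is the height $|[st]_{T,a}|$. So the pairs there should be ordered by height rather than as ordinals.
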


\qquad\qquad\ \qquad\ \ 

With this result, the second author was able to solve the \emph{basis problem
for uncountable graphs:}\footnote{This result should be contrasted with the
following well-known consequences of \textsf{PFA: }Any two $\omega_{1}$-dense
subsets of reals are isomorphic (see
\cite{Allw1DenseSetsofRealscanbeIsomorphic} or \cite{ApplicationsofPFA}),
there is a five element base for the uncountable linear orders (see
\cite{FiveElementBasis}) and there are exactly five cofinal types of directed
partial orders of size $\omega_{1}$ (see \cite{5CofinalTypes}).}\emph{
}$2^{\omega_{1}}$ is\emph{ }the minimum size of a family $\mathcal{G}$ of
uncountable graphs such that every uncountable graph contains an isomorphic
copy of some member of $\mathcal{G}.$ We believe this result justifies a
deeper investigation into the graphs induced by the square-bracket operations,
which is the central topic of the present paper. We begin with the following definition:

\begin{definition}
Let $F:\left[  \omega_{1}\right]  ^{2}\longrightarrow\omega_{1}$ and
$X\subseteq\omega_{1}.$ We say that $X$ is $F$\emph{-Ramsey }if $G\left(
F,X\right)  $ contains either an uncountable complete subgraph or an
uncountable independent subgraph. \label{Def F Ramsey}
\end{definition}

\qquad\ \ \qquad\ \ 

We will investigate which subsets of $\omega_{1}$ are Ramsey for the
square-bracket operations. The following is result is proved in \cite{Walks}.

\begin{theorem}
[T.]Let $[\cdot\cdot]$ be a square-bracket operation and $W\in\left[
\omega_{1}\right]  ^{\omega_{1}}.$ The set $\left\{  \left[  \alpha
\beta\right]  \mid\alpha,\beta\in W\right\}  $ contains a club.
\label{sq bracket general contiene club}
\end{theorem}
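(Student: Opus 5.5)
The plan is to use a single elementary-submodel scheme that works uniformly for the three operations $[\cdot\cdot]_{\mathcal{C}}$, $[\cdot\cdot]_{T,a}$ and $[\cdot\cdot]_{\mathcal{R},e}$. The only step that changes from one operation to another is the combinatorial core (the third step below).

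\textbf{Step 1: the club.} Fix $W\in\left[\omega_{1}\right]^{\omega_{1}}$. Fix a continuous $\in$-chain $\langle M_{\xi}\mid\xi<\omega_{1}\rangle$ of countable elementary submodels of $H(\theta)$, each containing $W$ and the parameter defining the operation ($\mathcal{C}$, or $(T,a)$, or $(\mathcal{R},e)$). The candidate club is
\[
D=\left\{\delta<\omega_{1}\mid \delta=M_{\delta}\cap\omega_{1}\right\}.
\]
Possibly one should intersect $D$ with the club of limit points of $W$, or pass to a further club on which the first-order properties below hold. The goal is then to show that every $\delta\in D$ equals $[\alpha\beta]$ for some $\alpha,\beta\in W$.

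\textbf{Step 2: fixing $\beta$ and the reflection.} Fix $\delta\in D$ and pick any $\beta\in W$ with $\beta\geq\delta$. Every square-bracket operation is defined from a finite amount of information computed from the pair: for example the walk $\mathrm{Tr}(\alpha,\beta)$ together with the number $\Delta(\alpha,\beta)$, or the meet of $\alpha$ and $\beta$ in $T$ (resp.\ the real coordinates). In each case $[\alpha\beta]$ is a point lying on the "trace" of $\beta$ strictly above a splitting point that is determined by $\alpha$.

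Write $\beta\upharpoonright\delta$ for the finite piece of the trace of $\beta$ below $\delta$: in the $C$-sequence case, the last steps of the walk from $\beta$ to $\delta$, ending with the first step that enters $\delta$. This piece is a finite object, hence belongs to $M_{\delta}$. By elementarity, $M_{\delta}$ then contains uncountably many (in $M_{\delta}$'s sense) $\alpha\in W$ whose traces "agree with $\beta$ up to the right splitting point". Reflecting down, we can find such $\alpha<\delta$ in $W\cap M_{\delta}$.

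\textbf{Step 3: the core (main obstacle).} We must choose $\alpha$ so that the disagreement between $\alpha$ and $\beta$ occurs exactly where the computation of $[\alpha\beta]$ outputs $\delta$. That is, the first point of $\mathrm{Tr}(\cdot,\beta)$ above the splitting ordinal must be $\delta$ itself and not some earlier ordinal on the walk. This is where the specific definition enters:
\begin{itemize}
\item[(a)] For $\mathcal{C}$, I would use the unboundedness of $\rho_{0}$-differences / the full lemma from chapter 5 that, for an uncountable family of pairwise disjoint finite sets, we can make $\Delta$ land below $\min C_{\delta}$-type bounds while the walk from $\beta$ still passes through $\delta$.
\item[(b)] For $(T,a)$, I would use that $a$ is a specializing map, so that nodes of height $\delta$ can be reached by choosing $\alpha$ splitting from $\beta$ at a controlled level.
\item[(c)] For $(\mathcal{R},e)$, I would use the analogous oscillation property of the reals and of the mappings $e$.
\end{itemize}
I expect this step to be the real difficulty. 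What I would try first is a pigeonhole argument inside $M_{\delta}$: since the relevant finite configurations are countable, $M_{\delta}$ knows an uncountable subfamily of $W$ sharing $\beta$'s configuration. We then take $\alpha$ in it whose splitting ordinal is as small as needed, which is possible because $\delta$ is a limit of such splitting ordinals by elementarity.

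\textbf{Step 4: conclusion.} Having found, for each $\delta\in D$, some $\alpha\in W\cap\delta$ and $\beta\in W\setminus\delta$ with $[\alpha\beta]=\delta$, we get that $D\subseteq\left\{[\alpha\beta]\mid\alpha,\beta\in W\right\}$. This proves Theorem \ref{sq bracket general contiene club}.
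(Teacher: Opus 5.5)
Your frame is the right one: the club of heights $\delta=M_\delta\cap\omega_1$, a fixed $\beta\in W$ above $\delta$ (take $\beta>\delta$, so that $\lambda(\delta,\beta)$ is defined), and a search for $\alpha\in W\cap M_\delta$ with $[\alpha\beta]=\delta$. But Step~3, which you flag as the real difficulty, is the whole proof, and the mechanism you sketch for it does not work.

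\textbf{Why the pigeonhole idea fails.} Picking $\alpha$ from an uncountable family in $M_\delta$ that agrees with $\beta$ up to some level gives no control over the level $\triangle(\alpha,\beta)$ at which they actually split. For instance, if $\rho_{0\alpha}\subseteq\rho_{0\beta}$, then $\triangle(\alpha,\beta)=\alpha$ and $[\alpha\beta]=\alpha<\delta$. What is needed has two parts, and the second depends on the first.
\begin{enumerate}
\item The splitting must clear a threshold determined by $\beta$ and $\delta$, so it cannot be taken ``as small as needed''. In the $\mathcal{C}$ case the threshold is $\lambda(\delta,\beta)<\triangle(\alpha,\beta)$; by Proposition~\ref{Prop juntar trazas} this is exactly what puts $\delta$ on $\mathsf{Tr}(\triangle(\alpha,\beta),\beta)$. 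For $(\mathcal{R},e)$ it is $e_\beta(\delta)\le\triangle(r_\alpha,r_\beta)$, and for $(T,a)$ it is $a(t\upharpoonright\delta)\le a(s\wedge t)$.
\item $\alpha$ must lie above a finite obstruction set determined by the splitting level $\triangle$. This set is $\mathsf{Tr}(\triangle,\delta)\cap\delta$; resp.\ $\{\xi<\delta\mid e_\beta(\xi)\le\triangle\}$, finite since $e_\beta$ is injective; resp.\ $\{\eta<\delta\mid a(t\upharpoonright\eta)\le a(s\wedge t)\}$, finite since $a$ is specializing.
\end{enumerate}
The obstruction depends on $\triangle$ and $\triangle$ depends on $\alpha$, so one pigeonhole choice of $\alpha$ cannot meet both requirements.

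Two smaller points. In Step~2, the walk from $\beta$ to $\delta$ is not an element of $M_\delta$, since its points are $\ge\delta$; what $M_\delta$ does contain is $\lambda(\delta,\beta)$ and the nodes $\rho_{0\beta}\upharpoonright\xi$ for $\xi<\delta$. In (a), invoking the chapter~5 lemma on families of disjoint finite sets is circular, because that lemma strengthens the statement you are proving.

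\textbf{The missing idea.} Fix the splitting \emph{before} choosing $\alpha$, using a node of $M_\delta$ incompatible with $\beta$'s branch. In the $\mathcal{C}$ case:
\begin{itemize}
\item Take $\xi\in(\lambda(\delta,\beta),\delta)$. The set $\{\alpha\in W\mid\rho_{0\beta}\upharpoonright\xi\subseteq\rho_{0\alpha}\}$ is in $M_\delta$ and contains $\beta\notin M_\delta$.
\item Proposition~\ref{Prop Aronszajn y submodelo}, applied to these nodes and to $\rho_{0\beta}$, gives $u\in M_\delta$ with $u\perp\rho_{0\beta}$ and uncountably many $\alpha\in W$ with $u\subseteq\rho_{0\alpha}$. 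This is where the Aronszajn property of $T(\rho_0)$ is used.
\item Necessarily $\rho_{0\beta}\upharpoonright\xi\subsetneq u$. Hence $\triangle:=\triangle(u,\rho_{0\beta})\ge\xi>\lambda(\delta,\beta)$, and $\triangle(\alpha,\beta)=\triangle$ for every such $\alpha$.
\item So $\mathsf{Tr}(\triangle,\beta)=\mathsf{Tr}(\delta,\beta)\cup\mathsf{Tr}(\triangle,\delta)$, and the finite set $\mathsf{Tr}(\triangle,\delta)\cap\delta$ lies in $M_\delta$. Elementarity gives $\alpha\in W\cap M_\delta$ with $u\subseteq\rho_{0\alpha}$ above this set, and then $[\alpha\beta]=\delta$.
\end{itemize}

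The other two operations follow the same two-step pattern.
\begin{itemize}
\item For $(\mathcal{R},e)$, take $t\supseteq r_\beta\upharpoonright e_\beta(\delta)$ with $r_\beta\notin\langle t\rangle$ and uncountably many $\alpha\in W$ with $r_\alpha\in\langle t\rangle$. Such a $t$ exists. The set of nodes extending $r_\beta\upharpoonright e_\beta(\delta)$ whose cone contains uncountably many such $r_\alpha$ is in $M_\delta$ and contains every $r_\beta\upharpoonright k$ with $k\ge e_\beta(\delta)$. If all its members were initial segments of $r_\beta$, their union $r_\beta$ would be in $M_\delta$, and then so would $\beta$.
\item For $(T,a)$, with $t\in W$ of height $>\delta$ in the role of $\beta$, Proposition~\ref{Prop Aronszajn y submodelo} gives $u\in M_\delta$ incompatible with $t$. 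Its meet with $t$ can be placed above the finitely many $\eta<\delta$ with $a(t\upharpoonright\eta)\le a(t\upharpoonright\delta)$.
\end{itemize}
Only then do you choose $\alpha$ (resp.\ $s$) in $W\cap M_\delta$ above the finite obstruction set. This is essentially the argument inside the proof of Theorem~\ref{Teo BC destruve V}, with $\delta_n$ replaced by $\delta$ and the forcing removed.
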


\qquad\ \ 

Note that this result implies Theorem \ref{Falla Ramsey}. Another easy
corollary is the following:

\begin{corollary}
Let $[\cdot\cdot]$ be a square-bracket operation and $X\subseteq\omega_{1}.$
\label{Corol 1}

\begin{enumerate}
\item If $G([\cdot\cdot],X)$ contains an uncountable complete subgraph, then
$X$ contains a club.

\item If $G([\cdot\cdot],X)$ contains an uncountable independent subgraph,
then $X$ is nonstationary.
\end{enumerate}
\end{corollary}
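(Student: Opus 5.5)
Both parts follow directly from Theorem \ref{sq bracket general contiene club}, applied to the uncountable set witnessing the complete or independent subgraph. The only thing to watch is whether a square-bracket operation is evaluated on a pair with $\alpha=\beta$, or only on pairs $\alpha<\beta$. I will read Theorem \ref{sq bracket general contiene club} as referring to the values $[\alpha\beta]$ with $\alpha,\beta\in W$, $\alpha<\beta$, that is, to the edges of the graph. This is the same set of pairs that completeness and independence of $G([\cdot\cdot],X)$ speak about.

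For part 1, I let $W\in[\omega_{1}]^{\omega_{1}}$ span a complete subgraph of $G([\cdot\cdot],X)$. Then for all $\alpha<\beta$ in $W$, $\alpha$ and $\beta$ are joined by an edge, which by definition of $G([\cdot\cdot],X)$ means $[\alpha\beta]\in X$. Hence
\[
\{[\alpha\beta]\mid\alpha,\beta\in W\}\subseteq X.
\]
By Theorem \ref{sq bracket general contiene club} the left-hand side contains a club, so $X$ contains a club.

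For part 2, I let $W\in[\omega_{1}]^{\omega_{1}}$ span an independent subgraph. Then $[\alpha\beta]\notin X$ for all $\alpha<\beta$ in $W$, so
\[
\{[\alpha\beta]\mid\alpha,\beta\in W\}\subseteq\omega_{1}\setminus X.
\]
Theorem \ref{sq bracket general contiene club} gives a club $C\subseteq\omega_{1}\setminus X$. Then $C\cap X=\emptyset$, so $X$ is nonstationary.

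There is no real obstacle. The only point needing care is the convention about pairs noted at the start: the set in Theorem \ref{sq bracket general contiene club} should range over unordered pairs of distinct elements, and it does under the operation's domain $[\omega_{1}]^{2}$.
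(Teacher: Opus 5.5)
Your proof is correct and follows the paper's intended argument. The paper gives no separate proof; it calls the corollary an easy consequence of Theorem \ref{sq bracket general contiene club}, applied to the vertex set of the complete (resp.\ independent) subgraph to get a club inside $X$ (resp.\ inside $\omega_{1}\setminus X$). For $[\cdot\cdot]_{T,a}$ the same argument applies with $W\subseteq T$ and the heights $|[st]_{T,a}|$ in place of the values, matching the paper's convention for that graph.
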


\qquad\qquad\qquad\ \ \ 

Note that Theorem \ref{sq bracket general contiene club} implies that if
$S\subseteq\omega_{1}$ is stationary (in particular, if it is a club), then
$G([\cdot\cdot],S)$ does not contain an uncountable independent subgraph. This
allows us to reformulate Definition \ref{Def F Ramsey} for clubs as follows:

\begin{definition}
Let $[\cdot\cdot]$ be a square-bracket operation and $C\subseteq\omega_{1}$ a
club. We say that $C$ is $[\cdot\cdot]$\emph{-Ramsey }if $G([\cdot\cdot],C)$
contains an uncountable complete subgraph.
\end{definition}

\qquad\qquad\qquad\ \ \ \ \qquad\ \ \ 

By the first point of Corollary \ref{Corol 1}, the problem of identifying the
subsets $X\subseteq\omega_{1}$ for which $G([\cdot\cdot],X)$ contains an
uncountable complete subgraph can be reduced to the case where $X$ is a club.
Moreover, since the question of which subsets yield a graph with an
uncountable independent subgraph is dual to the question for complete
subgraphs, we are led to the following problem:\newline

\begin{center}%
\begin{tabular}
[c]{l}%
\emph{Which clubs are}\textit{ }$[\cdot\cdot]$\emph{-Ramsey?}%
\end{tabular}

\end{center}

The first result in this direction can be found in the book \cite{Walks}
(Lemma 5.4.4):

\begin{theorem}
[T. \cite{Walks}]The \emph{Proper Forcing Axiom} (\textsf{PFA}) implies that
if $T$ is a special Aronszajn tree and $a:T\longrightarrow\omega$ is an
specialization mapping, then every club is $[\cdot\cdot]_{T,a}$-Ramsey.
\label{Teorema PFA Aronszajn}
\end{theorem}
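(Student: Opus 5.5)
We would derive the statement from a forcing argument: for a fixed club $C\subseteq\omega_{1}$ we build a proper poset $\mathbb{P}_{C}$ of size $\aleph_{1}$ that adds an uncountable $[\cdot\cdot]_{T,a}$-homogeneous set into $C$. Applying \textsf{PFA} to $\aleph_{1}$ many dense sets then gives an uncountable $W$ with $[\alpha\beta]_{T,a}\in C$ for all $\alpha,\beta\in W$. We take $\mathbb{P}_{C}$ to be the standard Todorcevic poset with models as side conditions. A condition is a pair $p=(w_{p},\mathcal{N}_{p})$ where $w_{p}\in[\omega_{1}]^{<\omega}$ satisfies $[\alpha\beta]_{T,a}\in C$ for all $\alpha,\beta\in w_{p}$, and $\mathcal{N}_{p}$ is a finite $\in$-chain of countable elementary submodels of $H(\omega_{2})$ containing $T,a,C$. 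We require that $\mathcal{N}_{p}$ separates $w_{p}$, meaning that for distinct $\alpha<\beta$ in $w_{p}$ some $N\in\mathcal{N}_{p}$ contains $\alpha$ but not $\beta$. The order is reverse inclusion in both coordinates. The density of ``$w_{p}\not\subseteq\xi$'' for each $\xi$ is immediate once properness is known, since we may add a large model on top and then a single new point. Hence the whole proof reduces to showing that $\mathbb{P}_{C}$ is proper.

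For properness we fix a countable $M\prec H(\theta)$ with $\mathbb{P}_{C}\in M$ and a condition $p\in M$. Let $q=p\cup\{M\cap H(\omega_{2})\}$, and let $\delta=M\cap\omega_{1}$; note $\delta\in C$. We must show $q$ is $(M,\mathbb{P}_{C})$-generic. Given $r\le q$ and a dense $D\in M$, we write $r=r_{M}\cup r^{+}$, where $r_{M}=r\cap M$ and the new points $\beta_{1}<\dots<\beta_{n}$ of $w_{r}$ lie at or above $\delta$. Using elementarity, we find in $M$ an uncountable family of conditions in $D$ extending $r_{M}$ with the same type as $r$ over $r_{M}$. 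We thin it out (the $\Delta$-system lemma, plus pressing down on the finitely many tree-nodes below height $\delta$ determined by the $t_{\beta_{i}}$) so that the new parts $\{\alpha_{1}^{\xi},\dots,\alpha_{n}^{\xi}\}$ form an uncountable family in $M$. The goal is to pick one member $s\in M$ of this family compatible with $r$, which requires $[\alpha_{i}^{\xi}\beta_{j}]_{T,a}\in C$ for all $i,j$.

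The key combinatorial input is the structure of $[\alpha\beta]_{T,a}$. When $t_{\alpha}$ and $t_{\beta}$ split at level $\Delta=\Delta(t_{\alpha},t_{\beta})$, the value $[\alpha\beta]_{T,a}$ is computed from $\Delta$ and from the $a$-values along $t_{\beta}$ (resp.\ $t_{\alpha}$) above $\Delta$. Consequently, if we choose the $\alpha^{\xi}$ so that $t_{\alpha^{\xi}}$ splits from every $t_{\beta_{j}}$ below $\delta$, then every value $[\alpha_{i}^{\xi}\beta_{j}]_{T,a}$ is an ordinal determined by data in $M$. Here we use that $T$ is Aronszajn: the restrictions $t_{\beta_{j}}\restriction\gamma$ for $\gamma<\delta$ lie in $M$. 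We would then apply Theorem \ref{sq bracket general contiene club} inside $M$. For the uncountable family, the set of values realized contains a club $E\in M$, and since $C\cap E\in M$ is a club, the values can be steered into $C$. Concretely, we use $E$ and $C$ to choose a level $\gamma\in C\cap E\cap M$ and restrict to those $\xi$ whose nodes extend $t_{\beta_{j}}\restriction\gamma$ and split exactly there with the right $a$-pattern, so that the bracket values equal ordinals in $C$.

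The main obstacle is this last step: arranging simultaneously for all $n\times n$ pairs $(\alpha_{i}^{\xi},\beta_{j})$ that the bracket values fall into $C$, while keeping the family uncountable so that elementarity applies. We would first try to handle it by an induction on $n$ that proves a ``rectangular'' strengthening of Theorem \ref{sq bracket general contiene club}. The strengthening says: for uncountable families $A,B$ of $n$-tuples in $M$ and a club $C\in M$, there are $\bar{\alpha}\in A$ and $\bar{\beta}\in B$ with all cross values in $C$. The one-dimensional case is exactly the club-containment theorem, and the step uses the splitting level as a single ordinal $\gamma\in C$ that controls all pairs at once. Once this is done, the chosen $s$ together with $r$ (and the union of the model chains) is a common extension in $D$, which proves properness and hence the theorem.
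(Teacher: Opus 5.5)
\textbf{There is a genuine gap.} Reducing to properness is the right frame, but the amalgamation step, which is the whole content, fails. The structural claim it rests on is false.

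\textbf{The claimed structural fact is false.} For $\alpha\in M$ and $\beta_j\notin M$, put $m=a(t_\alpha\wedge t_{\beta_j})$. Then $[t_\alpha t_{\beta_j}]_{T,a}$ is the first $z\le_T t_{\beta_j}$ with $|z|\ge|t_\alpha|$ and $a(z)\le m$ (or $t_{\beta_j}$ itself). Each color occurs at most once along a branch. So once $|t_\alpha|$ is above the finitely many such nodes below $\delta$, the value is the first such node of height $\ge\delta$. That node is not in $M$, it depends on the coloring of $t_{\beta_j}$ above $\delta$, and its height is at least $|t_\alpha|$. In particular it is never the splitting level $\gamma$ you want to steer into $C\cap E$.

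\textbf{Your proposed tools do not reach these values.} Theorem~\ref{sq bracket general contiene club} applied inside $M$ says nothing about them. Neither does a ``rectangular'' version, which only produces pairs of tuples drawn from families in $M$; here $\bar\beta$ is the fixed tail of $r$, not something you choose. The only value you can force is $t_{\beta_j}\upharpoonright\delta$ (height $\delta\in C$), by letting $t_\alpha$ follow $t_{\beta_j}$ past the nodes of color $<a(t_{\beta_j}\upharpoonright\delta)$. A node can follow only one branch.

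\textbf{A concrete failure.} Let $u=t_{\beta_1}\wedge t_{\beta_2}$ have height $<\delta$, with $|t_{\beta_1}|<|t_{\beta_2}|$. Suppose your thinned copies pass through the same nodes below $\delta$ as the $t_{\beta_i}$, so $t_{\alpha_2}\upharpoonright(|u|+1)=t_{\beta_2}\upharpoonright(|u|+1)$. Then $t_{\alpha_2}\wedge t_{\beta_1}=u$. Once $|t_{\alpha_2}|$ is large, $[\alpha_2\beta_1]$ is the first node of $t_{\beta_1}$ of height $\ge\delta$ and color $\le a(u)$. Nothing in your conditions constrains that node: homogeneity of $w_r$ only controls $[\beta_1\beta_2]$, which is computed along $t_{\beta_2}$ above level $|t_{\beta_1}|$. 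So if, say, $a(t_{\beta_1}\upharpoonright(\delta+1))\le a(u)<a(t_{\beta_1}\upharpoonright\delta)$, this value has height $\delta+1\notin C$ for every such copy.

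\textbf{The missing idea is a crossing-models condition.} The conditions must carry a requirement controlling exactly these crossing values. The paper does not reprove this theorem; it quotes Lemma 5.4.4 of \cite{Walks}. Its posets for the other two operations (Sections \ref{Seccion PFA reals} and \ref{Seccion PFA walks}) are modelled on that lemma's poset and work as follows:
\begin{itemize}
\item Conditions are pairs $(\mathcal{M},\mathcal{N})$ with $\mathcal{M}\subseteq\mathcal{N}$, and the homogeneous points are attached to the models in $\mathcal{M}$.
\item Besides completeness, there is a crossing models condition. Its tree analogue says: for distinct $M,L\in\mathcal{M}$ and $N\in\mathcal{N}$ with $N\in M$, the first node below the $M$-point at height $\ge\delta_N$, with color $\le a$ of the meet of the $M$- and $L$-points, has height in $C$.
\item Genericity is proved by taking $r\in D\cap M$ with the same finite invariant $H(\cdot)$ as $p$, via an $h$ that is the identity on $S(p)\cap M$. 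The new points of $r$ agree with those of $p$ up to a high level below $\delta$ and then split off.
\item For $\alpha\ne\beta$, the copy $h(\alpha)$ meets $\beta$ exactly where $\alpha$ does. So $[h(\alpha)\beta]$ is precisely the crossing value at $N=\overline{M}$, which lies in $C$ by the condition on $p$.
\item $[h(\beta)\beta]$ is the node of $t_\beta$ at level $\delta$.
\end{itemize}
Most of the work is the case analysis showing that the amalgam again satisfies the crossing condition.

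\textbf{The density claim is also not immediate.} Adding one new point homogeneous with $w_p$ is an instance of the same amalgamation problem, and adding a model on top does not help with it. In the paper, uncountability of the generic set comes instead from the generic condition $(\{\overline{M}\},\{\overline{M}\})$, which forces $\delta_M$ into it.
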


\qquad\qquad\qquad\ \ \ \qquad\ 

This theorem answered a question raised by Woodin while he was working with
his forcing $\mathbb{P}_{\text{\textsf{max}}}$ (see \cite{PmaxBook}). The
following questions motivated our research on this topic:

\begin{enumerate}
\item Is the conclusion of Theorem \ref{Teorema PFA Aronszajn} independent of
\textsf{ZFC? }What is its status under \textsf{MA }or \textsf{CH}?

\item Does Theorem \ref{Teorema PFA Aronszajn} hold for the other
square-bracket operations?
\end{enumerate}

\qquad\ \ 

This work addresses these questions. We will show that Theorem
\ref{Teorema PFA Aronszajn} holds for the other square-bracket operations and
that its conclusion is independent from \textsf{ZFC }(for all square-bracket
operations).\textsf{ }In fact, for some of the square-bracket operations, the
conclusion is even independent from \textsf{MA }and its negation is consistent
with \textsf{CH}. The most important open question remaining is whether
\textsf{CH }is consistent with the statement that every club is $[\cdot\cdot]$-Ramsey.

\qquad\qquad\ \ \ 

The structure of the paper is as follows. Section \ref{Secion notacion} covers
the necessary notation and definitions that will be used throughout the paper.
The next five sections provide the required background on elementary
submodels, Aronszajn trees, walks on ordinals, forcing and Baumgartner's club
forcing. The reader may skip these preliminary sections and return to them as
needed. The definitions of the square-bracket operations from the book
\cite{Walks} are reviewed in Section \ref{The operations}. In Section
\ref{Finding Ramsey clubs} we prove that for every club $C\subseteq\omega_{1}%
$, there exists a square-bracket operation $[\cdot\cdot]$ (of any type) for
which $C$ is $[\cdot\cdot]$-Ramsey. On the other hand, in Section
\ref{Destroying square} we prove that the generic club added by Baumgartner's
club forcing is not $[\cdot\cdot]$-Ramsey for any square-bracket operation
$[\cdot\cdot]$ from the ground model. Consequently, \textsf{ZFC }can not prove
the conclusion of Theorem \ref{Teorema PFA Aronszajn}. Furthermore, we prove
that \textsf{MA }is consistent with the existence of a square-bracket
operation induced by a \textsf{C}-sequence for which not all clubs are Ramsey.
In Section \ref{Taking care of all} we extend these results for all
square-bracket operations and not just the ones from the ground model. In
Sections \ref{Seccion PFA reals} and \ref{Seccion PFA walks} we extend Theorem
\ref{Teorema PFA Aronszajn} for the remaining square-bracket operations. The
final section presents a collection of open questions.

\section{Notation \label{Secion notacion}}

We say that $\left(  T,<_{T}\right)  $ is a \emph{tree} if it is a partial
order with a least element and for every $s\in T,$ the set $s^{\downarrow
}=\left\{  t\in T\mid t<_{T}s\right\}  $ is a well-order. Given an ordinal
$\alpha,$ by $T_{\alpha}$ we denote the $\alpha$-level of $T,$ which is the
set of all $s\in T$ for which $s^{\downarrow}$ is isomorphic to $\alpha.$ The
\emph{height of }$T$ is the least $\alpha$ for which $T_{\alpha}$ is empty.
For $s\in T,$ the \emph{height of }$s$ is the (unique) ordinal $\alpha$ such
that $s\in T_{\alpha}.$ The height of $s$ will be denoted as $\left\vert
s\right\vert $ (although we are using the same symbol that is used to denote
cardinality, there should not be any room for confusion). If $\gamma
\leq\left\vert s\right\vert ,$ denote by $s\upharpoonright\gamma$ the only
element of $T_{\gamma}$ for which $s\upharpoonright\gamma\leq_{T}s$ (so
$s\upharpoonright\left\vert s\right\vert =s$). For $C\subseteq\omega_{1},$
denote $T\upharpoonright C=\left\{  s\in T\mid\left\vert s\right\vert \in
C\right\}  .$ We say that $T$ is a \emph{Hausdorff tree }if for every limit
ordinal $\alpha$ and $s,t\in T_{\alpha}$, if $s^{\downarrow}=t^{\downarrow},$
then $s=t$ (equivalently, the tree topology of $T$ is Hausdorff). We say that
$s,t\in T$ are \emph{incompatible }(denoted by $s\perp t$) if $s\nleqslant
_{T}t$ and $t\nleqslant_{T}s.$ Denote by $\triangle\left(  s,t\right)  $ the
largest $\alpha$ for which $s\upharpoonright\alpha=t\upharpoonright\alpha.$ A
branch through a tree is a maximal linearly ordered subset.

\qquad\ \qquad\ \qquad\ 

Although our main focus is on trees of height $\omega_{1},$ we will also
encounter subtrees of $2^{<\omega}.$ Given $s,t\in2^{<\omega},$ the
concatenation of $s$ and $t$ is denoted by $s^{\frown}t.$ We will occasionally
denote $s^{\frown}n$ instead of $s^{\frown}\left(  n\right)  $ (for some
$n\in\omega$). The \emph{cone of }$s$ is defined as $\left\langle
s\right\rangle =\left\{  f\in2^{\omega}\mid s\subseteq f\right\}  .$ Let
$S\subseteq2^{<\omega}$ be a tree (the tree ordering is $\subseteq$). We say
that $s\in S$ is a \emph{splitting node }if both $s^{\frown}0$ and $s^{\frown
}1$ belong to $S.$ If $s$ is a splitting node of $S$ and it extends exactly
$n$-many splitting nodes, we will say $s$ is an $n$\emph{-splitting node of}
$S.$ The set of all $n$-splitting nodes of $S$ is denoted by \textsf{split}%
$_{n}\left(  S\right)  .$ A \emph{Sacks tree (or perfect tree)} is a tree in
which any node can be extended to a splitting node. The \emph{set of branches
of }$S$ is defined as $\left[  S\right]  =\left\{  f\in2^{\omega}\mid\forall
n\in\omega\left(  f\upharpoonright n\in S\right)  \right\}  .$

\qquad\ \qquad\ \ \ 

By \textsf{LIM}$\left(  \omega_{1}\right)  $ we denote the set of all
countable limit ordinals. Given $S\subseteq\omega_{1},$ its set of
\emph{accumulation points }(denoted by $S^{\prime}$) is the set of all
$\alpha\in\omega_{1}$ such that $S\cap\alpha$ is unbounded in $\alpha$. By
$\left[  \alpha,\beta\right]  $ we denote the closed interval $\left\{
\xi\mid\alpha\leq\xi\leq\beta\right\}  .$ This should not be confused with the
square-bracket operations, which are always denoted by $\left[  \alpha
\beta\right]  $ (with no comma). The intervals $\left(  \alpha,\beta\right)  $
and $(\alpha,\beta]$ have the expected meaning. The expression $\left\langle
\delta_{n}\right\rangle \longrightarrow\delta$ means that the sequence
$\left\langle \delta_{n}\right\rangle $ converges to $\delta$ (with the usual
order topology of $\omega_{1}$). The class of ordinals is denoted by
\textsf{OR.}

\qquad\ \qquad\ \qquad\ \ \ 

The quantifier $\exists^{\omega_{1}}$ means \textquotedblleft there are
uncountably many\textquotedblright\ while $\forall^{\infty}$ means
\textquotedblleft for almost all\textquotedblright\ (for all except finitely
many). For $A$ a set of the ordinal numbers, we will denote by \textsf{OT}%
$\left(  A\right)  $ its order type. If $f$ is a function, by \textsf{dom}%
$\left(  f\right)  $ we denote its domain and \textsf{im}$\left(  f\right)  $
is its image.

\section{Preliminaries on countable elementary submodels}

Elementary submodels play a fundamental role while studying the combinatorics
of $\omega_{1}.$ We now review the basic facts about them that are needed for
our work and refer the reader to the survey \cite{AlanSubmodelos} to learn
more. For a cardinal $\kappa$, define \textsf{H}$\left(  \kappa\right)  $ as
the set consisting of all sets whose transitive closure has size less than
$\kappa.$ Arguments involving elementary submodels, often start by choosing a
large enough regular $\kappa$ that is needed for the argument in question. To
avoid constant repetition, we will omit the reference to $\kappa$ when there
is no need to explicitly specify it. In this way, we will simply write
\textquotedblleft let $M$ be a countable elementary submodel\textquotedblright%
\ instead of \textquotedblleft let $M$ be a countable elementary submodel of
some \textsf{H}$\left(  \kappa\right)  ,$ for a large enough regular $\kappa
$\textquotedblright. It is often the case that the reference to $\kappa$ is
not needed. However, this is not the case when working with models as side
conditions (see \cite{PartitionProblems} and \cite{NotesonForcingAxioms}), as
we will do in sections \ref{Seccion PFA reals} and \ref{Seccion PFA walks}. In
this two sections, we will be more carefully naming the cardinal that is needed.

\qquad\qquad\qquad\qquad

Let $M$ be a countable elementary submodel. The \emph{height of }$M$ is
defined as $\delta_{M}=M\cap\omega_{1}.$ It is well-known that $\delta_{M}$ is
a countable ordinal. We will need the following result regarding the height of
a model:

\begin{proposition}
Let $M$ be a countable elementary submodel and $C,S\in M$ be two subsets of
$\omega_{1}.$ \label{Prop submodelos club y est}

\begin{enumerate}
\item If $C$ is a club, then $\delta_{M}\in C.$

\item If $\delta_{M}\in S,$ then $S$ is stationary.
\end{enumerate}
\end{proposition}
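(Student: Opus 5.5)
Both parts rest on elementarity, together with the standard fact that $\delta_M=M\cap\omega_1$ is a countable ordinal: $M\cap\omega_1$ is an initial segment of $\omega_1$, because each countable $\alpha\in M$ has a surjection $\omega\to\alpha$ in $M$, and $\omega\subseteq M$ forces $\alpha\subseteq M$.

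\emph{Part 1.} Let $C\in M$ be a club. It suffices to show that $C\cap\delta_M$ is unbounded in $\delta_M$. Closedness of $C$ then gives $\delta_M\in C$, since $\delta_M$ is a limit ordinal: for $\alpha\in M\cap\omega_1$ we have $\alpha+1\in M$. To get unboundedness, fix $\alpha<\delta_M$; then $\alpha\in M$. Since $C$ is unbounded, \textsf{H}$(\kappa)\models\exists\beta\in C\,(\beta>\alpha)$. Both parameters $C$ and $\alpha$ lie in $M$, so by elementarity there is $\beta\in M\cap C$ with $\beta>\alpha$. As $\beta\in M\cap\omega_1=\delta_M$, this gives $\beta<\delta_M$, as required.

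\emph{Part 2.} We argue by contraposition. Suppose $S\in M$ is not stationary. Then \textsf{H}$(\kappa)$ contains a club $D$ with $D\cap S=\emptyset$, and by elementarity (with parameter $S$) we may choose such a $D$ inside $M$. Part 1 yields $\delta_M\in D$, hence $\delta_M\notin S$.

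The only delicate point is verifying that the relevant statements, namely ``$C$ is unbounded'' and ``there is a club disjoint from $S$'', are correctly evaluated in \textsf{H}$(\kappa)$. This holds because clubs of $\omega_1$ and their defining properties are absolute for \textsf{H}$(\kappa)$ when $\kappa\geq\omega_2$ is regular, as every subset of $\omega_1$ belongs to \textsf{H}$(\omega_2)$. I do not expect any real obstacle.
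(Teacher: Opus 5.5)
The paper states this proposition without proof, as a standard fact about countable elementary submodels, and your argument is the standard one and is correct. You also observe that everything already works for $\kappa=\omega_2$; this matters because the paper later applies the proposition to the countable elementary submodels of \textsf{H}$(\omega_2)$ in the side-condition forcings $\mathbb{P}(C)$.
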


\qquad\ \qquad\ \qquad\ 

Another very useful result is the following:

\begin{lemma}
Let $M$ be a countable elementary submodel and $A\in M.$ If $A\nsubseteq M,$
then $A$ is uncountable. \label{Lema modelo contension}
\end{lemma}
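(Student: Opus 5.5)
We argue by contraposition: assuming $A$ is countable, we show that $A\subseteq M$. The tool is elementarity of $M$ in \textsf{H}$\left(  \kappa\right)$, applied to a statement whose parameters all lie in $M$, together with the fact that $\omega\subseteq M$.

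First, $\omega\subseteq M$. The ordinal $\omega$ is definable in \textsf{H}$\left(  \kappa\right)$, so $\omega\in M$. Next we show by induction on $n$ that each $n\in\omega$ belongs to $M$. We have $0=\emptyset\in M$, since $\emptyset$ is definable. If $n\in M$, then $n+1=n\cup\{n\}$ is definable from the parameter $n$, so $n+1\in M$ by elementarity.

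Now suppose $A\in M$ is countable, and assume first that $A\neq\emptyset$. The statement ``there is a surjection $f:\omega\longrightarrow A$'' holds in \textsf{H}$\left(  \kappa\right)$. This uses that $\kappa$ is large enough: any such $f$ is a hereditarily countable set built from $A$ and $\omega$, so it belongs to \textsf{H}$\left(  \kappa\right)$. The parameter $A$ lies in $M$, so by elementarity there is such a surjection $f$ with $f\in M$.

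We now show $A\subseteq M$. Take $a\in A$. Since $f$ is onto, there is $n\in\omega$ with $a=f(n)$. By the first step $n\in M$, and $f\in M$. The element $f(n)$ is definable in \textsf{H}$\left(  \kappa\right)$ from the parameters $f$ and $n$, as the unique $y$ with $(n,y)\in f$. Hence $a=f(n)\in M$. If instead $A=\emptyset$, then $A\subseteq M$ holds trivially.

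So every countable $A\in M$ is a subset of $M$, which is the contrapositive of the lemma. The only point requiring care is the existence of the surjection inside \textsf{H}$\left(  \kappa\right)$. This is why $\kappa$ must be chosen regular and larger than the ranks involved, and it is covered by the convention of taking $\kappa$ large enough. No earlier result of the paper is needed beyond this standard convention.
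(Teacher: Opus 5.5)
Your argument is correct and is the standard one; the paper states this lemma without proof, as a well-known fact about elementary submodels. You reduce to a countable $A\in M$, pull a surjection $f:\omega\longrightarrow A$ into $M$ by elementarity, and use $\omega\subseteq M$.

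One small correction: $f$ need not be hereditarily countable (take $A=\{\omega_{1}\}$). Still, $f\in$ \textsf{H}$(\kappa)$ holds simply because $f$ is a countable set of pairs from $\omega\cup A\subseteq$ \textsf{H}$(\kappa)$ and $\kappa$ is uncountable, so no extra largeness of $\kappa$ is needed at that step.
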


\qquad\ \qquad\ \ 

Recall the following notion:

\begin{definition}
Let $\delta$ be an ordinal. We say that $\delta$ is \emph{indecomposable }if
it is closed under (ordinal) addition. Equivalently, $\gamma+\delta=\delta$
for every $\gamma<\delta.$ By \textsf{IND}$\left(  \omega_{1}\right)  $ we
denote the set of all countable indecomposable ordinals.
\end{definition}

\qquad\qquad\qquad

The height of a model is an indecomposable ordinal (it is also closed under
ordinal multiplication and exponentiation). It is not hard to see that
\textsf{IND}$\left(  \omega_{1}\right)  $ is a club.

\begin{definition}
Let $\gamma\leq\omega_{1}.$ We say that $\left\{  M_{\alpha}\mid\alpha
<\gamma\right\}  $ is a \emph{continuos chain of elementary submodels} if the
following conditions hold:

\begin{enumerate}
\item There is an uncountable regular $\kappa$ such that each $M_{\alpha}$ is
an elementary submodel of \textsf{H}$\left(  \kappa\right)  .$

\item If $\alpha<\beta<\gamma,$ then $M_{\alpha}\in M_{\beta}.$

\item If $\alpha<\gamma$ is a limit ordinal, then $M_{\alpha}=\bigcup
\limits_{\xi<\alpha}M_{\xi}.$
\end{enumerate}
\end{definition}

\section{Preliminaries on Aronszajn trees}

An Aronszajn tree is a tree of height $\omega_{1}$ such that all of its levels
are countable and has no uncountable branches. A large part of the
combinatorics of $\omega_{1}$ deals with Aronszajn trees. The reader may
consult the survey \cite{StevoHandbook} to learn a lot on Aronszajn trees (see
also \cite{BaumgartnerTesis} and \cite{Kunen}). An Aronszajn tree $T$ \emph{is
special }if there is $a:T\longrightarrow\omega$ such that for every
$n\in\omega,$ the set $a^{-1}\left(  \left\{  n\right\}  \right)  $ is an
antichain. We call $a$ \emph{an specialization mapping for }$T.$

\qquad\qquad\qquad\ \ \ \ 

Let $T^{1},...,T^{n}$ be trees of height $\omega_{1}.$ The \emph{tree product
}$S=T^{0}\otimes...\otimes T^{n}$ consists of all sequences $\overline
{s}=\left(  s_{0},...,s_{n}\right)  \in T^{0}\times...\times T^{n}$ such that
$\left\vert s_{0}\right\vert =...=\left\vert s_{n}\right\vert .$ For
$\overline{s},\overline{t}\in S,$ define $\overline{s}\leq_{S}\overline{t}$ if
$s_{i}\leq_{T^{i}}t_{i}$ for every $i\leq n.$ It is easy to see verify that
$\left(  S,\leq_{S}\right)  $ is a tree of height $\omega_{1}$ and $S_{\delta
}$ is precisely the set of all $\overline{s}=\left(  s_{0},...,s_{n}\right)
\in S$ with $\left\vert s_{0}\right\vert =...=\left\vert s_{n}\right\vert
=\delta.$ If $\overline{s}=\left(  s_{0},...,s_{n}\right)  \in S$ and
$\delta<\left\vert \overline{s}\right\vert ,$ we have that $\overline
{s}\upharpoonright\delta=\left(  s_{0}\upharpoonright\delta,...,s_{n}%
\upharpoonright\delta\right)  \in S_{\delta}$ (we take this opportunity to
remind the reader that in this context, $\left\vert \overline{s}\right\vert $
denotes the height of $s$ in the tree $S,$ not its cardinality as a sequence).
A set $A=\{\overline{s}_{\alpha}\mid\alpha\in\omega_{1}\}\subseteq S$ is
\emph{height increasing }if $\left\vert \overline{s}_{\alpha}\right\vert
<\left\vert \overline{s}_{\beta}\right\vert $ for all $\alpha<\beta$.

\begin{definition}
Let $T^{1},...,T^{n}$ be trees of height $\omega_{1},$ $S=T^{0}\otimes
...\otimes T^{n},$ $A=\{\overline{s}_{\alpha}\mid\alpha\in\omega
_{1}\}\subseteq S,$ $k\in\omega$ and $\delta\in\omega_{1}.$ We say that $A$
\emph{is }$k$\emph{-distributed at level} $\delta$ if for every $\overline
{u}_{0},...,\overline{u}_{k}\in S_{\delta},$ there are uncountably many
$\alpha\in\omega_{1}$ such that $\overline{s}_{\alpha}\upharpoonright
\delta\cap\overline{u}_{i}=\emptyset$ for all $i\leq k.$
\end{definition}

\qquad\ \ \ \ \ \ \ \ 

The following lemma from \cite{StevoHandbook} (Lemma 5.9) is particularly
useful when working with Aronszajn trees.

\begin{proposition}
Let $T^{1},...,T^{n}$ be trees of height $\omega_{1},$ $S=T^{0}\otimes
...\otimes T^{n}$ and $A=\{\overline{s}_{\alpha}\mid\alpha\in\omega
_{1}\}\subseteq S$ that is height increasing. For every $k\in\omega$, there is
$\delta\in\omega_{1}$ such that $A\ $is $k$-distributed at level $\delta
.$\label{Prop Aronszajn distribuido}
\end{proposition}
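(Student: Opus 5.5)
The plan is to argue by contradiction using a uniform ultrafilter on $\omega_{1}$. The aim is to extract an uncountable chain in one of the factor trees. For this I read the statement, as in the intended application and in \cite{StevoHandbook}, with the $T^{i}$ Aronszajn trees, or at least with no uncountable branches. Without that hypothesis the statement fails: for $T^{0}=\omega_{1}$ and $k=0$, every $\overline{s}_{\alpha}\upharpoonright\delta$ is the unique node of level $\delta$. I also read ``$\overline{s}_{\alpha}\upharpoonright\delta\cap\overline{u}_{i}=\emptyset$'' coordinatewise: $s_{\alpha,j}\upharpoonright\delta\neq u_{i,j}$ for every coordinate $j$. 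Since $A$ is height increasing, for each $\delta$ all but countably many $\alpha$ satisfy $\left\vert \overline{s}_{\alpha}\right\vert \geq\delta$, so $\overline{s}_{\alpha}\upharpoonright\delta$ is defined for them.

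Fix $k$ and suppose that for no $\delta$ is $A$ $k$-distributed at level $\delta$. Then for every $\delta$ we can choose $\overline{u}_{0}^{\delta},...,\overline{u}_{k}^{\delta}\in S_{\delta}$ such that for all but countably many $\alpha$ there are $i\leq k$ and $j\leq n$ with $s_{\alpha,j}\upharpoonright\delta=u_{i,j}^{\delta}$. Fix a uniform ultrafilter $\mathcal{U}$ on $\omega_{1}$, meaning it contains all co-countable sets. For each $\delta$ the co-countable set above is partitioned into finitely many pieces indexed by the pairs $(i,j)\in(k+1)\times(n+1)$. Hence there is a pair $(i_{\delta},j_{\delta})$ for which
\[
X_{\delta}=\{\alpha\mid s_{\alpha,j_{\delta}}\upharpoonright\delta=u_{i_{\delta},j_{\delta}}^{\delta}\}\in\mathcal{U}.
\]
Put $t_{\delta}=u_{i_{\delta},j_{\delta}}^{\delta}\in T_{\delta}^{j_{\delta}}$. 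There are only finitely many pairs, so there is a fixed pair $(i,j)$ and an uncountable $D\subseteq\omega_{1}$ with $(i_{\delta},j_{\delta})=(i,j)$ for all $\delta\in D$.

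Now take $\delta<\delta'$ in $D$. Since $X_{\delta}\cap X_{\delta'}\in\mathcal{U}$, it is nonempty; pick $\alpha$ in it. Then $t_{\delta}=s_{\alpha,j}\upharpoonright\delta$ and $t_{\delta'}=s_{\alpha,j}\upharpoonright\delta'$, so $t_{\delta}<_{T^{j}}t_{\delta'}$. Thus $\{t_{\delta}\mid\delta\in D\}$ is an uncountable chain in $T^{j}$, and its downward closure is an uncountable branch. This contradicts $T^{j}$ being Aronszajn.

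The only real obstacle is the uniformization step. The witnesses $\overline{u}_{i}^{\delta}$ and the coordinate hit can vary with $\alpha$, and they must be made coherent across different levels. The ultrafilter handles this, because any two $\mathcal{U}$-large sets share a common $\alpha$, and that $\alpha$ links the levels $\delta$ and $\delta'$. If one prefers to avoid ultrafilters, the same step can be done with a countable elementary submodel $M$ containing $A$ and the choice function $\delta\mapsto(\overline{u}_{0}^{\delta},...,\overline{u}_{k}^{\delta})$. One uses an $\alpha\notin M$ and Lemma \ref{Lema modelo contension} to reflect the pattern of hits down to uncountably many levels; the ultrafilter version is the cleaner route.
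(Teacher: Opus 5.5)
The paper gives no proof of this proposition; it quotes it from \cite{StevoHandbook} (Lemma 5.9). Your uniform-ultrafilter argument is a correct, self-contained proof of the statement once the missing hypothesis is added, and it is the standard argument for lemmas of this kind.

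The coherence step is right. For a failing level $\delta$, the co-countable set of $\alpha$ meeting some $\overline{u}^{\delta}_{i}$ is covered by the finitely many sets $X^{(i,j)}_{\delta}$. It is a cover rather than a partition, but that is all you need to get one of them into $\mathcal{U}$. Any $\alpha\in X_{\delta}\cap X_{\delta'}$ then gives $t_{\delta}=s_{\alpha,j}\upharpoonright\delta<_{T^{j}}s_{\alpha,j}\upharpoonright\delta'=t_{\delta'}$.

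Your proof gives more than the citation does:
\begin{itemize}
\item It shows exactly which hypothesis is needed: each $T^{j}$ has no uncountable branch, and countability of levels plays no role.
\item The contradiction only uses uncountably many failing levels. So $A$ is in fact $k$-distributed at all but countably many $\delta$, and hence at every $\delta$ above some fixed countable ordinal, simultaneously for all $k$.
\end{itemize}

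You are right that the printed statement omits the Aronszajn hypothesis and is false without it; your example $T^{0}=\omega_{1}$, $k=0$ is a valid counterexample. The paper only applies the result to an Aronszajn tree, in Corollary \ref{Corol del juego}. Also, $T^{1}$ in the statement should read $T^{0}$.

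On the meaning of $\overline{s}_{\alpha}\upharpoonright\delta\cap\overline{u}_{i}=\emptyset$: in that corollary all factors are the same tree, and disjointness is used as sets of nodes, i.e.\ $s_{\alpha,j}\upharpoonright\delta\neq u_{i,l}$ for all $j,l$. Your argument covers this reading in either of two ways:
\begin{itemize}
\item Index over triples $(i,j,l)$, with $t_{\delta}=u^{\delta}_{i,l}$ and $X_{\delta}=\{\alpha\mid s_{\alpha,j}\upharpoonright\delta=u^{\delta}_{i,l}\}$, fixing $j$ to get the chain in $T^{j}$.
\item Apply your coordinatewise version to the constant tuples $(u_{i,l},\dots,u_{i,l})$, with $k$ replaced by $(k+1)(n+1)-1$.
\end{itemize}

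The elementary-submodel alternative in your last paragraph is too vague to count as a proof. An $\alpha\notin M$ only links the countably many levels in $M$, and you do not say how an uncountable chain would come out of it. Nothing depends on that remark, so you can simply drop it.
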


\qquad\ \qquad\ \qquad\ \ 

A consequence of this result is the following proposition (an alternative
proof can be found in Claim 3.2 of \cite{RectangleRefiningProperty}).

\begin{proposition}
Let $T$ be an Aronszajn tree and $A$ an uncountable subset of $T.$ There are
two incomparable nodes $s,t\in T$ such that the sets $\left\{  u\in T\mid
s<_{T}u\right\}  $ and $\left\{  u\in T\mid t<_{T}u\right\}  $ are
uncountable. \label{Prop Aronszajn no numerable}
\end{proposition}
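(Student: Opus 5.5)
We derive Proposition \ref{Prop Aronszajn no numerable} from Proposition \ref{Prop Aronszajn distribuido}, applied to the single tree $T$ (so $n=0$ and $S=T$) with $k=1$.

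First we reduce to a height-increasing family. Every level of $T$ is countable, while $A$ is uncountable, so $A$ meets uncountably many levels. By recursion on $\alpha<\omega_{1}$ we choose $s_{\alpha}\in A$ whose height is larger than the heights of all $s_{\xi}$ with $\xi<\alpha$. This is possible because only countably many levels lie below $\sup_{\xi<\alpha}(|s_{\xi}|+1)$. The resulting family $A_{0}=\{s_{\alpha}\mid\alpha\in\omega_{1}\}\subseteq A$ is height increasing.

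Next we apply Proposition \ref{Prop Aronszajn distribuido} with $k=0$ to obtain a level $\delta$ at which $A_{0}$ is $0$-distributed. The definition is written with $\cap\overline{u}_{i}=\emptyset$; we read this as ``$\overline{s}_{\alpha}\upharpoonright\delta\neq\overline{u}_{i}$'', where the condition is only meaningful for $\alpha$ with $|s_{\alpha}|\geq\delta$. Under this reading, for every $u\in T_{\delta}$ there are uncountably many $\alpha$ with $s_{\alpha}\upharpoonright\delta\neq u$.

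Now we locate two incomparable nodes. Since $T_{\delta}$ is countable and uncountably many $s_{\alpha}$ have height at least $\delta$, the pigeonhole principle gives some $u\in T_{\delta}$ with $s_{\alpha}\upharpoonright\delta=u$ for uncountably many $\alpha$. By distributivity at $\delta$ applied to this $u$, uncountably many $\alpha$ satisfy $s_{\alpha}\upharpoonright\delta\neq u$. Applying pigeonhole again over the countable set $T_{\delta}\setminus\{u\}$ yields some $v\neq u$ in $T_{\delta}$ with $s_{\alpha}\upharpoonright\delta=v$ for uncountably many $\alpha$.

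Then $u$ and $v$ are distinct nodes on the same level, hence incomparable. Each of them lies below uncountably many elements of $A_{0}$, and so below uncountably many elements of $T$. We may take $u$ and $v$ themselves as the desired $s$ and $t$. If strict $<_{T}$ is required, note that only countably many of the $s_{\alpha}$ lie at level exactly $\delta$, so strictness comes for free.

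We do not expect a real obstacle here. The only delicate point is the correct reading of ``$k$-distributed'' (disjointness of initial segments versus inequality); with the one-tree reading above, even $k=0$ suffices. If instead the definition is meant as disjointness of downward closures in a product, we would apply the proposition to $S=T\otimes T$ with the pairs $(s_{\alpha},s_{\alpha})$ and extract $u,v$ in the same way.
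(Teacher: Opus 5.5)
Your proposal is correct and follows the paper's route. The paper only says this proposition is a consequence of Proposition \ref{Prop Aronszajn distribuido}; your argument supplies the details (pass to a height-increasing subfamily of $A$, take a level $\delta$ where it is $0$-distributed, then pigeonhole twice over the countable level $T_{\delta}$), and it even gives the intended stronger version in which $s$ and $t$ each lie below uncountably many elements of $A$. The only blemish is cosmetic: you announce $k=1$ but then correctly use $k=0$, which already suffices.
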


\qquad\qquad\ \ \qquad\ \ \ 

An \textquotedblleft elementary submodel\textquotedblright\ version of the
preceding proposition is also true.

\begin{proposition}
Let $T$ be an Aronszajn tree, $A\subseteq T,$ $M$ a countable elementary
submodel with $T,A\in M$ and $t\in A\setminus M.$ There are $\xi\in M$ and
$u\in T\upharpoonright\xi$ such that: \label{Prop Aronszajn y submodelo}

\begin{enumerate}
\item $t\perp u.$

\item There are uncountably many $s\in A$ such that $u\leq_{T}s.$
\end{enumerate}
\end{proposition}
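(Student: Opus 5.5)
The plan is to argue by contradiction, using elementarity to move a failure at the node $t$ into a failure at every node of a fixed level, and then to read off an uncountable branch of $T$. Write $U=\{u\in T\mid \exists^{\omega_1} s\in A\,(u\leq_T s)\}$; this set is definable from $T$ and $A$, so $U\in M$. Put $\delta=\delta_M$. Since $T\in M$ has countable levels, each level $T_\xi$ with $\xi<\delta$ belongs to $M$ and is therefore a subset of $M$ (Lemma \ref{Lema modelo contension}). Consequently $t\notin M$ forces $|t|\geq\delta$, and then $t\upharpoonright\xi\in M$ for every $\xi<\delta$.

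Suppose the conclusion fails. Then for every $\xi<\delta$ and every $u\in U\cap T_\xi$ we have $u\leq_T t$. As levels are antichains, this says that for each $\xi<\delta$ the set $U\cap T_\xi$ is either empty or equal to $\{t\upharpoonright\xi\}$. Now $U\cap T_\xi\in M$ and $M$ computes it correctly, so for $\xi<\delta$ the statement ``$|U\cap T_\xi|\leq1$'' is true, and it also holds in $M$. The set $D=\{\xi\mid |U\cap T_\xi|\leq 1\}$ lies in $M$ and contains $\delta_M\cap\omega_1$. If $\omega_1\setminus D$ were nonempty, its minimum would lie in $M$ and hence below $\delta$, which is impossible. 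So $D=\omega_1$ by elementarity, and every level of $T$ meets $U$ in at most one point.

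Next I would show that $U$ is nonempty on every level. Since $t\in A\setminus M$ and $A\in M$, the set $A$ is uncountable by Lemma \ref{Lema modelo contension}. Take $\xi<\omega_1$. Only countably many elements of $A$ have height less than $\xi$, because the levels are countable. Every other element of $A$ lies above some node of the countable level $T_\xi$, so by pigeonhole some $u\in T_\xi$ lies below uncountably many elements of $A$. Thus $U\cap T_\xi$ is a singleton $\{u_\xi\}$ for every $\xi$.

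Finally, $U$ is downward closed, so for $\xi<\eta$ we get $u_\eta\upharpoonright\xi\in U\cap T_\xi$, which forces $u_\eta\upharpoonright\xi=u_\xi$. Hence $\{u_\xi\mid\xi<\omega_1\}$ is an uncountable branch of $T$, contradicting that $T$ is Aronszajn. So some $\xi<\delta$ and some $u\in U\cap T_\xi$ satisfy $u\not\leq_T t$. Since $|u|<|t|$, the node $t$ is not below $u$ either, so $u\perp t$. Because $\xi<\delta$ gives $\xi\in M$, this $u$ witnesses both clauses of the statement. The step that needs the most care is the elementarity transfer: one must check that $U\cap T_\xi$ is computed correctly in $M$ and that the minimal-counterexample argument applies; the remaining steps are routine.
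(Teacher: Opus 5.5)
Your proof is correct. The paper gives no proof of this statement. It only presents it as the elementary-submodel version of Proposition \ref{Prop Aronszajn no numerable}, read with the two sets taken inside $A$, as is clearly intended.

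The natural argument along those lines is the one the paper itself runs in the tree case of Proposition \ref{Prop SC destruye a V}. By elementarity, that proposition yields incomparable $w_0,w_1\in M$, each lying below uncountably many elements of $A$. Both have height below $\delta_M\leq|t|$, so they cannot both lie below $t$. Whichever one does not is incomparable with $t$.

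Your route is different. You argue by contradiction, lifting the local failure below $\delta_M$ to all of $\omega_1$ through the set $D$ and its least non-element. You then show that $U$ would be an uncountable branch. In effect you reprove Proposition \ref{Prop Aronszajn no numerable} from scratch: pigeonhole makes $U$ meet every level, and downward closure plus the absence of uncountable branches makes $U$ non-linear. This uses only countability of levels, not the distribution lemma (Proposition \ref{Prop Aronszajn distribuido}) from which the paper derives that proposition.

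The paper's route is shorter once Proposition \ref{Prop Aronszajn no numerable} is available; yours is self-contained. You could streamline yours by running elementarity in the other direction. Prove in $V$ that $U$ contains two incomparable nodes, reflect such a pair into $M$, and finish as above. This removes the minimal-counterexample step with $D$ entirely.
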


\section{Preliminaries on forcing}

We assume the reader is already familiar with the method of forcing as
presented in \cite{oldKunen}. \ If $\mathbb{P}$ is a forcing and $M$ is a
countable elementary submodel with $\mathbb{P}\in M,$ we say that
$p\in\mathbb{P}$ is an $(M,\mathbb{P})$\emph{-generic condition }if for every
$D\subseteq\mathbb{P}$ open dense set with $D\in M,$ the set $D\cap M$ is
predense below $p.$ Moreover, $p$ is an $(M,\mathbb{P})$\emph{-totally generic
condition }if $p\in D$ for every $D\in M$ that is an open dense subset of
$\mathbb{P}.$ It is well-known that if $p$ is $(M,\mathbb{P})$-generic and
$G\subseteq\mathbb{P}$ is a generic filter with $p\in G,$ then $M\left[
G\right]  $ is a generic extension of $M$ and $M\left[  G\right]  \cap V=M$.
The following equivalence of generic conditions is very useful (see
\cite{NotesonForcingAxioms}).

\begin{lemma}
Let $\mathbb{P}$ be a forcing, $p\in\mathbb{P}$ and $M$ an elementary submodel
with $\mathbb{P}\in M.$ The following are equivalent:
\label{equiv condicion generica}

\begin{enumerate}
\item $p$ is an $(M,\mathbb{P})$-generic condition.

\item For every $A\subseteq\mathbb{P}$ with $A\in M$ and $q\leq p,$ if $q\in
A,$ then there is $r\in A\cap M$ that is compatible with $q.$
\end{enumerate}
\end{lemma}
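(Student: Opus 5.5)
We prove the two implications separately. Throughout, $D\in M$ open dense means $D\subseteq\mathbb{P}$ is open dense and $D\in M$; recall that openness is needed for the usual formulation of genericity via predensity.

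For $(1)\Rightarrow(2)$: let $A\in M$, $A\subseteq\mathbb{P}$, and $q\leq p$ with $q\in A$. We form the set
\[
D=\{r\in\mathbb{P}\mid \exists a\in A\,(r\leq a)\}\cup\{r\in\mathbb{P}\mid \forall a\in A\,(r\perp a)\}.
\]
$D$ is definable from $A$ and $\mathbb{P}$, so $D\in M$ by elementarity. $D$ is open because both parts are downward closed. It is dense: given any $r$, either $r$ is compatible with some $a\in A$, so a common extension lies in the first part, or $r$ itself lies in the second part. By (1), $D\cap M$ is predense below $p$, so $q$ is compatible with some $r\in D\cap M$; we fix a common extension $q'\leq q,r$. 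Then $r$ cannot lie in the second part, since $q'\leq r$ and $q'\leq q\in A$ show that $r$ is compatible with $q\in A$. Hence $r\leq a$ for some $a\in A$, and since $r,A\in M$, elementarity lets us pick such an $a\in A\cap M$. Then $q'\leq a$, so $a$ is compatible with $q$, as required.

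For $(2)\Rightarrow(1)$: let $D\in M$ be open dense and $q\leq p$; we must find an element of $D\cap M$ compatible with $q$. By density we extend $q$ to $q'\leq q$ with $q'\in D$. Applying (2) with $A=D$ and the condition $q'\leq p$ yields $r\in D\cap M$ compatible with $q'$, and hence compatible with $q$. This shows $D\cap M$ is predense below $p$.

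The only delicate step is the choice of $D$ in the first direction. It has to be an element of $M$, it has to be dense, and it has to be arranged so that compatibility of $q$ with a member of $D\cap M$ forces the existence of an element of $A$ that lies inside $M$. The second part of $D$ (conditions incompatible with all of $A$) is what guarantees density, and the fact $q\in A$ is what rules it out for the witness $r$.
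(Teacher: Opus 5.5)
Your proof is correct in both directions. In $(1)\Rightarrow(2)$, the set $D$ of conditions that either lie below some element of $A$ or are incompatible with every element of $A$ is open, dense and in $M$, and elementarity correctly pulls a witness $a\in A\cap M$ with $r\leq a$; in $(2)\Rightarrow(1)$, extending $q$ into $D$ and applying $(2)$ with $A=D$ works. The paper itself gives no proof of this lemma (it only cites \cite{NotesonForcingAxioms}), and your argument is the standard one.
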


\qquad\ \ \ \ 

A forcing $\mathbb{P}$ is \emph{(totally) proper }if for every countable
elementary submodel $M$ with $\mathbb{P}\in M$ and $p\in M\cap\mathbb{P},$
there is $q\leq p$ that is an $(M,\mathbb{P})$\emph{-(totally) generic
condition. }It can be proved a forcing is totally proper if and only if it is
proper and does not add new reals (see \cite{IteratedForcingandCH}).

\qquad\ \qquad\ \qquad\ \ \ 

We say that $\mathbb{P}$ has the $\kappa$\emph{-chain condition }($\mathbb{P}$
is $\kappa$\textsf{-c.c.}) if $\mathbb{P}$ does not have antichains of size
$\kappa.$ We write \textsf{c.c.c. }instead of $\omega_{1}$-\textsf{c.c. }The
following is well-known (see \cite{forcingwithoutcombinatorics} or
\cite{NotesonForcingAxioms}).

\begin{proposition}
Let $\mathbb{P}$ be \textsf{c.c.c. }and $M$ a countable elementary submodel
with $\mathbb{P}\in M.$ Every $p\in\mathbb{P}$ is $(M,\mathbb{P})$-generic.
\label{prop ccc gen}
\end{proposition}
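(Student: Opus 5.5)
We must prove Proposition \ref{prop ccc gen}: if $\mathbb{P}$ is \textsf{c.c.c.} and $M$ is a countable elementary submodel with $\mathbb{P}\in M$, then every $p\in\mathbb{P}$ is $(M,\mathbb{P})$-generic.

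The approach is to show directly that for every open dense $D\in M$ the set $D\cap M$ is predense in all of $\mathbb{P}$, hence below any $p$. The key point is that, by the chain condition, $D$ contains a countable maximal antichain, and elementarity lets us pick such an antichain inside $M$, where all of its elements lie.

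First, fix an open dense $D\subseteq\mathbb{P}$ with $D\in M$. By Zorn's lemma (in the form of the axiom of choice, which holds in $\mathsf{H}(\kappa)$), there is an antichain $A\subseteq D$ that is maximal among antichains contained in $D$. Since $D$ is dense, such an $A$ is a maximal antichain of $\mathbb{P}$. Indeed, if some $q$ were incompatible with all of $A$, we could extend $q$ to some $q'\in D$; then $q'$ would still be incompatible with every element of $A$, contradicting the maximality of $A$ inside $D$.

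Because $\mathbb{P}$ is \textsf{c.c.c.}, $A$ is countable. The statement ``there is a countable antichain $A\subseteq D$ which is maximal in $\mathbb{P}$'' holds in $\mathsf{H}(\kappa)$ and has parameters $D,\mathbb{P}\in M$. By elementarity, some such $A$ lies in $M$. As $A$ is countable and belongs to $M$, there is a surjection $f:\omega\to A$ (or $A$ is finite) with $f\in M$. Since $\omega\subseteq M$, each $f(n)\in M$, so $A\subseteq M$. (This is the contrapositive of Lemma \ref{Lema modelo contension}, where countability gives containment.) Hence $A\subseteq D\cap M$.

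Finally, given $p$ and any $q\leq p$, maximality of $A$ yields some $r\in A\subseteq D\cap M$ compatible with $q$. So $D\cap M$ is predense below $p$, and $p$ is $(M,\mathbb{P})$-generic.

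The only delicate step is ensuring that the maximal antichain can be taken inside $M$ and entirely contained in $M$. This is handled by applying elementarity to the existential statement and then using the fact that countable elements of $M$ are subsets of $M$. Everything else is routine.
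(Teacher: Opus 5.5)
Your proof is correct and is the standard argument for this fact; the paper gives no proof of its own and cites it as well known. The idea is that a maximal antichain $A\subseteq D$ is countable by the \textsf{c.c.c.}, can be chosen in $M$ by elementarity, and is then contained in $M$, so $D\cap M$ is predense in all of $\mathbb{P}$ (the step where you transfer the statement to \textsf{H}$(\kappa)$ silently uses that \textsf{H}$(\kappa)$ contains all subsets of $\mathbb{P}$, which the paper's standing convention of a large enough $\kappa$ guarantees).
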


\qquad\ \qquad\ \ 

In regard with two step iterations, we have the following result (see
\cite{AbrahamHandbook}).

\begin{proposition}
Let $\mathbb{P}$ be a forcing notion, $\mathbb{\dot{Q}}$ a $\mathbb{P}$-name
for a partial order and $M$ a countable elementary submodel with
$\mathbb{P},\mathbb{\dot{Q}}\in M.$ For $(p,\dot{q})\in\mathbb{P}%
\ast\mathbb{\dot{Q}},$ the following are equivalent:
\label{prop iteracion gen}

\begin{enumerate}
\item $(p,\dot{q})$ is $(M,\mathbb{P}\ast\mathbb{\dot{Q}})$-generic.

\item $p$ is $(M,\mathbb{P})$-generic and $p\Vdash$\textquotedblleft$\dot{q}$
is $(M[\dot{G}],\mathbb{\dot{Q}}[\dot{G}])$-generic\textquotedblright\ (where
$\dot{G}$ is the canonical $\mathbb{P}$-name for the generic filter).
\end{enumerate}
\end{proposition}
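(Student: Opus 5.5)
The plan is to prove the two implications of Proposition \ref{prop iteracion gen} separately, each reduced to the definition of generic condition via dense sets. The basic translation between the two levels is this: open dense sets of $\mathbb{P}\ast\mathbb{\dot{Q}}$ in $M$ correspond to $\mathbb{P}$-names for open dense subsets of $\mathbb{\dot{Q}}$ in $M$, and conversely. Throughout I will use the standard fact that if $p$ is $(M,\mathbb{P})$-generic and $G\ni p$ is generic, then $M[G]\prec H(\kappa)[G]$, $M[G]\cap V=M$, and every element of $M[G]$ is $\tau[G]$ for some name $\tau\in M$.

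\emph{(2) implies (1).} Let $D\in M$ be open dense in $\mathbb{P}\ast\mathbb{\dot{Q}}$ and $G\ni p$ be $\mathbb{P}$-generic. In $V[G]$ set
\[
D/G=\{\dot{r}[G]\mid \exists s\in G\,((s,\dot{r})\in D)\}.
\]
This set is dense in $\mathbb{\dot{Q}}[G]$, by a standard density argument in $\mathbb{P}$. It is also in $M[G]$, since it is defined from $D,G$. By hypothesis, $\dot{q}[G]$ is $(M[G],\mathbb{\dot{Q}}[G])$-generic. So for any $r'\leq\dot{q}[G]$ in $\mathbb{\dot{Q}}[G]$ there is $\dot{r}[G]\in D/G\cap M[G]$ compatible with $r'$. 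Using elementarity of $M[G]$, I will choose the witness $(s,\dot{r})\in D$ with $s\in G$ inside $M[G]$. Then $(s,\dot{r})\in M[G]\cap V=M$, which uses the genericity of $p$. Now take any $(p',\dot{q}')\leq(p,\dot{q})$. Passing to a generic $G\ni p'$ and pulling back to a condition forcing the compatibility, we get a condition in $D\cap M$ compatible with $(p',\dot{q}')$. This is predensity.

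\emph{(1) implies (2).} First, $p$ is $(M,\mathbb{P})$-generic. Given $E\in M$ open dense in $\mathbb{P}$, the set $E\ast\mathbb{\dot{Q}}$ is open dense in the iteration and lies in $M$. Predensity of its trace in $M$ below $(p,\dot{q})$ projects to predensity of $E\cap M$ below $p$. Second, fix $G\ni p$ and a dense $\dot{E}[G]\in M[G]$ with $\dot{E}\in M$. Define
\[
D=\{(s,\dot{r})\mid s\Vdash\dot{r}\in\dot{E}\},
\]
which is in $M$ and, after modifying $\dot{E}$ so that it is always forced dense, is open dense. Suppose $r'\leq\dot{q}[G]$. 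By (1), the trace $D\cap M$ is predense below $(p,\dot{q})$. A density argument below a condition forcing $r'\leq\dot{q}$ then yields $(s,\dot{r})\in D\cap M$ with $s\in G$ and $\dot{r}[G]$ compatible with $r'$. Here $\dot{r}[G]\in\dot{E}[G]\cap M[G]$, as required.

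The main obstacle is the bookkeeping in moving between $V$ and $V[G]$. In particular, the witness must lie in $M$ itself rather than merely in $M[G]$, which is exactly where $M[G]\cap V=M$ is used. One must also handle names for conditions properly, taking $\dot{q}$ full and using mixing so that names in $M$ suffice. I would use Lemma \ref{equiv condicion generica}'s formulation with arbitrary sets $A\in M$ if the dense-set version becomes awkward.
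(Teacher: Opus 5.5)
The paper does not prove this proposition; it only cites it from Abraham's Handbook chapter \cite{AbrahamHandbook}, so there is no internal proof to compare against. Your argument is the standard one, and it is correct.

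For $(2)\Rightarrow(1)$, the quotient $D/G$ lies in $M[G]$. The witness $(s,\dot{r})$ can be chosen in $M[G]$ by elementarity of $M[G]$ in $\mathsf{H}(\kappa)^{V[G]}$, and then it lies in $M[G]\cap V=M$ because $p\in G$ is $(M,\mathbb{P})$-generic. Pulling back to $V$ produces a common extension of $(p',\dot{q}')$ and $(s,\dot{r})$.

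For $(1)\Rightarrow(2)$, the sets $E\ast\mathbb{\dot{Q}}$ and $\{(s,\dot{r})\mid s\Vdash\dot{r}\in\dot{E}\}$ do exactly what you claim. The density argument below a condition $p'\in G$ forcing $\dot{r}'\leq\dot{q}$ gives some $(s,\dot{r})\in D\cap M$ with $s\in G$. This direction does not even need $M[G]\cap V=M$.

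Three small points are worth recording when you write it out:
\begin{enumerate}
\item The paper's definition of generic condition quantifies over \emph{open} dense sets, so note that $D/G$ is open as well as dense. If $s\in G$, $(s,\dot{r})\in D$ and $\dot{r}_1[G]\leq\dot{r}[G]$, pick $s'\in G$ below $s$ forcing $\dot{r}_1\leq\dot{r}$; then $(s',\dot{r}_1)\in D$.
\item Likewise, replace $\dot{E}$ by a name in $M$ forced by $1$ to be open dense.
\item When pulling back, $u\Vdash\check{s}\in\dot{G}$ only gives that $u$ and $s$ are compatible if $\mathbb{P}$ is not separative. Extend $u$ below $s$ before forming the common extension.
\end{enumerate}
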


\qquad\ \qquad\ \ \ \qquad\ \ 

Recall the following notion:

\begin{definition}
Let $\mathbb{P}$ be a suborder of $\mathbb{Q}$. We say that $\mathbb{P}$
\emph{is a regular suborder of }$\mathbb{Q}$ if the following conditions hold:

\begin{enumerate}
\item If $p\perp_{\mathbb{P}}r,$ then $p\perp_{\mathbb{Q}}r.$

\item For every $q\in\mathbb{Q},$ there is $p\in\mathbb{P}$ such that for
every $r\in\mathbb{P},$ if $r\leq p,$ then $r$ is compatible with $q.$ In this
situation, we say that $p$ is a reduction of $q$ to $\mathbb{P}.$
\end{enumerate}
\end{definition}

\qquad\ \qquad\ \ \ \ 

If $\mathbb{P}$ is a regular suborder of $\mathbb{Q},$ then forcing with
$\mathbb{Q}$ adds a $(V,\mathbb{P})$-generic filter. We will say that
$\mathbb{P}$ and $\mathbb{Q}$ are \emph{forcing equivalent }if they give rise
to the same generic extensions.

\begin{definition}
Let $\mathbb{P}$ and $\mathbb{Q}$ be partial orders. We say that
$f:\mathbb{P\longrightarrow Q}$ \emph{is a dense embedding} if for every
$p,r\in\mathbb{P},$ the following conditions hold:

\begin{enumerate}
\item If $p\leq r,$ then $f\left(  p\right)  \leq f\left(  r\right)  .$

\item If $p\perp r,$ then $f\left(  p\right)  \perp f\left(  r\right)  .$

\item The image of $f$ is dense in $\mathbb{Q}.$
\end{enumerate}
\end{definition}

\qquad\qquad\ \qquad\ \ \ 

It is well-known that if there is a dense embedding between two partial
orders, then they are forcing equivalent. See \cite{Kunen} for more
information on regular suborders and dense embeddings.

\qquad\ \qquad\ \ \ \ \ \ \ \qquad\ \ \ 

The \emph{Martin number }$\mathfrak{m}$ is the smallest cardinal $\kappa$ for
which there exists a \textsf{c.c.c. }forcing $\mathbb{P}$ and a family
$\mathcal{D}$ of $\kappa$ dense subsets of $\mathbb{P}$ such that there is no
filter $G\subseteq\mathbb{P}$ that intersects every member of $\mathcal{D}.$
\textsf{Martin's Axiom} (\textsf{MA}) is the statement that $\mathfrak{m=c}$
(where $\mathfrak{c}$ denotes the size of the continuum). The following is a
very well-known result (see Theorem V.4.1 of \cite{Kunen}).

\begin{theorem}
Let $\lambda>\omega$ be a regular cardinal such that $2^{<\lambda}=\lambda.$
There is $\mathbb{P}$ a \textsf{c.c.c.} forcing of size $\lambda$ such that
$\mathbb{P}\Vdash$\textquotedblleft$\mathfrak{m=c=}$ $\lambda$%
\textquotedblright. \label{Teo forzar Martin}
\end{theorem}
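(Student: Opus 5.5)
The plan is the classical Solovay--Tennenbaum construction: a finite support iteration $\langle\mathbb{P}_{\alpha},\dot{\mathbb{Q}}_{\alpha}\mid\alpha<\lambda\rangle$ of length $\lambda$ in which every iterand is forced to be a \textsf{c.c.c.} poset whose underlying set is some cardinal $\kappa<\lambda$. We then take $\mathbb{P}=\mathbb{P}_{\lambda}$.

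\textbf{Reduction of \textsf{MA}.} First we prove in \textsf{ZFC} that $\mathfrak{m}\geq\lambda$ follows from the following statement: for every $\kappa<\lambda$, every \textsf{c.c.c.} order on $\kappa$ and every family of $\kappa$ dense sets, there is a filter meeting all of them. Given an arbitrary \textsf{c.c.c.} $\mathbb{Q}$ and $\kappa<\lambda$ dense sets $D_{\xi}$, take $N\prec\mathsf{H}(\theta)$ of size $\kappa$ containing $\mathbb{Q}$ and all $D_{\xi}$. Then $N\cap\mathbb{Q}$ is a suborder in which two conditions are compatible iff they are compatible in $\mathbb{Q}$, by elementarity, since a common extension can be found in $N$. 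Hence $N\cap\mathbb{Q}$ is \textsf{c.c.c.}, and each $D_{\xi}\cap N$ is dense in it. A filter on $N\cap\mathbb{Q}$ meeting these sets generates a filter on $\mathbb{Q}$ meeting the $D_{\xi}$.

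\textbf{Bookkeeping.} Fix a bijection $\pi:\lambda\to\lambda\times\lambda$ with $\pi(\alpha)=(\beta,\gamma)$ implying $\beta\leq\alpha$. Inductively each $\mathbb{P}_{\alpha}$ will be \textsf{c.c.c.} (finite support iterations of \textsf{c.c.c.} posets are \textsf{c.c.c.}) and of size $\leq\lambda$. The number of nice $\mathbb{P}_{\alpha}$-names for pairs consisting of an order on some $\kappa<\lambda$ and a $\kappa$-sequence of subsets of $\kappa$ is at most $\lambda^{<\lambda}=(2^{<\lambda})^{<\lambda}=\lambda$, using regularity of $\lambda$. So at stage $\beta$ we enumerate all such names as $\langle\dot{x}_{\beta,\gamma}\mid\gamma<\lambda\rangle$. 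At stage $\alpha$ with $\pi(\alpha)=(\beta,\gamma)$, we let $\dot{\mathbb{Q}}_{\alpha}$ be the order coded by $\dot{x}_{\beta,\gamma}$ if it is forced to be \textsf{c.c.c.}, and Cohen forcing otherwise. Since $\mathbb{P}_{\beta}$ is a regular suborder of $\mathbb{P}_{\alpha}$, this name can be reinterpreted as a $\mathbb{P}_{\alpha}$-name. Each $\dot{\mathbb{Q}}_{\alpha}$ has a name of size $<\lambda$, so $|\mathbb{P}_{\alpha}|\leq\lambda$ throughout.

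\textbf{Verification.} Let $G$ be generic for $\mathbb{P}_{\lambda}$. A poset on $\kappa<\lambda$ together with $\kappa$ dense sets is a subset of $\kappa\times\kappa$ plus a subset of $\kappa\times\kappa$, that is, $\kappa$ ordinals. By \textsf{c.c.c.} and regularity of $\lambda>\kappa$, each of these is decided by countably many antichains whose supports are bounded below $\lambda$. Hence the whole object lies in $V[G_{\beta}]$ for some $\beta<\lambda$ and equals $\dot{x}_{\beta,\gamma}$ for some $\gamma$. Being \textsf{c.c.c.} is downward absolute, so at the stage $\alpha=\pi^{-1}(\beta,\gamma)$ it is \textsf{c.c.c.} in $V[G_{\alpha}]$ and is forced with, producing the required filter. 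By the reduction, $\mathfrak{m}\geq\lambda$.

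For the continuum, at least $\lambda$ Cohen reals are added, since cofinally many limit stages add one. Conversely, nice names for reals number at most $(\lambda)^{\omega}\leq\lambda^{<\lambda}=\lambda$, again using $\mathrm{cf}(\lambda)>\omega$. Thus $\mathfrak{c}=\lambda$. Since $\mathfrak{m}\leq\mathfrak{c}$ always holds, we get $\mathfrak{m}=\mathfrak{c}=\lambda$, and $|\mathbb{P}|=\lambda$.

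I expect the main obstacle to be the bookkeeping: ensuring that every relevant small poset, which may be added at any stage, is caught at a later stage. This is exactly why $\pi$ is chosen with first coordinate $\leq\alpha$ and why the cardinal counts $\lambda^{<\lambda}=\lambda$ are needed.
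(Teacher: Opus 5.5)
Your plan is the standard Solovay--Tennenbaum iteration, and the paper relies on exactly that (it only cites Kunen, Theorem V.4.1). Most of your steps are fine: the reduction via an elementary submodel, the count $\lambda^{<\lambda}=\lambda$, capturing each small object at some stage $\beta<\lambda$, and the computation of $\mathfrak{c}$.

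The step that does not work as written is the definition of the iterands. Since $\dot{\mathbb{Q}}_{\alpha}$ is defined in $V$, ``the order coded by $\dot{x}_{\beta,\gamma}$ if it is forced to be \textsf{c.c.c.}, and Cohen forcing otherwise'' can only mean a case split on whether $1_{\mathbb{P}_{\alpha}}\Vdash$``the coded order is \textsf{c.c.c.}''. In the verification, however, you only know that your $Q$ is \textsf{c.c.c.} in $V[G_{\alpha}]$ for the particular generic $G$. From this you conclude that $Q$ is forced with at stage $\alpha$, and that does not follow. Being \textsf{c.c.c.} is not upward absolute, so whether a $\mathbb{P}_{\beta}$-name stays \textsf{c.c.c.} after $\mathbb{P}_{\alpha}$ can depend on the generic between $\beta$ and $\alpha$. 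For instance, let $Q$ be a Suslin tree in $V[G_{\beta}]$, and suppose some intermediate iterand forces with $Q$ below a condition not in $G$. Then some condition of $\mathbb{P}_{\alpha}$ forces $Q$ to have an uncountable antichain, although $Q$ may remain \textsf{c.c.c.} along $G$. In that situation your rule uses Cohen forcing instead of $Q$. Re-choosing the name (another $\gamma$, or a larger $\beta$) does not get around this in general, because the same can happen between the level of the new name and the stage at which it is handled.

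The fix is the usual one: make the case distinction inside the extension. By the mixing lemma there is a $\mathbb{P}_{\alpha}$-name $\dot{\mathbb{Q}}_{\alpha}$ such that $1_{\mathbb{P}_{\alpha}}$ forces ``if the order coded by $\dot{x}_{\beta,\gamma}$ is \textsf{c.c.c.}, then $\dot{\mathbb{Q}}_{\alpha}$ is that order; otherwise $\dot{\mathbb{Q}}_{\alpha}$ is Cohen forcing''. It is convenient to let both alternatives live on the same underlying cardinal, so that your size estimates are unaffected. Then $1_{\mathbb{P}_{\alpha}}\Vdash$``$\dot{\mathbb{Q}}_{\alpha}$ is \textsf{c.c.c.}'', so the iteration is still \textsf{c.c.c.}. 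Moreover, in $V[G_{\alpha}]$ the iterand is exactly $Q$ whenever $Q$ is \textsf{c.c.c.} there, which is precisely what your downward-absoluteness argument provides. With this change your argument is complete and coincides with the textbook proof the paper cites.
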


\section{Preliminaries on walks on ordinals}

The method of \emph{walks on ordinals} (introduced by the second author) is a
powerful technique for constructing structures on $\omega_{1}.$ Virtually all
interesting combinatorial objects on $\omega_{1},$ including Aronszajn trees,
Hausdorff gaps, Countryman lines, homogenous Eberlein compacta, colorings on
$\omega_{1}$ with strong combinatorial properties, \textsf{L}-spaces,
Lindel\"{o}f groups with non-Lindel\"{o}f square, ultrafilters, Fr\'{e}chet
groups... and much more can be constructed using this method. This section
will review the necessary result on walks on ordinals that are needed for our
work. The reader may learn more of this method in the books \cite{Walks} and
\cite{TopicsinSetTheory}. Further applications can be found in
\cite{WalksandselectiveUltrafilters}, \cite{PIDandTukey},
\cite{KomjathInaccesible}, \cite{Lspace}, \cite{LSpacewithseparablesquare},
\cite{HrusakEstrellitas}, \cite{LindeloffGroup}, \cite{LVectorSpaces},
\cite{FirstOmegaAlephs}, \cite{GroupsfromWalks},
\cite{RectangularSquareBracket}, \cite{ComplicatedColoringsRevisited},
\cite{CombinatorialPropertyofpfunctions} or \cite{KnasterandFriendsI} among
many others.

\qquad\qquad\qquad\qquad

We say that $\mathcal{C}=\left\{  C_{\alpha}\mid\alpha\in\omega_{1}\right\}  $
is a \textsf{C}\emph{-sequence }if the following conditions hold for every
$\alpha\in\omega_{1}:$

\begin{enumerate}
\item $C_{\alpha}\subseteq\alpha.$

\item $C_{\alpha+1}=\left\{  \alpha\right\}  .$

\item If $\alpha$ is a limit ordinal, then $C_{\alpha}=\left\{  C_{\alpha
}\left(  n\right)  \mid n\in\omega\right\}  $ is a cofinal subset of $\alpha$
of order type $\omega.$
\end{enumerate}

\qquad\qquad\ \qquad\ \qquad\ \ \ 

Fix $\mathcal{C}$ a \textsf{C}-sequence. When we write $C_{\alpha}=\left\{
C_{\alpha}\left(  n\right)  \mid n\in\omega\right\}  ,$ we always implicitly
assume that $C_{\alpha}\left(  n\right)  <C_{\alpha}\left(  n+1\right)  $ for
every $n\in\omega.$ For $\alpha<\beta$ two countable ordinals, the \emph{step
from} $\beta$ \emph{to} $\alpha$ is defined as \textsf{stp}$\left(
\alpha,\beta\right)  =$ \textsf{min}$\left(  C_{\beta}\setminus\alpha\right)
.$ The \emph{upper trace of the walk from} $\beta$ \emph{to }$\alpha$ is
define as \textsf{Tr}$\left(  \alpha,\beta\right)  =\left\{  \beta
_{0},...,\beta_{n}\right\}  $ where $\beta_{0}=\beta,$ $\beta_{n}=\alpha$ and
$\beta_{i+1}=$ \textsf{stp}$\left(  \alpha,\beta_{i}\right)  $ (\textsf{Tr}%
$\left(  \alpha,\beta\right)  $ is a finite set since there are no infinite
decreasing sequences of ordinals). Note that The \emph{full code of the walk
}is defined as $\rho_{0}\left(  \alpha,\beta\right)  =\langle\left\vert
C_{\beta_{0}}\cap\alpha\right\vert ,\left\vert C_{\beta_{1}}\cap
\alpha\right\vert ,...,\left\vert C_{\beta_{n-1}}\cap\alpha\right\vert
\rangle,$ which is an element of $\omega^{<\omega}.$ Define $\lambda\left(
\alpha,\beta\right)  =$ \textsf{max}$\{\bigcup C_{\beta_{i}}\cap\alpha\mid
i<n\}$ and note that it is an ordinal smaller than $\alpha.$ Both the upper
trace and the full code can be defined recursively: \qquad\ \ \ 

\begin{center}%
\begin{tabular}
[c]{c}%
\textsf{Tr}$\left(  \alpha,\beta\right)  =\left\{  \beta\right\}  \cup$
\textsf{Tr(}$\alpha,$\textsf{min}$\left(  C_{\beta}\setminus\alpha\right)
$\textsf{)}\\
\ \ \ \ \\
$\rho_{0}\left(  \alpha,\beta\right)  =\langle\left\vert C_{\beta}\cap
\alpha\right\vert \rangle^{\frown}\rho_{0}(\alpha,$\textsf{min}$\left(
C_{\beta}\setminus\alpha\right)  )$%
\end{tabular}

\end{center}

\qquad\ \qquad\ \ \ \qquad\ \ \ 

A simple, yet crucial result is the following:

\begin{proposition}
Let $\alpha<\gamma<\beta$ be countable ordinals. The following conditions are
equivalent: \label{Prop juntar trazas}

\begin{enumerate}
\item $\gamma\in$ \textsf{Tr}$\left(  \alpha,\beta\right)  .$

\item \textsf{Tr}$\left(  \alpha,\beta\right)  =$\textsf{Tr}$\left(
\gamma,\beta\right)  \cup$\textsf{Tr}$\left(  \alpha,\gamma\right)  .$

\item $\lambda\left(  \gamma,\beta\right)  <\alpha.$

\item $\rho_{0}\left(  \alpha,\beta\right)  =\rho_{0}\left(  \gamma
,\beta\right)  ^{\frown}\rho_{0}\left(  \alpha,\gamma\right)  .$
\end{enumerate}
\end{proposition}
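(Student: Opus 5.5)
The plan is to prove the cycle of implications $1\Rightarrow3\Rightarrow1$, $1\Rightarrow2\Rightarrow1$ and $1\Rightarrow4\Rightarrow1$, using the recursive descriptions of \textsf{Tr} and $\rho_{0}$. Everything rests on one fact: for $\alpha<\gamma<\beta$, the walk from $\beta$ to $\alpha$ and the walk from $\beta$ to $\gamma$ agree as long as no step falls into the interval $[\alpha,\gamma)$. Write \textsf{Tr}$\left(  \gamma,\beta\right)  =\{\gamma_{0},\dots,\gamma_{m}\}$ with $\gamma_{0}=\beta$ and $\gamma_{m}=\gamma$.

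\textbf{The key lemma.} By induction on $i<m$ I will show that if $C_{\gamma_{j}}\cap[\alpha,\gamma)=\emptyset$ for all $j\leq i$, then $\gamma_{i+1}=\beta_{i+1}$ (the $(i+1)$-st point of \textsf{Tr}$\left(  \alpha,\beta\right)  $) and $C_{\gamma_{j}}\cap\alpha=C_{\gamma_{j}}\cap\gamma$ for $j\le i$. This holds because \textsf{min}$\left(  C_{\gamma_{i}}\setminus\alpha\right)  =$ \textsf{min}$\left(  C_{\gamma_{i}}\setminus\gamma\right)  $ exactly when $C_{\gamma_{i}}\cap[\alpha,\gamma)=\emptyset$.

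\textbf{$3\Rightarrow1$.} Assume $\lambda\left(  \gamma,\beta\right)  <\alpha$. Then for each $j<m$ every element of $C_{\gamma_{j}}\cap\gamma$ is at most $\lambda(\gamma,\beta)$, hence below $\alpha$, so $C_{\gamma_{j}}\cap[\alpha,\gamma)=\emptyset$. (Here one must use that $C_{\gamma_j}\cap\gamma$ is finite, so its supremum is attained and is $<\alpha$.) The lemma then gives that the two walks coincide up to $\gamma_{m}=\gamma$, so $\gamma\in$ \textsf{Tr}$\left(  \alpha,\beta\right)  $. The same computation shows the codes agree on those steps: $|C_{\gamma_{j}}\cap\alpha|=|C_{\gamma_{j}}\cap\gamma|$.

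\textbf{$1\Rightarrow3$.} Suppose $\gamma=\beta_{k}\in$ \textsf{Tr}$\left(  \alpha,\beta\right)  $. For $i<k$ we have $\beta_{i+1}=$ \textsf{min}$\left(  C_{\beta_{i}}\setminus\alpha\right)  \geq\gamma$, since the trace is decreasing and $\beta_{i+1}\ge\beta_k=\gamma$. Hence $C_{\beta_{i}}\cap[\alpha,\gamma)=\emptyset$ and $\beta_{i+1}=$ \textsf{min}$\left(  C_{\beta_{i}}\setminus\gamma\right)  $. So the walk from $\beta$ to $\gamma$ is exactly $\beta_{0},\dots,\beta_{k}$, every $C_{\beta_{i}}\cap\gamma$ (for $i<k$) is contained in $\alpha$, and therefore $\lambda\left(  \gamma,\beta\right)  <\alpha$.

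\textbf{Conditions 2 and 4.} Once 1 (equivalently 3) holds, both walks pass through the same points $\beta_0,\dots,\beta_k=\gamma$, after which the walk to $\alpha$ continues from $\gamma$ by the recursion. This gives 2 by the recursive definition of \textsf{Tr}, and gives 4 because the code entries agree, $|C_{\beta_{i}}\cap\alpha|=|C_{\beta_{i}}\cap\gamma|$ for $i<k$. Conversely, 2 implies 1 trivially, since $\gamma\in$ \textsf{Tr}$\left(  \gamma,\beta\right)  $.

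\textbf{$4\Rightarrow1$, the main obstacle.} Suppose the code equation holds but $\gamma\notin$ \textsf{Tr}$\left(  \alpha,\beta\right)  $. Let $i$ be least with $C_{\gamma_{i}}\cap[\alpha,\gamma)\neq\emptyset$; such an $i<m$ exists, since otherwise the lemma would put $\gamma$ on the trace. By the lemma the walks agree up to $\gamma_{i}$, and
\[
|C_{\gamma_{i}}\cap\alpha|<|C_{\gamma_{i}}\cap\gamma|.
\]
So the $i$-th entries of $\rho_{0}(\alpha,\beta)$ and $\rho_{0}(\gamma,\beta)$ differ, whereas condition 4 forces $\rho_{0}(\gamma,\beta)$ to be an initial segment of $\rho_{0}(\alpha,\beta)$, which makes those entries equal. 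This contradiction gives 1. The delicate part is the index bookkeeping: one has to check that position $i$ is within the length $m$ of $\rho_{0}(\gamma,\beta)$, which holds because $i<m$.
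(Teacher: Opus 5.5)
The paper states this proposition without proof (it is quoted as a standard fact about walks), so there is no argument of the authors' to compare against. Your proof is correct and complete, and it uses the standard mechanism: the walks from $\beta$ to $\alpha$ and to $\gamma$ coincide as long as no $C_{\gamma_j}$ meets $[\alpha,\gamma)$. Your first-disagreement argument for $4\Rightarrow1$ uses only that $\rho_0(\gamma,\beta)$ is an initial segment of $\rho_0(\alpha,\beta)$, so it proves slightly more than required.

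There are two small points to fix.

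First, the finiteness remark is in the wrong implication. In $3\Rightarrow1$ it is immediate that every element of $C_{\gamma_j}\cap\gamma$ is $\leq\lambda(\gamma,\beta)<\alpha$, with no finiteness needed. In $1\Rightarrow3$, however, you pass from $C_{\beta_i}\cap\gamma\subseteq\alpha$ to $\bigcup(C_{\beta_i}\cap\gamma)<\alpha$. That is where finiteness of $C_{\beta_i}\cap\gamma$ is used.

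Second, that same step also needs $\alpha>0$, because an empty $C_{\beta_i}\cap\gamma$ contributes $\bigcup\emptyset=0$ to $\lambda(\gamma,\beta)$. For $\alpha=0$ the equivalence $1\Leftrightarrow3$ is actually false. Take $\beta=\gamma+1$. Then $\gamma\in\mathsf{Tr}(0,\gamma+1)$, and conditions 2 and 4 also hold. But $\lambda(\gamma,\gamma+1)=\bigcup(\{\gamma\}\cap\gamma)=0$, so $\lambda(\gamma,\gamma+1)<0$ fails.

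So you should add the hypothesis $\alpha\geq 1$ to $1\Rightarrow3$. The implications out of $3$ are unaffected, since $3$ itself forces $\alpha>\lambda(\gamma,\beta)\geq 0$.
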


\qquad\ \ \ \ \ \ \ \ 

Therefore, if $\lambda\left(  \gamma,\beta\right)  <\alpha,$ then the walk
from $\beta$ to $\alpha$ decomposes as the walk from $\beta$ to $\gamma$
followed by the walk from $\gamma$ to $\alpha.$ For $\beta$ a countable
ordinal, define $\rho_{0\beta}:\beta\longrightarrow\omega^{<\omega}$ by
$\rho_{0\beta}\left(  \alpha\right)  =\rho_{0}\left(  \alpha,\beta\right)  .$
The \emph{Aronszajn tree induced by} $\rho_{0}$ is then defined as $T\left(
\rho_{0}\right)  =\left\{  \rho_{0\beta}\upharpoonright\alpha\mid\alpha
\leq\beta<\omega_{1}\right\}  .$ In \cite{Walks} it is proved that $T\left(
\rho_{0}\right)  $ is a Hausdorff, special Aronszajn tree that does not branch
at limit levels. For convenience, we write $\triangle_{\mathcal{C}}\left(
\alpha,\beta\right)  $ instead of $\triangle_{T\left(  \rho_{0}\right)
}\left(  \rho_{0\alpha},\rho_{0\beta}\right)  .$ The following lemma is
well-known, we prove it for completeness.

\begin{lemma}
The set $\{\rho_{0\alpha}\mid\alpha\in$ \textsf{LIM}$\left(  \omega
_{1}\right)  \}$ is an antichain of $T\left(  \rho_{0}\right)  .$
\label{Lema limite anticadena}
\end{lemma}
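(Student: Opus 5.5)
We need to show: if $\alpha<\beta$ are limit ordinals, then $\rho_{0\alpha}$ is not an initial segment of $\rho_{0\beta}$, i.e. $\rho_{0\alpha}\neq\rho_{0\beta}\upharpoonright\alpha$. Since elements of $T(\rho_0)$ are functions ordered by end-extension, comparability of $\rho_{0\alpha}$ and $\rho_{0\beta}$ (with $\alpha<\beta$) means precisely that $\rho_0(\xi,\alpha)=\rho_0(\xi,\beta)$ for every $\xi<\alpha$. The plan is to locate a single $\xi<\alpha$ at which the two codes differ, and the natural place to look is at the walk from $\beta$ to $\alpha$ itself.

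Let $\rho_0(\alpha,\beta)=\langle n_0,\dots,n_{k-1}\rangle$, which has length $k\geq1$ because $\alpha<\beta$. By Proposition \ref{Prop juntar trazas}, for every $\xi$ with $\lambda(\alpha,\beta)<\xi<\alpha$ we have $\rho_0(\xi,\beta)=\rho_0(\alpha,\beta)^{\frown}\rho_0(\xi,\alpha)$. Since $\alpha$ is a limit ordinal and $\lambda(\alpha,\beta)<\alpha$, such $\xi$ exist; indeed the interval $(\lambda(\alpha,\beta),\alpha)$ is nonempty and cofinal in $\alpha$. Fix any such $\xi$. If $\rho_{0\alpha}\leq\rho_{0\beta}$ held, we would get $\rho_0(\xi,\alpha)=\rho_0(\xi,\beta)=\rho_0(\alpha,\beta)^{\frown}\rho_0(\xi,\alpha)$. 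The right-hand side has length $k+|\rho_0(\xi,\alpha)|$, which is strictly larger than $|\rho_0(\xi,\alpha)|$ because $k\geq1$. This is a contradiction, so the two nodes are incomparable.

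One must also check that the nodes $\rho_{0\alpha}$ for distinct limit $\alpha$ are distinct, but this is immediate since their domains differ. Only limitness of $\alpha$, the smaller ordinal, is used; its role is to guarantee room strictly between $\lambda(\alpha,\beta)$ and $\alpha$. There is no real obstacle here. The only point needing care is confirming that Proposition \ref{Prop juntar trazas} applies with $\xi$ playing the role of the smallest ordinal, namely $\xi<\alpha<\beta$ and $\lambda(\alpha,\beta)<\xi$, which is exactly condition 3 with $\gamma=\alpha$.
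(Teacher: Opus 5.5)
Your proof is correct, but it takes a different route from the paper's. The paper does not use the decomposition of walks. It observes that for limit $\alpha$ and $\xi<\alpha$, the code $\rho_{0}(\xi,\alpha)$ has length $1$ exactly when $\xi\in C_{\alpha}$, since the walk reaches $\xi$ in one step. Hence $\rho_{0\alpha}\subseteq\rho_{0\beta}$ with $\alpha<\beta$ would force $C_{\alpha}\subseteq C_{\beta}\cap\alpha$. This is impossible, because $C_{\alpha}$ is infinite while $C_{\beta}\cap\alpha$ is finite for every $\beta>\alpha$.

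The paper's argument is more elementary: it needs only the first step of the walk and the definition of a \textsf{C}-sequence. Yours relies on Proposition \ref{Prop juntar trazas}, which the paper states without proof, together with $\lambda(\alpha,\beta)<\alpha$. In exchange it gives a sharper, quantitative conclusion. The two codes disagree at every $\xi$ in the final segment $(\lambda(\alpha,\beta),\alpha)$, so $\triangle_{\mathcal{C}}(\alpha,\beta)\leq\lambda(\alpha,\beta)+1$. The paper's argument only yields disagreement on the cofinal set $C_{\alpha}\setminus C_{\beta}$. Both arguments use only that the smaller of the two ordinals is a limit.
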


\begin{proof}
If $\alpha$ is a limit ordinal and $\xi<\alpha,$ then $\rho_{0\alpha}\left(
\xi\right)  $ is a sequence of length $1$ if and only if $\xi\in C_{\alpha}.$
The conclusion of the lemma easily follows.
\end{proof}

\qquad\ \qquad\ \ \ \ 

The following lemma is well-known (for example, it follows from Fact 2 in
\cite{Lspace}). We provide a proof here for the convenience of the reader.

\begin{lemma}
Let $\delta<\omega_{1}.$ If $\xi<\delta,$ then $\xi\leq\lambda\left(
\alpha,\delta\right)  $ for almost all $\alpha\in\left(  \xi,\delta\right)  .$
In particular, if $\left\langle \delta_{n}\right\rangle \longrightarrow
\delta,$ then $\left\langle \lambda\left(  \delta_{n},\delta\right)
\right\rangle \longrightarrow\delta.$ \label{Lemma lambas converjen}
\end{lemma}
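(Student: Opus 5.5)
We argue by induction on $\delta$, using the recursive description of walks. Recall that $\lambda(\alpha,\delta)=\max\{\bigcup(C_{\beta_i}\cap\alpha)\mid i<n\}$, where $\mathsf{Tr}(\alpha,\delta)=\{\beta_0,\dots,\beta_n\}$. Since $\lambda(\alpha,\delta)$ is a maximum over the whole trace, it is at least $\bigcup(C_{\beta_i}\cap\alpha)$ for any single $i<n$. So it suffices to find, for almost all $\alpha\in(\xi,\delta)$, one step of the walk whose ladder already contains an element of $[\xi,\alpha)$.

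\textbf{Successor case.} Suppose $\delta=\gamma+1$. Then $C_\delta=\{\gamma\}$.
\begin{itemize}
\item For $\alpha=\gamma$ the walk is the single step $\delta\to\gamma$. We get $\lambda(\gamma,\delta)=\bigcup(C_\delta\cap\gamma)=0$, and this is only one value of $\alpha$, so it can be discarded.
\item For $\alpha\in(\xi,\gamma)$ the walk first steps to $\gamma$, so $\lambda(\alpha,\delta)\ge\lambda(\alpha,\gamma)$. By the induction hypothesis applied to $\gamma$ (when $\xi<\gamma$), we have $\xi\le\lambda(\alpha,\gamma)$ for almost all $\alpha\in(\xi,\gamma)$.
\item If $\xi=\gamma$, the interval $(\xi,\delta)$ is empty and there is nothing to prove.
\end{itemize}

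\textbf{Limit case.} Suppose $\delta$ is a limit. Since $C_\delta$ is cofinal of order type $\omega$, only finitely many of its elements lie below or at $\xi$. Let $\eta=\min(C_\delta\setminus(\xi+1))$, so $\xi<\eta<\delta$ and $\eta\in C_\delta$.
\begin{itemize}
\item For every $\alpha>\eta$, the first step of the walk gives $\bigcup(C_\delta\cap\alpha)\ge\eta>\xi$. Hence $\lambda(\alpha,\delta)\ge\xi$.
\item The only possible exceptions are the $\alpha\in(\xi,\eta]$, and there may be infinitely many of these. So the induction must handle this interval.
\item For $\alpha\in(\xi,\eta]$ we have $\mathsf{stp}(\alpha,\delta)=\eta$, so the walk passes through $\eta$ and $\lambda(\alpha,\delta)\ge\lambda(\alpha,\eta)$ when $\alpha<\eta$.
\item By the induction hypothesis for $\eta<\delta$ with the same $\xi<\eta$, we get $\xi\le\lambda(\alpha,\eta)$ for almost all $\alpha\in(\xi,\eta)$.
\end{itemize}
Together with the single point $\alpha=\eta$, this leaves only finitely many exceptions, which completes the induction.

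\textbf{The ``in particular'' clause.} Let $\langle\delta_n\rangle\to\delta$ and fix $\xi<\delta$. Eventually $\delta_n\in(\xi,\delta)$, and the $\delta_n$ are eventually distinct (or equal to $\delta$, which is excluded by the domain of $\lambda$). The finitely many exceptional $\alpha$ for this $\xi$ are therefore hit only finitely often. Hence $\lambda(\delta_n,\delta)\ge\xi$ for almost all $n$. Since also $\lambda(\delta_n,\delta)<\delta_n<\delta$, we conclude $\lambda(\delta_n,\delta)\to\delta$.

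The main obstacle is the limit case. There the naive bound from the first step of the walk fails for the possibly infinite interval $(\xi,\eta]$, and one must pass to the next step $\eta$ and invoke induction there. Strong induction on $\delta$ handles this, because $\eta<\delta$.
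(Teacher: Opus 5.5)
Your proof is correct, but it takes a different route from the paper's.

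\textbf{The paper's route.} The paper argues in one line. If $\xi<\alpha<\delta$ and $\lambda(\alpha,\delta)<\xi$, then the equivalence (1)$\Leftrightarrow$(3) of Proposition \ref{Prop juntar trazas} gives $\alpha\in\mathsf{Tr}(\xi,\delta)$. So the set of exceptions sits inside the finite set $\mathsf{Tr}(\xi,\delta)$.

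\textbf{Your route.} You run a well-founded induction on $\delta$. You use only two facts: that $\lambda$ is a maximum over the steps of the walk, and the recursion $\mathsf{Tr}(\alpha,\delta)=\{\delta\}\cup\mathsf{Tr}(\alpha,\textsf{stp}(\alpha,\delta))$. Your pivot point is $\textsf{stp}(\xi+1,\delta)$: this is your $\eta=\min(C_\delta\setminus(\xi+1))$ in the limit case and your $\gamma$ in the successor case. Unwinding the induction, your exceptional $\alpha$'s all lie in $\mathsf{Tr}(\xi+1,\delta)\setminus\{\delta\}$. In effect, you re-prove inline the half of Proposition \ref{Prop juntar trazas} that the paper quotes without proof.

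\textbf{Comparison.} The paper's argument is shorter and places the exceptions precisely inside $\mathsf{Tr}(\xi,\delta)$. Yours is self-contained from the definitions and gives a slightly coarser, but still finite, bound.

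\textbf{A small slip in the ``in particular'' part.} A sequence converging to $\delta$ need not be eventually injective; for instance, it may repeat every value twice. What you actually need is only that each fixed $\alpha<\delta$ equals $\delta_n$ for finitely many $n$. This holds because $\delta_n>\alpha$ for all large $n$.
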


\begin{proof}
Suppose there is an infinite set $A\subseteq\left(  \xi,\delta\right)  $ such
that $\lambda\left(  \alpha,\delta\right)  <\xi$ for all $\alpha\in A.$ By
Proposition \ref{Prop juntar trazas}, it follows that $A\subseteq$
\textsf{Tr}$(\xi,\delta).$ Hence, \textsf{Tr}$(\xi,\delta)$ is infinite, which
is a contradiction.
\end{proof}

\section{Preliminaries on Baumgartner's club forcing}

\emph{Baumgartner's club forcing} is the standard method for adding a club
subset to $\omega_{1}$ using finite conditions. It was introduced and studied
by Baumgartner in \cite{ApplicationsofPFA}. This is a very interesting forcing
notion with several applications and is a natural example of a proper forcing
that is not $\omega$-proper. The finiteness of its conditions facilitates
various amalgamation arguments, which in turn yield rich combinatorial
properties for the generic club. This section reviews the fundamentals of this
forcing notion. For further reading, the reader is referred to
\cite{MultipleForcing}, \cite{ApplicationsofPFA}, \cite{ForcingClubs},
\cite{CharacterizationofClubForcing},
\cite{CharacterizationofDefinableForcings}, \cite{AddingBaumgartnerClubs},
\cite{SeparatingClubPrinciples}. By \textsf{Int}$\left(  \omega_{1}\right)  $
we denote the set $\left\{  (\alpha,\beta]\mid\alpha<\beta<\omega_{1}\right\}
.$

\begin{definition}
\emph{Baumgartner's club forcing }(denoted by $\mathbb{BC}$) consists of all
pairs $p=\left(  a,I\right)  $ with the following properties:

\begin{enumerate}
\item $a\in\left[  \omega_{1}\right]  ^{<\omega}.$

\item $I\subseteq$ \textsf{Int}$\left(  \omega_{1}\right)  $ is finite.

\item $a\cap\bigcup I=\emptyset.$
\end{enumerate}
\end{definition}

\qquad\qquad\qquad\ \ \ 

Given $p=\left(  a_{p},I_{p}\right)  $ and $q=\left(  a_{q},I_{q}\right)  $ in
$\mathbb{BC},$ define $p\leq q$ if $a_{q}\subseteq a_{p}$ and $\bigcup
I_{q}\subseteq\bigcup I_{p}.$ This is the presentation of Baumgartner's club
forcing from \cite{CharacterizationofClubForcing} (there are other several
presentations, all yielding forcing equivalent notions). For convenience, we
will say that $\alpha$ \emph{is above }$p$ if $a_{p}\subseteq\alpha$ and
$\bigcup I_{p}\subseteq\alpha.$

\qquad\ \qquad\ \ \ \ 

If $G\subseteq\mathbb{BC}$ is a generic filter, the \emph{Baumgartner's
generic club }is defined as $C_{gen}=\bigcup\left\{  a_{p}\mid p\in G\right\}
.$ It is easy to verify that $C_{gen}$ is forced to be a club.

\qquad\qquad\qquad\qquad

Since $\mathbb{BC}$ is a partial order of size $\omega_{1},$ then it satisfies
the $\omega_{2}$-chain condition.\textsf{ }Furthermore, it is a proper
forcing. In fact, identifying the generic conditions over a given countable
elementary submodel is a very easy:

\begin{proposition}
[\cite{ApplicationsofPFA}]Let $M$ be a countable elementary submodel and
$p=\left(  a_{p},I_{p}\right)  \in\mathbb{BC}.$ The following conditions are
equivalent: \label{Prop BC proper}

\begin{enumerate}
\item $p$ is an $(M,\mathbb{BC})$-generic condition.

\item $\delta_{M}\in a_{p}$ (where $p=\left(  a_{p},I_{p}\right)  $).
\end{enumerate}
\end{proposition}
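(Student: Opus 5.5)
We prove the two implications separately, using the characterization of generic conditions in Lemma \ref{equiv condicion generica}.

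For (2)$\Rightarrow$(1), assume $\delta=\delta_M\in a_p$. Let $A\subseteq\mathbb{BC}$ with $A\in M$, and let $q\leq p$ with $q\in A$. We must find $r\in A\cap M$ compatible with $q$.

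First split $q$ into a part below $\delta$ and a part at or above $\delta$. Since $\delta\in a_q$ and $a_q\cap\bigcup I_q=\emptyset$, no interval of $I_q$ contains $\delta$. Hence every interval $(\alpha,\beta]\in I_q$ satisfies either $\beta<\delta$ or $\alpha\geq\delta$. Put
\[
q_0=\bigl(a_q\cap\delta,\ \{J\in I_q\mid J\subseteq\delta\}\bigr)\in M .
\]
Let $\gamma=\max(a_q\cup\bigcup I_q)$, or a suitable bound. Let $\eta<\delta$ be above $a_q\cap\delta$ and above the right endpoints of the intervals below $\delta$.

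Next reflect into $M$. In $H(\kappa)$ the following holds: there is $r\in A$ with $r\restriction\delta'=q_0$ and with all of $a_r\setminus\eta$ and $\bigcup I_r\setminus\eta$ lying above some prescribed ordinal, where $\delta'$ is any ordinal. Formulate this with parameters in $M$: for every $\zeta<\omega_1$ there is $r\in A$ whose ``low part'' below $\eta$ equals $q_0$ and whose remaining elements are all $>\zeta$. This statement holds in $V$, witnessed by $q$ itself for every $\zeta<\delta$. To make the quantifier over all $\zeta<\omega_1$ true, consider the set $Z$ of $\zeta$ for which such an $r$ exists. We have $Z\in M$ and $Z\supseteq\delta$, so $Z$ is unbounded by Lemma \ref{Lema modelo contension}. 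Then $Z=\omega_1$, since $Z$ is downward closed. By elementarity pick, for a fixed $\zeta<\delta$, a witness $r\in A\cap M$. Taking $\zeta\geq\eta$ in $M$, $r$ has low part $q_0$ and its high part lies in $(\zeta,\delta)$.

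Then check compatibility. Take the union $s=(a_q\cup a_r,I_q\cup I_r)$. We need $a_r\cap\bigcup I_q=\emptyset$ and $a_q\cap\bigcup I_r=\emptyset$.
\begin{itemize}
\item The high parts of $q$ lie at or above $\delta$, while everything in $r$ is below $\delta$. So the only danger is between low parts, and these coincide.
\item There is one subtlety: an interval of $r$ in $(\zeta,\delta)$ might contain a point of $a_q$. This is impossible because $a_q\cap\delta\subseteq\eta\leq\zeta$.
\item Symmetrically, an interval of $q$ above $\delta$ cannot contain points of $r$.
\end{itemize}
Thus $s\in\mathbb{BC}$ extends both $q$ and $r$.

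For (1)$\Rightarrow$(2), suppose $\delta\notin a_p$. First extend $p$ to a condition $q$ in which $\delta$ is covered by an interval. If $\delta\notin\bigcup I_p$, choose $\alpha<\delta$ above all elements of $p$ below $\delta$ and $\beta\geq\delta$ below all elements of $p$ above $\delta$; this is possible since $p$ is finite. Adding $(\alpha,\beta]$ gives $q\leq p$ with $\delta\in\bigcup I_q$, say $\delta\in(\alpha,\beta]\in I_q$ with $\alpha<\delta$.

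Now consider the dense set $D=\{r\mid a_r\cap(\alpha,\omega_1)\neq\emptyset\}$. A cleaner choice is $D_\alpha=\{r\mid a_r\setminus(\alpha+1)\neq\emptyset\}$, which lies in $M$ and is dense. Any $r\in D_\alpha\cap M$ has an element of $a_r$ in $(\alpha,\delta)$, which falls inside the interval $(\alpha,\beta]$ of $q$. So $r$ and $q$ are incompatible, and $D_\alpha\cap M$ is not predense below $q$, contradicting genericity. Equivalently, $C_{gen}\cap\delta$ would be bounded below $\delta$ even though $\delta_{M[G]}=\delta$, where $M[G]$ contains $C_{gen}$.

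The main obstacle is the reflection step in (2)$\Rightarrow$(1): choosing the $\in$-definable property of $q$ so that its witness in $M$ has its high part strictly above the low part of $q$. This makes the union a genuine condition.
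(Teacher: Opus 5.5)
Your proof is correct, and it is the standard argument behind the paper's citation of Baumgartner. The paper gives no proof of its own, but its Proposition \ref{Prop BC Strongly proper} records the same splitting of $q$ at $\delta_M$ into a part lying in $M$ and a part above.

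The reflection through $\zeta$ and the set $Z$ is more than you need. Because $\delta\in a_q$, you have $a_q\cap\delta=a_{q_0}$ and $\bigcup I_q\cap\delta=\bigcup I_{q_0}$. So any $r\in A\cap M$ with $r\leq q_0$ is already compatible with $q$, and such an $r$ exists by elementarity, since $q$ itself witnesses the statement.
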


\qquad\ \ \ \ \ \ 

With this last result and Propositions \ref{prop ccc gen} and
\ref{prop iteracion gen}, we get the following:

\begin{corollary}
Let $\mathbb{\dot{Q}}$ be a $\mathbb{BC}$-name for a \textsf{c.c.c.} partial
order and $M$ a countable elementary submodel with $\mathbb{\dot{Q}}\in M.$
For $(p,\dot{x})\in\mathbb{BC}\ast\mathbb{\dot{Q}},$ the following conditions
are equivalent: \label{Cor BC 2 pasos}

\begin{enumerate}
\item $(p,\dot{x})$ is $(M,\mathbb{BC}\ast\mathbb{\dot{Q}})$-generic.

\item $\delta_{M}\in a_{p}.$\qquad
\end{enumerate}
\end{corollary}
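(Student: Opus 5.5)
By Proposition \ref{prop iteracion gen}, $(p,\dot{x})$ is $(M,\mathbb{BC}\ast\mathbb{\dot{Q}})$-generic if and only if two things hold: $p$ is $(M,\mathbb{BC})$-generic, and $p\Vdash$``$\dot{x}$ is $(M[\dot{G}],\mathbb{\dot{Q}}[\dot{G}])$-generic''. The plan is to check that the second clause is automatic whenever the first holds. The corollary then reduces to Proposition \ref{Prop BC proper}, which says that $p$ is $(M,\mathbb{BC})$-generic exactly when $\delta_{M}\in a_{p}$.

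For the direction (1)$\Rightarrow$(2), the implication in Proposition \ref{prop iteracion gen} gives that $p$ is $(M,\mathbb{BC})$-generic. Proposition \ref{Prop BC proper} then gives $\delta_{M}\in a_{p}$. Since $\mathbb{BC}$ and $\mathbb{\dot{Q}}$ lie in $M$, the iteration $\mathbb{BC}\ast\mathbb{\dot{Q}}$ does too, so the hypotheses of that proposition are met.

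For the direction (2)$\Rightarrow$(1), assume $\delta_{M}\in a_{p}$. Proposition \ref{Prop BC proper} makes $p$ $(M,\mathbb{BC})$-generic. Now let $G$ be a generic filter containing $p$. Then $M[G]$ is a countable elementary submodel of $\mathsf{H}(\kappa)^{V[G]}$; this is the standard fact for generic conditions, noted in the forcing preliminaries. Also $\mathbb{\dot{Q}}[G]\in M[G]$, and $\mathbb{\dot{Q}}[G]$ is c.c.c. in $V[G]$ by hypothesis. Applying Proposition \ref{prop ccc gen} inside $V[G]$ to the model $M[G]$ and the forcing $\mathbb{\dot{Q}}[G]$, every condition of $\mathbb{\dot{Q}}[G]$ is $(M[G],\mathbb{\dot{Q}}[G])$-generic. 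In particular $\dot{x}[G]$ is. Since $G$ was an arbitrary generic filter containing $p$, this gives $p\Vdash$``$\dot{x}$ is $(M[\dot{G}],\mathbb{\dot{Q}}[\dot{G}])$-generic''. Proposition \ref{prop iteracion gen} then yields (1).

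The only delicate point is justifying that $M[G]\prec\mathsf{H}(\kappa)^{V[G]}$, so that Proposition \ref{prop ccc gen} really applies in the extension. This is the usual consequence of $p$ being $(M,\mathbb{BC})$-generic, and here we simply take it as known.
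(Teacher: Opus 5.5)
Your proposal is correct and follows the paper's route: the paper gets the corollary by combining Proposition \ref{Prop BC proper} with Propositions \ref{prop ccc gen} and \ref{prop iteracion gen}, the c.c.c.\ of $\mathbb{\dot{Q}}$ making the second clause of Proposition \ref{prop iteracion gen} automatic, exactly as you argue. One small precision: $M[G]\prec\mathsf{H}(\kappa)^{V[G]}$ holds for every $V$-generic $G$ (because $\mathbb{BC}\in M$), not only because $p$ is $(M,\mathbb{BC})$-generic; what that genericity adds is $M[G]\cap V=M$, which your argument does not need.
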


\qquad\qquad\ \ \ 

The properness and the $\omega_{2}$-chain condition of $\mathbb{BC}$\ imply
that no cardinals are collapsed after performing a forcing extension. Let
$p,r\in\mathbb{BC}.$ Note that if $p$ and $r$ are compatible, then $p\wedge
r=\left(  a_{r}\cup a_{p},I_{p}\cup I_{q}\right)  $ is the largest common
extension of both $p$ and $r.$ For $p,q\in\mathbb{BC},$ define $q\sqsubseteq
p$ if there is $\delta\in a_{p}$ such that $a_{q}=a_{p}\cap\delta$ and
$I_{q}=\left\{  (\alpha,\beta]\in I_{p}\mid\beta<\delta\right\}  .$ Note that
if $q\sqsubseteq p,$ then $p\leq q.$ If $M$ is a countable elementary submodel
and $p$ is an $(M,\mathbb{BC})$-generic condition, define $p_{M}=\left(
a_{p}\cap M,I_{p}\cap M\right)  .$ We have the following:

\begin{proposition}
Let $M$ be a countable elementary submodel of some \textsf{H}$\left(
\kappa\right)  $ and $p$ an $(M,\mathbb{BC})$-generic condition. The following
holds:\label{Prop BC Strongly proper}

\begin{enumerate}
\item $p_{M}=\left(  a_{p}\cap M,I_{p}\cap M\right)  \sqsubseteq p.$ In
particular, $p\leq p_{M}.$

\item $p_{M}\in M.$

\item If $r\in M$ and $r\leq p_{M},$ then $r$ and $p$ are compatible.
\end{enumerate}
\end{proposition}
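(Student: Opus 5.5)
The plan is to unwind the definitions directly. By Proposition \ref{Prop BC proper}, the hypothesis that $p$ is $(M,\mathbb{BC})$-generic is equivalent to $\delta_M\in a_p$, and this fact will drive every step. Two basic observations about $M$ will be used throughout. First, $M\cap\omega_1=\delta_M$, so for $\xi<\omega_1$ we have $\xi\in M$ if and only if $\xi<\delta_M$. Second, a finite set is an element of $M$ if and only if all of its elements belong to $M$. In particular, an interval $(\alpha,\beta]$ belongs to $M$ if and only if $\alpha,\beta\in M$, that is, if and only if $\beta<\delta_M$, since it is definable from the pair $(\alpha,\beta)$.

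For item 1, we take $\delta=\delta_M\in a_p$ as the witness in the definition of $\sqsubseteq$. Then $a_p\cap M=a_p\cap\delta_M$, and by the observation above, $I_p\cap M=\{(\alpha,\beta]\in I_p\mid \beta<\delta_M\}$. Thus $p_M\sqsubseteq p$, provided $p_M\in\mathbb{BC}$. This holds because its components are subsets of those of $p$, so they are finite and still disjoint. The inequality $p\leq p_M$ was already noted after the definition of $\sqsubseteq$. For item 2, both $a_p\cap M$ and $I_p\cap M$ are finite sets of elements of $M$, hence they lie in $M$, and so does the pair $p_M$.

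Item 3 is the only step requiring a small argument. Let $r\in M$ with $r\leq p_M$. By the second observation, $a_r\subseteq\delta_M$ and every interval of $I_r$ lies in $M$, so $\bigcup I_r\subseteq\delta_M$. I will verify that $(a_r\cup a_p,\,I_r\cup I_p)$ is a condition; it then extends both $r$ and $p$. This requires checking two disjointness facts.

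\emph{First, $a_p\cap\bigcup I_r=\emptyset$.} Any point of $a_p$ in $\bigcup I_r$ is below $\delta_M$, hence lies in $a_{p_M}\subseteq a_r$. But $a_r$ is disjoint from $\bigcup I_r$, so no such point exists.

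\emph{Second, $a_r\cap\bigcup I_p=\emptyset$.} Split $I_p$ into two kinds of intervals.
\begin{enumerate}
\item Intervals $(\alpha,\beta]$ with $\beta<\delta_M$. These belong to $I_{p_M}$, so their union is contained in $\bigcup I_{p_M}\subseteq\bigcup I_r$, which is disjoint from $a_r$.
\item Intervals $(\alpha,\beta]$ with $\beta\geq\delta_M$. This is the key use of $\delta_M\in a_p$. Since $a_p\cap\bigcup I_p=\emptyset$, we get $\delta_M\notin(\alpha,\beta]$, which forces $\alpha\geq\delta_M$. So the whole interval lies above $\delta_M$, while $a_r\subseteq\delta_M$.
\end{enumerate}

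The main potential obstacle is this last case, namely intervals of $p$ that might straddle $\delta_M$. It is resolved precisely by the genericity criterion $\delta_M\in a_p$, after which everything is routine bookkeeping.
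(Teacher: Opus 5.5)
Your argument is correct. The paper states this proposition without proof, and your direct verification is exactly the routine check it leaves to the reader: the genericity criterion $\delta_M\in a_p$ is used precisely where needed, to rule out intervals of $I_p$ that straddle $\delta_M$.

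The only step you leave implicit is that $(\alpha,\beta]\in M$ implies $\beta\in M$. It holds because the interval is a countable element of $M$ and hence a subset of $M$, so the gap is harmless.
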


\qquad\ \qquad\ 

This last proposition implies that $\mathbb{BC}$ is \emph{strongly proper
}(see \cite{NotesonForcingAxioms}). How can we add many Baumgartner clubs?
Since $\mathbb{BC}$ is not \textsf{c.c.c.}, a finite support product (which in
this case, is equivalent to a finite support iteration) would collapse
$\omega_{1}.$ What about the countable support product? In general, if
$\mathbb{P}$ is a forcing notion and $X\ $is a set of ordinals, by
$\prod\limits_{\alpha\in X}^{CS}$ $\mathbb{P}$ we denote the \emph{countable
support product of }$\mathbb{P},$ which consists of all countable partial
functions from\ $X$ to $\mathbb{P}$. Although the countable support product is
often useful for some applications (for example, to add many Sacks or Silver
reals), it is of no use in this context. In \cite{AddingBaumgartnerClubs} it
is shown that the countable support product also collapses $\omega_{1}$ (this
is an instance of a more general phenomenon: Hru\v{s}\'{a}k proved in
\cite{Rendezvous} that any countable support product of infinitely many proper
forcing of size at most $\mathfrak{c}$ that add unbounded reals will
collapse$\mathbb{\ }\omega_{1}$). Of course, we could use a countable support
iteration. However, this approach not only restricts us to models where the
continuum is at most $\omega_{2},$ but also causes us to lose the
\textquotedblleft finite nature\textquotedblright\ that is such an appealing
feature of Baumgartner's original forcing. Fortunately for us, in
\cite{AddingBaumgartnerClubs} Asper\'{o} found a method for adding many
Baumgartner clubs without collapsing cardinals. This method behaves much like
a product (one can think of it as lying between the finite and countable
support products), while still preserving some of the \textquotedblleft finite
essence\textquotedblright\ of the Baumgartner club forcing. We will now review
this method, all the result are due to Asper\'{o} and can be found in
\cite{AddingBaumgartnerClubs}.

\begin{definition}
$\mathbb{BC}_{0}$ denotes the set of all $p=\left(  a,I\right)  \in
\mathbb{BC}$ such that $a\subseteq$ \textsf{IND}$\left(  \omega_{1}\right)  $
and $I=\emptyset.$ Elements of $\mathbb{BC}_{0}$ are often called \emph{pure
conditions.}
\end{definition}

\qquad\ \qquad\ \ 

It is easy to see that any finite set of conditions in $\mathbb{BC}_{0}$ has a
lower bound in $\mathbb{BC}_{0}$. For $X$ a set of ordinals, by $\mathbb{BC}%
^{X}$ we denote the countable support product $\prod\limits_{\alpha\in X}%
^{CS}\mathbb{BC}.$

\begin{definition}
Let $X\subseteq$ \textsf{OR}, $p\in\mathbb{BC}^{X}$ and $\gamma\in\omega_{1}.$

\begin{enumerate}
\item The \emph{support of} $p$ is \textsf{sup}$\left(  p\right)  =\left\{
\alpha\in X\mid p\left(  \alpha\right)  \neq1_{\mathbb{BC}}\right\}  .$

\item The \emph{pure support of} $p$ is \textsf{sup}$_{\text{\textsf{pr}}%
}\left(  p\right)  =\{\alpha\in$ \textsf{sup}$\left(  p\right)  \mid p\left(
\alpha\right)  \in\mathbb{BC}_{0}\}.$

\item The \emph{impure support of} $p$ is \textsf{sup}$_{\text{\textsf{imp}}%
}\left(  p\right)  =$ \textsf{sup}$\left(  p\right)  \setminus$\textsf{sup}%
$_{\text{\textsf{pr}}}\left(  p\right)  $.

\item The \emph{full image of }$p$ is \textsf{Fim}$\left(  p\right)
=\bigcup\{a_{p\left(  \alpha\right)  }\mid\alpha\in$ \textsf{sup}$\left(
p\right)  \}.$

\item The $\gamma$\emph{-support of} $p$ is \textsf{sup}$_{\gamma}\left(
p\right)  =\{\alpha\in$ \textsf{sup}$\left(  p\right)  \mid\gamma\in
a_{p\left(  \alpha\right)  }\}.$
\end{enumerate}
\end{definition}

\qquad\qquad\qquad\ \ \ \ 

If $\gamma$ is an indecomposable ordinal, we denote by $\gamma^{+}$ the first
indecomposable ordinal above $\gamma.$ Although this notation is also used for
successor cardinals, no confusion should arise. We can now define Asper\'{o}'s
forcing for adding many Baumgartner clubs.

\begin{definition}
Let $X\subseteq$ \textsf{OR. }The \textsf{Asper\'{o} product }$\mathbb{BC}%
^{\left(  X\right)  }$ is the suborder of $\mathbb{BC}^{X}$ consisting of all
$p\in\mathbb{BC}^{X}$ with the following properties:

\begin{enumerate}
\item Both \textsf{sup}$_{\text{\textsf{imp}}}\left(  p\right)  $ and
\textsf{Fim}$\left(  p\right)  $ are finite.

\item If $\delta$ is an indecomposable ordinal, then \textsf{OT}$\mathsf{(}%
$\textsf{sup}$_{\delta}\left(  p\right)  )<\delta^{+}.$
\end{enumerate}
\end{definition}

\qquad\ \ \ 

The order on $\mathbb{BC}^{\left(  X\right)  }$ is inherited from
$\mathbb{BC}^{X}.$ Although our definition of the Asper\'{o} product differs
slightly from the one in \cite{AddingBaumgartnerClubs}, these differences are inconsequential.

\qquad\ \ \ 

Let $p,q\in\mathbb{BC}^{\left(  X\right)  }.$ Note that $p$ and $q$ are
compatible in $\mathbb{BC}^{X}$ if and only if they are compatible in
$\mathbb{BC}^{\left(  X\right)  }.$ It is straightforward to check that if
$Y\subseteq X$ and $p\in\mathbb{BC}^{\left(  X\right)  },$ then the
restriction $p\upharpoonright Y$ is in $\mathbb{BC}^{\left(  Y\right)  }.$ We
also have the following:

\begin{lemma}
Let $Y\subseteq X$. The forcing $\mathbb{BC}^{\left(  Y\right)  }$ is a
regular suborder of $\mathbb{BC}^{\left(  X\right)  }.$ Moreover, if
$p\in\mathbb{BC}^{\left(  X\right)  },$ then $p\leq p\upharpoonright y$ and
$p\upharpoonright y$ is a reduction of $p$ to $\mathbb{BC}^{\left(  Y\right)
}.$
\end{lemma}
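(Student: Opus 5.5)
The plan is to verify directly the two clauses of the definition of regular suborder, together with the two ``moreover'' claims, using only coordinatewise reasoning in the countable support product. The facts from the discussion above that we rely on are: $p\upharpoonright Y\in\mathbb{BC}^{(Y)}$ for $p\in\mathbb{BC}^{(X)}$; compatibility in $\mathbb{BC}^{(X)}$ coincides with compatibility in $\mathbb{BC}^{X}$; and two conditions of $\mathbb{BC}$ are compatible exactly when $p\wedge r=(a_p\cup a_r,I_p\cup I_r)$ is a condition. We regard $\mathbb{BC}^{(Y)}$ as a suborder of $\mathbb{BC}^{(X)}$ by extending each condition by $1_{\mathbb{BC}}$ outside $Y$. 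This extension does not change supports, full images, impure supports or $\gamma$-supports, so the defining clauses are preserved.

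First, $p\leq p\upharpoonright Y$ is immediate. On coordinates in $Y$ the two conditions agree, and on coordinates outside $Y$ the restriction is $1_{\mathbb{BC}}$. For clause 1, take $p,r\in\mathbb{BC}^{(Y)}$ compatible in $\mathbb{BC}^{(X)}$, witnessed by $q$. Then $q\upharpoonright Y\in\mathbb{BC}^{(Y)}$, and it extends both $p$ and $r$, because the order is coordinatewise and $p,r$ are trivial off $Y$. Hence $p$ and $r$ are compatible in $\mathbb{BC}^{(Y)}$.

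For clause 2 and the reduction claim, fix $p\in\mathbb{BC}^{(X)}$ and let $r\in\mathbb{BC}^{(Y)}$ with $r\leq p\upharpoonright Y$. We need $r$ compatible with $p$, and it suffices to show compatibility in $\mathbb{BC}^{X}$. Define $q$ by $q(\alpha)=r(\alpha)$ for $\alpha\in Y$ and $q(\alpha)=p(\alpha)$ for $\alpha\in X\setminus Y$. Its support is contained in $\mathsf{sup}(r)\cup\mathsf{sup}(p)$, which is countable, so $q\in\mathbb{BC}^{X}$. Coordinatewise $q\le r$ and $q\le p$, the latter since $r(\alpha)\le p(\alpha)$ for $\alpha\in Y$. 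So $p$ and $r$ are compatible in $\mathbb{BC}^{X}$, hence in $\mathbb{BC}^{(X)}$.

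The only point needing care is the remark that compatibility in $\mathbb{BC}^{X}$ implies compatibility in $\mathbb{BC}^{(X)}$, and I would take this as given since the paper states it. If it had to be checked, the natural witness is the coordinatewise meet $p\wedge r$, so that the full image is $\mathsf{Fim}(p)\cup\mathsf{Fim}(r)$ and finite. The one subtle issue would be clause 2 of the Asperó product, because $\mathsf{sup}_\delta(p\wedge r)=\mathsf{sup}_\delta(p)\cup\mathsf{sup}_\delta(r)$ has order type below $\delta^{+}$. This holds since both order types are below $\delta^{+}$ and $\delta^{+}$ is indecomposable, so it is closed under natural sums. Everything else is routine.
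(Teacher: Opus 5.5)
Your proof is correct. Clause 1 follows by restricting a common extension to $Y$, and clause 2 follows by gluing $r$ on $Y$ with $p$ off $Y$, which gives a common extension in $\mathbb{BC}^{X}$ and hence compatibility in $\mathbb{BC}^{(X)}$ by the paper's remark. The paper gives no separate proof (it attributes the lemma to Asperó and relies on the two remarks just before it), and your argument is exactly the routine verification those remarks support; your natural-sum check of the order-type clause is also right.
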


\qquad\ \qquad\ \ \ 

It is clear that $\mathbb{BC}^{\left(  \left\{  \gamma\right\}  \right)  }$ is
isomorphic to $\mathbb{BC},$ hence $\mathbb{BC}^{\left(  X\right)  }$ adds
many Baumgartner clubs.

\begin{lemma}
Let $X,Y\subseteq$ \textsf{OR.}

\begin{enumerate}
\item If $X\cap Y=\emptyset,$ then $\mathbb{BC}^{\left(  X\cup Y\right)  }$ is
isomorphic to $\mathbb{BC}^{\left(  X\right)  }\times\mathbb{BC}^{\left(
Y\right)  }.$ In particular, they are forcing equivalent.

\item If \textsf{OT}$\left(  X\right)  =$ \textsf{OT}$\left(  Y\right)  ,$
then $\mathbb{BC}^{\left(  X\right)  }$ and $\mathbb{BC}^{\left(  Y\right)  }$
are isomorphic.
\end{enumerate}
\end{lemma}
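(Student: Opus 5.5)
For part 1, I will show that the restriction map $\Phi(p)=(p\upharpoonright X,p\upharpoonright Y)$ is an isomorphism from $\mathbb{BC}^{(X\cup Y)}$ onto $\mathbb{BC}^{(X)}\times\mathbb{BC}^{(Y)}$.

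\emph{Well-definedness.} We already know that restrictions of conditions in the Asper\'{o} product remain in the corresponding Asper\'{o} product, so $\Phi$ lands in the product.

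\emph{Order.} The order on $\mathbb{BC}^{X\cup Y}$ is coordinatewise. Since $X\cap Y=\emptyset$, a partial function on $X\cup Y$ is exactly the disjoint union of its restrictions to $X$ and to $Y$. Hence $p\leq q$ if and only if $p\upharpoonright X\leq q\upharpoonright X$ and $p\upharpoonright Y\leq q\upharpoonright Y$. So $\Phi$ is injective and preserves the order in both directions.

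\emph{Surjectivity.} Given $(q,r)\in\mathbb{BC}^{(X)}\times\mathbb{BC}^{(Y)}$, put $p=q\cup r$. This is a countable partial function on $X\cup Y$, and we must check that $p\in\mathbb{BC}^{(X\cup Y)}$.
\begin{enumerate}
\item The finiteness conditions are clear: $\textsf{sup}_{\textsf{imp}}(p)=\textsf{sup}_{\textsf{imp}}(q)\cup\textsf{sup}_{\textsf{imp}}(r)$ and $\textsf{Fim}(p)=\textsf{Fim}(q)\cup\textsf{Fim}(r)$ are unions of two finite sets.
\item The order-type condition is the only point needing an argument, and I expect it to be the main obstacle. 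For an indecomposable $\delta$ we have $\textsf{sup}_{\delta}(p)=\textsf{sup}_{\delta}(q)\cup\textsf{sup}_{\delta}(r)$, a union of two sets of ordinals with order types $\alpha,\beta<\delta^{+}$.
\item The order type of a union of two sets of ordinals with order types $\alpha$ and $\beta$ is at most the natural (Hessenberg) sum $\alpha\oplus\beta$. This is the standard fact that any linear extension of the disjoint union of two well-orders of types $\alpha,\beta$ has type at most $\alpha\oplus\beta$.
\item The ordinal $\delta^{+}$ is indecomposable, hence of the form $\omega^{\eta}$. Writing $\alpha,\beta$ in Cantor normal form, all exponents are $<\eta$, so $\alpha\oplus\beta<\omega^{\eta}$.
\end{enumerate}
Thus $\textsf{OT}(\textsf{sup}_{\delta}(p))<\delta^{+}$, so $p\in\mathbb{BC}^{(X\cup Y)}$ and $\Phi(p)=(q,r)$. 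Forcing equivalence of isomorphic orders is immediate.

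For part 2, let $f:X\to Y$ be the order isomorphism, which exists since $\textsf{OT}(X)=\textsf{OT}(Y)$. Define $\Psi(p)=p\circ f^{-1}$, so that $\Psi(p)(f(\alpha))=p(\alpha)$.
\begin{enumerate}
\item \emph{Basic properties.} $\Psi$ is a bijection between $\mathbb{BC}^{X}$ and $\mathbb{BC}^{Y}$ preserving the coordinatewise order in both directions, with inverse $q\mapsto q\circ f$.
\item \emph{Preservation of the Asper\'{o} conditions.} Since the coordinate conditions are unchanged, we get $\textsf{Fim}(\Psi(p))=\textsf{Fim}(p)$, $\textsf{sup}_{\textsf{imp}}(\Psi(p))=f[\textsf{sup}_{\textsf{imp}}(p)]$, and $\textsf{sup}_{\delta}(\Psi(p))=f[\textsf{sup}_{\delta}(p)]$.
\item \emph{Order types.} Because $f$ is order preserving, $f[\textsf{sup}_{\delta}(p)]$ has the same order type as $\textsf{sup}_{\delta}(p)$.
\end{enumerate}
Hence $\Psi$ maps $\mathbb{BC}^{(X)}$ onto $\mathbb{BC}^{(Y)}$ and is an isomorphism.
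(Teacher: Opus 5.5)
Your proof is correct. The paper does not prove this lemma at all: it quotes it from Asper\'{o}'s paper on adding many Baumgartner clubs as a routine fact. So there is no argument to compare against, and your proposal supplies the omitted verification.

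You correctly isolate the one step that is not purely formal. In part 1, the clause $\textsf{OT}(\textsf{sup}_{\delta}(p))<\delta^{+}$ must survive passing from $q$ and $r$ to $q\cup r$. Carruth's bound $\textsf{OT}(A\cup B)\leq\textsf{OT}(A)\oplus\textsf{OT}(B)$ handles this. It is a genuine theorem (Carruth; de Jongh--Parikh), so it is worth citing explicitly rather than calling it standard. Combined with the closure of $\omega^{\eta}$ under natural sums, it settles the point. Concretely, for $\delta=\omega^{\zeta}$ one has $\delta^{+}=\delta\cdot\omega$, so order types below $\delta\cdot n$ and $\delta\cdot m$ give a union of type below $\delta\cdot(n+m)$.

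In part 2, the order-type clause is exactly the place where you need $f$ to be order preserving. Every other verification would go through for any bijection $X\to Y$.
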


\qquad\ \ \ \qquad\ \ \ \qquad\ \qquad\ \ \ \ 

Regarding properness, the following result is the analogue of Proposition
\ref{Prop BC proper}:

\begin{theorem}
[Asper\'{o}, \cite{AddingBaumgartnerClubs}]Let $X\subseteq$ \textsf{OR}, $M$ a
countable elementary submodel with $X\in M$ and $p\in\mathbb{BC}^{\left(
X\right)  }.$ The following are equivalent: \label{Teo BCX proper}

\begin{enumerate}
\item $p$ is $(M,\mathbb{BC}^{\left(  X\right)  })$-generic.

\item $M\cap X\subseteq$ \textsf{sup}$_{\delta_{M}}\left(  p\right)  .$
\end{enumerate}
\end{theorem}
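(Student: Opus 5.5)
Both directions follow the proof of Proposition \ref{Prop BC proper}, carried out coordinatewise; the finiteness of $\mathsf{Fim}(p)$ and of $\mathsf{sup}_{\mathsf{imp}}(p)$ is what makes the amalgamation work. I will use the characterization of generic conditions from Lemma \ref{equiv condicion generica}.

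\textbf{(1)$\Rightarrow$(2).} Suppose $\alpha\in M\cap X$ but $\delta_M\notin a_{p(\alpha)}$. Since $\delta_M$ is a limit and $\mathsf{Fim}(p)$ is finite, we can extend $p$ at coordinate $\alpha$ by adding an interval $(\xi,\delta_M]$ with $\xi\in M$ and $\xi$ above $\mathsf{Fim}(p)\cap\delta_M$. This gives $q\leq p$ with $\delta_M\in\bigcup I_{q(\alpha)}$. Now take the dense set $D_{\alpha,\xi}\in M$ of conditions $r$ with $a_{r(\alpha)}\not\subseteq\xi+1$. By elementarity, any $r\in D_{\alpha,\xi}\cap M$ puts a point of $(\xi,\delta_M)$ into $a_{r(\alpha)}$, and that point lies in the interval $(\xi,\delta_M]$. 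So $r$ is incompatible with $q$, and $D_{\alpha,\xi}\cap M$ is not predense below $p$. Hence $p$ is not generic.

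\textbf{(2)$\Rightarrow$(1).} Let $A\in M$ be a set of conditions and let $q\leq p$ with $q\in A$. We need some $r\in A\cap M$ compatible with $q$. The plan is to define a trace $q_M$, the analogue of $p_M$ in Proposition \ref{Prop BC Strongly proper}, and then reflect.

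\emph{Defining the trace.} Set $q_M(\alpha)=(a_{q(\alpha)}\cap\delta_M,\ \{J\in I_{q(\alpha)}: J\subseteq\delta_M\})$ for $\alpha\in\mathsf{sup}(q)\cap M$. Because $\delta_M\in a_{q(\alpha)}$ for every $\alpha\in M\cap X$, no interval of $q(\alpha)$ straddles $\delta_M$, so $q(\alpha)$ restricted below $\delta_M$ is exactly this trace. The problem is that $\mathsf{sup}(q)\cap M$ need not lie in $M$. To handle this, I will describe $q$ by finitely many parameters from $M$:
\begin{itemize}
\item the finite set $\mathsf{Fim}(q)\cap\delta_M$;
\item the finitely many impure coordinates in $M$, with their traces;
\item for each $\gamma\in\mathsf{Fim}(q)\cap\delta_M$, the set $\mathsf{sup}_\gamma(q)\cap M$.
\end{itemize}

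\emph{The main obstacle} is this last item. Each set $\mathsf{sup}_\gamma(q)\cap M$ is countable but possibly not in $M$, and condition 2 on order types, $\mathsf{OT}(\mathsf{sup}_\gamma)<\gamma^+$, has to be preserved under the amalgamation. I would first try to replace these sets by finitely many elements of $M$. One way is a $\Delta$-system/reflection argument using the bound $\gamma^+<\delta_M$, which holds since $\delta_M$ is indecomposable and $\gamma<\delta_M$. Another is to choose the reflecting witness $r$ so that $\mathsf{sup}_\gamma(r)\cap M$ is contained in the corresponding set for $q$, plus coordinates outside $M$.

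\emph{Reflecting and amalgamating.} By elementarity, $H(\kappa)$ contains some $r\in A$ that agrees with $q$ on the trace data and has $\mathsf{Fim}(r)\setminus\delta_M$ above a given $\eta<\delta_M$. Take $\eta$ above all of $\mathsf{Fim}(q)\cap\delta_M$ and above the relevant interval endpoints. Then $M$ contains such an $r$. We show $r$ and $q$ are compatible by taking the coordinatewise union:
\begin{itemize}
\item On $M\cap X$, the new points of $r$ lie in $[\eta,\delta_M)$. Since $\delta_M\in a_{q(\alpha)}$, these points avoid the intervals of $q$, and $q$'s points above $\delta_M$ avoid $r$'s intervals, which lie below $\delta_M$.
\item Off $M$, $r$ is trivial, because $\mathsf{sup}(r)\subseteq M$ when $r\in M$: the support is countable.
\end{itemize}
Finally we check the union is in $\mathbb{BC}^{(X)}$. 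The impure support and full image stay finite. For $\gamma<\delta_M$, $\mathsf{sup}_\gamma$ of the union is the union of two sets, each of order type $<\gamma^+$, so its order type stays $<\gamma^+$ because $\gamma^+$ is indecomposable. For $\gamma\ge\delta_M$, only $q$ contributes.
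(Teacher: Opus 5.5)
Your argument for (1)$\Rightarrow$(2) is correct. The gap is in (2)$\Rightarrow$(1), at exactly the point you call the main obstacle and then leave unresolved.

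\textbf{The reflection step.} The step ``$M$ contains such an $r$ that agrees with $q$ on the trace data'' is not justified, because that data ($\mathsf{sup}(q)\cap M$ and the sets $\mathsf{sup}_\gamma(q)\cap M$) is not in $M$. What can legitimately be reflected is the following:
\begin{itemize}
\item $F=\mathsf{Fim}(q)\cap\delta_M$;
\item the finite set $J=\mathsf{sup}_{\mathsf{imp}}(q)\cap M$ together with its traces;
\item the parts below $\delta_M$ of the finitely many impure coordinates $\beta^*_1,\dots,\beta^*_k$ of $q$ lying outside $M$.
\end{itemize}

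\textbf{The missed conflict.} Once you reflect only this, the witness $r\in M$ carries copies $\beta'_1,\dots,\beta'_k\in M\cap X$ of the $\beta^*_j$. An interval of $q(\beta^*_j)$ whose left endpoint is below $\delta_M$ keeps that left endpoint in $r(\beta'_j)$. So it may contain some $\gamma\in F$, and for suitable $A$ it must. If $\beta'_j\in\mathsf{sup}_\gamma(q)$, then $r(\beta'_j)\perp q(\beta'_j)$. This is entirely possible, since by (2) every coordinate of $M\cap X$ lies in $\mathsf{sup}(q)$, and $q$ may have put $\gamma$ on many of them. Your compatibility check looks only at the new points of $r$ in $[\eta,\delta_M)$ and at the points of $q$ above $\delta_M$. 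It never compares the intervals of $r$ with the points of $q$ below $\delta_M$.

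\textbf{Why clause 2 is essential.} This is the one place where clause 2 of the definition of $\mathbb{BC}^{(X)}$ matters. You use it only to see that the amalgam is a condition, which is automatic. Without clause 2 the implication is false. For uncountable $X$, let $p(\alpha)=(\{\omega,\delta_M\},\emptyset)$ for all $\alpha\in M\cap X$. Let $D\in M$ be the set of conditions having an interval containing $\omega$ at some coordinate. Then $D$ is open dense and $p$ satisfies (2), yet every member of $D\cap M$ is incompatible with $p$.

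\textbf{The missing avoidance argument.}
\begin{enumerate}
\item For $\gamma\in F$, clause 2 gives $\mathsf{OT}(\mathsf{sup}_\gamma(q))<\gamma^+<\delta_M$; if $\gamma$ is not indecomposable, $\mathsf{sup}_\gamma(q)$ is finite. Hence $Z=\bigcup_{\gamma\in F}\mathsf{sup}_\gamma(q)\cap M$ has order type $<\delta_M$.
\item Fix $\eta$. Since $\{\beta^*_1,\dots,\beta^*_k\}\cap M=\emptyset$, elementarity shows that for every countable $W\subseteq X$ some witness has its new impure coordinates outside $W$.
\item Therefore $M$ contains a sequence $\langle r_\nu\mid\nu<\omega_1\rangle$ of witnesses whose new impure coordinates form pairwise disjoint, coordinatewise increasing $k$-tuples, all disjoint from $J$.
\item For each $j\le k$, the indices $\nu<\delta_M$ whose $j$-th new coordinate lies in $Z$ map order-preservingly into $Z$. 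So the set of bad $\nu<\delta_M$ is a union of $k$ sets of order type $<\delta_M$, and hence has order type $<\delta_M$.
\item Any good $\nu<\delta_M$ gives $r_\nu\in A\cap M$ compatible with $q$.
\end{enumerate}

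Finally, the sets $\mathsf{sup}_\gamma(q)\cap M$ cannot be coded by finitely many parameters from $M$. So the first route you suggest does not work as stated. What is needed is not to reflect these sets, but to make the new impure coordinates of $r$ avoid them.
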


\qquad\qquad\ \ \ 

Although this last result is not stated explicitly in
\cite{AddingBaumgartnerClubs}, it follows directly from the proof of Lemma 3.7
of that paper.

\begin{corollary}
[Asper\'{o}, \cite{AddingBaumgartnerClubs}]Let $X\subseteq$ \textsf{OR. }The
Asper\'{o} product $\mathbb{BC}^{\left(  X\right)  }$ is proper and has the
$\omega_{2}$-\textsf{c.c. }In particular, it does not collapse cardinals.
\end{corollary}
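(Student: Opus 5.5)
The corollary has two parts: properness, which should follow from Theorem \ref{Teo BCX proper}, and the $\omega_{2}$-\textsf{c.c.}, which is a $\Delta$-system argument. Once both are established, the absence of collapses is standard: properness preserves $\omega_{1}$, and the $\omega_{2}$-\textsf{c.c.} preserves all cardinals $\geq\omega_{2}$.

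\emph{Properness.} Let $M$ be a countable elementary submodel with $X\in M$ (hence $\mathbb{BC}^{(X)}\in M$), and let $p\in M\cap\mathbb{BC}^{(X)}$. By Theorem \ref{Teo BCX proper}, it suffices to find $q\leq p$ with $M\cap X\subseteq\mathsf{sup}_{\delta_{M}}(q)$. Define $q$ coordinatewise as follows.
\begin{enumerate}
\item For $\alpha\in M\cap X$, let $q(\alpha)=(a_{p(\alpha)}\cup\{\delta_{M}\},I_{p(\alpha)})$.
\item For $\alpha\notin M$, let $q(\alpha)=1_{\mathbb{BC}}$.
\end{enumerate}
Since $p\in M$ and $p$ has countable support, $\mathsf{sup}(p)\subseteq M$; also $\mathsf{Fim}(p)\subseteq M$, so every element of it lies below $\delta_{M}$. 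Each $\bigcup I_{p(\alpha)}$ is a finite union of intervals lying in $M$, so it is bounded below $\delta_{M}$ and therefore misses $\delta_{M}$. Hence each $q(\alpha)$ is a condition and $q\leq p$.

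Next I check the clauses in the definition of $\mathbb{BC}^{(X)}$. The impure support is unchanged. On new coordinates we use pure conditions, and $\delta_{M}$ is indecomposable, so those coordinates stay pure. Thus $\mathsf{Fim}(q)=\mathsf{Fim}(p)\cup\{\delta_{M}\}$ is finite.

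For clause 2, at indecomposables $\gamma<\delta_{M}$ we have $\mathsf{sup}_{\gamma}(q)=\mathsf{sup}_{\gamma}(p)$, so there is nothing new to check. At $\gamma=\delta_{M}$, the set $\mathsf{sup}_{\delta_{M}}(q)=M\cap X$ is countable. Its order type must be below $\delta_{M}^{+}$, and this is the main point to verify. Since $X\in M$, every initial segment of $X$ bounded by an element of $M$ that has order type in $M$ contributes ordinals below $\delta_{M}$. I expect $\mathsf{OT}(M\cap X)\leq\delta_{M}\cdot\omega$ or a similar bound below $\delta_{M}^{+}$ to hold whenever $X$ is, say, of size at most $\omega_{1}$. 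In general, however, I anticipate this is exactly the subtle point, and I would follow Asper\'{o}'s proof of Lemma 3.7, possibly restricting to models $M$ from a club of $[H(\kappa)]^{\omega}$ where the order type bound holds. That restriction is enough for properness.

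\emph{$\omega_{2}$-\textsf{c.c.}} Given $\{p_{\xi}\mid\xi<\omega_{2}\}$, I would argue as follows (assuming \textsf{CH} where needed, or using $\aleph_{1}^{\aleph_{0}}$-style counting on the finitely many relevant data).
\begin{enumerate}
\item Thin out so that the impure supports form a $\Delta$-system with root $R$, and so that each $p_{\xi}\upharpoonright R$ is the same.
\item Make $\mathsf{Fim}(p_{\xi})$ a fixed finite set $F$.
\item Make the countable supports form a $\Delta$-system with the same pure parts on the root. Here pure conditions on $F$ are compatible automatically, since unions of pure conditions remain pure.
\end{enumerate}
Then $p_{\xi}\cup p_{\eta}$ is a condition. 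Its full image is still $F$, and for $\gamma\in F$ the set $\mathsf{sup}_{\gamma}$ is the union of two sets of order type $<\gamma^{+}$, which has order type $<\gamma^{+}$ because $\gamma^{+}$ is indecomposable. The main obstacle here is the counting: supports are countable, so the $\Delta$-system lemma needs $\aleph_{1}^{\aleph_{0}}<\aleph_{2}$. Otherwise I would use an elementary-submodel argument with models of size $\aleph_{1}$, in the style of Asper\'{o}.
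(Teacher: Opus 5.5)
\paragraph{Properness.} Your plan matches what the paper has in mind. The paper gives no argument of its own and reads properness off Theorem~\ref{Teo BCX proper}, i.e.\ Asper\'{o}'s Lemma 3.7. However, the one non-trivial step, clause 2 at $\delta_M$ for your $q$, is left open, and the ways you suggest to close it are false.

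Suppose $X\in M$ and $\mathsf{OT}(X)\geq\omega_1\cdot\omega$; for instance $X=\omega_1\cdot\omega$, which has size $\aleph_1$. Each $\omega_1\cdot n$ lies in $M$, and $\omega_1\cdot n+\eta\in M$ iff $\eta<\delta_M$. Transporting along the order isomorphism between $\mathsf{OT}(X)$ and $X$, which lies in $M$, gives $\mathsf{OT}(M\cap X)\geq\delta_M\cdot\omega$. But $\delta_M\cdot\omega$ is exactly $\delta_M^{+}$, since the additively indecomposable ordinal following $\omega^{\zeta}$ is $\omega^{\zeta+1}=\omega^{\zeta}\cdot\omega$. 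So even your hoped-for bound $\mathsf{OT}(M\cap X)\leq\delta_M\cdot\omega$ would not give the required strict inequality.

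This happens for \emph{every} countable $M$ containing $X$. There is no club of good models to restrict to, and your $q$ is not a condition. In fact, with clause 2 read literally, no $p$ at all can satisfy $M\cap X\subseteq\mathsf{sup}_{\delta_M}(p)$ for such $X$. So properness cannot be extracted from the definition as written in this paper, which itself warns that it differs from Asper\'{o}'s. It has to come from Asper\'{o}'s own formulation of the side condition together with his genericity lemma, and your proposal supplies no substitute for that. As it stands, your argument only covers models with $\mathsf{OT}(M\cap X)<\delta_M\cdot\omega$, e.g.\ all $M$ when $\mathsf{OT}(X)<\omega_1\cdot\omega$.

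\paragraph{The $\omega_2$-\textsf{c.c.}} Your step 3 does need $\aleph_1^{\aleph_0}<\aleph_2$, but it is superfluous. Your own observation that two pure conditions on the same coordinate are always compatible is what removes it. Conflicts can only occur at coordinates that are impure for one of the two conditions, and there are only finitely many of those.

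First thin out so that the impure supports form a $\Delta$-system with root $R$ and $p_\xi\upharpoonright R$ is constant. There are only $\aleph_1$ possibilities for $p_\xi\upharpoonright R$, so no \textsf{CH} is needed. Put $A_\xi=\mathsf{sup}_{\mathsf{imp}}(p_\xi)\setminus R$ and $B_\xi=\mathsf{sup}(p_\xi)$. Let $I_0$ be the first $\aleph_1$ indices and $U=\bigcup_{\xi\in I_0}B_\xi$.
\begin{enumerate}
\item The $A_\eta$ are pairwise disjoint and $|U|\leq\aleph_1$, so some $\eta\notin I_0$ has $A_\eta\cap U=\emptyset$.
\item Since $B_\eta$ is countable, some $\xi\in I_0$ has $A_\xi\cap B_\eta=\emptyset$.
\end{enumerate}
Then $p_\xi$ and $p_\eta$ agree on $R$ and are both pure on the rest of their common support. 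Their coordinatewise meet has finite impure support and finite full image. Each $\mathsf{sup}_\gamma$ of the meet is a union of two sets of order type below the additively indecomposable $\gamma^{+}$, so it also has order type below $\gamma^{+}$. Hence the meet is a condition, and this gives the $\omega_2$-\textsf{c.c.} in \textsf{ZFC}.
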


\qquad\ \qquad\ \ \ \ \ 

Regarding the size of the continuum after forcing with $\mathbb{BC}^{\left(
X\right)  },$ we quote the following result:

\begin{proposition}
Let $\kappa\geq\omega_{2}$ be a cardinal such that $\kappa^{\omega_{1}}%
=\kappa.$ If $G\subseteq$ $\mathbb{BC}^{\left(  \kappa\right)  }$ is a generic
filter, then $V\left[  G\right]  $ is a model of $\kappa=2^{\omega}%
=2^{\omega_{1}}.$ \label{Prop BC tamao continuo}
\end{proposition}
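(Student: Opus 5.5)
The plan is the standard counting argument. First bound the continuum from above by counting nice names in the ground model, then bound it from below by exhibiting many distinct subsets of $\omega$ (or of $\omega_1$) added by the forcing. Since $2^{\omega}\leq2^{\omega_{1}}$ always holds, it is enough to show $V[G]\models2^{\omega_{1}}\leq\kappa$ and $V[G]\models2^{\omega}\geq\kappa$.

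\emph{Upper bound.} The conditions of $\mathbb{BC}^{(\kappa)}$ are countable partial functions from $\kappa$ into $\mathbb{BC}$, a set of size $\omega_{1}$, so $|\mathbb{BC}^{(\kappa)}|\leq\kappa^{\omega}\cdot\omega_{1}^{\omega}\leq\kappa^{\omega_{1}}=\kappa$. By the corollary of Asper\'{o}'s theorem (properness together with the $\omega_{2}$-\textsf{c.c.}), every subset of $\omega_{1}$ in $V[G]$ has a nice name, determined by $\omega_{1}$ many antichains, each of size at most $\omega_{1}$. The number of such names is at most $(\kappa^{\omega_{1}})^{\omega_{1}}=\kappa$. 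Hence $V[G]\models2^{\omega_{1}}\leq\kappa$. Since no cardinals are collapsed, $\kappa$ still denotes the same cardinal in $V[G]$.

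\emph{Lower bound.} This is where the work lies: we must show that $\mathbb{BC}^{(\kappa)}$ adds $\kappa$ distinct reals. Each coordinate $\alpha<\kappa$ yields a generic club $C_{\alpha}$, and by the lemma on restrictions, $\mathbb{BC}^{(\{\alpha\})}\cong\mathbb{BC}$ is a regular suborder. I would first argue that $\mathbb{BC}$ adds a real. A natural choice is $r=\{n\in\omega\mid\omega\cdot(n+1)\cdot2\in C_{gen}\}$, or more robustly, the parity of the order type of $C_{gen}$ between consecutive limit points below $\omega^{2}$. A density argument using the finiteness of conditions shows that this $r$ is not in $V$: given a condition $p$ and a ground-model real $x$, choose $n$ large enough that the relevant block lies above $p$, and put the needed point into $a_{p}$ or into an interval of $I_{p}$ so as to disagree with $x$. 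Next, I would show that the reals $r_{\alpha}$ for distinct $\alpha\neq\beta$ are forced to be different. Given $p$ and $\alpha\neq\beta$, extend $p$ on the two coordinates independently above everything in their finite parts, so that for some large $n$ the point encoding $n$ is put into $C_{\alpha}$ and excluded from $C_{\beta}$ by an interval. Such an extension stays in $\mathbb{BC}^{(\kappa)}$: it adds finitely many ordinals to \textsf{Fim} and keeps the impure support finite. I would take the new points to be non-indecomposable, or else check that the order-type clause on $\mathsf{sup}_{\delta}$ is unaffected, since only two coordinates change. This gives $\kappa$ distinct reals, so $2^{\omega}\geq\kappa$.

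The main obstacle is verifying that these extensions remain in the Asper\'{o} product. Condition 2 on $\mathsf{OT}(\mathsf{sup}_{\delta}(p))$ could in principle fail when new indecomposable points are added to coordinates. Choosing the coding points to be successor ordinals, which are not indecomposable, sidesteps this, because only $\mathsf{sup}_{\delta}$ for indecomposable $\delta$ is constrained. Combining the two bounds gives $\kappa\leq2^{\omega}\leq2^{\omega_{1}}\leq\kappa$ in $V[G]$.
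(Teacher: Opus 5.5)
The paper gives no proof of this statement; it quotes it from Asper\'{o}. Your two-bound scheme is the expected one, and your upper bound is correct: $|\mathbb{BC}^{(\kappa)}|\le\kappa$, the $\omega_2$-c.c.\ bounds antichains by $\omega_1$, and counting nice names gives $2^{\omega_1}\le\kappa$ in $V[G]$.

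\textbf{The gap is in the lower bound.} Your real $r$ is coded entirely below the fixed ordinal $\omega^{2}$, so a single condition can decide it. For example, $s=(\emptyset,\{(0,\omega^{2}]\})\in\mathbb{BC}$ forces $C_{gen}\cap(0,\omega^{2}]=\emptyset$, hence $r=\emptyset\in V$. Your step ``choose $n$ large enough that the relevant block lies above $p$'' is impossible once $p$ reaches beyond $\omega^{2}$. The parity-of-order-type variant fails for the same reason. Consequently your distinctness claim is false. The condition $p$ with support $\{\alpha,\beta\}$ and $p(\alpha)=p(\beta)=s$ has finite impure support, so it lies in $\mathbb{BC}^{(\kappa)}$, and it forces $r_{\alpha}=r_{\beta}$. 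As written, you have not produced $\kappa$ distinct reals.

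\textbf{The missing idea is a coding whose location is determined by the generic club itself,} so that no condition decides the real. In effect you need that $\mathbb{BC}$ adds a Cohen real. One option: let $x(n)$ be the parity of the finite part of the $n$-th element $c_n$ of $C_{gen}$.

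\emph{No condition decides $x$.} Every $s$ has an extension $u$ and an ordinal $\mu$ with two properties:
\begin{enumerate}
\item every ordinal $\le\mu$ lies in $a_u\cup\bigcup I_u$, so $u$ decides $C_{gen}\cap(\mu+1)=\{c_0,\ldots,c_{k-1}\}$;
\item $a_u\cup\bigcup I_u$ is disjoint from $(\mu,\mu+\omega)$.
\end{enumerate}
To find $u$, work below the least limit point $\lambda$ of the generic club. A limit ordinal in some $a_q$ is forced to be a limit point of $C_{gen}$. Hence the elements of $C_{gen}$ below $\lambda$ are successor ordinals, and each gap between consecutive ones is covered by finitely many intervals from conditions in the generic filter. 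Now add $\mu+2$ together with $(\mu,\mu+1]$, or alternatively $\mu+3$ together with $(\mu,\mu+2]$. These force $c_k=\mu+2$ and $c_k=\mu+3$ respectively, so $x(k)$ is decided both ways.

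\emph{Distinctness then follows.} Given $p$ and $\alpha\neq\beta$, take $n$ and $q_0,q_1\le p(\alpha)$ deciding $x_\alpha(n)$ differently. Extend $p(\beta)$ to decide $x_\beta(n)$, and keep whichever $q_i$ gives the opposite value.

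This amalgamation stays in the Asper\'{o} product without any special choice of coding points. Adding finitely many points to a set of order type $<\delta^{+}$ keeps it below $\delta^{+}$, because $\delta^{+}$ is indecomposable. Incidentally, your coding points $\omega\cdot(2n+2)$ are limit ordinals, not successors.
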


\qquad\ \qquad\ \qquad\qquad\qquad\ \ \ \ \ \ \ \ \ 

For similar constructions to $\mathbb{BC}^{\left(  X\right)  },$ we refer the
reader to \cite{AsperoMota} and \cite{AGeneralizationofMA}.

\section{The square-bracket operations \label{The operations}}

This section reviews the definitions of the three square-bracket operations
from chapter 5 of the book \cite{Walks}, which are the protagonists of this
work. A key property of these operations is that every uncountable subset of
$\omega_{1}$ realizes club many values. We will study the Ramsey clubs
associated with each operation.

\qquad\qquad\qquad\qquad

The first square-bracket operation is defined using walks on ordinals. Let
$\mathcal{C}$ be a \textsf{C}-sequence. Define $[\cdot\cdot]_{\mathcal{C}%
}:\left[  \omega_{1}\right]  ^{2}\longrightarrow\omega_{1}$ such that for
every $\alpha<\beta$:

\begin{center}%
\begin{tabular}
[c]{|lll|}\hline
$\lbrack\alpha\beta]_{\mathcal{C}}$ & $=$ & \textsf{min}$\{$\textsf{Tr}%
$(\triangle_{\mathcal{C}}\left(  \alpha,\beta\right)  ,\beta)\setminus
\alpha\}$\\
& $=$ & \textsf{min}$\{$\textsf{Tr}$\left(  \alpha,\beta\right)  \cap$
\textsf{Tr}$(\triangle_{\mathcal{C}}\left(  \alpha,\beta\right)  ,\beta
)\}$\\\hline
\end{tabular}

\end{center}

\qquad\ \ \ \qquad\ \ \ 

In this way, when walking from $\beta$ to $\triangle_{\mathcal{C}}\left(
\alpha,\beta\right)  ,$ the smallest point encountered that is greater than or
equal to $\alpha$ is precisely $[\alpha\beta]_{\mathcal{C}}.$ Equivalently, it
is the minimal point common to both the walk from $\beta$ to $\alpha$ and the
walk from $\beta$ to $\triangle_{\mathcal{C}}\left(  \alpha,\beta\right)  $
(see \cite{Walks}). It is worth pointing out that the definition in
\cite{Walks} uses $\triangle_{\mathcal{C}}\left(  \alpha,\beta\right)  -1$
instead of $\triangle_{\mathcal{C}}\left(  \alpha,\beta\right)  .$ This
distinction is irrelevant to our arguments.

\qquad\qquad\qquad\ \ \ \ \ \ \qquad\qquad\ \ \ 

The second square-bracket operation is defined with a sequence of (distinct)
reals $\mathcal{R}=\left\{  r_{\alpha}\mid\alpha\in\omega_{1}\right\}
\subseteq2^{\omega}$ and a sequence $e=\left\langle e_{\beta}\mid\beta
\in\omega_{1}\right\rangle $ where $e_{\beta}:\beta+1\longrightarrow\omega$ is
an injective function such that $e_{\beta}\left(  \beta\right)  =0.$ For
convenience we will call a sequence with this properties a \emph{nice
sequence}. For $\alpha$ and $\beta$ countable ordinals, denote $\triangle
_{\mathcal{R}}\left(  \alpha,\beta\right)  =\triangle\left(  r_{\alpha
},r_{\beta}\right)  .$ Define $[\cdot\cdot]_{\mathcal{R},e}:\left[  \omega
_{1}\right]  ^{2}\longrightarrow\omega_{1}$ such that for every $\alpha<\beta$:

\begin{center}%
\begin{tabular}
[c]{|l|}\hline
$\lbrack\alpha\beta]_{\mathcal{R},e}=$ \textsf{min}$\{\mathsf{\xi}\in\left[
\alpha,\beta\right]  \mid e_{\beta}\left(  \xi\right)  \leq\triangle
_{\mathcal{R}}\left(  \alpha,\beta\right)  \}$\\\hline
\end{tabular}

\qquad\ \qquad\ \ \ 
\end{center}

The computation of $[\alpha\beta]_{\mathcal{R},e}$ proceeds by finding the
smallest $m$ for which $r_{\alpha}\left(  m\right)  \neq r_{\beta}\left(
m\right)  $ and then taking the least $\xi\in$ $\left[  \alpha,\beta\right]  $
such that $e_{\beta}\left(  \xi\right)  \leq m.$

\qquad\qquad\ \ \qquad\ \ \ \ \ 

The third square-bracket operation is defined with a Hausdorff, special
Aronszajn tree $T$ and an specialization mapping $a:T\longrightarrow\omega.$
This time, the operation is defined on $T$ instead of $\omega_{1}.$ Given
$s,t\in T,$ define $s\wedge t$ as the largest lower bound for both $s$ and
$t.$ Note that $s\wedge t$ is well-defined since $T$ is a Hausdorff tree. For
$s,t\in T,$ let $[s$ $,t]=\left\{  z\in T\mid s\leq_{T}z\leq_{T}t\right\}  .$

\qquad\ \qquad\ \qquad\ \ \ \ \ \ 

Define $[\cdot\cdot]_{T,a}:\left[  T\right]  ^{2}\longrightarrow T$ such that
for every $s,t\in T$ with $\left\vert s\right\vert \leq\left\vert t\right\vert
$:

\begin{center}%
\begin{tabular}
[c]{|l|}\hline
$\lbrack st]_{T,a}=$ \textsf{min}$\{z\in\lbrack t\upharpoonright\left\vert
s\right\vert $ $,t]\mid z=t\vee a\left(  z\right)  \leq a\left(  s\wedge
t\right)  $ $\}$\\\hline
\end{tabular}

\qquad\ \qquad\ \ 
\end{center}

To compute $[st]_{T,a},$ first set $m=a\left(  s\wedge t\right)  .$ Then, find
the first node $z\leq_{T}t$ with $\left\vert z\right\vert \geq\left\vert
s\right\vert $ and $a\left(  z\right)  \leq m.$ If no node such $z$ exists,
let $[st]_{T,a}=t.$ Since the operation is not defined on $\omega_{1},$ in
this context the vertex set of the graph $G([\cdot\cdot],X)$ (for
$X\subseteq\omega_{1}$) is the tree $T.$ Two nodes $s$ and $t$ are connected
by an edge if the height of $[st]_{T,a}$ is in $X.$

\qquad\qquad\qquad\qquad\qquad

The reader is encouraged to review the proofs that the square-bracket
operations defined above all have the property that every uncountable subset
of $\omega_{1}$ realizes club-many distinct values (Lemmas 5.1.4, 5.4.1, and
5.5.1 of \cite{Walks}), as ideas from these proofs will be used frequently.

\section{Finding Ramsey clubs \label{Finding Ramsey clubs}}

Let $[\cdot\cdot]$ be a square-bracket operation and $C\subseteq\omega_{1}$ a
club. Recall that $C$ is $[\cdot\cdot]$\emph{-Ramsey }if there exists an
uncountable $W\subseteq\omega_{1}$ such that $\left\{  \left[  \alpha
\beta\right]  \mid\alpha,\beta\in W\right\}  \subseteq C$ (in other words, $W$
induces a complete subgraph of $G([\cdot\cdot],C)$). In this section, we prove
that for every club $C,$ there exists a choice of parameters for the
square-bracket operations $[\cdot\cdot]$ such that $C$ is $[\cdot\cdot]$-Ramsey.

\begin{theorem}
Let $C\subseteq\omega_{1}$ be a club, $\mathcal{R}=\left\{  r_{\alpha}%
\mid\alpha\in\omega_{1}\right\}  \subseteq2^{\omega}$ a sequence of reals and
$T$ a Hausdorff special Aronszajn tree. There exist a \textsf{C}-sequence
$\mathcal{C}$, a nice sequence $e$ and a specializing mapping
$a:T\longrightarrow\omega$ such that $C$ is $[\cdot\cdot]$\emph{-}Ramsey,
where $[\cdot\cdot]$\emph{ }is\emph{ }any of the operations $[\cdot
\cdot]_{\mathcal{C}},$ $[\cdot\cdot]_{\mathcal{R},e}$ or $[\cdot\cdot]_{T,a}.$
\label{Teorema Todo club es Ramsey}
\end{theorem}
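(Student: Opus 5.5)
The plan is to treat the three operations separately. In each case I choose the parameter so that, for $\beta$ (or $t$) in a suitable uncountable set $W$ of ordinals (or nodes) living on $C$, the minimum defining $[\alpha\beta]$ is forced to land in $C$. The common idea is a "predecessor in $C$" trick. Since $C$ is closed, every ordinal $\xi\geq\min C$ has a largest element of $C$ below or equal to it, namely $\max(C\cap(\xi+1))$. I arrange the parameter so that any candidate outside $C$ is beaten by this predecessor in $C$.

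\emph{The \textsf{C}-sequence.} Let $D=C'$, the set of accumulation points of $C$. This is a club contained in $C\cap\mathsf{LIM}(\omega_1)$. I define $\mathcal{C}$ as follows.
\begin{enumerate}
\item If $\gamma\in D'$, choose $C_\gamma\subseteq D$ cofinal in $\gamma$ of order type $\omega$.
\item If $\gamma\in D\setminus D'$ and $\gamma>\min D$, choose $C_\gamma$ with $C_\gamma(0)=\max(D\cap\gamma)$.
\item All other $C_\gamma$ are arbitrary.
\end{enumerate}
Let $\alpha<\beta$ both lie in $D\setminus\{\min D\}$, and take any $\gamma\in D$ with $\alpha<\gamma$. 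If $\gamma\in D'$, then $\mathsf{stp}(\alpha,\gamma)\in C_\gamma\subseteq D$. If $\gamma\notin D'$, then $\alpha\leq\max(D\cap\gamma)=C_\gamma(0)$, so $\mathsf{stp}(\alpha,\gamma)=\max(D\cap\gamma)\in D$. By induction along the walk, $\mathsf{Tr}(\alpha,\beta)\subseteq D\subseteq C$. By the second description of $[\alpha\beta]_{\mathcal{C}}$ as $\min\{\mathsf{Tr}(\alpha,\beta)\cap\mathsf{Tr}(\triangle_{\mathcal{C}}(\alpha,\beta),\beta)\}$, we get $[\alpha\beta]_{\mathcal{C}}\in\mathsf{Tr}(\alpha,\beta)\subseteq C$. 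So $W=D\setminus\{\min D\}$ works, and $\triangle_{\mathcal{C}}$ plays no role.

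\emph{The nice sequence.} Take $W=C$. For $\beta\in C$, I want an injective $e_\beta:\beta+1\to\omega$ with $e_\beta(\beta)=0$ and the following property: for every $\xi\leq\beta$ with $\xi\notin C$ and $\xi>\min C$, we have $e_\beta(\xi)>e_\beta(\pi(\xi))$, where $\pi(\xi)=\max(C\cap\xi)$.
\begin{itemize}
\item The constraints form a forest of depth one: each non-$C$ point has one parent in $C$.
\item So I can enumerate $\beta+1$ in type $\omega$ with $\beta$ first and every parent before each of its children. For instance, go through a fixed enumeration and, whenever a child appears, insert its parent first if it is not yet listed.
\item No point has parent $\beta$, since $\pi(\xi)<\xi\leq\beta$. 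Hence $\beta$ can take value $0$.
\item For $\beta\notin C$, let $e_\beta$ be arbitrary.
\end{itemize}
Now let $\alpha<\beta$ be in $C$ and put $m=\triangle_{\mathcal{R}}(\alpha,\beta)$. Let $\xi$ be the least element of $[\alpha,\beta]$ with $e_\beta(\xi)\leq m$, and suppose $\xi\notin C$. Then $\pi(\xi)\geq\alpha$ because $\alpha\in C$, and $e_\beta(\pi(\xi))<e_\beta(\xi)\leq m$. This contradicts the minimality of $\xi$, so $[\alpha\beta]_{\mathcal{R},e}\in C$.

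\emph{The specializing map.} Fix any specializing map $b$ for $T$ and a pairing bijection $\langle\cdot,\cdot\rangle:\omega^2\to\omega$ with $\langle i,j\rangle\geq j$. For $z\in T$ let $\gamma_z=\max(C\cap(|z|+1))$ when $|z|\geq\min C$, and define:
\begin{itemize}
\item $a(z)=2b(z)$ if $|z|\in C$;
\item $a(z)=2\langle b(z),b(z\upharpoonright\gamma_z)\rangle+1$ if $|z|\notin C$ and $|z|>\min C$;
\item $a(z)=2\langle b(z),0\rangle+1$ if $|z|<\min C$.
\end{itemize}
Equal values of $a$ force equal values of $b$, so each fiber of $a$ is an antichain and $a$ is specializing. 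In the second case, $a(z)>2b(z\upharpoonright\gamma_z)=a(z\upharpoonright\gamma_z)$. Let $W=T\upharpoonright C$, which is uncountable. For $s,t\in W$ with $|s|\leq|t|$, put $m=a(s\wedge t)$. If $[st]_{T,a}=t$, its height is in $C$. Otherwise $z=[st]_{T,a}$ satisfies $a(z)\leq m$, and if $|z|\notin C$ then $\gamma_z\geq|s|$ since $|s|\in C$. Then $z\upharpoonright\gamma_z$ lies in $[t\upharpoonright|s|,t]$ below $z$ with $a$-value below $m$, which contradicts minimality. So the height of $[st]_{T,a}$ lies in $C$.

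The only mildly delicate points are the following:
\begin{enumerate}
\item closure of $C$ and of $D$, which guarantees that the relevant maxima exist;
\item in the \textsf{C}-sequence case, checking that every point visited by the walk stays in $D$, so that the induction goes through;
\item in the nice-sequence case, the enumeration respecting parents while keeping $e_\beta(\beta)=0$.
\end{enumerate}
I expect the walk invariant in (2) to be the main thing to verify carefully.
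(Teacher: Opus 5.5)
Your proposal is correct and follows essentially the same route as the paper. In all three cases you use the same ``largest element of the club below'' trick: $C_\gamma(0)=\max(D\cap\gamma)$ for non-accumulation points, $e_\beta(\max(C\cap\xi))<e_\beta(\xi)$ for $\xi\notin C$, and an $a$ that strictly dominates $a(z\upharpoonright\gamma_z)$ off $C$.

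The differences are cosmetic. You work with $D=C'$ instead of first shrinking $C$ to limit ordinals, and you encode $a$ by parity and a pairing function rather than prime powers. You also spell out the construction of $e_\beta$, including the restriction to $\xi>\min C$, which the paper leaves implicit.
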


\begin{proof}
We may assume $C\subseteq$ \textsf{LIM}$\left(  \omega_{1}\right)  $ and
recall that $C^{\prime}$ denotes the accumulation points of $C.$ We first
construct the \textsf{C}-sequence $\mathcal{C}.$ Find a sequence
$\mathcal{C}=\left\{  C_{\alpha}\mid\alpha\in\omega_{1}\right\}  $ such that
for every $\alpha\in\omega_{1}:$

\begin{enumerate}
\item $C_{\alpha+1}=\left\{  \alpha\right\}  .$

\item If $\alpha\in C^{\prime},$ then $C_{\alpha}=\left\{  C_{\alpha}\left(
n\right)  \mid n\in\omega\right\}  \subseteq C$ is a cofinal subset of
$\alpha$ of order type $\omega$.

\item If $\alpha$ is a limit ordinal, but $\alpha\notin C^{\prime},$ then
$C_{\alpha}=\left\{  C_{\alpha}\left(  n\right)  \mid n\in\omega\right\}  $ is
a cofinal subset of $\alpha$ of order type $\omega$ and $C_{\alpha}\left(
0\right)  =\bigcup\left(  C\cap\alpha\right)  .$
\end{enumerate}

\qquad\ \ 

We emphasize that for $\alpha$ limit: In case $\alpha\in C^{\prime},$ then
$C_{\alpha}$ is contained in $C,$ while if $\alpha\notin C^{\prime},$ then at
most the first point of $C_{\alpha}$ is in $C$. Moreover, if $C\cap\alpha
\neq\emptyset,$ then $C_{\alpha}\left(  0\right)  $ is in $C.$ We claim that
if $\alpha,\beta\in C^{\prime}$ (with $\alpha<\beta$), then \textsf{Tr}%
$(\alpha,\beta)\subseteq C.$ We proceed by contradiction, let \textsf{Tr}%
$(\alpha,\beta)=\left\{  \beta_{0},...,\beta_{n+1}\right\}  $ (where
$\beta_{0}=\beta,$ $\beta_{n+1}=\alpha$ and $\beta_{i+1}=$ \textsf{stp}%
$\left(  \alpha,\beta_{i}\right)  $). Let $i$ be the first such that
$\beta_{i+1}\notin C$. Since $\beta_{i}\in C,$ it follows that $\beta_{i}$ is
a limit ordinal. Furthermore, since $\beta_{i+1}\in C_{\beta_{i}},$ it follows
that $C_{\beta_{i}}$ is not a subset of $C,$ hence $\beta_{i}\notin C^{\prime
}.$ We know that $C\cap\beta_{i}\neq\emptyset$ (because $\alpha$ is in the
intersection), so $C_{\beta_{i}}\left(  0\right)  =\bigcup\left(  C\cap
\beta_{i}\right)  $ and then $\alpha\leq C_{\beta_{i}}\left(  0\right)  .$ We
get that $\beta_{i+1}=C_{\beta_{i}}\left(  0\right)  ,$ which is in $C,$ so we
get a contradiction. It follows that $\{\left[  \alpha\beta\right]
_{\mathcal{C}}\mid\alpha,\beta\in C^{\prime}\}\subseteq C,$ so $C\ $is
$[\cdot\cdot]_{\mathcal{C}}$\emph{-}Ramsey.

\qquad\qquad\qquad

We now find the nice sequence $e.$ Find $e=\left\langle e_{\beta}\mid\beta
\in\omega_{1}\right\rangle $ as follows: If $\beta\notin C,$ then $e_{\beta
}:\beta+1\longrightarrow\omega$ can be any injective function such that
$e_{\beta}\left(  \beta\right)  =0.$ In case $\beta\in C,$ we choose an
injective function $e_{\beta}$ with the following property:

\qquad\ \qquad\ \ \ \ 

\hfill%
\begin{tabular}
[c]{|c|}\hline
For every $\gamma\in\beta,$ if $\gamma\notin C$ and $\delta=\bigcup\left(
C\cap\gamma\right)  ,$\\
then $e_{\beta}\left(  \delta\right)  <e_{\beta}\left(  \gamma\right)
$\\\hline
\end{tabular}
\hfill\ 

\qquad\ \ \ \qquad\ \ \ 

We claim that if $\alpha,\beta\in C$ (with $\alpha<\beta$), then $\left[
\alpha\beta\right]  _{\mathcal{R},e}\in C.$ Assume this is not the case. Let
$\gamma=\left[  \alpha\beta\right]  _{\mathcal{R},e}$ and suppose it is not in
$C.$ It follows that $\alpha<\gamma<\beta.$ Since $\alpha\in C,$ it follows
that $\delta=\bigcup\left(  C\cap\gamma\right)  $ is in $C\ $as well. Note
that $\alpha\leq\delta<\gamma$ and $e_{\beta}\left(  \delta\right)  <e_{\beta
}\left(  \gamma\right)  \leq\triangle_{\mathcal{R}}\left(  \alpha
,\beta\right)  $. In this way, $e_{\beta}\left(  \delta\right)  <\triangle
_{\mathcal{R}}\left(  \alpha,\beta\right)  ,$ but this is a contradiction,
since $\gamma$\textsf{ }is the smallest element of the set\textsf{
}$\{\mathsf{\xi}\in\left[  \alpha,\beta\right]  \mid e_{\beta}\left(
\xi\right)  \leq\triangle_{\mathcal{R}}\left(  \alpha,\beta\right)  \}.$ We
conclude that $\{\left[  \alpha\beta\right]  _{\mathcal{R},e}\mid\alpha
,\beta\in C\}\subseteq C,$ so $C\ $is $[\cdot\cdot]_{\mathcal{R},e}$\emph{-}Ramsey.

\qquad\qquad\qquad\qquad

It remains to find the specializing mapping. Let $f:T\longrightarrow P$ be an
specialization mapping, where $P$ is the set of prime numbers. Define
$a:T\longrightarrow\omega$ such that for every $s\in T,$ the following holds:

\begin{enumerate}
\item If $\left\vert s\right\vert \in C,$ then $a\left(  s\right)  =f\left(
s\right)  .$

\item If $\left\vert s\right\vert \notin C$ and $\delta=\bigcup\left(
C\cap\left\vert s\right\vert \right)  ,$ then $a\left(  s\right)  =f\left(
s\upharpoonright\delta\right)  ^{f\left(  s\right)  }.$
\end{enumerate}

\qquad\ \ 

Note that $a\left(  s\right)  $ is a prime number if and only if $\left\vert
s\right\vert \in C$. We claim that $a$ is a specialization mapping. Proceed by
contradiction, assume there are $s,t\in T$ such that $t<_{T}s$ and $a\left(
s\right)  =a\left(  t\right)  .$ If $a\left(  t\right)  $ is prime, it follows
that $a\left(  t\right)  =f\left(  t\right)  $ and $a\left(  s\right)
=f\left(  s\right)  ,$ which contradicts that $f$ is an specialization
mapping. We now assume $a\left(  t\right)  $ is not a prime number. Let
$\delta=\bigcup\left(  C\cap\left\vert s\right\vert \right)  .$ If $\delta
\leq\left\vert t\right\vert ,$ then $t\upharpoonright\delta=s\upharpoonright
\delta$ and $a\left(  t\right)  =f\left(  t\upharpoonright\delta\right)
^{f\left(  t\right)  },$ while $a\left(  s\right)  =f\left(  s\upharpoonright
\delta\right)  ^{f\left(  s\right)  }.$ It follows that $f\left(  t\right)
=f\left(  s\right)  ,$ which again contradicts that $f$ is a specialization
mapping. The only remaining case is when $\left\vert t\right\vert <\delta.$
Let $\gamma=\bigcup\left(  C\cap\left\vert s\right\vert \right)  ,$ which is
smaller that $\delta.$ It follows that $a\left(  s\right)  =f\left(
s\upharpoonright\delta\right)  ^{f\left(  s\right)  }$ and $a\left(  t\right)
=f\left(  t\upharpoonright\gamma\right)  ^{f\left(  t\right)  }.$ Since
$t\upharpoonright\gamma<_{T}s\upharpoonright\delta,$ it follows that $f\left(
s\upharpoonright\delta\right)  $ and $f\left(  t\upharpoonright\gamma\right)
$ are distinct prime numbers, so it is impossible that $a\left(  s\right)
=a\left(  t\right)  .$

\qquad\qquad\qquad

We now show that if $s,t\in T\upharpoonright C$ and $\left\vert s\right\vert
<\left\vert t\right\vert ,$ then $|\left[  st\right]  _{T,a}|$ $\in T.$ Let
$z=\left[  st\right]  _{T,a}$ and assume that $\left\vert z\right\vert \notin
C.$ It follows that $\left\vert s\right\vert <\left\vert z\right\vert .$ Let
$\delta=\bigcup\left(  C\cap\left\vert z\right\vert \right)  ,$ which is in
$C$ and $\left\vert s\right\vert \leq\delta.$ We know that $a\left(  z\right)
=f\left(  z\upharpoonright\delta\right)  ^{f\left(  z\right)  }.$ Since
$\delta\in C,$ we have that $a\left(  z\upharpoonright\delta\right)  =f\left(
z\upharpoonright\delta\right)  ,$ so $a\left(  z\upharpoonright\delta\right)
<a\left(  z\right)  <a\left(  s\wedge t\right)  .$ This contradicts the
minimality of $z.$ In this way, if $U$ is an uncountable subset of
$T\upharpoonright C$ such that any two different nodes have different height,
then $\{|\left[  st\right]  _{T,a}|\mid s,t\in U\}\subseteq C,$ so $C\ $is
$[\cdot\cdot]_{T,a}$\emph{-}Ramsey.
\end{proof}

\qquad\ \qquad\ \qquad\ \ \ \ \ \qquad\qquad\qquad\ 

Let $\mathcal{C}=\left\{  C_{\alpha}\mid\alpha\in\omega_{1}\right\}  $ be a
\textsf{C}-sequence. We say that $\mathcal{C}$ is a \emph{successor}
\textsf{C}\emph{-sequence }if for every $\alpha$ limit, every element of
$C_{\alpha}$ is a successor ordinal. Such sequences are sometimes useful (see
\cite{Walks}). The \textsf{C}-sequence found in Theorem
\ref{Teorema Todo club es Ramsey} is not a successor \textsf{C}-sequence.
However, one can be obtained if desired.

\begin{proposition}
Let $C\subseteq\omega_{1}$ be a club. There exists a successor \textsf{C}%
-sequence $\mathcal{D}$ such that $C$ is $[\cdot\cdot]_{\mathcal{D}}$\emph{-}Ramsey.
\end{proposition}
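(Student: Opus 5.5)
The idea is to modify the construction from the proof of Theorem \ref{Teorema Todo club es Ramsey}, replacing the limit points of $C$ by their successors. We may assume $C\subseteq$ \textsf{LIM}$\left(\omega_{1}\right)$. Put $D=\{\gamma+1\mid\gamma\in C\}$.

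We define the \textsf{C}-sequence $\mathcal{D}=\{D_{\alpha}\mid\alpha\in\omega_{1}\}$ as follows:
\begin{enumerate}
\item $D_{\alpha+1}=\{\alpha\}$.
\item If $\alpha\in C^{\prime}$, then $D_{\alpha}$ is a cofinal $\omega$-sequence contained in $D$. This is possible because $C\cap\alpha$ is cofinal in $\alpha$, hence so is $D\cap\alpha$.
\item If $\alpha$ is limit, $\alpha\notin C^{\prime}$ and $C\cap\alpha\neq\emptyset$, then $D_{\alpha}(0)=\delta+1$ where $\delta=\bigcup(C\cap\alpha)\in C$, and the remaining elements of $D_{\alpha}$ are successor ordinals cofinal in $\alpha$. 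Note $\delta<\alpha$ since $\alpha\notin C^{\prime}$.
\item In the remaining limit case, $D_{\alpha}$ is any cofinal $\omega$-sequence of successors.
\end{enumerate}
This is a successor \textsf{C}-sequence.

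The key claim is that if $\alpha<\beta$ with $\alpha,\beta\in C^{\prime}$, then every element of \textsf{Tr}$(\alpha,\beta)$ other than $\alpha$ lies in $C^{\prime}\cup\{\gamma+1\mid\gamma\in C\}$. Each point of this set is either in $C$ or is $\gamma+1$ with $\gamma\in C$. Write the walk as $\beta_{0}=\beta,\dots,\beta_{n+1}=\alpha$ and argue by induction on $i$. There are four cases for $\beta_{i}$.
\begin{enumerate}
\item If $\beta_{i}=\gamma+1$ with $\gamma\in C$, then the next step is $\beta_{i+1}=\gamma\geq\alpha$ and $\gamma\in C$.
\item If $\beta_{i}\in C^{\prime}$, then $\beta_{i+1}\in D$ or $\beta_{i+1}=\alpha$.
\item If $\beta_{i}\in C\setminus C^{\prime}$, then $\beta_{i}$ is limit and $\alpha<\beta_{i}$, so $\delta=\bigcup(C\cap\beta_{i})\geq\alpha$. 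Hence $\beta_{i+1}=\delta+1$ or $\alpha$, with $\delta\in C$; this mirrors the argument in the theorem.
\item Otherwise $\beta_{i}\in C\cap$ \textsf{LIM} in general, which is covered by the cases above, so the induction closes. The point is that the invariant becomes ``each $\beta_{i}$ is in $C$ or in $D$.''
\end{enumerate}

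Next we determine $[\alpha\beta]_{\mathcal{D}}$, the minimum of \textsf{Tr}$(\triangle,\beta)\setminus\alpha$, where $\triangle=\triangle_{\mathcal{D}}(\alpha,\beta)$. The only possibly bad value is $\gamma+1$ with $\gamma\in C$. If $\gamma+1\in$ \textsf{Tr}$(\triangle,\beta)$ and $\gamma+1>\alpha$, then $\gamma\geq\alpha>\triangle$ is also on that walk, since the step from $\gamma+1$ goes to $\gamma$. Minimality then forces the value to be $\gamma\in C$, unless $\gamma+1=\alpha$, which is impossible because $\alpha$ is limit. Similarly, points of $C$ reached on the walk to $\triangle$ that are at least $\alpha$ are also on the walk to $\alpha$.

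The main obstacle is that \textsf{Tr}$(\triangle,\beta)$ need not be contained in \textsf{Tr}$(\alpha,\beta)$. However, the paper's second formula gives $[\alpha\beta]_{\mathcal{D}}\in$ \textsf{Tr}$(\alpha,\beta)\cap$ \textsf{Tr}$(\triangle,\beta)$, so the value lies in $C\cup D$ by the key claim. The successor-step argument above then rules out $D\setminus C$. Hence $\{[\alpha\beta]_{\mathcal{D}}\mid\alpha,\beta\in C^{\prime}\}\subseteq C$, and $W=C^{\prime}$ witnesses that $C$ is $[\cdot\cdot]_{\mathcal{D}}$-Ramsey.
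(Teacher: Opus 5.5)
Your proof is correct, and it differs from the paper's mainly in how the verification is organized. The paper uses essentially the same $\mathcal{D}$: it sets $D_{\alpha}(n)=C_{\alpha}(n)+1$ for limit $\alpha$, where $\mathcal{C}$ is the \textsf{C}-sequence from Theorem \ref{Teorema Todo club es Ramsey}. It then argues only inside \textsf{Tr}$(\triangle,\beta)$, in three steps:
\begin{enumerate}
\item The successor-step rule shows $[\alpha\beta]$ cannot be a successor, because its predecessor would be the next walk point and yet lie in $[\alpha,[\alpha\beta])$.
\item Hence the preceding walk point is $[\alpha\beta]+1$.
\item Taking the last limit point $\beta_{j}$ before it, the construction forces $\beta_{j+1}=\gamma+1$ and $\beta_{j+2}=\gamma\in C$, and a maximality argument gives $j=i-1$, so $[\alpha\beta]=\beta_{j+2}\in C$.
\end{enumerate}

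You instead prove the global invariant \textsf{Tr}$(\alpha,\beta)\subseteq C\cup D$, the exact analogue of the invariant \textsf{Tr}$(\alpha,\beta)\subseteq C$ from the theorem. You transfer it to $[\alpha\beta]$ via $[\alpha\beta]\in$ \textsf{Tr}$(\alpha,\beta)\cap$ \textsf{Tr}$(\triangle,\beta)$, and use the successor-step observation only to exclude $D$. This avoids the ``last limit'' bookkeeping and makes the proposition a visible corollary of the theorem's argument. The price is that you rely on the identity between the two formulas for $[\alpha\beta]_{\mathcal{C}}$. That identity amounts to \textsf{Tr}$(\triangle,\beta)\setminus\alpha$ being an initial segment of \textsf{Tr}$(\alpha,\beta)$, which holds for any $\triangle\leq\alpha$ and is easy to check. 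Alternatively, you could run your induction directly on \textsf{Tr}$(\triangle,\beta)\setminus\alpha$, where the same three cases go through.

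Two small corrections. First, your key claim as initially stated, with $C^{\prime}\cup D$, is false: after a step $\gamma+1\to\gamma$ the walk can sit at a point of $C\setminus C^{\prime}$. The invariant you actually prove and use, $C\cup D$, is the right one, and your ``fourth case'' is vacuous. Second, $\alpha>\triangle$ does hold, by Lemma \ref{Lema limite anticadena}, but you only need $\triangle\leq\alpha<\gamma+1$ to ensure that the walk to $\triangle$ does not stop at $\gamma+1$.
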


\begin{proof}
We may assume $C\subseteq$ \textsf{LIM}$\left(  \omega_{1}\right)  .$ Let
$\mathcal{C}$ be the \textsf{C}-sequence obtained in the proof of Theorem
\ref{Teorema Todo club es Ramsey}. Define a new \textsf{C}-sequence
$\mathcal{D}=\left\{  D_{\alpha}\mid\alpha\in\omega_{1}\right\}  $ where
$D_{\alpha}\left(  n\right)  =C_{\alpha}\left(  n\right)  +1$ for every
$\alpha$ limit and $n\in\omega.$ We claim that if $\alpha,\beta\in C^{\prime}$
(with $\alpha<\beta$), then $\left[  \alpha\beta\right]  =\left[  \alpha
\beta\right]  _{\mathcal{D}}\in C.$ If $\left[  \alpha\beta\right]  $ is
either $\alpha$ or $\beta,$ there is nothing to do, so assume that
$\alpha<\left[  \alpha\beta\right]  <\beta.$ In particular, $\alpha\notin$
\textsf{Tr}$(\triangle,\beta),$ where $\triangle=\triangle_{\mathcal{D}%
}\left(  \alpha,\beta\right)  .$ Let \textsf{Tr}$(\triangle,\beta)=\left\{
\beta_{0},...,\beta_{i},\beta_{i+1},...,\beta_{n+1}\right\}  $ where
$\beta_{0}=\beta,$ $\beta_{i+1}=\left[  \alpha\beta\right]  $ and $\beta
_{n+1}=\triangle.$ Note that $\beta_{i+2}<\alpha<\beta_{i+1}=$ $\left[
\alpha\beta\right]  .$ If $\left[  \alpha\beta\right]  $ was a successor
ordinal, we would have that $\left[  \alpha\beta\right]  -1<\alpha<\left[
\alpha\beta\right]  ,$ which is clearly impossible, so $\left[  \alpha
\beta\right]  $ is a limit ordinal. Hence $\beta_{i}=\left[  \alpha
\beta\right]  +1.$ Let $j<i$ be the largest such that $\beta_{j}$ is a limit ordinal.

\begin{claim}
$\beta_{j+2}\in C.$
\end{claim}

\qquad\qquad\ \ 

First consider the case where $\beta_{j}\in C^{\prime}.$ In this way, there is
$\gamma\in C$ such that $\beta_{j+1}=\gamma+1,$ so $\beta_{j+2}=\gamma.$ In
case $\beta_{j}\notin C^{\prime},$ we know that $D_{\beta_{j}}\left(
0\right)  =\bigcup\left(  C\cap\beta_{j}\right)  +1,$ which is in $C$ and
larger than $\alpha.$ It follows that $\beta_{j+1}=\bigcup\left(  C\cap
\beta_{j}\right)  +1$ and $\beta_{j+2}=\bigcup\left(  C\cap\beta_{j}\right)
.$ This finishes the proof of the claim.

\qquad\qquad\qquad

We now claim that $j=i-1.$ If this was not the case, we would have that
$j<i-1$ and then:

\hfill%
\begin{tabular}
[c]{lll}%
$j<i-1$ & $\Longrightarrow$ & $j+2<i+1$\\
& $\Longrightarrow$ & $j+2\leq i$\\
& $\Longrightarrow$ & $j+2<i$%
\end{tabular}
\hfill\ 

\qquad\qquad\qquad\ \ 

Where the last implication holds because $\beta_{i}$ is a successor and
$\beta_{j+2}$ is limit. But this is a contradicts the maximality of $j.$ In
this way, we have that $\left[  \alpha\beta\right]  =\beta_{i+1}=\beta
_{j+2}\in C.$ It follows that $\{\left[  \alpha\beta\right]  \mid\alpha
,\beta\in C^{\prime}\}\subseteq C,$ so $C\ $is $[\cdot\cdot]_{\mathcal{D}}%
$\emph{-}Ramsey.
\end{proof}

\section{Destroying square-bracket operations \label{Destroying square}}

We will say that a square-bracket operation $[\cdot\cdot]$ has the
\emph{Ramsey club property }if every club subset of $\omega_{1}$ is
$[\cdot\cdot]$-Ramsey. In this section we will prove that a Baumgartner club
can not be $[\cdot\cdot]$-Ramsey for any square-bracket operation from the
ground model. It follows that it is consistent some square-bracket operations
do not have the Ramsey club property.

\begin{theorem}
$\mathbb{BC}$ forces that the generic club is not $[\cdot\cdot]$-Ramsey, for
any square-bracket operation $[\cdot\cdot]$ in $V.$ \label{Teo BC destruve V}
\end{theorem}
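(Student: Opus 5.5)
The plan is a density argument. Fix a square-bracket operation $[\cdot\cdot]$ in $V$; I treat $[\cdot\cdot]_{\mathcal{C}}$ and $[\cdot\cdot]_{\mathcal{R},e}$ directly and handle $[\cdot\cdot]_{T,a}$ in the same way, working with nodes of $T$ and their heights. Suppose toward a contradiction that $p\in\mathbb{BC}$ and $\dot{W}$ is a name with
$$p\Vdash\text{``}\dot{W}\text{ is uncountable and }[\alpha\beta]\in C_{gen}\text{ for all }\alpha,\beta\in\dot{W}\text{''}.$$
Take a countable elementary submodel $M$ containing $p,\dot{W}$ and the parameters of $[\cdot\cdot]$. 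Extend $p$ to $q$ with $\delta_M\in a_q$; by Proposition \ref{Prop BC proper}, $q$ is $(M,\mathbb{BC})$-generic. Then extend further to $q'\leq q$ deciding some $\beta\in\dot{W}$ with $\beta>\delta_M$; this is possible because $\dot{W}$ is forced to be uncountable. Let $q'_M=(a_{q'}\cap M,I_{q'}\cap M)$. By Proposition \ref{Prop BC Strongly proper}, every $r\in M$ with $r\leq q'_M$ is compatible with $q'$, and the common extension is $r\wedge q'$.

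Next, consider the set
$$A=\{\alpha<\omega_1\mid\exists r\leq q'_M\ (r\Vdash\alpha\in\dot{W})\}.$$
Note $A\in M$ and $\beta\in A$ (witnessed by $q'$ itself), so $A\nsubseteq M$; by Lemma \ref{Lema modelo contension}, $A$ is uncountable. The goal is to find $\alpha\in A\cap M$, witnessed by some $r\in M$ with $r\leq q'_M$, such that $\gamma=[\alpha\beta]$ satisfies $\gamma\notin a_r\cup a_{q'}$. Once we have this, the condition $r\wedge q'$ extended by the interval $(\eta,\gamma]$, with $\eta=\max((a_r\cup a_{q'})\cap\gamma)$, is a condition. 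It forces that $\alpha,\beta\in\dot{W}$ while $[\alpha\beta]\notin C_{gen}$, which is the contradiction.

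To avoid any interference from $a_r$, I will aim for $\gamma\geq\delta_M$. Since $r\in M$, we have $a_r\subseteq\delta_M$, and we already know $\delta_M\in a_{q'}$. So the requirement reduces to:
$$\delta_M<[\alpha\beta]\notin a_{q'}.$$
In particular $[\alpha\beta]$ must land strictly between two consecutive points of the finite set $a_{q'}\cup\{\beta\}$ that lie above $\delta_M$, or else equal $\beta$ only if $\beta\notin a_{q'}$. I will first arrange, before extending, that $\beta\notin a_{q'}$ and that $a_{q'}\cap(\delta_M,\beta)$ is as small as we like. This is where the real work lies.

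This last step is operation-specific and is the main obstacle. For each operation, I will mimic the corresponding proof that uncountable sets realize club-many values (Lemmas 5.1.4, 5.4.1, 5.5.1 of \cite{Walks}), now applied to $A\in M$ and the fixed $\beta\notin M$. The aim is to show that by choosing $\alpha\in A\cap M$ suitably, $[\alpha\beta]$ can be pushed to a controlled point of $[\delta_M,\beta]$ that avoids the finitely many forbidden ordinals in $a_{q'}\setminus M$.

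For $[\cdot\cdot]_{\mathcal{C}}$: use Lemma \ref{Lemma lambas converjen} and Proposition \ref{Prop juntar trazas} to take $\alpha\in A\cap M$ so close to $\delta_M$ that the walk from $\beta$ to $\alpha$ passes through \textsf{Tr}$(\delta_M,\beta)$. Then use the tree $T(\rho_0)$ and Proposition \ref{Prop Aronszajn y submodelo} to control $\triangle_{\mathcal{C}}(\alpha,\beta)$. This makes $[\alpha\beta]$ a point of \textsf{Tr}$(\delta_M,\beta)$ that we get to select, and I will try to choose it outside $a_{q'}$.

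For $[\cdot\cdot]_{\mathcal{R},e}$: pick $\alpha\in A\cap M$ with $r_\alpha$ agreeing with $r_\beta$ up to a prescribed $m$. Then $[\alpha\beta]$ is the least $\xi\geq\alpha$ with $e_\beta(\xi)\leq m$, which lies in a finite set determined by $\beta$ and $m$.

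For $[\cdot\cdot]_{T,a}$: argue analogously, with the value of the specializing map at the meet $s\wedge t$ playing the role of $m$.

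If it turns out that the value cannot be guaranteed to avoid $a_{q'}$, the fallback is to choose $q'$ more carefully. Before deciding $\beta$, first find $\beta$ together with the finitely many candidate values, namely the relevant points of the walk or of $e_\beta^{-1}[0,m]$. Then choose the extension $q'$ so that it forces $\beta\in\dot{W}$ while placing $\delta_M$ in $a_{q'}$ and as few of the other candidate values as possible; this ordering is what should be checked first.
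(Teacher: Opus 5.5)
\textbf{There is a genuine gap: the step you defer is the crux, and the direction you choose for it is not under your control.}

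Your outer framework is sound: one model $M$ with $\delta_M\in a_{q'}$, $\beta>\delta_M$ decided by $q'$, reflection through $q'_M\in M$, compatibility of any $r\le q'_M$ in $M$ with $q'$, and killing $\gamma$ with a new interval. But the step you defer as ``the main obstacle'' is essentially the entire content of the theorem. Your decision to aim for $[\alpha\beta]>\delta_M$ sends it where you have no control.

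For $\alpha<\delta_M$, a value above $\delta_M$ is confined to finitely many points fixed by $\beta$:
\begin{itemize}
\item For $[\cdot\cdot]_{\mathcal C}$ it requires $\triangle(\alpha,\beta)\le\lambda(\delta_M,\beta)$, and then $[\alpha\beta]\in\mathsf{Tr}(\delta_M,\beta)$.
\item For $[\cdot\cdot]_{\mathcal R,e}$ it requires $\triangle(\alpha,\beta)<e_\beta(\delta_M)$, and then $[\alpha\beta]=\min\{\xi>\delta_M\mid e_\beta(\xi)\le\triangle(\alpha,\beta)\}$.
\end{itemize}
However, $q'$ was an arbitrary condition forcing $\beta\in\dot W$, so nothing in your argument prevents all these points from lying in $a_{q'}$. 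Your fallback of choosing $q'$ ``with as few candidates as possible'' is not an argument. For example, if $e_\beta(\delta_M)=1$, the only value above $\delta_M$ that $[\cdot\cdot]_{\mathcal R,e}$ can produce is $\beta$ itself, since it needs $\triangle(\alpha,\beta)=0$. And $\beta\in a_{q'}$ whenever $q'\Vdash\beta\in\dot C_{gen}$. Above $\delta_M$ there is no supply of points guaranteed to be untouched by the conditions.

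\textbf{The missing idea is to manufacture such a supply below the top model.}

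The paper takes a continuous chain $\langle M_n\mid n\le\omega\rangle$ with all parameters in $M_0$. It puts $\delta=\delta_{M_\omega}=\sup_n\delta_{M_n}$ into the condition, extends to $p$ deciding some $\beta>\delta$, and picks $n$ with $p_M\in M_n$. Then:
\begin{itemize}
\item $a_p\cap[\delta_n,\delta)=\emptyset$.
\item The reflected witness $q\le p_M$ for $\alpha$ can be found inside $M_n$, so $a_q\subseteq\delta_n$. This also settles your worry about interference from $a_r$.
\item Adding $(\varepsilon,\delta_n]$ to $p\wedge q$ forces $\delta_n\notin\dot C_{gen}$.
\end{itemize}
The operation-specific work is then to make $[\alpha\beta]=\delta_n$ for some $\alpha\in M_n$.

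For $[\cdot\cdot]_{\mathcal C}$: take $\xi<\delta_n$ above $\lambda(\delta,\beta)$ and $\lambda(\delta_n,\delta)$. Take $t\in M_n$ extending $\rho_{0\beta}\upharpoonright\xi$ with the same color as $\rho_{0\beta}\upharpoonright\delta_n$ under a specializing map of $T(\rho_0)$, hence incomparable with it. Then $\triangle=\triangle(\rho_{0\beta},t)$ gives $\mathsf{Tr}(\triangle,\beta)=\mathsf{Tr}(\delta,\beta)\cup\mathsf{Tr}(\delta_n,\delta)\cup\mathsf{Tr}(\triangle,\delta_n)$. Any $\alpha\in M_n$ with $t\subseteq\rho_{0\alpha}$ and $\mathsf{Tr}(\triangle,\delta_n)\cap\delta_n\subseteq\alpha$ yields $[\alpha\beta]=\delta_n$.

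For $[\cdot\cdot]_{\mathcal R,e}$: set $m=e_\beta(\delta_n)$ and fix $t\in M_n$ with $r_\beta\upharpoonright m\subseteq t$ and $r_\beta\notin\langle t\rangle$. This determines $\triangle=\triangle(t,r_\beta)\ge m$ before $\alpha$ is chosen. Then take $\alpha\in M_n$ with $r_\alpha\in\langle t\rangle$ above the finite set $\{\xi<\delta_n\mid e_\beta(\xi)\le\triangle\}$.

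The $(T,a)$ case is analogous, with $m$ the $a$-value of the restriction to level $\delta_n$ of the node decided into $\dot W$.

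Without this device, or an equivalent one, your proposal does not prove the theorem.
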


\begin{proof}
Let $p_{0}\in\mathbb{BC}$ and $\dot{B}$ such that $p_{0}\Vdash$%
\textquotedblleft$\dot{B}\in\left[  \omega_{1}\right]  ^{\omega_{1}}%
$\textquotedblright. We want to extend $p_{0}$ to a condition that forces that
there are $\alpha,\beta\in\dot{B}$ such that $\left[  \alpha\beta\right]  $ is
not in $\dot{C}_{gen}$. Let $\left\langle M_{\alpha}\mid\alpha\leq
\omega\right\rangle $ be a continuous chain of elementary submodels such that
$p_{0},\dot{B}\in M_{0},$ as well as the square-bracket operation we want to
take care of. For convenience, we introduce the following notation:

\begin{enumerate}
\item $M=M_{\omega}.$

\item $\delta=\delta_{M}.$

\item $\delta_{n}=\delta_{M_{n}}$ for $n\in\omega.$

\item $p_{0}=\left(  a_{0},I_{0}\right)  .$
\end{enumerate}

\qquad\ \ \qquad\ \ 

Note that $\delta=\bigcup\limits_{n\in\omega}\delta_{n}.$ Let $p_{1}=\left(
a_{0}\cup\left\{  \delta\right\}  ,I_{0}\right)  .$ We know that $p_{1}\leq
p_{0}$ and is an $(M,\mathbb{BC})$-generic condition (see Proposition
\ref{Prop BC proper}). Since $\dot{B}$ is forced to be uncountable, we can now
find $p\leq p_{1}$ and $\beta>\delta$ such that $p\Vdash$\textquotedblleft%
$\beta\in\dot{B}$\textquotedblright. Write $p=\left(  a_{p},I_{p}\right)  .$
By Proposition \ref{Prop BC Strongly proper}, we know that $p_{M}=\left(
a_{p}\cap M,I_{p}\cap M\right)  $ is in $M$ and is weaker than $p.$ Denote
$p_{M}=\left(  a_{M},I_{M}\right)  .$

\qquad\qquad\qquad\qquad\ \ \ \ 

The proof now splits into cases depending on the type of square-bracket
operation under consideration. We begin with the case of the operation induced
by a \textsf{C}-sequence, say $\mathcal{C}.$ For notational convenience in
this part of the proof, we will write $\left[  \gamma\xi\right]  $ for
$\left[  \gamma\xi\right]  _{\mathcal{C}}$ and $\triangle\left(  \gamma
,\xi\right)  $ for $\triangle_{\mathcal{C}}\left(  \gamma,\xi\right)  $ (for
any $\gamma,\xi\in\omega_{1}$). We assume that $\mathcal{C}\in M_{0}.$

\qquad\ \ \ \qquad\ \ 

Let $g:T\left(  \rho_{0}\right)  \longrightarrow\omega$ be a specialization
mapping with $g\in M_{0}.$ Find $n\in\omega$ such that $\lambda\left(
\delta,\beta\right)  $ and $p_{M}$ are in $M_{n},$ then choose $\xi<\delta
_{n}$ such that $\lambda\left(  \delta,\beta\right)  ,$ $\lambda\left(
\delta_{n},\delta\right)  <\xi$ and $\xi$ is above $p_{M}.$ Denote
$s=\rho_{0\beta}\upharpoonright\xi$ and let $i\in\omega$ such that $g\left(
\rho_{0\beta}\upharpoonright\delta_{n}\right)  =i.$ Define $A$ as the set of
all $t\in T\left(  \rho_{0}\right)  $ that satisfy the following properties:

\begin{enumerate}
\item $s\subseteq t.$

\item $g\left(  t\right)  =i.$

\item For every $\eta<\omega_{1},$ there is $q\in\mathbb{BC}$ for which:

\begin{enumerate}
\item $p_{M}\sqsubseteq q.$

\item There is $\alpha>\eta$ such that $q\Vdash$\textquotedblleft$\alpha
\in\dot{B}$\textquotedblright\ and $t\subseteq\rho_{0\alpha}.$
\end{enumerate}
\end{enumerate}

\qquad\ \ 

It follows that $A\in M_{n},$ since all the parameters used to define in $A$
belong to $M_{n}.$ Moreover, note that $\rho_{0\beta}\upharpoonright\delta
_{n}\in A,$ which implies that $A\neq\emptyset.$ By elementarity., we can find
$t\in A\cap M_{n}.$ It follows that $s\subseteq t,\rho_{0\beta}\upharpoonright
\delta_{n}$ and that $t$ and $\rho_{0\beta}\upharpoonright\delta_{n}$ are
incompatible in $T\left(  \rho_{0}\right)  ,$ since $g\left(  t\right)
=g\left(  \rho_{0\beta}\upharpoonright\delta_{n}\right)  .$ Let $\triangle
=\triangle\left(  \rho_{0\beta},t\right)  .$ We look at \textsf{Tr}%
$(\triangle,\delta_{n}).$ Since $t\in A,$ we can find $q=\left(  a_{q}%
,I_{q}\right)  \in\mathbb{BC}$ and $\alpha$ such that:

\begin{enumerate}
\item $\alpha,q\in M_{n}.$

\item \textsf{Tr}$(\triangle,\delta_{n})\setminus\left\{  \delta_{n}\right\}
\subseteq\alpha.$

\item $p_{M}\sqsubseteq q.$

\item $q\Vdash$\textquotedblleft$\alpha\in\dot{B}$\textquotedblright\ and
$t\subseteq\rho_{0\alpha}.$
\end{enumerate}

\qquad\ \ \qquad\ \ \ \qquad\ \ 

It follows that $\triangle=\triangle\left(  \alpha,\beta\right)  .$ Pick
$\varepsilon<\delta_{n}$ that is above $q.$ Define $r=\left(  a_{p}\cup
a_{q},I_{p}\cup I_{q}\cup\left\{  (\varepsilon,\delta_{n}]\right\}  \right)
.$ It follows that $r$ extends $p$ and $q,$ it is $(M,\mathbb{BC})$-generic
but not $(M_{n},\mathbb{BC})$-generic. Note that $r\Vdash$\textquotedblleft%
$\alpha,\beta\in\dot{B}$\textquotedblright\ and $r\Vdash$\textquotedblleft%
$\delta_{n}\notin\dot{C}_{gen}$\textquotedblright. We will now compute
$\left[  \alpha\beta\right]  .$ Recall that $\lambda\left(  \delta
,\beta\right)  ,\lambda\left(  \delta_{n},\delta\right)  <\triangle.$ By
Proposition \ref{Prop juntar trazas} we get that:

\qquad\qquad\qquad\ \ \ \ \ \ \qquad\ \ \ 

\hfill%
\begin{tabular}
[c]{ccc}%
\textsf{Tr}$(\triangle,\beta)$ & $=$ & \textsf{Tr}$(\delta,\beta)\cup
$\textsf{Tr}$(\delta_{n},\delta)\cup$\textsf{Tr}$(\triangle,\delta_{n})$%
\end{tabular}
\hfill\ 

\ \ \ \qquad\ \ \ \ \qquad\ \ \ \ \qquad\qquad\ \ \ \ 

Since $\alpha<\delta_{n}$ and \textsf{Tr}$(\triangle,\delta_{n})\setminus
\left\{  \delta_{n}\right\}  \subseteq\alpha,$ it follows that $\left[
\alpha\beta\right]  =\delta_{n},$ which is forced by $r$ to be outside
$\dot{C}_{gen}.$

\qquad\ \ \qquad\ \ \ 

We now consider the case of the operation induced by a sequence of reals
$\mathcal{R}=\left\{  r_{\alpha}\mid\alpha\in\omega_{1}\right\}
\subseteq2^{\omega}$ and $e=\left\langle e_{\beta}\mid\beta\in\omega
_{1}\right\rangle $ a nice sequence. For notational convenience in this part
of the proof, we will write $\left[  \gamma\xi\right]  $ for $\left[
\gamma\xi\right]  _{\mathcal{R},e}$ and $\triangle\left(  \gamma,\xi\right)  $
for $\triangle_{\mathcal{R}}\left(  \gamma,\xi\right)  $ (for any $\gamma
,\xi\in\omega_{1}$). We are assuming that $\mathcal{R},e\in M_{0}.$

\qquad\qquad\qquad\qquad

Find $n\in\omega$ such that $p_{M}\in M_{n}$. Define $m=e_{\beta}\left(
\delta_{n}\right)  ,$ $s=r_{\beta}\upharpoonright m$ and $F=\left\{  \xi
<\beta\mid e_{\beta}\left(  \xi\right)  \leq m\right\}  \cap M_{n}.$ Since $F$
is a finite set, we have that $F\in M_{n}.$ Choose $\eta<\delta_{n}$ such that
$F\subseteq\eta$ and $\eta$ is above $p_{M}.$ Let $A$ be the set of all
$t\in2^{<\omega}$ such that there is $q\in\mathbb{BC}$ with the following properties:

\begin{enumerate}
\item $s\subseteq t.$

\item $p_{M}\sqsubseteq q.$

\item $q\Vdash$\textquotedblleft$\exists^{\omega_{1}}\alpha\in\dot{B}\left(
r_{\alpha}\in\left\langle t\right\rangle \right)  $\textquotedblright.
\end{enumerate}

\qquad\ \ 

Note that if $k>m,$ then $r_{\beta}\upharpoonright k\in A,$ as witnessed by
$p.$ Since $A\in M_{n}$ while $r_{\beta}$ is not, we can find $t\in A$ for
which $r_{\beta}\notin\left\langle t\right\rangle .$ Denote $\triangle
=\triangle\left(  t,r_{\beta}\right)  \geq m.$ By elementarity., we can find
$q_{0}\in\mathbb{BC}\cap M_{n}$ such that $p_{M}\sqsubseteq q_{0}$ and
$q_{0}\Vdash$\textquotedblleft$\exists^{\omega_{1}}\alpha\in\dot{B}\left(
r_{\alpha}\in\left\langle t\right\rangle \right)  $\textquotedblright. \ Let
$H=$ $\left\{  \xi<\beta\mid e_{\beta}\left(  \xi\right)  \leq\triangle
\right\}  \cap M_{n}.$ We can now find $q=\left(  a_{q},I_{q}\right)  \in
M_{n}$ extending $q_{0}$ and $\alpha\in M_{n}$ such that $q\Vdash
$\textquotedblleft$\alpha\in\dot{B}$\textquotedblright, $r_{\alpha}%
\in\left\langle t\right\rangle $ and $H\subseteq\alpha.$ Note that
$\triangle=\triangle\left(  \alpha,\beta\right)  .$ Choose $\varepsilon\in
M_{n}$ that is above $q$ and define $r=\left(  a_{p}\cup a_{q},I_{p}\cup
I_{q}\cup\left\{  (\varepsilon,\delta_{n}]\right\}  \right)  .$ It follows
that $r$ extends $p$ and $q,$ it is $(M,\mathbb{BC})$-generic but not
$(M_{n},\mathbb{BC})$-generic. It follows that $r\Vdash$\textquotedblleft%
$\alpha,\beta\in\dot{B}$\textquotedblright\ and $r\Vdash$\textquotedblleft%
$\delta_{n}\notin\dot{C}_{gen}$\textquotedblright. Moreover, it is easy to see
that $\left[  \alpha\beta\right]  =\delta_{n},$ which is forced by $r$ to be
outside $\dot{C}_{gen}.$

\qquad\qquad\qquad\ \ \ 

The last case is when the operation is induced by a Hausdorff, special
Aronszajn tree $T$ and $a:T\longrightarrow\omega$ a specialization mapping.
For notational convenience, we now write $\left[  st\right]  $ for $\left[
st\right]  _{T,r}$. We can assume that $T,a\in M_{0}.$ One more change is
needed, now $\dot{B}$ is forced to be an uncountable subset of $T$ and
$\beta>\delta$ is the height of some $t\in T$ such that $p\Vdash
$\textquotedblleft$t\in\dot{B}$\textquotedblright.

\qquad\ \qquad\ \ \ 

Choose $n\in\omega$ such that $p_{M}\in M_{n}$ . We can now define $m=a\left(
t\upharpoonright\delta_{n}\right)  $ and $F=\left\{  \eta<\beta\mid a\left(
t\upharpoonright\eta\right)  \leq m\right\}  \cap M_{n}.$ Since $F$ is a
finite set, we have that $F\in M_{n}.$ Choose $\xi<\delta_{n}$ such that
$F\subseteq\xi$ and $\xi$ is above $p_{M}.$ Let $A$ be the set of all $s\in T$
such that there is $q\in\mathbb{BC}$ with the following properties:

\begin{enumerate}
\item $t\upharpoonright\xi\leq_{T}s.$

\item $p_{M}\sqsubseteq q.$

\item $q\Vdash$\textquotedblleft$s\in\dot{B}$\textquotedblright.
\end{enumerate}

\qquad\ \qquad\ \ \ 

It follows that $A\in M_{n}$ and $t\in A.$ Apply Proposition
\ref{Prop Aronszajn y submodelo} and find $\gamma\in M$ and $w\in T_{\gamma}$
such that $w\perp t$ and there are uncountably many $s\in A$ such that
$w\leq_{T}s.$ We look at $w\wedge t.$ Since $\xi<\left\vert w\wedge
t\right\vert ,$ it follows that $a\left(  w\wedge t\right)  >m=a\left(
t\upharpoonright\delta_{n}\right)  .$ Define $G=\left\{  \eta<\beta\mid
a\left(  t\upharpoonright\eta\right)  \leq a\left(  w\wedge t\right)
\right\}  \cap M_{n}.$ Once again, $G\in M_{n}$ since it is a finite subset of
$M_{n}.$ We can now find $s\in A\cap M_{n}$ and $q\in\mathbb{BC}\cap M_{n}$
with the following properties:

\begin{enumerate}
\item $w\leq_{T}s.$

\item $\left\vert s\right\vert >\eta$ for every $\eta\in G.$

\item $p_{M}\sqsubseteq q.$

\item $q\Vdash$\textquotedblleft$s\in\dot{B}$\textquotedblright.
\end{enumerate}

\qquad\ \ \ \qquad\ \ \ 

Note that $s\wedge t=w\wedge t.$ Choose $\varepsilon\in M_{n}$ that is above
$q$ and define $r=\left(  a_{p}\cup a_{q},I_{p}\cup I_{q}\cup\left\{
(\varepsilon,\delta_{n}]\right\}  \right)  .$ It follows that $r$ extends $p$
and $q,$ it is $(M,\mathbb{BC})$-generic but not $(M_{n},\mathbb{BC}%
)$-generic. Note that $r\Vdash$\textquotedblleft$s,t\in\dot{B}$%
\textquotedblright\ and $r\Vdash$\textquotedblleft$\delta_{n}\notin\dot
{C}_{gen}$\textquotedblright. Moreover, we have that $\left[  st\right]
=t\upharpoonright\delta_{n},$ whose height is forced by $r$ to be outside
$\dot{C}_{gen}.$
\end{proof}

\qquad\qquad\qquad\ \ \qquad\ \ \ 

We now want to prove the consistency of Martin Axiom (\textsf{MA}) with the
existence of square-bracket operations that fail to have the Ramsey club
property. To establish this result, we aim to show that the Baumgartner club
causes this property to fail so badly (for square-bracket operations from the
ground model) that it can not be fixed by a \textsf{c.c.c.} forcing.
Unfortunately, we were only able to prove this result for the square-bracket
operations induced by \textsf{C}-sequences.

\begin{proposition}
Let $\mathbb{\dot{Q}}$ be a $\mathbb{BC}$-name for a \textsf{c.c.c.} forcing
and $\mathcal{C}$ a \textsf{C}-sequence. $\mathbb{BC\ast\dot{Q}}$ forces that
the generic club is not $[\cdot\cdot]_{\mathcal{C}}$-Ramsey$.$
\label{Prop ccc no arregla}
\end{proposition}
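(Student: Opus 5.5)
We adapt the $\mathsf{C}$-sequence case of the proof of Theorem \ref{Teo BC destruve V} to the two-step iteration $\mathbb{BC}\ast\mathbb{\dot{Q}}$. Fix a condition $(p_{0},\dot{x}_{0})$ and a $\mathbb{BC}\ast\mathbb{\dot{Q}}$-name $\dot{B}$ forced to be an uncountable subset of $\omega_{1}$. Take a continuous chain $\left\langle M_{k}\mid k\leq\omega\right\rangle$ with $(p_{0},\dot{x}_{0}),\dot{B},\mathbb{\dot{Q}},\mathcal{C}$ and a specializing map $g$ of $T(\rho_{0})$ in $M_{0}$, and write $M=M_{\omega}$, $\delta=\delta_{M}$, $\delta_{n}=\delta_{M_{n}}$. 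Let $p_{1}=(a_{0}\cup\{\delta\},I_{0})$. By Corollary \ref{Cor BC 2 pasos}, $(p_{1},\dot{x}_{0})$ is $(M,\mathbb{BC}\ast\mathbb{\dot{Q}})$-generic. Extend it to $(p,\dot{x})$ deciding some $\beta>\delta$ with $(p,\dot{x})\Vdash\beta\in\dot{B}$, and set $p_{M}=(a_{p}\cap M,I_{p}\cap M)\in M$. Choose $n$, $\xi$, $s=\rho_{0\beta}\upharpoonright\xi$ and $i=g(\rho_{0\beta}\upharpoonright\delta_{n})$ exactly as before, with $\lambda(\delta,\beta),\lambda(\delta_{n},\delta)<\xi$ and $\xi$ above $p_{M}$.

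The new difficulty is that the second coordinate $\dot{x}$ is a $\mathbb{BC}$-name not lying in $M$, so we cannot simply amalgamate a condition of $M_{n}$ with $(p,\dot{x})$. The key point is that $\mathbb{\dot{Q}}$ is c.c.c., and we use this through a counting/antichain argument carried out in the generic extension by $\mathbb{BC}$. Define $A$ as the set of $t\supseteq s$ with $g(t)=i$ such that for every $\eta$ there exist $q$ with $p_{M}\sqsubseteq q$, a $\mathbb{BC}$-name $\dot{y}$ and $\alpha>\eta$ with $(q,\dot{y})\Vdash\alpha\in\dot{B}$ and $t\subseteq\rho_{0\alpha}$. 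Then $A\in M_{n}$ and $\rho_{0\beta}\upharpoonright\delta_{n}\in A$, witnessed by $(p,\dot{x})$ and its extensions. To verify the latter, we need that for every $\eta$ the relevant statement is witnessed. A cleaner route is to pass to a $\mathbb{BC}$-generic $G\ni p$ and work in $V[G]$, where $\mathbb{Q}=\mathbb{\dot{Q}}[G]$ is c.c.c. and $M[G]$ is an elementary submodel with $M[G]\cap V=M$, and similarly $M_{n}[G]$ provided $G$ is $M_{n}$-generic.

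Here is the problem: we will want $r$ with $(\varepsilon,\delta_{n}]\in I_{r}$, which makes $G$ \emph{not} $M_{n}$-generic. So we plan to choose the elements needed from $M_{n}$ in $V$, before collapsing the genericity, as in the original proof. Then we use c.c.c. via the following trick. For each candidate $\alpha$, consider the $\mathbb{BC}$-name for the Boolean value $\|\alpha\in\dot{B}\|_{\mathbb{Q}}$. Let $\dot{D}$ be the name for the set of $\alpha$ such that $\alpha\in\dot{B}$ is compatible with $\dot{x}$. By c.c.c. of $\mathbb{Q}$, the set $\dot{B}^{*}=\{\alpha\mid\exists y\in\mathbb{Q}\ (y\Vdash\alpha\in\dot{B})\}$ is a $\mathbb{BC}$-name for an uncountable set lying in $M_{0}$.

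More importantly, the standard fact is that in $V[G]$, for an uncountable $X\subseteq\omega_{1}$ and a c.c.c. $\mathbb{Q}$, there is a condition forcing that $\dot{B}\cap X$ is uncountable whenever $X$ consists of $\alpha$ with $y_{\alpha}\Vdash\alpha\in\dot{B}$. Using this, we rephrase the goal: it suffices to find, in $V$, $r\leq p$ with $r\Vdash\delta_{n}\notin\dot{C}_{gen}$ and uncountably many $\alpha$ with the trace property, all below $\delta_{n}$-level and in $M_{n}$. That is impossible, since we need only one $\alpha<\delta_{n}$.

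So instead we use Knaster-type amalgamation: choose $\alpha\in M_{n}$ together with $(q,\dot{y})\in M_{n}$, and require, via c.c.c. reflection, that $\dot{y}$ and $\dot{x}$ be forced by $r=p\wedge q\wedge(\varepsilon,\delta_{n}]$ to be compatible. This is the main obstacle. Our first attempt will be to strengthen the definition of $A$ so that it uses only first coordinates. We put into $A$ those $t$ such that $p_{M}$-extensions force, in $\mathbb{BC}$, that \textquotedblleft there are uncountably many $\alpha\in\dot{B}^{*}$ with $t\subseteq\rho_{0\alpha}$\textquotedblright. Then, in $V[G]$ with $G\ni r$, we apply c.c.c.\ of $\mathbb{Q}$ to the uncountable family $\{y_{\alpha}\}$ to find some $y_{\alpha}$ compatible with $x=\dot{x}[G]$. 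Since all such $\alpha$ have the prescribed $t\subseteq\rho_{0\alpha}$ and lie beyond the finitely many trace points, the computation $[\alpha\beta]_{\mathcal{C}}=\delta_{n}$ via Proposition \ref{Prop juntar trazas} goes through as in Theorem \ref{Teo BC destruve V}. The remaining subtlety is that this requires $\alpha<\delta_{n}$. We therefore run the whole choice of $\delta_{n}$ inside $V[G]$ using the models $M_{k}[G]$ for $k$ where $G$ remains generic. The plan is to make $r$ non-generic only for $M_{n}$ while choosing the uncountable family and $t$ using $M_{n-1}$-level data so as to land below $\delta_{n}$.
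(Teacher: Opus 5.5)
\textbf{There is a genuine gap.} You correctly isolate the obstacle: the second coordinate must be reflected into a model over which $G$ is generic, while $[\alpha\beta]$ must be the height of a model over which $G$ is not generic. But neither device you propose gets past it.

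\textbf{The c.c.c.\ step.} As stated it is false. Given an uncountable family $\{y_\alpha\}\subseteq\mathbb{Q}$, c.c.c.\ only yields \emph{some} condition forcing uncountably many $y_\alpha$ into the generic. It does not yield a $y_\alpha$ compatible with the prescribed $x=\dot{x}[G]$. Your revised $A$ mentions only first coordinates, so the $t$ you obtain may satisfy $x\Vdash\neg\exists\mu\in\dot{B}\,(t\subseteq\rho_{0\mu})$, and then every $y_\alpha$ is incompatible with $x$.

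The paper avoids this by putting the actual second coordinate into a set. In $V[G]$, $A$ is the set of $w\in\mathbb{Q}$ having a witness $t$ with three properties: $t$ extends $\rho_{0\beta}\upharpoonright\eta$ and $\rho_{0\beta}\upharpoonright\lambda(\delta_\varepsilon,\delta_\gamma)$; $g(t)=i$; and $w\Vdash\exists^{\omega_1}\mu\in\dot{B}\,(t\subseteq\rho_{0\mu})$. Then $A\in M_\varepsilon[G]$. The second coordinate lies in $A$ with witness $\rho_{0\beta}\upharpoonright\delta_\varepsilon$, because $\beta\notin M[G][H]$. Since every condition of a c.c.c.\ poset is $(M_\varepsilon[G],\mathbb{Q})$-generic, some $w\in A\cap M_\varepsilon[G]$, with a witness $t\in M_\varepsilon$, is compatible with it. This requires $G$ to be $M_\varepsilon$-generic.

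\textbf{Where the non-generic height sits.} This is the decisive gap. The final $\alpha$ comes from genericity over some model $N$, so $\alpha<\delta_N$. Yet for $[\alpha\beta]=\delta^{*}$ to hold, $\alpha$ must exceed every point of $\textsf{Tr}(\triangle,\beta)$ below the non-generic height $\delta^{*}$.

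With your chain of length $\omega+1$, $N=M_{n-1}$ and $\delta^{*}=\delta_n$, this fails. The walk from $\delta_n$ to $\triangle<\delta_{n-1}$ first steps to $\min(C_{\delta_n}\setminus\triangle)$. Since $C_{\delta_n}\cap\delta_{n-1}$ is finite, this step in general lies in $[\delta_{n-1},\delta_n)$. So every $\alpha<\delta_{n-1}$ gives $[\alpha\beta]<\delta_n$, an ordinal you do not control.

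The paper uses a chain of length $\omega^2$ and a \emph{limit} $\gamma$ with $\lambda(\delta,\beta),p_M\in M_\gamma$. Then $\delta_\gamma=\sup_{\xi<\gamma}\delta_\xi$, so finitely many walk points below $\delta_\gamma$ are bounded by some $\delta_\alpha$ with $\alpha<\gamma$.

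There is a timing constraint as well. $\alpha$ must already be fixed in $q=(a_p\cup\{\delta_\varepsilon,\delta_\alpha\},I_p\cup\{(\delta_\alpha,\delta_\gamma]\})$ before $t$, and hence $\triangle$, is found in $V[G]$. So the paper also routes the walk from $\delta_\gamma$ through $\delta_\varepsilon$, by arranging $\lambda(\delta_\varepsilon,\delta_\gamma)<\triangle<\delta_\varepsilon$. Then only $\textsf{Tr}(\delta_\varepsilon,\delta_\gamma)$, which is known in advance, has to be bounded by $\delta_\alpha$.

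Now $q$ is generic for $M$, $M_\varepsilon$ and $M_\alpha$, but not for $M_\gamma$. Pull $t$, $w$ and a common extension $u$ back to some $r\leq q$. The condition $(r,\dot{u})$ is $(M_\alpha,\mathbb{BC}\ast\dot{\mathbb{Q}})$-generic by Corollary \ref{Cor BC 2 pasos}. This yields $\mu<\delta_\alpha$ in $\dot{B}$, above $\textsf{Tr}(\triangle,\beta)\cap\delta_\gamma$, with $t\subseteq\rho_{0\mu}$, so $[\mu\beta]=\delta_\gamma\notin\dot{C}_{gen}$.

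Your sketch has neither the limit-index trick nor the routing of the walk through the reflecting model, and without them the argument does not close.
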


\begin{proof}
We follow an approach similar to that of Theorem \ref{Teo BC destruve V}.
However, this proof involves further complications because of the extra
difficulties that arise when working with an iteration. For notational
convenience, we will write $\left[  \gamma\xi\right]  $ for $\left[  \gamma
\xi\right]  _{\mathcal{C}}$ and $\triangle\left(  \gamma,\xi\right)  $ for
$\triangle_{\mathcal{C}}\left(  \gamma,\xi\right)  $ (for any $\gamma,\xi
\in\omega_{1}$).

\qquad\ \ \ \qquad\ \ \ 

Let $\left(  p_{0},\dot{x}\right)  \in\mathbb{BC\ast\dot{Q}}$ and $\dot{B}$
such that $\left(  p_{0},\dot{x}\right)  \Vdash$\textquotedblleft$\dot{B}%
\in\left[  \omega_{1}\right]  ^{\omega_{1}}$\textquotedblright. We want to
extend $\left(  p_{0},\dot{x}\right)  $ to a condition forcing that there are
$\alpha,\beta\in\dot{B}$ such that $\left[  \alpha\beta\right]  $ is not in
$\dot{C}_{gen}$. Let $\left\langle M_{\alpha}\mid\alpha\leq\omega
^{2}\right\rangle $ be a continuous chain of elementary submodels such that
$\mathbb{\dot{Q}},\left(  p_{0},\dot{x}\right)  ,\dot{B},\mathcal{C}\in M_{0}%
$. For convenience, we introduce the following notation:

\begin{enumerate}
\item $M=M_{\omega^{2}}.$

\item $\delta=\delta_{M}.$

\item $\delta_{\alpha}=\delta_{M_{\alpha}}$ for $\alpha<\omega^{2}.$

\item $p_{0}=\left(  a_{0},I_{0}\right)  .$
\end{enumerate}

\qquad\ \ \qquad\ \ 

Let $p_{1}=\left(  a_{0}\cup\left\{  \delta\right\}  ,I_{0}\right)  .$ By
Corollary \ref{Cor BC 2 pasos}, we know that $\left(  p_{1},\dot{x}\right)
\leq(p_{0},\dot{x})$ is an $(M,\mathbb{BC\ast\dot{Q}})$-generic condition.
Since $\dot{B}$ is forced to be uncountable, we can now find $(p,\dot{y}%
)\leq\left(  p_{1},\dot{x}\right)  $ and $\beta>\delta$ such that $(p,\dot
{y})\Vdash$\textquotedblleft$\beta\in\dot{B}$\textquotedblright. Write
$p=\left(  a_{p},I_{p}\right)  .$ By Proposition \ref{Prop BC Strongly proper}%
, we know that $p_{M}=\left(  a_{p}\cap M,I_{p}\cap M\right)  $ is in $M$ and
is weaker than $p.$ Denote $p_{M}=\left(  a_{M},I_{M}\right)  .$

\qquad\qquad\qquad\qquad\ \ \ \ \qquad\ \qquad\ \ \ \ \ \qquad\ \ \ \ \ 

Let $g:T\left(  \rho_{0}\right)  \longrightarrow\omega$ be a specialization
mapping with $g\in M_{0}.$ Find $\gamma<\omega^{2}$ a limit ordinal such that
$\lambda\left(  \delta,\beta\right)  ,p_{M}\in M_{\gamma}$. Choose
$\eta<\delta_{\gamma}$ that is above $p_{M}$ and $\lambda\left(  \delta
,\beta\right)  ,\lambda\left(  \delta_{\gamma},\delta\right)  <\eta.$ Since
$\gamma$ is limit, we can find $\varepsilon<\gamma$ such that $\eta
<\delta_{\varepsilon}.$ Note that $\delta_{\gamma}\in$ \textsf{Tr}%
$(\delta_{\varepsilon},\delta).$ Once again, since $\gamma$ is limit, we can
find $\alpha\in\left(  \varepsilon,\gamma\right)  $ such that \textsf{Tr}%
$(\delta_{\varepsilon},\delta_{\gamma})\setminus\left\{  \delta_{\gamma
}\right\}  \subseteq\delta_{\alpha}.$ Define $q=(a_{p}\cup\left\{
\delta_{\varepsilon},\delta_{\alpha}\right\}  ,I_{p}\cup\left\{
(\delta_{\alpha},\delta_{\gamma}]\right\}  ).$ It follows that $q$ is a
$\mathbb{BC}$-generic condition for $M,M_{\varepsilon}$ and $M_{\alpha},$ but
it is not for $M_{\gamma}.$ Let $G\subseteq\mathbb{BC}$ be a generic filter
with $q\in G.$

\qquad\qquad\qquad

We now go to $V\left[  G\right]  .$ Note that $M\left[  G\right]
,M_{\varepsilon}\left[  G\right]  $ and $M_{\alpha}\left[  G\right]  $ are
forcing extensions of $M,M_{\varepsilon}$ and $M_{\alpha}$ respectively.
Denote $y=\dot{y}\left[  G\right]  ,$ $\mathbb{Q=\dot{Q}}\left[  G\right]  $
and $i\in\omega\ $such\ that $g\left(  \rho_{0\beta}\upharpoonright
\delta_{\varepsilon}\right)  =i.$ Define $A$ as the set of $w\in\mathbb{Q}$
for which there is $t\in T\left(  \rho_{0}\right)  $ such that:

\begin{enumerate}
\item $\rho_{0\beta}\upharpoonright\lambda\left(  \delta_{\varepsilon}%
,\delta_{\gamma}\right)  ,$ $\rho_{0\beta}\upharpoonright\eta\subseteq t.$

\item $g\left(  t\right)  =i.$

\item $w\Vdash$\textquotedblleft$\exists^{\omega_{1}}\mu\in\dot{B}\left(
t\subseteq\rho_{0\mu}\right)  $\textquotedblright.
\end{enumerate}

\qquad\ \qquad\ \ \ \ 

If $w\in A$ and $t$ is as above, we say that $t$ is a witness for $w\in A.$
Note that $A\in M_{\varepsilon}\left[  G\right]  .$ We claim that $y\in A$ and
$\rho_{0\beta}\upharpoonright\delta_{\varepsilon}$ is a witness of it. We just
need to check that $y\Vdash$\textquotedblleft$\exists^{\omega_{1}}\mu\in
\dot{B}\left(  t\subseteq\rho_{0\mu}\right)  $\textquotedblright. In order to
check this, let $H\subseteq\mathbb{Q}$ be a $\left(  V\left[  G\right]
,\mathbb{Q}\right)  $ generic filter with $y\in H.$ We have that $M\left[
G\right]  \left[  H\right]  $ is a generic extension of $M\left[  G\right]  ,$
which is a generic extension of $M,$ so $\beta\notin M\left[  G\right]
\left[  H\right]  .$ It follows by Lemma \ref{Lema modelo contension} that
$\{\mu\in\dot{B}\left[  G\times H\right]  \mid t\subseteq\rho_{0\mu}\}$ is
uncountable. Since $H$ was any generic filter that has $y$ in it, it follows
that $y\Vdash$\textquotedblleft$\exists^{\omega_{1}}\mu\in\dot{B}\left(
t\subseteq\rho_{0\mu}\right)  $\textquotedblright, just as we wanted.

\qquad\qquad\qquad

We now know that $y\in A.$ Since $\mathbb{Q}$ is \textsf{c.c.c. }and $A\in
M_{\varepsilon}\left[  G\right]  ,$ it follows by Proposition
\ref{prop ccc gen} and Lemma \ref{equiv condicion generica} that there is
$w\in A\cap M_{\varepsilon}\left[  G\right]  $ that is compatible with $y.$
Pick $t\in T\left(  \rho_{0}\right)  \cap M_{\varepsilon}\left[  G\right]  $ a
witness for $w\in A.$ Since $g\left(  t\right)  $ and $g\left(  \rho_{0\beta
}\upharpoonright\delta_{\varepsilon}\right)  $ are the same, it follows that
$t$ and $\rho_{0\beta}$ are incomparable in $T\left(  \rho_{0}\right)  .$
Choose $u\in\mathbb{Q}$ a common extension of $w$ and $y.$

\qquad\qquad\qquad\qquad

With the recently new knowledge acquired, we return to $V.$ Let $\dot{u}$ and
$\dot{w}$ be $\mathbb{BC}$-names for $u$ and $w.$ We can now find $r\leq q$
such that:

\begin{enumerate}
\item $r\Vdash$\textquotedblleft$\dot{w}\in\dot{A}$\textquotedblright\ and
knows a $t\in M_{\varepsilon}$ that is a witness.

\item $r\Vdash$\textquotedblleft$\dot{u}\leq\dot{w},\dot{y}$\textquotedblright.
\end{enumerate}

\qquad\ \ \ 

It follows that $(r,\dot{u})\leq(p,\dot{y})$ (so $(r,\dot{u})\Vdash
$\textquotedblleft$\beta\in\dot{B}$\textquotedblright$)$ and that
\newline$(r,\dot{u})\Vdash$\textquotedblleft$\exists^{\omega_{1}}\mu\in\dot
{B}\left(  t\subseteq\rho_{0\mu}\right)  $\textquotedblright. Denote
$\triangle=\triangle\left(  \rho_{0\beta},t\right)  $ (recall that
$\rho_{0\beta}\perp t$). Since $\lambda\left(  \delta,\beta\right)
,\lambda\left(  \delta_{\gamma},\delta\right)  ,\lambda\left(  \delta
_{\varepsilon},\delta_{\gamma}\right)  <\triangle,$ we have that:

\qquad\ \ \ \ \ 

\hfill%
\begin{tabular}
[c]{ccc}%
\textsf{Tr}$(\triangle,\beta)$ & $=$ & \textsf{Tr}$(\delta,\beta)\cup
$\textsf{Tr}$(\delta_{\gamma},\delta)\cup$\textsf{Tr}$(\delta_{\varepsilon
},\delta_{\gamma})\cup$\textsf{Tr}$(\triangle,\delta_{\varepsilon})$%
\end{tabular}
\hfill\ 

\qquad\ \qquad\ \ \ \ 

It is also worth recalling that \textsf{Tr}$(\delta_{\varepsilon}%
,\delta_{\gamma})\setminus\left\{  \delta_{\gamma}\right\}  \subseteq
\delta_{\alpha},$ so \textsf{Tr}$(\triangle,\beta)\cap\delta_{\gamma}%
\subseteq\delta_{\alpha}.$ In this way, $\delta_{\gamma}=$ \textsf{min}%
$\{$\textsf{Tr}$(\triangle,\beta)\setminus\delta_{\alpha}\}.$ By Corollary
\ref{Cor BC 2 pasos}, we know that $(r,\dot{u})$ is an $(M_{\alpha
},\mathbb{BC\ast\dot{Q}})$-generic condition. Since $(r,\dot{u})\Vdash
$\textquotedblleft$\exists^{\omega_{1}}\mu\in\dot{B}\left(  t\subseteq
\rho_{0\mu}\right)  ,$ we can find $(r_{0},\dot{u}_{0})$ and $\mu$ such that:

\begin{enumerate}
\item $(r_{0},\dot{u}_{0})\leq(r,\dot{u}).$

\item \textsf{Tr}$(\triangle,\beta)\cap\delta_{\gamma}\subseteq\mu$ and
$\mu<\delta_{\alpha}.$

\item $t\subseteq\rho_{0\mu}.$

\item $(r_{0},\dot{u}_{0})\Vdash$\textquotedblleft$\mu\in\dot{B}%
$\textquotedblright.
\end{enumerate}

\qquad\ \ 

It follows that $\triangle=\triangle\left(  \mu,\beta\right)  $ and $\left[
\mu\beta\right]  =\delta_{\gamma,}$ which $r_{0}$ forces that is not in
$\dot{C}_{gen}.$
\end{proof}

\qquad\ \qquad\ \qquad\ \ \ 

The difficulty in extending this result to the other square-bracket operations
lies in the choice of $\alpha.$ While a natural choice exists for operations
induced by \textsf{C}-sequences, it is unclear what the corresponding choice
should be in the other cases.

\qquad\ \qquad\ \ \ \ \ \qquad\ \ \ 

By first adding a Baumgartner club and then forcing Martin's Axiom
(\textsf{MA) }with a \textsf{c.c.c. }forcing, we obtain the following
consistency result:

\begin{theorem}
The statement \textquotedblleft There exists a \textsf{C}-sequence whose
square-bracket operation fails to have the Ramsey club
property\textquotedblright\ is consistent with \textsf{MA }and an arbitrarily
large continuum.
\end{theorem}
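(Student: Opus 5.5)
We start from a model $V$ of \textsf{GCH} (or at least satisfying $2^{<\lambda}=\lambda$ for the target regular $\lambda>\omega_{1}$) and fix a \textsf{C}-sequence $\mathcal{C}\in V$. The first step is to force with $\mathbb{BC}$. Since $\mathbb{BC}$ has size $\omega_{1}$, is proper and has the $\omega_{2}$-\textsf{c.c.}, it preserves cardinals and cardinal arithmetic of the form $2^{<\lambda}=\lambda$ for regular $\lambda\geq\omega_{2}$. Indeed, a poset of size $\omega_{1}$ has at most $\lambda^{\omega_{1}\cdot\mu}=\lambda$ nice names for subsets of $\mu<\lambda$, provided $\lambda^{<\lambda}=\lambda$. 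In $V[G]$ let $C_{gen}$ be the generic club.

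Next, in $V[G]$ apply Theorem \ref{Teo forzar Martin} to obtain a \textsf{c.c.c.} forcing $\mathbb{Q}$ of size $\lambda$ forcing $\mathfrak{m}=\mathfrak{c}=\lambda$, that is, \textsf{MA} with continuum $\lambda$. Let $\mathbb{\dot{Q}}$ be a $\mathbb{BC}$-name for it. The combined extension is by $\mathbb{BC}\ast\mathbb{\dot{Q}}$. By Proposition \ref{Prop ccc no arregla}, $\mathbb{BC}\ast\mathbb{\dot{Q}}$ forces that $C_{gen}$ is not $[\cdot\cdot]_{\mathcal{C}}$-Ramsey. Note that $C_{gen}$ remains a club in the final model: it is closed and unbounded in $\omega_{1}$, and $\omega_{1}$ is preserved by the \textsf{c.c.c.} forcing. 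Also, $\mathcal{C}$ is still a \textsf{C}-sequence there, since the notion is absolute. Hence in the final model $\mathcal{C}$ witnesses the failure of the Ramsey club property, while \textsf{MA} holds with $\mathfrak{c}=\lambda$. Because $\lambda$ was an arbitrary regular cardinal with $2^{<\lambda}=\lambda$ in the ground model (for instance $\lambda=\kappa^{+}$ under \textsf{GCH}), the continuum can be made arbitrarily large.

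The only real content is Proposition \ref{Prop ccc no arregla}, which is already proved. So the main point to check, and the expected obstacle, is the bookkeeping that $2^{<\lambda}=\lambda$ survives $\mathbb{BC}$. This lets Theorem \ref{Teo forzar Martin} be applied in $V[G]$. It follows from counting names as above, together with the $\omega_{2}$-\textsf{c.c.} and properness of $\mathbb{BC}$, which ensure that no cardinals collapse and $\lambda$ stays regular.
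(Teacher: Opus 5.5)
Your proposal is correct and is essentially the paper's argument. You add a Baumgartner club with $\mathbb{BC}$, then force \textsf{MA} with $\mathfrak{c}=\lambda$ via the c.c.c.\ poset of Theorem \ref{Teo forzar Martin}, and apply Proposition \ref{Prop ccc no arregla} to the ground-model \textsf{C}-sequence. The paper states this in a single line; your name-counting check that $2^{<\lambda}=\lambda$ and the regularity of $\lambda$ survive $\mathbb{BC}$ supplies bookkeeping the paper leaves implicit.
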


It can be proved that the failure of the $[\cdot\cdot]_{\mathcal{C}}$-Ramsey
club property for the Baumgartner generic club can not be fixed by a forcing
of the form $\mathbb{COL}_{\in}\left(  \kappa\right)  \ast\mathbb{\dot{P}}$,
where $\mathbb{COL}_{\in}\left(  \kappa\right)  $ denotes the $\in$-collapse
of $\kappa$ (see \cite{NotesonForcingAxioms}) and $\mathbb{\dot{P}}$ is a name
for an $\omega$-proper forcing. This result will appear in a forth-comming paper.

\section{Taking care of all square-bracket operations
\label{Taking care of all}}

In the previous section we proved the consistency of the existence of
square-bracket operations that fail to have the Ramsey club property. In fact,
the Baumgartner club witnesses the failure of this property for all operations
from the ground model. We now aim to construct models in which no
square-bracket operation has the Ramsey club property. Note that by Theorem
\ref{Teorema Todo club es Ramsey}, a single club cannot witness this failure
for all such operations simultaneously. To build a model where every
square-bracket operation fails to have the Ramsey club property, we will add
many Baumgartner clubs using Asper\'{o}'s product.

\begin{theorem}
The statement \textquotedblleft No square-bracket operation has the Ramsey
club property\textquotedblright\ is consistent with an arbitrarily large
continuum. \label{Teo todas fallan}
\end{theorem}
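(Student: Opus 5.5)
Start from a ground model of \textsf{GCH}, fix a cardinal $\kappa\geq\omega_{2}$ with $\kappa^{\omega_{1}}=\kappa$ (for instance $\mathrm{cf}(\kappa)>\omega_{1}$), and force with the Asper\'{o} product $\mathbb{BC}^{\left(\kappa\right)}$. By Proposition \ref{Prop BC tamao continuo} the extension satisfies $2^{\omega}=\kappa$, and no cardinals are collapsed. We claim that in $V[G]$ no square-bracket operation has the Ramsey club property.

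The first step is a reflection argument. A square-bracket operation is coded by an object of size $\omega_{1}$: a \textsf{C}-sequence, a pair $(\mathcal{R},e)$, or a pair $(T,a)$. Every nice $\mathbb{BC}^{\left(\kappa\right)}$-name for such an object involves $\omega_{1}$ many maximal antichains. By the $\omega_{2}$-c.c., each antichain has size at most $\omega_{1}$, and each condition has countable support. Hence there is $Y\subseteq\kappa$ with $|Y|=\omega_{1}$ such that the operation already lies in $V[G\cap\mathbb{BC}^{\left(Y\right)}]$.

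Now pick $\xi\in\kappa\setminus Y$. The earlier lemmas give $\mathbb{BC}^{\left(Y\cup\{\xi\}\right)}\cong\mathbb{BC}^{\left(Y\right)}\times\mathbb{BC}$ and $\mathbb{BC}^{\left(\kappa\right)}\cong\mathbb{BC}^{\left(Y\cup\{\xi\}\right)}\times\mathbb{BC}^{\left(\kappa\setminus(Y\cup\{\xi\})\right)}$. Write $W=V[G_{Y}]$. Then $V[G]$ is $W[g_{\xi}][G_{\mathrm{rest}}]$, where $g_{\xi}$ is $W$-generic for $\mathbb{BC}$ (computed in $V$). By Theorem \ref{Teo BC destruve V}, applied over $W$, the club $C_{\xi}$ is not $[\cdot\cdot]$-Ramsey in $W[g_{\xi}]$. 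This use of Theorem \ref{Teo BC destruve V} needs one check: $\mathbb{BC}^{V}=\mathbb{BC}^{W}$ because no new countable sets of ordinals relevant to finite conditions appear, and the proof of that theorem only uses properness and Proposition \ref{Prop BC Strongly proper}, both of which are absolute for $\mathbb{BC}$ over $W$.

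The remaining task, and the main obstacle, is to show that the tail forcing $\mathbb{BC}^{\left(\kappa\setminus(Y\cup\{\xi\})\right)}$ does not add an uncountable $B$ with $\{[\alpha\beta]\mid\alpha,\beta\in B\}\subseteq C_{\xi}$. This tail is not c.c.c., so Proposition \ref{Prop ccc no arregla} does not apply. My plan is to avoid factoring the tail off altogether. I would redo the density argument of Theorem \ref{Teo BC destruve V} directly in the full product $\mathbb{BC}^{\left(\kappa\right)}$, treating coordinate $\xi$ as the distinguished club.

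Concretely, fix a condition $p_{0}$ and a name $\dot{B}$. Take a continuous chain $\langle M_{n}\mid n\leq\omega\rangle$ and use Theorem \ref{Teo BCX proper} to obtain a condition that is $(M_{\omega},\mathbb{BC}^{\left(\kappa\right)})$-generic, i.e. it puts $\delta$ into coordinate $\alpha$ for every $\alpha\in M_{\omega}\cap\kappa$; this is allowed by the order-type clause, since $\mathsf{OT}(M\cap\kappa)<\delta^{+}$. Then find $\beta>\delta$ forced into $\dot{B}$.

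The next ingredient is an analogue of Proposition \ref{Prop BC Strongly proper}, the restriction $p_{M}\in M$. I would prove it by restricting $p$ to the finite impure support and to $M\cap\kappa$. Pure coordinates contribute only points of $\mathsf{Fim}$, which is finite, so the resulting trace is an element of $M_{n}$ for large $n$. With that in hand, the three case analyses go through verbatim: the sets $A$ are defined in $M_{n}$ using $p_{M}\sqsubseteq q$ coordinatewise. The final amalgamation $r$ adds the interval $(\varepsilon,\delta_{n}]$ only at coordinate $\xi$, so $r$ forces $\delta_{n}\notin\dot{C}_{\xi}$ and $[\alpha\beta]=\delta_{n}$.

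The delicate points in this amalgamation are two. First, we must check that $r$ still satisfies the order-type condition on $\mathsf{sup}_{\gamma}$. Second, we must check that putting $\delta_{n}$ outside $C_{\xi}$ conflicts with no existing requirement. Here we need $\xi\in M_{0}$, which we arrange by placing $\xi$ in $M_{0}$ from the start; then $q\in M_{n}$ has no point $\delta_{n}$ at coordinate $\xi$. Finally, since the operation was arbitrary and was captured by some $Y$, every square-bracket operation in $V[G]$ fails the Ramsey club property.
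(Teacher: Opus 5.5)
\paragraph{Verdict: there is a genuine gap, and it comes from choosing the wrong intermediate model.} The ``tail'' you single out as the main obstacle does not exist. With $\xi\notin Y$ you have $\mathbb{BC}^{(\kappa)}\cong\mathbb{BC}^{(\kappa\setminus\{\xi\})}\times\mathbb{BC}$. The factors of a product can be forced in either order, so $V[G]=W'[g_{\xi}]$, where $W'=V[G\cap\mathbb{BC}^{(\kappa\setminus\{\xi\})}]$ and $g_{\xi}$ is $\mathbb{BC}$-generic over $W'$. Here $\mathbb{BC}^{W'}=\mathbb{BC}^{V}$, since conditions are finite objects and $\omega_1$ is preserved. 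The operation already lies in $V[G_Y]\subseteq W'$. Hence Theorem \ref{Teo BC destruve V}, applied inside $W'$, shows directly that $C_{\xi}$ is not $[\cdot\cdot]$-Ramsey in $W'[g_{\xi}]=V[G]$, and nothing is forced after $g_{\xi}$. This is exactly the paper's proof: it picks one coordinate $\alpha$ on which the operation does not depend and uses $\mathbb{BC}^{(\kappa)}\equiv\mathbb{BC}^{(\kappa\setminus\{\alpha\})}\times\mathbb{BC}$. Taking $W=V[G_Y]$ instead of $V[G_{\kappa\setminus\{\xi\}}]$ is what created the spurious tail.

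\paragraph{Your replacement argument does not close the gap as written.} Its key ingredient is an analogue of Proposition \ref{Prop BC Strongly proper}, namely that the trace $p_M$ lies in some $M_n$. This is not justified, and for the literal restriction it is false in general. Finiteness of $\mathsf{Fim}(p)$ bounds how many points $\gamma<\delta$ occur, but not the countable sets $\mathsf{sup}_{\gamma}(p)\cap M$. An extension $p$ of your generic condition, chosen outside $M$, can place some indecomposable $\gamma<\delta$ at an $\omega$-type set of coordinates in $M\cap\kappa$ that is not an element of $M$. Then $p\upharpoonright M\notin M$.

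Without $p_M\in M_n$, the following steps do not go through ``verbatim'':
\begin{itemize}
\item defining the sets $A$ inside $M_n$;
\item using elementarity to find $q\in M_n$ compatible with $p$.
\end{itemize}
You also leave unchecked the point you yourself flag: that the amalgam $r$ satisfies the order-type clause on each $\mathsf{sup}_{\gamma}$. So the argument is incomplete at precisely the step you call the main obstacle, and that step becomes unnecessary once the product factors are reordered.
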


\begin{proof}
Let $\kappa\geq\omega_{2}$ be a cardinal such that $\kappa^{\omega_{1}}%
=\kappa.$ We claim that forcing with $\mathbb{BC}^{\left(  \kappa\right)  }$
produces a model as desired. Pick $G\subseteq$ $\mathbb{BC}^{\left(  X\right)
}$ a generic filter. We already know that $V\left[  G\right]  \models
\mathfrak{c}=$ $\kappa$ (see Proposition \ref{Prop BC tamao continuo}). Let
$[\cdot\cdot]$ be a square-bracket operation in$\ V\left[  G\right]  .$ Since
$[\cdot\cdot]$ is an object of size $\omega_{1}$ and $\mathbb{BC}^{\left(
\kappa\right)  }$ has the $\omega_{2}$\textsf{-c.c.}, we can find $\alpha
\in\kappa$ such that $[\cdot\cdot]$ only depends on $\mathbb{BC}^{\left(
X\right)  },$ where $X=\kappa\setminus\left\{  \alpha\right\}  .$ Since
$\mathbb{BC}^{\left(  \kappa\right)  }$ is forcing equivalent to
$\mathbb{BC}^{\left(  X\right)  }\times\mathbb{BC},$ the result follows by
Theorem \ref{Teo BC destruve V}.
\end{proof}

\qquad\ \qquad\ \ \ \ \ \qquad\ \ \ \qquad\ \ \qquad\ \ \ 

We would like to emphasize following result, which follows by the previous
theorem, Lemma 5.4.4. of \cite{Walks} and the results established in the
subsequent sections:

\begin{corollary}
Both of the following statements are consistent (but not at the same time)
with \textsf{ZFC:}

\begin{enumerate}
\item All square-bracket operations have the Ramsey club property.

\item No square-bracket operation has the Ramsey club property.\textsf{ }
\end{enumerate}
\end{corollary}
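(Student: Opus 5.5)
The corollary combines two consistency results, and I would prove each in its own model. For statement (2) there is nothing new to do: it is exactly Theorem \ref{Teo todas fallan}. I would force with the Asper\'{o} product $\mathbb{BC}^{\left(\kappa\right)}$ over a ground model where $\kappa^{\omega_{1}}=\kappa$. Given a square-bracket operation in the extension, I would use the $\omega_{2}$-c.c. to find a coordinate $\alpha$ on which it does not depend. I would then factor $\mathbb{BC}^{\left(\kappa\right)}\cong\mathbb{BC}^{\left(\kappa\setminus\{\alpha\}\right)}\times\mathbb{BC}$ and apply Theorem \ref{Teo BC destruve V} over the intermediate model. That application shows the $\alpha$-th Baumgartner club is not Ramsey for this operation.

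For statement (1) I would work in a model of \textsf{PFA}, which is consistent relative to a supercompact cardinal. I need every square-bracket operation of each of the three types to have the Ramsey club property there.
\begin{enumerate}
\item For $[\cdot\cdot]_{T,a}$ this is Theorem \ref{Teorema PFA Aronszajn} (Lemma 5.4.4 of \cite{Walks}).
\item For $[\cdot\cdot]_{\mathcal{R},e}$ and $[\cdot\cdot]_{\mathcal{C}}$ I would rely on the analogous \textsf{PFA} theorems announced for Sections \ref{Seccion PFA reals} and \ref{Seccion PFA walks}.
\end{enumerate}
Each of those theorems would go as follows. Fix the club $C$ and define a proper poset of finite approximations $W_p$ to the desired uncountable set $W$, using finite $\in$-chains of countable elementary submodels as side conditions. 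A condition requires all brackets among points of $W_p$ to land in $C$. Meeting $\omega_{1}$ dense sets, one per height bound, then yields an uncountable $W$ with all brackets in $C$.

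The main obstacle is the properness of these side-condition posets, particularly in the walks case. Given a condition $p$ and a model $N$, one must amalgamate $p$ with a condition $q\in N$ extending $p\cap N$. The new brackets $[\alpha\beta]$, with $\alpha$ in $q$ and $\beta$ in $p$ above $\delta_{N}$, must still fall in $C$. My first attempt would choose $q$ so that $\triangle(\alpha,\beta)$ is large. By Proposition \ref{Prop juntar trazas} and Lemma \ref{Lemma lambas converjen}, the walk from $\beta$ to $\triangle$ then passes through model heights such as $\delta_{N}$, which lie in $C$ by Proposition \ref{Prop submodelos club y est}. Under this choice, the minimal trace point at or above $\alpha$ would be such a height.

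The remaining point, already built into the statement, is that (1) and (2) cannot hold simultaneously. This is immediate, since there is at least one square-bracket operation; for example, $[\cdot\cdot]_{\mathcal{C}}$ exists for any $\mathcal{C}$-sequence in \textsf{ZFC}.
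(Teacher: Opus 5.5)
Your proposal is correct and follows the paper. Statement (2) is exactly Theorem \ref{Teo todas fallan}: the Asper\'{o} product, a coordinate on which the operation does not depend, and Theorem \ref{Teo BC destruve V}. Statement (1) holds in any model of \textsf{PFA} by Theorem \ref{Teorema PFA Aronszajn}, Theorem \ref{PFA para reales} and the analogous theorem of Section \ref{Seccion PFA walks}.

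Your heuristic sketch of the side-condition posets is not needed for the corollary, and on its own it would not suffice. The condition found in $N$ must copy $p$'s configuration. So for a copy $\alpha$ of a point $\bar{\alpha}$ of $p$, and a point $\beta$ of $p$ above $\delta_N$ with $\triangle(\bar{\alpha},\beta)<\delta_N$, you get $\triangle(\alpha,\beta)=\triangle(\bar{\alpha},\beta)$, which is fixed by $p$ and cannot be made large. This is why the paper builds the required membership in $C$ into $p$ itself via the crossing models conditions.
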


\qquad\ \qquad\ \ \ \ 

In other words, \textsf{ZFC }is unable to decide the status of the Ramsey club
property for any square-bracket operation.

\qquad\ \qquad\ \qquad\ \qquad\ \ \ \ \qquad\ \ 

Our aim is now to produce a model of Martin's Axiom\textsf{ }in which no
square-bracket operation (induced by a \textsf{C}-sequence) has the Ramsey
club property. The desired model is obtained by first adding many Baumgartner
clubs and then forcing \textsf{MA} in the usual way. To achieve this, we will
require several preliminary technical lemmas.

\qquad\qquad\qquad

Let $X$ be a set of ordinals, $\beta\in X$ and $\dot{B}$ a $\mathbb{BC}%
^{\left(  X\right)  }$-name$.$ We say that $\dot{B}$ \emph{does not depend on
}$\beta$ if $\dot{B}$ is (equivalent) to a $\mathbb{BC}^{\left(
X\setminus\left\{  \beta\right\}  \right)  }$-name. We have the following:

\begin{proposition}
Let $\kappa\geq\omega_{1}$ be a regular cardinal, $\mathbb{\dot{P}}$ a
$\mathbb{BC}^{\left(  \kappa^{+}\right)  }$-name for a partial order of size
$\kappa$ and $\dot{f}$ a $\mathbb{BC}^{\left(  \kappa^{+}\right)  }$-name for
an operation on $\omega_{1}.$ There is $\beta\in\kappa^{+}$ such that:
\label{Prop no depende}

\begin{enumerate}
\item $\mathbb{\dot{P}}$ and $\dot{f}$ do not depend on $\beta.$

\item $\mathbb{BC}^{\left(  \kappa^{+}\right)  }\ast\mathbb{\dot{P}}$ and
$(\mathbb{BC}^{(\kappa^{+}\setminus\left\{  \beta\right\}  )}\ast
\mathbb{\dot{P}})\times\mathbb{BC}$ are forcing equivalent.
\end{enumerate}
\end{proposition}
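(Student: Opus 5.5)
We follow the standard "nice names plus chain condition" approach. First, by the $\omega_{2}$-\textsf{c.c.} of $\mathbb{BC}^{\left(  \kappa^{+}\right)  }$ (Asper\'{o}'s corollary), replace $\mathbb{\dot{P}}$ by an equivalent nice name. We may assume $\mathbb{\dot{P}}$ is forced to have underlying set $\kappa$. The order relation is then coded by names for the truth values of the $\kappa$ many statements \textquotedblleft$\xi\leq_{\mathbb{\dot{P}}}\zeta$\textquotedblright. Each such statement is decided by a maximal antichain of $\mathbb{BC}^{\left(  \kappa^{+}\right)  }$, which has size at most $\omega_{1}$. Likewise $\dot{f}$ is coded by $\omega_{1}$ statements \textquotedblleft$\dot{f}(\mu,\nu)=\eta$\textquotedblright, each decided by an antichain of size at most $\omega_{1}$.

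Collect all conditions appearing in these antichains. There are at most $\kappa\cdot\omega_{1}=\kappa$ of them, and each has countable support. So the union $S$ of their supports has size at most $\kappa<\kappa^{+}$, and we pick $\beta\in\kappa^{+}\setminus S$. Set $X=\kappa^{+}\setminus\left\{  \beta\right\}  $. Every condition used lies in $\mathbb{BC}^{\left(  X\right)  }$, viewed as a suborder via $p\mapsto p\cup\{(\beta,1)\}$. The restriction lemma shows that $\mathbb{BC}^{\left(  X\right)  }$ is a regular suborder, with $p\upharpoonright X$ a reduction of $p$. Hence these antichains remain maximal antichains of $\mathbb{BC}^{\left(  X\right)  }$. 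The same names, read as $\mathbb{BC}^{\left(  X\right)  }$-names, therefore give $\mathbb{\dot{P}}$ and $\dot{f}$ equivalently. This proves the first item.

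For the second item, use the lemma that $\mathbb{BC}^{\left(  X\cup\{\beta\}\right)  }\cong\mathbb{BC}^{\left(  X\right)  }\times\mathbb{BC}^{\left(  \{\beta\}\right)  }\cong\mathbb{BC}^{\left(  X\right)  }\times\mathbb{BC}$. Since $\mathbb{\dot{P}}$ is a $\mathbb{BC}^{\left(  X\right)  }$-name, the iteration $(\mathbb{BC}^{\left(  X\right)  }\times\mathbb{BC})\ast\mathbb{\dot{P}}$ should be forcing equivalent to $(\mathbb{BC}^{\left(  X\right)  }\ast\mathbb{\dot{P}})\times\mathbb{BC}$. The natural map $((p,q),\dot{x})\mapsto((p,\dot{x}),q)$ is essentially an isomorphism once conditions of the iteration are taken with first coordinate's name being a $\mathbb{BC}^{\left(  X\right)  }$-name. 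In general a $\mathbb{BC}^{(X)}\times\mathbb{BC}$-name $\dot{x}$ for an element of $\mathbb{\dot{P}}$ need not be a $\mathbb{BC}^{(X)}$-name. Since $\mathbb{\dot{P}}$ has underlying set $\kappa$, however, any condition $((p,q),\dot{x})$ can be strengthened in the first coordinate so that it decides $\dot{x}$ as a specific ordinal $\xi<\kappa$. Thus the conditions with $\dot{x}$ a canonical name $\check{\xi}$ form a dense set. On this dense set the map above is order preserving and preserves incompatibility in both directions, because the order of $\mathbb{\dot{P}}$ is computed from the $\mathbb{BC}^{(X)}$-generic alone. Its image is dense in the product for the same reason. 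By the dense embedding fact, both sides give the same extensions.

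The main obstacle is this last step: making sure incompatibility is preserved when the second factor $\mathbb{BC}$ is involved. If $((p,q),\check{\xi})$ and $((p',q'),\check{\xi}')$ are incompatible, then either $q\perp q'$, or $(p,\check{\xi})$ and $(p',\check{\xi}')$ are incompatible in $\mathbb{BC}^{(X)}\ast\mathbb{\dot{P}}$. This holds because a common lower bound of the pair in $\mathbb{BC}^{(X)}$ forcing $\xi,\xi'$ compatible would yield, together with $q\wedge q'$, a common extension. The subtle point is that $\mathbb{BC}$-generic information cannot create compatibility in $\mathbb{\dot{P}}$ beyond what $\mathbb{BC}^{(X)}$ already decides. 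This follows because the truth value of \textquotedblleft$\xi,\xi'$ compatible in $\mathbb{\dot{P}}$\textquotedblright\ has a $\mathbb{BC}^{(X)}$-name, by the first item.
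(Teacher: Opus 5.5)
Your proposal is correct and follows essentially the same route as the paper. In both, $\beta$ is chosen outside the union of the countable supports of the at most $\kappa$ many conditions lying in maximal antichains that decide the order of $\mathbb{\dot{P}}$ and the values of $\dot{f}$, and item 2 comes from the dense embedding $((p,\dot{x}),r)\mapsto(p^{\frown}r,\dot{x})$. Your restriction to conditions whose $\mathbb{\dot{P}}$-coordinate is a check name makes explicit the verification that the paper leaves as ``easy to see.''
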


\begin{proof}
We may assume there is a set $P$ in the ground model such that $\mathbb{\dot
{P}=(}P,\leq_{\mathbb{\dot{P}}}\mathbb{)}.$ For every $x,y\in P$ and
$\alpha,\beta\in\omega_{1}$ define the sets $D\left(  x,y,\alpha,\beta\right)
$ be the set of all $p\in\mathbb{BC}^{\left(  \kappa^{+}\right)  }$ such that
either $p\Vdash$\textquotedblleft$x\leq_{\mathbb{\dot{P}}}y$\textquotedblright%
\ or $p\Vdash$\textquotedblleft$x\nleq_{\mathbb{\dot{P}}}y$\textquotedblright%
\ and there is $\gamma\in\omega_{1}$ such that $p\Vdash$\textquotedblleft%
$\dot{f}\left(  \alpha,\beta\right)  =\gamma$\textquotedblright. It is clear
that each $D\left(  x,y,\alpha,\beta\right)  $ is an open dense subset of
$\mathbb{BC}^{\left(  \kappa^{+}\right)  }.$ Pick $A\left(  x,y,\alpha
,\beta\right)  \subseteq$ $D\left(  x,y,\alpha,\beta\right)  $ a maximal
antichain and let $A$ be the union of all these antichains. Define
$B=\bigcup\{$\textsf{sup}$\left(  p\right)  \mid p\in A\}.$ Since
$\mathbb{BC}^{\left(  \kappa^{+}\right)  }$ has the $\omega_{2}$-\textsf{c.c.,
}it follows that $\left\vert B\right\vert \leq\kappa,$ so we can find
$\beta\in\kappa^{+}\setminus B.$ We claim $\beta$ is as desired. It is clear
that $\mathbb{\dot{P}}$ and $\dot{f}$ do not depend on $\beta.$

\qquad\qquad\qquad\qquad\ \ 

For ease of writing, denote $X=\kappa^{+}\setminus\left\{  \beta\right\}  .$
Define the function $F:(\mathbb{BC}^{(X)}\ast\mathbb{\dot{P}})\times
\mathbb{BC\longrightarrow BC}^{\left(  \kappa^{+}\right)  }\ast\mathbb{\dot
{P}}$ given by $F(p,\dot{x},r)=(p^{\frown}r,\dot{x}),$ where $p^{\frown}%
r\in\mathbb{BC}^{\left(  \kappa^{+}\right)  }$ is such that \textsf{sup}%
$\left(  p^{\frown}r\right)  =$ \textsf{sup}$\left(  p\right)  \cup\left\{
\beta\right\}  ,$ $p\subseteq p^{\frown}r$ and $p^{\frown}r\left(
\beta\right)  =r.$ It is easy to see that $F$ is a dense embedding.
\end{proof}

\qquad\qquad\ \ \ \ \ \ \ \ 

We will need the following result (see Theorem 2.1 of \cite{IteratedForcing}
and the remark below it).

\begin{proposition}
Let $\kappa$ be a regular cardinal, $\mathbb{P}$ a partial order and
$\mathbb{\dot{Q}}$ a $\mathbb{P}$-name for a forcing.
\label{Prop iteracion kcc}

\begin{enumerate}
\item If $\mathbb{P}$ is $\kappa$-\textsf{c.c.} and it forces that
$\mathbb{\dot{Q}}$ is also $\kappa$-\textsf{c.c., }then $\mathbb{P\ast\dot{Q}%
}$ is $\kappa$-\textsf{c.c. }

\item If $\mathbb{P\ast\dot{Q}}$ is $\kappa$-\textsf{c.c., }then $\mathbb{P}$
is $\kappa$-\textsf{c.c.} and forces that $\mathbb{\dot{Q}}$ is $\kappa
$-\textsf{c.c.}
\end{enumerate}
\end{proposition}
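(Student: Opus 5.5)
The plan is to prove the two parts separately, using only the definitions of chain conditions and two-step iterations. The key tools are mixing of names and the fact that a $\mathbb{P}$-name for an antichain evaluated in an extension gives an antichain of the iteration.

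For part 1, suppose $\{(p_\xi,\dot q_\xi)\mid\xi<\kappa\}$ is an antichain in $\mathbb{P}\ast\mathbb{\dot Q}$. Let $\dot G$ be the canonical $\mathbb{P}$-name for the generic filter, and let $\dot S$ be a $\mathbb{P}$-name for $\{\xi<\kappa\mid p_\xi\in\dot G\}$. In $V[G]$ the family $\{\dot q_\xi[G]\mid\xi\in S\}$ is an antichain of $\mathbb{Q}$. Indeed, if $\xi\neq\zeta$ in $S$ had $\dot q_\xi[G],\dot q_\zeta[G]$ compatible, we could pick $p\in G$ below $p_\xi,p_\zeta$ forcing a common extension $\dot r$. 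Then $(p,\dot r)$ would extend both $(p_\xi,\dot q_\xi)$ and $(p_\zeta,\dot q_\zeta)$, which is a contradiction.

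Since $\mathbb{P}$ forces $\mathbb{\dot Q}$ to be $\kappa$-c.c., we get $\Vdash|\dot S|<\kappa$. Because $\kappa$ is regular, $\mathbb{P}$ forces $\dot S\subseteq\check\theta$ for some $\theta<\kappa$. To find such a $\theta$ in $V$, note that in $V[G]$ we have $\sup S<\kappa$ (by regularity of $\kappa$ in $V[G]$, which follows from $\kappa$-c.c. of $\mathbb{P}$). Choose a maximal antichain of conditions deciding a bound for $\sup\dot S$. By $\kappa$-c.c. of $\mathbb{P}$ it has size $<\kappa$, so regularity of $\kappa$ in $V$ gives one bound $\theta<\kappa$. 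Now take $\xi>\theta$. Any $G$ containing $p_\xi$ puts $\xi\in S$, which is a contradiction. This is the main technical point: the chain condition of $\mathbb{P}$ is used twice, once to preserve regularity of $\kappa$ and once to bound the name.

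For part 2, first suppose $\{p_\xi\}_{\xi<\kappa}$ is an antichain in $\mathbb{P}$. Then $\{(p_\xi,\dot 1)\}$ is an antichain in $\mathbb{P}\ast\mathbb{\dot Q}$, so $\mathbb{P}$ is $\kappa$-c.c.

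Next suppose, towards a contradiction, that some $p$ forces ``$\{\dot q_\xi\mid\xi<\kappa\}$ is an antichain of $\mathbb{\dot Q}$''. By mixing we may assume each $\dot q_\xi$ is a name, and then the conditions $(p,\dot q_\xi)$ form an antichain of size $\kappa$ in $\mathbb{P}\ast\mathbb{\dot Q}$. If $(r,\dot s)$ extended both $(p,\dot q_\xi)$ and $(p,\dot q_\zeta)$, then $r\le p$ would force $\dot s\le\dot q_\xi,\dot q_\zeta$, contradicting the antichain being forced. Injectivity of the indexing is also forced, since the $\dot q_\xi$ are forced to be distinct.

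If the antichain in the extension is only indexed by $\kappa$ there, this still works: fix a name for an injective enumeration $\langle\dot q_\xi\mid\xi<\kappa\rangle$ of it and apply the argument above. This contradicts the $\kappa$-c.c. of $\mathbb{P}\ast\mathbb{\dot Q}$.
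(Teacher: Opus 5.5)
Your proof is correct. The paper does not prove this proposition itself; it cites Theorem 2.1 of Baumgartner's \emph{Iterated forcing} and the remark below it. Your argument is the standard proof found there: for part 1 you take the name $\dot S=\{\xi<\kappa\mid p_\xi\in\dot G\}$ and bound it by a single ground-model $\theta<\kappa$ using the $\kappa$-\textsf{c.c.} and the regularity of $\kappa$; for part 2 you use the antichains $\{(p_\xi,\dot 1)\}$ and $\{(p,\dot q_\xi)\}$.
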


\qquad\qquad\ \ \ 

In particular, we get that if $\mathbb{\dot{P}}$ is a $\mathbb{BC}^{\left(
X\right)  }$-name for a \textsf{c.c.c.} partial order, then $\mathbb{BC}%
^{\left(  X\right)  }\ast\mathbb{\dot{P}}$ is $\omega_{2}$\textsf{-c.c. }The
following lemma follows by Proposition \ref{Prop BC tamao continuo} and
standard arguments.

\begin{lemma}
[\textsf{GCH}]Let $\kappa>\omega_{1}$ be a regular cardinal and $X\subseteq$
\textsf{OR }of size\textsf{ }$\kappa.$ The following holds:

\begin{enumerate}
\item $\mathbb{BC}^{\left(  X\right)  }$ has size $\kappa.$

\item $\mathbb{BC}^{\left(  X\right)  }$ forces that \textquotedblleft%
$\mathfrak{c}=2^{<\kappa}=\kappa\wedge2^{\kappa}=\kappa^{+}$\textquotedblright.
\end{enumerate}
\end{lemma}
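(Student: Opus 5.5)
We have to prove the [GCH] lemma just stated: for a regular $\kappa>\omega_{1}$ and $X\subseteq$ \textsf{OR} with $\left\vert X\right\vert =\kappa$, the Asper\'{o} product $\mathbb{BC}^{\left(X\right)}$ has size $\kappa$ and forces $\mathfrak{c}=2^{<\kappa}=\kappa$ and $2^{\kappa}=\kappa^{+}$. The plan is to count conditions first and then count nice names, using Proposition \ref{Prop BC tamao continuo} for the lower bound on the continuum. We may replace $X$ by $\kappa$, since $\mathbb{BC}^{\left(X\right)}$ depends only on the order type of $X$ and, by the lemma on order types, isomorphic products behave identically. Even if $\mathsf{OT}(X)>\kappa$, all counting arguments below only use $\left\vert X\right\vert=\kappa$.

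\textbf{Size.} Each $p\in\mathbb{BC}^{\left(X\right)}$ is determined by three pieces of data:
\begin{enumerate}
\item its countable support, which is an element of $[X]^{\leq\omega}$;
\item the finite impure part;
\item the assignment to each support point of a pair $(a,I)$ with $a\subseteq\mathsf{Fim}(p)$, $\mathsf{Fim}(p)$ finite, and $I$ finite.
\end{enumerate}
Hence $\left\vert \mathbb{BC}^{\left(X\right)}\right\vert \leq\kappa^{\omega}\cdot\omega_{1}^{\omega}$. Under \textsf{GCH}, and since $\kappa>\omega_{1}$ is regular, we have $\operatorname{cf}\kappa>\omega$ and therefore $\kappa^{\omega}=\kappa$. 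The bound is thus $\kappa$, and the reverse inequality is clear because the singletons $\{\alpha\}\mapsto$ (nonempty condition) give $\kappa$ distinct elements.

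\textbf{Cardinal arithmetic in the extension.} By Asper\'{o}'s corollary, $\mathbb{BC}^{\left(X\right)}$ is proper and $\omega_{2}$-\textsf{c.c.}, so cardinals are preserved.

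For any $\mu<\kappa$, we count nice names for subsets of $\mu$. Each such name is a $\mu$-sequence of antichains of size $\leq\omega_{1}$ in a poset of size $\kappa$. The number of antichains is at most $\kappa^{\omega_{1}}$, which equals $\kappa$ under \textsf{GCH} when $\operatorname{cf}\kappa>\omega_{1}$. When $\kappa=\omega_{2}$ this gives $\omega_{2}^{\omega_{1}}=\omega_{2}$, which is fine. If instead $\operatorname{cf}\kappa=\omega_{1}$, then $\kappa$ is not regular, contrary to hypothesis. So the number of nice names is $(\kappa)^{\mu}=\kappa$ for $\mu<\kappa$, using \textsf{GCH} and regularity of $\kappa$ (here $\kappa^{\mu}=\kappa$ for $\mu<\operatorname{cf}\kappa$). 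Hence $V[G]\models 2^{\mu}\leq\kappa$ for all $\mu<\kappa$, i.e. $2^{<\kappa}\leq\kappa$.

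For the lower bound, $\mathfrak{c}\geq\kappa$. This follows from the proof of Proposition \ref{Prop BC tamao continuo}: each coordinate adds a new real, for example via the generic club coded against a ground-model ladder system, and these reals are distinct. Alternatively, we apply the Proposition to $\mathbb{BC}^{(\kappa)}$, noting $\kappa^{\omega_{1}}=\kappa$ when $\kappa>\omega_{2}$ or $\kappa=\omega_{2}$ under \textsf{GCH}. So $\mathfrak{c}=2^{<\kappa}=\kappa$.

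Similarly, the number of nice names for subsets of $\kappa$ is $\kappa^{\kappa}=\kappa^{+}$, giving $2^{\kappa}\leq\kappa^{+}$, and Cantor's theorem gives equality.

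\textbf{Main obstacle.} The delicate point is justifying $\kappa^{\omega_{1}}=\kappa$, which is needed both for the antichain count and for invoking Proposition \ref{Prop BC tamao continuo}. Under \textsf{GCH} this holds exactly when $\operatorname{cf}\kappa\neq\omega_{1}$, and regularity of $\kappa>\omega_{1}$ guarantees it. Everything else is routine counting.
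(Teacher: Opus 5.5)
Your counting arguments are the ``standard arguments'' the paper has in mind, and together with Proposition \ref{Prop BC tamao continuo} for the lower bound this is the paper's proof. Concretely: the bound $\kappa^{\omega}\cdot\omega_1^{\omega}=\kappa$ on the number of conditions; then, via the $\omega_2$-c.c.\ and \textsf{GCH}, at most $\kappa^{\omega_1}=\kappa$ antichains, hence $\kappa^{\mu}=\kappa$ nice names for subsets of $\mu<\kappa$ and $\kappa^{\kappa}=\kappa^{+}$ nice names for subsets of $\kappa$.

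The one step that is not justified as written is the opening reduction ``we may replace $X$ by $\kappa$, since $\mathbb{BC}^{(X)}$ depends only on the order type of $X$''. The isomorphism lemma only identifies products over index sets of the same order type, and $\mathsf{OT}(X)$ can be any ordinal of cardinality $\kappa$. Moreover, an arbitrary bijection $X\to\kappa$ need not respect the clause $\mathsf{OT}(\mathsf{sup}_{\delta}(p))<\delta^{+}$. Your upper bounds are unaffected, since they only use $\left\vert X\right\vert=\kappa$. The lower bound $\mathfrak{c}\geq\kappa$, however, is then only established for $\mathbb{BC}^{(\kappa)}$, and your alternative (``each coordinate adds a new real and these are distinct'') is asserted rather than proved.

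The repair is one line. Let $Y$ be the initial segment of $X$ of order type $\kappa$. Then $\mathbb{BC}^{(Y)}\cong\mathbb{BC}^{(\kappa)}$ is a regular suborder of $\mathbb{BC}^{(X)}$, so $V[G]$ contains a $\mathbb{BC}^{(\kappa)}$-generic extension of $V$. That extension already has $\kappa$ reals by Proposition \ref{Prop BC tamao continuo}, and $\kappa$ remains a cardinal.

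A minor slip in your last paragraph: under \textsf{GCH}, $\kappa^{\omega_1}=\kappa$ holds iff $\operatorname{cf}\kappa>\omega_1$, not iff $\operatorname{cf}\kappa\neq\omega_1$. If $\operatorname{cf}\kappa=\omega$, then $\kappa^{\omega_1}\geq\kappa^{\omega}>\kappa$. This is harmless here since $\kappa$ is regular.
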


\qquad\ \qquad\ \ \ \ \qquad\ \ \ \ 

We can now prove the main result of the section:

\begin{theorem}
The statement \textquotedblleft No square-bracket operation induced by a
\textsf{C}-sequence has the Ramsey club property\textquotedblright\ is
consistent with \textsf{MA }and an arbitrarily large continuum. \label{MA}
\end{theorem}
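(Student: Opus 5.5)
Assume \textsf{GCH} in the ground model and fix a regular cardinal $\kappa>\omega_{1}$; we may take the continuum to be $\kappa^{+}$ (or, to reach any regular $\lambda$ with $2^{<\lambda}=\lambda$, proceed as below with $\kappa^{+}$ replaced appropriately). First force with the Asper\'{o} product $\mathbb{BC}^{(\kappa^{+})}$. By the preceding lemma (applied to $X=\kappa^{+}$), the extension satisfies $\mathfrak{c}=2^{<\kappa^{+}}=\kappa^{+}$. Hence Theorem \ref{Teo forzar Martin} applies with $\lambda=\kappa^{+}$: there is a \textsf{c.c.c.} forcing $\mathbb{\dot{P}}$ of size $\kappa^{+}$ forcing $\mathfrak{m}=\mathfrak{c}=\kappa^{+}$. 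The final model is $V^{\mathbb{BC}^{(\kappa^{+})}\ast\mathbb{\dot{P}}}$. By Proposition \ref{Prop iteracion kcc}, $\mathbb{BC}^{(\kappa^{+})}\ast\mathbb{\dot{P}}$ is $\omega_{2}$-\textsf{c.c.}, and it is proper, so no cardinals are collapsed. This model satisfies \textsf{MA} with continuum $\kappa^{+}$.

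The main point is to show that in the final model no $[\cdot\cdot]_{\mathcal{C}}$ has the Ramsey club property. Fix a name $\dot{\mathcal{C}}$ for a \textsf{C}-sequence in the final model. Since $\mathbb{\dot{P}}$ is a name of size $\kappa^{+}$, Proposition \ref{Prop no depende} does not apply directly, because it requires $|\mathbb{\dot{P}}|=\kappa$. So I will instead reflect. By the $\omega_{2}$-\textsf{c.c.}\ of the two-step iteration, $\dot{\mathcal{C}}$ is determined by $\omega_{1}$ many maximal antichains. Each antichain has size $\le\omega_{1}$ and each condition involves countably many coordinates and countably many elements of $\mathbb{\dot{P}}$. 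So I find a set $Y\subseteq\kappa^{+}$ and a subforcing $\mathbb{\dot{P}}_{0}\subseteq\mathbb{\dot{P}}$, both of size $\le\kappa$, closed under the relevant operations, and such that $\mathbb{\dot{P}}_{0}$ is a name depending only on $Y$ and is a complete suborder of $\mathbb{\dot{P}}$ in the extension. Ensuring this completeness requires a L\"{o}wenheim--Skolem closure argument inside an elementary submodel of size $\kappa$ closed under $\omega_{1}$-sequences, which the \textsf{GCH} assumption makes available.

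With this in hand, choose $\beta\in\kappa^{+}\setminus Y$. The argument of Proposition \ref{Prop no depende} shows that $\mathbb{BC}^{(\kappa^{+})}\ast\mathbb{\dot{P}}$ factors as $\bigl(\mathbb{BC}^{(\kappa^{+}\setminus\{\beta\})}\ast\mathbb{\dot{P}}_{0}\bigr)\times\mathbb{BC}$ followed by a quotient forcing $\mathbb{\dot{R}}$. That quotient is \textsf{c.c.c.}: it is the quotient of the \textsf{c.c.c.}\ $\mathbb{\dot{P}}$ by the complete suborder $\mathbb{\dot{P}}_{0}$, after commuting $\mathbb{BC}$ past the \textsf{c.c.c.}\ factor. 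Consequently $\mathcal{C}$ lies in the intermediate model $W=V^{\mathbb{BC}^{(\kappa^{+}\setminus\{\beta\})}\ast\mathbb{\dot{P}}_{0}}$, and the final model is $W^{\mathbb{BC}\ast\mathbb{\dot{R}}}$ with $\mathbb{\dot{R}}$ a $\mathbb{BC}$-name for a \textsf{c.c.c.}\ forcing. Applying Proposition \ref{Prop ccc no arregla} in $W$ then shows that the $\beta$-th generic club is not $[\cdot\cdot]_{\mathcal{C}}$-Ramsey in the final model.

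The hardest step is the factorization. One must check that $\mathbb{\dot{P}}_{0}$ is a complete suborder of $\mathbb{\dot{P}}$ so the quotient makes sense. One must also check that, after adding the $\beta$-th club by $\mathbb{BC}$, the quotient remains \textsf{c.c.c.}; this needs \textsf{c.c.c.}-ness of $\mathbb{\dot{P}}$ in the larger model. Since $\mathbb{\dot{P}}$ is \textsf{c.c.c.}\ in the full $\mathbb{BC}^{(\kappa^{+})}$-extension, Proposition \ref{Prop iteracion kcc}(2) gives this. An alternative that avoids quotients is to build $\mathbb{\dot{P}}$ as a finite support iteration of length $\kappa^{+}$ of small \textsf{c.c.c.}\ posets. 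Then an initial segment (a complete suborder of size $\kappa$) captures $\mathcal{C}$, and the tail is \textsf{c.c.c.}\ by Proposition \ref{Prop iteracion kcc}. Choosing $\beta$ outside the supports of that initial segment via Proposition \ref{Prop no depende} gives the needed form $\mathbb{BC}\ast(\textsf{c.c.c.})$ over $W$ cleanly. This iteration version is the one I would actually carry out.
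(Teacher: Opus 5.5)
Your proposal is correct, and the iteration version you say you would actually carry out is exactly the paper's argument. The paper forces MA over $V^{\mathbb{BC}^{(\kappa^{+})}}$ by a finite support iteration of length $\kappa^{+}$ whose initial segments have size $\le\kappa$. It then captures $\mathcal{C}$ at some stage $\alpha<\kappa^{+}$, uses Proposition \ref{Prop no depende} to split off one coordinate $\beta$, notes that the tail quotient is \textsf{c.c.c.} by Proposition \ref{Prop iteracion kcc}, and finishes with Proposition \ref{Prop ccc no arregla}. The reflection/complete-suborder detour in your second paragraph is unnecessary, since its completeness and ``commuting'' claims would need real justification, so you can drop it.
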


\begin{proof}
Assume \textsf{GCH }in the ground model, pick $\kappa\geq\omega_{1}$ a regular
cardinal and let $G\subseteq$ $\mathbb{BC}^{(\kappa^{+})}$ be a generic
filter. We go to $V\left[  G\right]  .$ In here, we know that
$\mathfrak{c=\kappa}^{+}$ and $2^{<\kappa^{+}}=2^{\kappa}=\kappa^{+}.$ By the
standard proof of Theorem \ref{Teo forzar Martin}, there is a a finite support
iteration $\langle\mathbb{P}_{\alpha},\mathbb{\dot{Q}}_{\alpha}\mid\alpha
\leq\kappa^{+}\rangle$ of \textsf{c.c.c. }forcings such that $\mathbb{P}%
_{\kappa^{+}}$ forces that \textquotedblleft$\mathfrak{m=c=}$ $\kappa^{+}%
$\textquotedblright\ and if $\alpha<\kappa^{+},$ then $\left\vert
\mathbb{P}_{\alpha}\right\vert \leq\kappa.$ For convenience, denote
$\mathbb{P=P}_{\kappa^{+}}.$

\qquad\qquad\qquad\ \ 

We return to $V$ and claim that $\mathbb{Q=BC}^{(\kappa^{+})}\ast
\mathbb{\dot{P}}$ is the partial order we are looking for. We already know
that $\mathbb{Q}$ forces that \textquotedblleft$\mathfrak{m=c=}$ $\kappa^{+}%
$\textquotedblright. It remains to prove that it also forces that no
square-bracket operation has the Ramsey club property . Let $\dot{f}$ be a
$\mathbb{Q}$-name for a square-bracket operation. Since $\mathbb{Q}$ is
$\omega_{2}$-\textsf{.c.c. }and $\kappa^{+}>\omega_{1}$ is regular, we can
find $\alpha<\kappa^{+}$ such that $\dot{f}$ is equivalent to a $\mathbb{BC}%
^{(\kappa^{+})}\ast\mathbb{\dot{P}}_{\alpha}$-name. Let $\mathbb{\dot{R}}$ be
a $\mathbb{BC}^{(\kappa^{+})}\ast\mathbb{\dot{P}}_{\alpha}$-name for the
quotient $\mathbb{P}_{\kappa^{+}}/\mathbb{P}_{\alpha}.$ In this way,
$\mathbb{Q\equiv(BC}^{(\kappa^{+})}\ast\mathbb{\dot{P}}_{\alpha}%
)\ast\mathbb{\dot{R}}$ (where $\mathbb{\equiv}$ denotes being forcing
equivalent). Note that $\mathbb{\dot{R}}$ is forced to be \textsf{c.c.c.} by
Proposition \ref{Prop iteracion kcc}.\ Since $\mathbb{P}_{\alpha}$ is forced
to have size at most $\kappa,$ by Proposition \ref{Prop no depende}, we can
find $\beta\in\kappa^{+}$ such that the following conditions hold:

\begin{enumerate}
\item $\mathbb{\dot{P}}_{\alpha}$ and $\dot{f}$ do not depend on $\beta.$

\item $\mathbb{BC}^{\left(  \kappa^{+}\right)  }\ast\mathbb{\dot{P}}_{\alpha
}\equiv(\mathbb{BC}^{(\kappa^{+}\setminus\left\{  \beta\right\}  )}%
\ast\mathbb{\dot{P}}_{\alpha})\times\mathbb{BC}$.
\end{enumerate}

In this way, we get the following:\medskip

\hfill%
\begin{tabular}
[c]{lll}%
$\mathbb{Q}$ & $=$ & $\mathbb{BC}^{(\kappa^{+})}\ast\mathbb{\dot{P}}$\\
& $\mathbb{\equiv}$ & $\mathbb{(BC}^{(\kappa^{+})}\ast\mathbb{\dot{P}}%
_{\alpha})\ast\mathbb{\dot{R}}$\\
& $\mathbb{\equiv}$ & $((\mathbb{BC}^{(\kappa^{+}\setminus\left\{
\beta\right\}  )}\ast\mathbb{\dot{P}}_{\alpha})\times\mathbb{BC}%
)\ast\mathbb{\dot{R}}$%
\end{tabular}
\hfill\ 

\qquad\ \qquad\ \ \ \qquad\ \ \ \ \ \qquad\ \medskip

It follows that $\dot{f}$ will live in the extension by $\mathbb{BC}%
^{(\kappa^{+}\setminus\left\{  \beta\right\}  )}\ast\mathbb{\dot{P}}_{\alpha
},$ which is then followed by $\mathbb{BC}$ and then a \textsf{c.c.c.
}forcing. We can apply Proposition \ref{Prop ccc no arregla} and conclude that
$\dot{f}$ will not have the Ramsey club property in the final extension.
\end{proof}

\ \ \ \ \ \qquad\qquad\qquad\ \ 

Note that if Proposition \ref{Prop ccc no arregla} could be proved for the
other square-bracket operations, then the argument of Theorem \ref{MA} would
also apply for them.

\section{Square-bracket operations and CH}

Having settled the (non) relationship between Martin's Axiom and the Ramsey
club property for the square-bracket operations, we now turn to the Continuum
Hypothesis. It is easy to see that if \textsf{CH }holds in the ground model,
then adding a single Baumgartner club preserves it. Thus, Theorem
\ref{Teo BC destruve V} implies that \textsf{CH }is consistent with the
existence of square-bracket operations that do not have the Ramsey club
property. The goal of this section is to prove a stronger result: \textsf{CH
}(and even $\Diamond$) is consistent with the failure of the Ramsey club
property for every square-bracket operation. We cannot proceed as in previous
sections, since adding many Baumgartner clubs would force the failure of
\textsf{CH}. Instead, we will use a forcing notion for adding a club with
countable conditions that was introduced by Baumgartner, Harrington, and
Kleinberg in \cite{AddingaClub} (see also \cite{TasteofGoldstern}).

\begin{definition}
Let $\mathbb{SC}$ be the the set of all closed and countable subsets of
$\omega_{1}.$ For $p,q\in\mathbb{SC},$ define $q\leq p$ if $p$ is an initial
segment of $q.$
\end{definition}

\qquad\ \ \ \ 

$\mathbb{SC}$ is the usual forcing for shooting a club through $\omega_{1}$
with countable conditions. Clearly $\mathbb{SC}$ is a $\sigma$-closed tree. If
$G\subseteq\mathbb{SC}$ is a generic filter, the \emph{generic club }is
defined as $C_{gen}=\bigcup G.$ It is not hard to verify that $C_{gen}$ is
forced to be a club on $\omega_{1}.$ The following lemma is easy and well-known.

\begin{lemma}
Let $M$ be a countable elementary submodel. \label{Lemma SC basic generic}

\begin{enumerate}
\item If $p\in\mathbb{SC}$ is $(M,\mathbb{SC})$-generic, then $\delta_{M}\in
p.$

\item Let $p\in\mathbb{SC}\cap M.$ There is $q\leq p$ an $(M,\mathbb{SC}%
)$-generic condition such that $\delta_{M}=$ \textsf{max}$\left(  q\right)  .$
\end{enumerate}
\end{lemma}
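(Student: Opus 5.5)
For the first item, the plan is to argue by contraposition using the density argument built into genericity. Suppose $p\in\mathbb{SC}$ is $(M,\mathbb{SC})$-generic and consider, for each $\xi<\delta_M$, the set $D_\xi=\{q\in\mathbb{SC}\mid \textsf{max}(q)>\xi\}$. Each $D_\xi$ is open dense (any countable closed set can be end-extended by adding a point above its maximum), and $D_\xi\in M$ since $\xi\in M$. By genericity, $D_\xi\cap M$ is predense below $p$. I will show that $\textsf{max}(p)\geq\delta_M$. Indeed, if $\textsf{max}(p)<\delta_M$, choose $\xi$ with $\textsf{max}(p)\leq\xi<\delta_M$. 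Then extend $p$ by adding a single point $\theta\geq\delta_M$ to get $p'=p\cup\{\theta\}\leq p$. Any $q\in D_{\xi}\cap M$ compatible with $p'$ must have $p'$ and $q$ both initial segments of a common condition; since $q\in M$ is countable, $q\subseteq\delta_M$ (by Lemma \ref{Lema modelo contension}), and $\textsf{max}(q)>\xi\geq\textsf{max}(p)$, so $q$ would contain a point in $(\textsf{max}(p),\delta_M)$, while $p'$ has no point there but has $\theta$ above. Two end-extensions of $p$ are compatible only if one is an initial segment of the other, which fails here. This contradicts predensity.

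So $\textsf{max}(p)\geq\delta_M$. Genericity also forces $p\cap\delta_M$ to be unbounded in $\delta_M$. To see this, for $\xi<\delta_M$, note that any extension of $p$ compatible with some $q\in D_\xi\cap M$ contains $q$ as an initial segment. Since $q$ is countable in $M$, $q\subseteq\delta_M$, and $p$ is comparable with $q$. Because $\textsf{max}(p)\geq\delta_M>\textsf{max}(q)$, it follows that $q$ is an initial segment of $p$, so $p$ has a point above $\xi$ below $\delta_M$. Hence $\delta_M$ is a limit of points of $p$, and closedness of $p$ gives $\delta_M\in p$. This is really the only delicate point: making compatibility in $\mathbb{SC}$ (tree-like, via initial segments) interact correctly with the predensity clause.

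For the second item, I will use the standard fusion construction, exploiting $\sigma$-closure. Enumerate the open dense subsets of $\mathbb{SC}$ lying in $M$ as $\langle D_n\mid n\in\omega\rangle$ and fix a sequence $\xi_n$ cofinal in $\delta_M$. Build $p=q_0\geq q_1\geq\cdots$ in $M$ with $q_{n+1}\in D_n$ and $\textsf{max}(q_{n+1})>\xi_n$, using elementarity at each step (the required extension exists and $D_n,q_n,\xi_n\in M$). Each $q_n\subseteq\delta_M$ and the maxima are cofinal in $\delta_M$. Then let $q=\bigcup_n q_n\cup\{\delta_M\}$. This is closed (its only new limit point is $\delta_M$), countable, and end-extends every $q_n$, so $q$ is a condition with $\textsf{max}(q)=\delta_M$. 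Moreover $q\leq q_{n+1}\in D_n\cap M$ for each $n$, so $q$ is in fact totally generic, and in particular $(M,\mathbb{SC})$-generic.
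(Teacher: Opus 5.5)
Your proof is correct and is the standard argument; the paper gives no proof of this lemma and calls it easy and well-known. Two wording points only: in item 1, $q\in D_\xi\cap M$ need not end-extend $p$, but what you actually use (compatible conditions of $\mathbb{SC}$ are comparable, being initial segments of a common extension) holds for every pair, so the contradiction stands. In item 2, what gives predensity of $D_n\cap M$ below $q$ is the fact $q\leq q_{n+1}\in D_n\cap M$ itself, not merely $q\in D_n$, which is all the paper's definition of total genericity requires.
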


\qquad\ \ \qquad\ \ \ 

The paper \cite{IteratedForcingandCH} provides an example of a totally proper
forcing $\mathbb{P}$ for which there exist conditions that are $(M,\mathbb{P}%
)$-generic but not $(M,\mathbb{P})$-totally generic. Fortunately, this
phenomenon does not occur with our forcing $\mathbb{SC}.$

\begin{lemma}
Let $M$ be a countable elementary submodel and $p\in\mathbb{SC}.$ If $p$ is
$(M,\mathbb{SC})$-generic, then it is $(M,\mathbb{SC})$-totally generic.
\end{lemma}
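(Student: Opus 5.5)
The plan is to reduce total genericity to a compatibility computation in the tree $\mathbb{SC}$. Openness is the key lever. Fix an open dense $D\subseteq\mathbb{SC}$ with $D\in M$. It suffices to find some $d\in D\cap M$ with $p\leq d$, because $D$ is open and therefore $p\in D$. So the whole argument consists of producing such a $d$ from the genericity of $p$.

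First, by the definition of $(M,\mathbb{SC})$-genericity, $D\cap M$ is predense below $p$. In particular, $p$ itself is compatible with some $d\in D\cap M$.

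Second, I will analyze compatibility in $\mathbb{SC}$. Since the order is end-extension, $\mathbb{SC}$ is a tree. Hence two conditions are compatible if and only if one is an initial segment of the other: a common extension $r$ has both of them as initial segments, and two initial segments of the same set of ordinals are comparable under end-extension. So either $d$ is an initial segment of $p$, or $p$ is an initial segment of $d$.

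Third, I will rule out the second alternative using the height of $M$. Since $d\in M$ is a countable subset of $\omega_{1}$, we have $d\subseteq M$, so $d\subseteq\delta_{M}$. By point 1 of Lemma \ref{Lemma SC basic generic}, $\delta_{M}\in p$. Thus $p$ cannot be an initial segment of $d$, because $\delta_{M}\in p\setminus d$. Therefore $d$ is an initial segment of $p$, that is, $p\leq d$, and so $p\in D$.

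There is no real obstacle here. The only points needing care are the fact that $d\subseteq\delta_{M}$, which is an instance of the standard fact that countable elements of $M$ are subsets of $M$, and the observation that compatibility in a tree forcing means comparability.
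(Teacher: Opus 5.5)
Your proof is correct and is essentially the paper's argument: genericity gives $d\in D\cap M$ compatible with $p$, compatibility in the tree $\mathbb{SC}$ means comparability, and $\delta_M\in p$ together with $d\subseteq\delta_M$ rules out $d$ extending $p$, so $p\leq d$ and $p\in D$ by openness. The paper states the last step as $\max(d)<\delta_M\leq\max(p)$ and writes the resulting inequality with a typo ($q\leq p$ instead of $p\leq q$); your orientation $p\leq d$ is the correct one.
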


\begin{proof}
Let $D\in M$ be an open dense subset of $\mathbb{SC}.$ By the genericity of
$p,$ there is $q\in D\cap M$ that is compatible with $p.$ Since $\mathbb{SC}$
is a tree, it must be the case that either $p\leq q$ or $q\leq p.$ Since
\textsf{max}$\left(  q\right)  <\delta_{M}\leq$ \textsf{max}$\left(  p\right)
,$ then $q\leq p.$ It follows that $p\in D$ because $D$ is an open subset of
$\mathbb{SC}.$
\end{proof}

\qquad\qquad\ \ \ \ \ \qquad\ \ \ \ \ \qquad\qquad\qquad\ \ \ 

We also have the following:

\begin{lemma}
Let $\left\{  M_{n}\mid n\in\omega\right\}  $ be a continuous increasing chain
of countable elementary submodels and $M=\bigcup\limits_{n\in\omega}M_{n}.$ If
$p\in\mathbb{SC}$ is $(M_{n},\mathbb{SC})$-generic for every $n\in\omega,$
then $p$ is $(M,\mathbb{SC})$-generic. \label{Lema union genericidad}
\end{lemma}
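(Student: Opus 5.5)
We verify the genericity of $p$ over $M$ through the characterization in Lemma \ref{equiv condicion generica}, although for $\mathbb{SC}$ it is simpler to check total genericity directly, which is stronger. By the preceding lemma, each $(M_{n},\mathbb{SC})$-generic condition is $(M_{n},\mathbb{SC})$-totally generic. So $p$ belongs to every open dense set $D\subseteq\mathbb{SC}$ with $D\in M_{n}$ for some $n$.

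Now let $D\in M$ be an open dense subset of $\mathbb{SC}$. Since $M=\bigcup_{n\in\omega}M_{n}$, there is $n$ with $D\in M_{n}$, hence $p\in D$. Thus $p$ lies in every open dense set belonging to $M$, so $p$ is $(M,\mathbb{SC})$-totally generic. For the plain genericity required in the statement, take any open dense $D\in M$ and $q\leq p$. We want some $r\in D\cap M$ compatible with $q$. Since $M$ is an elementary submodel containing $\mathbb{SC}$ and $D$, and $p\in D$, it is natural to look for such an $r$ inside $M_{n}$, using that $p$ is $(M_{n},\mathbb{SC})$-generic. By that genericity, $D\cap M_{n}$ is predense below $p$, hence below $q$. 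Some $r\in D\cap M_{n}\subseteq D\cap M$ is therefore compatible with $q$. This shows $D\cap M$ is predense below $p$, which is exactly the definition of $(M,\mathbb{SC})$-generic.

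The only point needing care is that $D\in M$ implies $D\in M_{n}$ for some $n$. This holds because $M$ is literally the union of the $M_{n}$. Beyond this, the argument simply passes genericity from each $M_{n}$ up to $M$, so I expect no real obstacle. Note that the hypothesis that the chain is continuous is not needed, only that $M$ is the union. One could also remark that $\delta_{M}=\sup_{n}\delta_{M_{n}}$ and that each $\delta_{M_{n}}\in p$ by Lemma \ref{Lemma SC basic generic}. Since $p$ is closed, $\delta_{M}\in p$, which is consistent with the conclusion, though not needed for the proof.
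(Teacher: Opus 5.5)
Your proof is correct, and its first paragraph is exactly the paper's argument: the preceding lemma upgrades each $(M_n,\mathbb{SC})$-genericity to total genericity, and any open dense $D\in M$ already lies in some $M_n$, so $p\in D$. Your second, direct argument (that $D\cap M_n\subseteq D\cap M$ is predense below $p$) is also valid, and it shows the lemma holds for an arbitrary forcing without using any special property of $\mathbb{SC}$.
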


\begin{proof}
We will prove that $p$ is $(M,\mathbb{SC})$-totally generic. Let $D\in M$ be
an open dense subset of $\mathbb{SC}.$ Choose $n\in\omega$ such that $D\in
M_{n}.$ Since $p$ is $(M_{n},\mathbb{SC})$-totally generic, it follows that
$p\in D.$
\end{proof}

\qquad\ \qquad\ \ \ \qquad\ \ \ \ \ \ 

To establish the main results of this section, we must first introduce new
definitions and prove several auxiliary lemmas. We start with some preliminary
results concerning uncountable subsets of the reals.

\begin{definition}
Let $X$ be a subset of $2^{\omega}.$ The \emph{kernel tree of} $X$ (denoted by
$S\left(  X\right)  $) consists of all $s\in2^{<\omega}$ for which the set
$\left\langle s\right\rangle \cap X$ is uncountable.
\end{definition}

\qquad\ \qquad\ \ \ 

We have the following simple observation.

\begin{lemma}
If $X\subseteq2^{\omega}$ is uncountable, then $S\left(  X\right)  $ is a
Sacks tree. \label{Lema es Sacks tree}
\end{lemma}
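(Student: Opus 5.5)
We need to show that $S(X)$ is a nonempty subtree of $2^{<\omega}$ in which every node extends to a splitting node. The plan has three short steps.

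First, $S(X)$ is a tree, meaning it is closed under initial segments. If $s\subseteq s'$ then $\langle s'\rangle\subseteq\langle s\rangle$. So whenever $\langle s'\rangle\cap X$ is uncountable, $\langle s\rangle\cap X$ is uncountable as well. Second, $S(X)$ is nonempty, since $\langle\emptyset\rangle\cap X=X$ is uncountable and hence $\emptyset\in S(X)$.

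Third, and this is the only real point, every $s\in S(X)$ extends to a splitting node. Fix $s\in S(X)$ and suppose, towards a contradiction, that no $t\supseteq s$ in $S(X)$ is splitting. We build an increasing sequence $s=t_0\subseteq t_1\subseteq\cdots$ in $S(X)$ with $|t_{k}|=|s|+k$. Given $t_k\in S(X)$, we have
\[
\langle t_k\rangle\cap X=(\langle t_k{}^\frown0\rangle\cap X)\cup(\langle t_k{}^\frown1\rangle\cap X),
\]
so at least one of $t_k{}^\frown0$, $t_k{}^\frown1$ lies in $S(X)$. By assumption $t_k$ is not splitting, so exactly one of them does; call it $t_{k+1}$. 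The other one, $u_k$, satisfies that $\langle u_k\rangle\cap X$ is countable. Let $f=\bigcup_k t_k\in2^\omega$. Then
\[
\langle s\rangle\cap X\subseteq\{f\}\cup\bigcup_{k\in\omega}(\langle u_k\rangle\cap X),
\]
because any $g\in\langle s\rangle$ with $g\neq f$ first leaves the sequence at some $k$, i.e.\ $g\in\langle u_k\rangle$. The right-hand side is a countable union of countable sets, so it is countable. This contradicts $s\in S(X)$.

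Hence every node of $S(X)$ extends to a splitting node, and $S(X)$ is a Sacks tree. No step is a serious obstacle. The only step needing care is the covering argument, and it uses only that a countable union of countable sets is countable. The hypothesis that $X$ is uncountable is used only to guarantee $S(X)\neq\emptyset$.
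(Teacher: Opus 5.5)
Your proof is correct and complete. The tree and nonemptiness checks are immediate, and the covering $\langle s\rangle\cap X\subseteq\{f\}\cup\bigcup_{k}(\langle u_k\rangle\cap X)$ along the forced non-splitting path above $s$ correctly yields the contradiction. The paper states this lemma as a simple observation and gives no proof, so there is no argument of the paper's to compare against; yours is the standard one.

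An equivalent way to package it is to note that $X\setminus[S(X)]\subseteq\bigcup_{t\notin S(X)}(\langle t\rangle\cap X)$ is countable. Then $\langle s\rangle\cap X\cap[S(X)]$ contains two distinct branches, and their longest common initial segment is a splitting node of $S(X)$ above $s$. This form also gives the next result, Lemma~\ref{Lemma Kernel modelo}, at once: $X\setminus[S(X)]$ is a countable element of $M$, hence a subset of $M$.
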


\qquad\qquad\ \ \ \ 

The following result is also easy, so we leave it to the reader.

\begin{lemma}
Let $X\subseteq2^{\omega}$ be uncountable, $M$ a countable elementary submodel
such that $X\in M.$ If $r\in X\setminus M,$ then $r\in\left[  S\left(
X\right)  \right]  .$ \label{Lemma Kernel modelo}
\end{lemma}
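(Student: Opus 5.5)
We argue by contradiction: suppose $r\notin\left[S\left(X\right)\right]$. Then there is $n\in\omega$ with $s=r\upharpoonright n\notin S\left(X\right)$, which by definition of the kernel tree means that the set $\left\langle s\right\rangle \cap X$ is countable. The plan is to show that this countable set lies inside $M$, which is impossible since $r\in\left\langle s\right\rangle \cap X$ while $r\notin M$.

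The key steps run in this order. First, note that $s\in2^{<\omega}$ is a finite object, so $s\in M$, because $M$ contains all hereditarily finite sets (each is definable without parameters). Since $X\in M$, elementarity gives $\left\langle s\right\rangle \cap X\in M$, as this set is definable from the parameters $s$ and $X$. Finally, we invoke Lemma \ref{Lema modelo contension} in its contrapositive form: a countable set belonging to $M$ is a subset of $M$. Concretely, $M$ contains a surjection from $\omega$ onto the nonempty countable set $\left\langle s\right\rangle \cap X$, and every $f\left(k\right)$ with $k\in\omega\subseteq M$ lies in $M$. Hence $\left\langle s\right\rangle \cap X\subseteq M$ and so $r\in M$, a contradiction.

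There is no real obstacle here. The only point requiring attention is to confirm that $\left\langle s\right\rangle \cap X$ is countable in the sense of $V$ and that this is reflected in $M$; this holds because countability is absolute between \textsf{H}$\left(\kappa\right)$ and $V$ for a large enough regular $\kappa$. The argument also shows the slightly stronger fact that every $r\in X\setminus M$ has all of its restrictions in $S\left(X\right)$, which is exactly the statement $r\in\left[S\left(X\right)\right]$.
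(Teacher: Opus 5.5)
Your proof is correct. The paper leaves this lemma to the reader, and your argument is the intended routine one: pick $n$ with $r\upharpoonright n\notin S(X)$, note that the countable set $\langle r\upharpoonright n\rangle\cap X$ belongs to $M$ and is therefore a subset of $M$ (the contrapositive of Lemma~\ref{Lema modelo contension}), and conclude $r\in M$, a contradiction.
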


\qquad\ \qquad\ \ \ \qquad\ \ \ \ 

We now introduce the definitions and results concerning Aronszajn trees that
are necessary for the main theorem of this section.

\begin{definition}
Let $\left(  T,\leq_{T}\right)  $ be an Aronszajn tree, $s\in T,$
$p\in\mathbb{SC}$ and $\dot{B}$ an $\mathbb{SC}$-name for an uncountable
subset of $T.$ We say that:

\begin{enumerate}
\item $p$ $\dot{B}$\emph{-kills }$s$ if there is $t<_{T}s$ such that $p\Vdash
$\textquotedblleft$\forall z\in\dot{B}\left(  t\nleq_{T}z\right)
$\textquotedblright.

\item $p$ $\dot{B}$\emph{-freezes }$s$ if for every $t<_{T}s,$ we have that
$p\Vdash$\textquotedblleft$\exists^{\omega_{1}}z\in\dot{B}\left(  t\leq
_{T}z\right)  $\textquotedblright.

\item $p$ $\dot{B}$\emph{-decides }$s$ if $p$ either $\dot{B}$-kills $s$ or
$\dot{B}$-freezes $s.$
\end{enumerate}
\end{definition}

\qquad\ \ \ \qquad\ \ \ \ \ 

It is crucial to note that in both the killing and freezing definitions, the
node $t$ is required to be strictly below $s.$ We have the following lemma.

\begin{lemma}
Let $\left(  T,\leq_{T}\right)  $ be an Aronszajn tree, $\dot{B}$ an
$\mathbb{SC}$-name for an uncountable subset of $T,$ $M$ a countable
elementary submodel with $\dot{B},T\in M,$ $p\in\mathbb{SC}$ an
$(M,\mathbb{SC})$-generic condition and $\delta=\delta_{M}.$ The following
holds: \label{Lema SC decide y congela}

\begin{enumerate}
\item $p$ $\dot{B}$-decides every $s\in T_{\delta}.$

\item There is $s\in T_{\delta}$ such that $p$ $\dot{B}$-freezes $s.$
\end{enumerate}
\end{lemma}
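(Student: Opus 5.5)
For the first item, fix $s\in T_{\delta}$ and, for each $t\in T$, consider the set $D_{t}$ of conditions $q\in\mathbb{SC}$ that either force \textquotedblleft$\forall z\in\dot{B}\left(  t\nleq_{T}z\right)$\textquotedblright\ or force \textquotedblleft$\exists^{\omega_{1}}z\in\dot{B}\left(  t\leq_{T}z\right)$\textquotedblright. The statement \textquotedblleft only countably many $z\in\dot{B}$ extend $t$\textquotedblright\ must be turned into one of these two outcomes. So I will use the slightly weaker first alternative: $q$ forces that $\{z\in\dot{B}\mid t\leq_{T}z\}$ is countable. I then refine to a bound, and use that $t$ has only countably many successors below any fixed level. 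I expect to repair this as follows. Replace \textquotedblleft kills\textquotedblright\ by passing to a node one step higher: if $q$ forces the set of $z\in\dot{B}$ above $t$ to be bounded in height by some $\eta<\delta$ (decided in $M$ by $\sigma$-closure and total genericity), then any $t^{\prime}<_{T}s$ of height above $\eta$ with $t\leq t^{\prime}$ is killed outright. Thus the dense sets to use are $D_{t}$ = conditions deciding either \textquotedblleft uncountably many $z\in\dot{B}$ above $t$\textquotedblright\ or deciding an explicit countable bound $\eta$ on heights of such $z$. Each $D_{t}$ is open dense and lies in $M$ when $t\in M$.

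The argument for item 1 proceeds from these sets. For every $t<_{T}s$ we have $t\in T\upharpoonright\delta\subseteq M$, so $D_{t}\in M$. By the previous lemma $p$ is $(M,\mathbb{SC})$-totally generic, hence $p\in D_{t}$ for all $t<_{T}s$. If $p$ forces uncountably many extensions for every $t<_{T}s$, then $p$ freezes $s$. Otherwise pick $t<_{T}s$ where $p$ forces a bound $\eta$. Since the $q\in D_{t}\cap M$ witnessing this has $\eta\in M$, we get $\eta<\delta$. Then take $t^{\prime}=s\upharpoonright(\eta+1)$, which is above $t$ and strictly below $s$; $p$ forces no element of $\dot{B}$ extends $t^{\prime}$, so $p$ kills $s$.

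For item 2, I argue by contradiction: suppose $p$ kills every $s\in T_{\delta}$. Take a generic $G\ni p$ and work in $V[G]$, where $B=\dot{B}[G]$ is uncountable and $M[G]\cap V=M$, so $\delta_{M[G]}=\delta$. Since $B\in M[G]$ is uncountable, there is $z\in B$ of height $\geq\delta$, and $s=z\upharpoonright\delta\in T_{\delta}$. But $p$ kills $s$ via some $t<_{T}s\leq_{T}z$, contradicting $z\in B$. Hence some $s\in T_{\delta}$ is not killed, and by item 1 it is frozen. (Countability of levels is not even needed here.)

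The main obstacle is item 1: I must make sure the \textquotedblleft countable\textquotedblright\ outcome is converted to a genuine kill with a witness strictly below $s$. This relies on the bound $\eta$ being decided inside $M$, which uses that $\mathbb{SC}$ adds no new countable sets, together with total genericity.
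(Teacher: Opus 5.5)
Your argument is correct and is essentially the paper's proof. Total genericity of $p$ (from the preceding lemma) puts $p$ in the dense sets in $M$ that decide, for each $t<_{T}s$, either ``uncountably many elements of $\dot{B}$ lie above $t$'' or a height bound, which a compatible condition in $M$ lets you take below $\delta$; item 2 is the same observation as the paper's, that some $z\in\dot{B}$ of height above $\delta$ prevents $z\upharpoonright\delta$ from being killed.

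Two small repairs are needed. Take $t^{\prime}=s\upharpoonright\max(\eta+1,\left\vert t\right\vert)$ so that $t\leq_{T}t^{\prime}$. Also, finding $z\in B$ of height at least $\delta$ does use that $T\upharpoonright\delta$ is countable, contrary to your parenthetical remark.
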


\begin{proof}
We start with the first point. Let $s\in T_{\delta}.$ Since $p$ is
$(M,\mathbb{SC})$-totally generic, for every $t<_{T}s,$ we have that one of
the following statements holds:

\begin{enumerate}
\item $p\Vdash$\textquotedblleft$\exists^{\omega_{1}}z\in\dot{B}\left(
t\leq_{T}z\right)  $\textquotedblright.

\item $p$ forces that the set $\{z\in\dot{B}\mid t\leq_{T}z\}$ is at most countable.
\end{enumerate}

\qquad\ \ \ 

If the first point holds for every $t<_{T}s,$ then we have that $p$ $\dot{B}%
$-freezes $s$ and we are done. Assume there is $t<_{T}s$ such that forces that
the set $\{z\in\dot{B}\mid t\leq_{T}z\}$ is at most countable. Appealing once
again to total genericity, we know that there is $\alpha<\delta$ such that
$p\Vdash$\textquotedblleft$\{\left\vert z\right\vert \mid z\in\dot{B}\wedge
t\leq_{T}z\}\subseteq\alpha$\textquotedblright. It follows that $p\Vdash
$\textquotedblleft$\forall z\in\dot{B}\left(  s\upharpoonright\alpha\nleq
_{T}z\right)  ,$ hence $p$ $\dot{B}$-kills $s.$

\qquad\qquad\qquad\qquad\qquad

We will now prove that $p$ $\dot{B}$-freezes a node at the level $\delta.$
Since $\dot{B}$ is forced to be uncountable, we can find $z\in T$ with
$\left\vert z\right\vert >\delta$ and $q\leq p$ such that $q\Vdash
$\textquotedblleft$z\in\dot{B}$\textquotedblright. Let $s=z\upharpoonright
\delta.$ It follows that $p$ can not $\dot{B}$ kill $s,$ so $p$ $B$-freezes it.
\end{proof}

\qquad\ \ \qquad\ \ 

Let $e=\left\langle e_{\beta}\mid\beta\in\omega_{1}\right\rangle $ be a nice
sequence. Define the tree $T\left(  e\right)  =\{e_{\beta}\upharpoonright
\alpha\mid\alpha\leq\beta<\omega_{1}\}.$ We say that a square-bracket
operation $[\cdot\cdot]$ is an \emph{Aronszajn square-bracket operation} if it
is of one of the following types:

\begin{enumerate}
\item $[\cdot\cdot]_{\mathcal{C}}$ where $\mathcal{C}$ is a \textsf{C}-sequence.

\item $[\cdot\cdot]_{T,a}$ where $T$ is a Hausdorff, special Aronszajn tree
and $a:T\longrightarrow\omega$ is a specialization mapping.

\item $[\cdot\cdot]_{\mathcal{R},e}$ where $\mathcal{R}=\left\{  r_{\alpha
}\mid\alpha\in\omega_{1}\right\}  \subseteq2^{\omega}$ and $e=\left\langle
e_{\beta}\mid\beta\in\omega_{1}\right\rangle $ is a nice sequence for which
$T\left(  e\right)  $ is an Aronszajn tree.
\end{enumerate}

\qquad\ \ \qquad\ \ 

We can now prove the following:

\begin{proposition}
$\mathbb{SC}$ forces that the generic club is not $[\cdot\cdot]$-Ramsey for
any Aronszajn square-bracket operation $[\cdot\cdot]$ in $V.$
\label{Prop SC destruye a V}
\end{proposition}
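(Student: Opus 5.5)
Given $p_0\in\mathbb{SC}$ and an $\mathbb{SC}$-name $\dot B$ forced by $p_0$ to be uncountable, the plan is to find $q\le p_0$ and $\alpha,\beta$ with $q\Vdash$``$\alpha,\beta\in\dot B$ and $[\alpha\beta]\notin\dot C_{gen}$''. We follow the scheme of Theorem \ref{Teo BC destruve V}, but replace finite conditions by countable ones and strong properness by total genericity. Since conditions of $\mathbb{SC}$ are closed, the natural way to force $\xi\notin\dot C_{gen}$ is to take a condition $q$ with $\max(q)>\xi$ and $\xi\notin q$. The target value of $[\alpha\beta]$ will be $\delta_N$ for a model $N$ over which the final condition is \emph{not} generic, by Lemma \ref{Lemma SC basic generic}.

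Fix a continuous chain $\langle M_\xi\mid\xi\le\omega\rangle$ containing $p_0,\dot B$, the operation and the associated Aronszajn tree. This tree is $T(\rho_0)$ for $[\cdot\cdot]_{\mathcal C}$, $T(e)$ for $[\cdot\cdot]_{\mathcal R,e}$, and $T$ itself for $[\cdot\cdot]_{T,a}$. Using Lemma \ref{Lemma SC basic generic} and Lemma \ref{Lema union genericidad}, build $p\le p_0$ that is generic for every $M_n$ and for $M=M_\omega$, with $\max(p)=\delta=\delta_M$, and with some extension deciding a $\beta>\delta$ in $\dot B$. We work with the totally generic condition $p$ and keep open the option of extending it only by conditions whose part below $\delta$ is already fixed.

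The main device is Lemma \ref{Lema SC decide y congela}, applied to the $\dot B$-image in the relevant tree: to $\{\rho_{0\mu}\}$ in the $\mathcal C$ case, to $\{e_\mu\}$ in the $(\mathcal R,e)$ case, and to nodes of $T$ in the third case. Because $p$ is totally generic over $M$, it $\dot B$-decides every node of level $\delta$. Take the node $s=\rho_{0\beta}\upharpoonright\delta$ (resp.\ $e_\beta\upharpoonright\delta$, $t\upharpoonright\delta$). It cannot be killed, since some extension of $p$ puts $\beta$ into $\dot B$, so $p$ $\dot B$-freezes it. Freezing says that every $t<s$ has uncountably many elements of $\dot B$ above it, forced by $p$ itself. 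This is the countable-condition analogue of the set $A$ in Theorem \ref{Teo BC destruve V}, and it is why we need the Aronszajn hypothesis on $T(e)$.

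Next pick $n$ with the relevant parameters in $M_n$: namely $\lambda(\delta,\beta)$ in the $\mathcal C$ case, and the finite sets $F$, $G$ in the others. Set $N=M_n$ and $\delta_n=\delta_N$. Repeat the incompatibility trick from Theorem \ref{Teo BC destruve V}: use a specializing map (for $[\cdot\cdot]_{\mathcal R,e}$ use $\Delta$ of reals and Lemma \ref{Lemma Kernel modelo} with kernel trees) to find, inside $M_n$, a node $t$ incomparable with the $\beta$-branch, splitting off above the chosen threshold, with uncountably many members of $\dot B$ above it. Then choose $\mu\in M_n$ above $t$ with $\mu$ above the finitely many relevant trace points. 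As in Theorem \ref{Teo BC destruve V}, we get $[\mu\beta]=\delta_n$ (resp.\ $t\upharpoonright\delta_n$).

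The main obstacle is the forcing side: we need a single condition $q\le p$ with $q\Vdash\mu,\beta\in\dot B$ and $\delta_n\notin q$, yet $p$ already contains $\delta_n$ by genericity over $M_n$. To handle this, we will not make $p$ generic over $M_n$. Build $p$ generic only over a cofinal sequence of models $M_{n_k}$ (all still forming $M$ at the union, so Lemma \ref{Lema union genericidad} still gives $M$-genericity), and deliberately skip one level. Concretely, fix $n$ in advance from names in $M_0$ and choose $p\cap\delta_{n+1}$ to jump over $\delta_n$ via some $\varepsilon<\delta_n$, giving $\max(p\cap\delta_n)<\varepsilon$ and $p\cap[\varepsilon,\delta_{n+1})=\{\delta_{n+1}\}$. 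The freezing witness $\mu$ must then be found compatibly. We do this by working inside $M_{n+1}$ with the condition $p\cap(\delta_{n+1}+1)$, or rather an $M_{n+1}$-generic piece of it, using elementarity of $M_{n+1}$ to reflect ``there is an extension forcing some $\mu$ above $t$ into $\dot B$'' and obtain $\mu<\delta_{n+1}$. The delicate point is ordering these choices so that $n$ and the parameters $\lambda(\delta,\beta)$, $F$, $G$ are determined before $\beta$, since $\beta$ is decided only after $p$. The remedy is to fix a name for $\beta$ and let total genericity over $M$ decide, inside $M$, the finitely many relevant parameters.
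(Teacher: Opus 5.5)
You have the right ingredients: holes in the condition that destroy genericity over a single model, and Lemma \ref{Lema union genericidad} to recover $M$-genericity. But the argument breaks at exactly the step you call delicate, and your remedy does not work.

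Everything that makes $[\mu\beta]$ land on the hole depends on $\beta$:
\begin{itemize}
\item the level-$\delta$ node $\rho_{0\beta}\upharpoonright\delta$ (resp.\ $e_\beta\upharpoonright\delta$, $t\upharpoonright\delta$) from which the witness must split off;
\item the ordinal $\lambda(\delta,\beta)$ that the splitting point $\triangle$ must exceed;
\item the finite sets $F,G$, computed from $e_\beta$, resp.\ $a(t\upharpoonright\cdot)$;
\item in the $(\mathcal{R},e)$ case, the real $r_\beta$.
\end{itemize}
Total genericity over $M$ cannot decide any of this. Since $p\Vdash M[\dot G]\cap\omega_1=\delta$, no name in $M$ is forced by $p$ to lie above $\delta$, so there is no name for $\beta$ in $M$. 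These data are fixed only by the extension $r\le p$ that decides $\beta$. Because $\max(p)=\delta$, we have $r\cap(\delta+1)=p$, so the location of the hole and everything arranged below it were committed before $\beta$ was known. The node can be any of the infinitely many elements of $T_\delta$, and $\lambda(\delta,\beta)$ any ordinal below $\delta$. Hence one hole at a $\delta_n$ fixed in advance cannot be tailored to every possible $\beta$.

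There is a second problem. Reflecting ``some extension forces $\mu\in\dot B$'' into $M_{n+1}$ is the strongly-proper trick of Theorem \ref{Teo BC destruve V}, and it is not available in $\mathbb{SC}$. Such an extension is compatible with your final condition only if it is an initial segment of it. So a suitable $\mu<\delta_n$ must already be forced into $\dot B$ by the part of the condition below the hole, and that too must be prepared before $\beta$ exists.

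The missing idea is to anticipate every possible $\beta$ by planting countably many holes below $\delta$, one per candidate. The paper works along a chain of length $\omega^2$ and enumerates $T_\delta=\{s_n\mid n\in\omega\}$ so that every node appears infinitely often. Countability of the level is what matters here, and that is the real role of the Aronszajn hypothesis on $T(e)$. At stage $n$ it chooses:
\begin{itemize}
\item a limit $\gamma_n$ and ordinals $\varepsilon_n<\mu_n<\gamma_n$;
\item a node $z_n\in T_{\delta_{\varepsilon_n}}$ frozen by the current condition, equal to $s_n\upharpoonright\delta_{\varepsilon_n}$ whenever that node is frozen;
\item a witness $t_n\in M_{\varepsilon_n}$ incompatible with $z_n$, whose splitting point $\triangle_n$ exceeds $\lambda(\delta_{\gamma_n},\delta)$;
\item $\mu_n$ with $\textsf{Tr}(\triangle_n,\delta_{\gamma_n})\setminus\{\delta_{\gamma_n}\}\subseteq\delta_{\mu_n}$, which is possible because $\gamma_n$ is a limit;
\item a condition $q_n$ generic over $M_{\varepsilon_n}$ and $M_{\mu_n}$, with $\max(q_n)=\delta_{\gamma_n}+1$ and $\delta_{\gamma_n}\notin q_n$.
\end{itemize}
Then $q=\bigcup_n q_n\cup\{\delta\}$ is $M$-generic by Lemma \ref{Lema union genericidad}. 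Only after some $r\le q$ decides $\beta$ does one choose $n$ with $s_n=\rho_{0\beta}\upharpoonright\delta$ and $\lambda(\delta,\beta)<\lambda(\delta_{\gamma_n},\delta)$. Such $n$ exists by Lemma \ref{Lemma lambas converjen} together with the infinite repetition of each node. The partner $\alpha<\delta_{\mu_n}$ then comes from total genericity of $r$ itself over $M_{\mu_n}$, and $[\alpha\beta]=\delta_{\gamma_n}\notin r$.

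In the $(\mathcal{R},e)$ case the dependence on $r_\beta$ is removed differently. By $\sigma$-closedness one fixes in advance the kernel tree $S$ of $\{r_\alpha\mid\alpha\in\dot B\}$. One then chooses $m_n$ so that all $s_n(\delta_{\gamma_n})$-splitting nodes of $S$ lie in $2^{<m_n}$. This gives $s_n(\delta_{\gamma_n})\le\triangle(\alpha,\beta)<m_n$ whichever branch of $S$ the real $r_\beta$ turns out to be.
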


\begin{proof}
Let $p\in\mathbb{SC}$ and $\dot{B}$ such that $p\Vdash$\textquotedblleft%
$\dot{B}\in\left[  \omega_{1}\right]  ^{\omega_{1}}$\textquotedblright. We
want to extend $p$ to a condition forcing that there are $\alpha,\beta\in
\dot{B}$ such that $\left[  \alpha\beta\right]  $ is not in $\dot{C}_{gen}$.
Let $\left\langle M_{\alpha}\mid\alpha\leq\omega^{2}\right\rangle $ be a
continuous chain of countable elementary submodels such that $\dot{B}\in
M_{0},$ as well as the square-bracket operation we want to take care of. For
convenience, we introduce the following notation:

\begin{enumerate}
\item $M=M_{\omega^{2}}.$

\item $\delta=\delta_{M}.$

\item $\delta_{\alpha}=\delta_{M_{\alpha}}$ for $\alpha<\omega^{2}.$
\end{enumerate}

\qquad\ \ \qquad\ \ \ \ \ \ \qquad\qquad\qquad\qquad\ \ \ \ 

The proof now splits into cases depending on the type of square-bracket
operation under consideration. We begin with the case of the operation induced
by a \textsf{C}-sequence, say $\mathcal{C}.$ We assume that $\mathcal{C}\in
M_{0}.$ For notational convenience in this part of the proof, we will write
$T$ for $T(\rho_{0}),$ $\left[  \gamma\xi\right]  $ for $\left[  \gamma
\xi\right]  _{\mathcal{C}}$ and $\triangle\left(  \gamma,\xi\right)  $ for
$\triangle_{\mathcal{C}}\left(  \gamma,\xi\right)  $ (for any $\gamma,\xi
\in\omega_{1}$). To simplify notation, a subset $W\subseteq\omega_{1}$ will
occasionally be identified with the corresponding set $\{\rho_{0\alpha}%
\mid\alpha\in W\}.$

\qquad\ \qquad\ \ \ \ \ \qquad\ \ 

Fix an enumeration $T_{\delta}=\left\{  s_{n}\mid n\in\omega\right\}  $ such
that each element of $T_{\delta}$ is listed infinitely many times. We will now
recursively construct a sequence $\langle\varepsilon_{n},\gamma_{n},\mu
_{n},q_{n},z_{n},t_{n},\triangle_{n}\rangle_{n\in\omega}$ satisfying the
following properties\footnote{The reader may wish to skip this lengthy list
initially, as the motivation for these properties will become clear in the
next paragraphs.}:

\begin{enumerate}
\item $\omega n<\varepsilon_{n}<\mu_{n}<\gamma_{n}<\omega^{2}.$

\item $\gamma_{n}<\varepsilon_{n+1}.$

\item $q_{n+1}\leq q_{n}\leq p$ and $q_{n}\in M.$

\item $q_{n}$ is both $(M_{\varepsilon_{n}},\mathbb{SC})$-generic and
$(M_{\mu_{n}},\mathbb{SC})$-generic.

\item \textsf{max}$\left(  q_{n}\right)  =\delta_{\gamma_{n}}+1$ and
$\delta_{\gamma_{n}}\notin q_{n}$ (in particular, $q_{n}$ is not
$(M_{\gamma_{n}},\mathbb{SC})$-generic).

\item $z_{n}\in T_{\delta_{\varepsilon_{n}}}$ and $q_{n}$ $\dot{B}$-freezes
it. Moreover, if $q_{n}$ $\dot{B}$-freezes $s_{n}\upharpoonright
\delta_{\varepsilon_{n}},$ then $z_{n}=s_{n}\upharpoonright\delta
_{\varepsilon_{n}}.$

\item $t_{n}\in M_{\varepsilon_{n}}.$

\item $t_{n}$ and $z_{n}$ are incompatible nodes of $T.$

\item $\triangle_{n}=\triangle\left(  z_{n},t_{n}\right)  $.

\item $\lambda\left(  \delta_{\gamma_{n},}\delta\right)  <\triangle_{n}$

\item \textsf{Tr}$(\triangle_{n},\delta)=$ \textsf{Tr}$(\delta_{\gamma_{n}%
},\delta)\cup$ \textsf{Tr}$(\triangle_{n},\delta_{\gamma_{n}}).$

\item \textsf{Tr}$(\triangle_{n},\delta_{\gamma_{n}})\setminus\left\{
\delta_{\gamma_{n}}\right\}  \subseteq\delta_{\mu_{n}}$

\item $q_{n}\Vdash$\textquotedblleft$\exists^{\omega_{1}}\alpha\in\dot
{B}\left(  t\subseteq\rho_{0\alpha}\right)  $\textquotedblright.
\end{enumerate}

\qquad\ \ \ \ \qquad\ \qquad\ \ \ \ \ \ 

The construction proceeds by recursion. For notational convenience, let
$q_{-1}=p,$ $\varepsilon_{-1}=\mu_{-1}=\gamma_{-1}=0.$ Suppose we have
constructed the sequence up to stage $n-1,$ we now define the items for stage
$n.$ Since $\omega^{2}$ is a limit of limit ordinals, we can find $\gamma
_{n}<\omega^{2}$ a limit ordinal such that $\gamma_{n-1},$ $\omega
n<\gamma_{n}$ and $q_{n-1}\in M_{\gamma_{n}}.$ Now, since $\gamma_{n}$ is
limit, it is possible to find $\varepsilon_{n}<\gamma_{n}$ such that
$\gamma_{n-1},\omega n<\varepsilon_{n}$ and $\lambda\left(  \delta_{\gamma
_{n},}\delta\right)  ,$ $q_{n-1}\in M_{\varepsilon_{n}}.$ We can now apply
Lemma \ref{Lemma SC basic generic} and find $p_{n}\in M_{\gamma_{n}}$ an
$(M_{\varepsilon_{n}},\mathbb{SC})$-generic condition extending $q_{n-1}$ such
that \textsf{max}$\left(  p_{n}\right)  =\delta_{\varepsilon_{n}}.$

\qquad\qquad\qquad\ \ \ \ 

Since $p_{n}$ is $(M_{\varepsilon_{n}},\mathbb{SC})$-generic, Lemma
\ref{Lema SC decide y congela} implies that $p_{n}$ $\dot{B}$-decides
$s_{n}\upharpoonright\delta_{\varepsilon_{n}}.$ If $p_{n}$ $\dot{B}$-freezes
$s_{n}\upharpoonright\delta_{\varepsilon_{n}},$ we set $z_{n}=s_{n}%
\upharpoonright\delta_{\varepsilon_{n}}.$ If instead $p_{n}$ $\dot{B}$-kills
$s_{n}\upharpoonright\delta_{\varepsilon_{n}},$ we let $z_{n}$ be any node in
$T_{\delta_{\varepsilon_{n}}}$ that is $\dot{B}$-frozen by $p_{n}$ (the
existence of such a node is guaranteed by Lemma \ref{Lema SC decide y congela}%
). Find $\eta<\delta_{\varepsilon_{n}}$ such that $\lambda\left(
\delta_{\gamma_{n},}\delta\right)  ,$ $\lambda\left(  \delta_{\varepsilon_{n}%
},\delta_{\gamma_{n},}\right)  <\eta.$ Since $p_{n}$ $\dot{B}$-freezes
$z_{n},$ it follows that $p_{n}\Vdash$\textquotedblleft$\exists^{\omega_{1}%
}\alpha\in\dot{B}\left(  z_{n}\upharpoonright\eta\subseteq\rho_{0\alpha
}\right)  $\textquotedblright. By Proposition \ref{Prop Aronszajn y submodelo}
and since $p_{n}$ is $(M_{\varepsilon_{n}},\mathbb{SC})$-totally generic, it
is possible to find $t_{n}\in T$ with the following properties:

\begin{enumerate}
\item $t_{n}\in M_{\varepsilon_{n}}.$

\item $z_{n}$ and $t_{n}$ are incompatible nodes of $T.$

\item $z_{n}\upharpoonright\eta\subseteq t_{n}.$

\item $p_{n}\Vdash$\textquotedblleft$\exists^{\omega_{1}}\alpha\in\dot
{B}\left(  t\subseteq\rho_{0\alpha}\right)  $\textquotedblright.
\end{enumerate}

\qquad\ \ \ 

Let $\triangle_{n}=\triangle\left(  z_{n},t_{n}\right)  .$ Clearly $\eta
\leq\triangle_{n},$ so in particular we have that $\lambda\left(
\delta_{\gamma_{n},}\delta\right)  ,$ $\lambda\left(  \delta_{\varepsilon_{n}%
},\delta_{\gamma_{n},}\right)  <\triangle_{n}.$ We can now apply Proposition
\ref{Prop juntar trazas} and conclude that \newline\textsf{Tr}$(\triangle
_{n},\delta)=$ \textsf{Tr}$(\delta_{\gamma_{n}},\delta)\cup$\textsf{Tr}%
$(\triangle_{n},\delta_{\gamma_{n}}).$ Since $\gamma_{n}$ is a limit ordinal,
we can find $\mu_{n}<\gamma_{n}$ such that \textsf{Tr}$(\triangle_{n}%
,\delta_{\gamma_{n}})\setminus\left\{  \delta_{\gamma_{n}}\right\}
\subseteq\delta_{\mu_{n}},$ $\varepsilon_{n}<\mu_{n}$ and $p_{n}\in
M_{\gamma_{n}}.$ By Lemma \ref{Lemma SC basic generic}, there is $\overline
{p}_{n}\in M_{\gamma_{n}}$ an $(M_{\mu_{n}},\mathbb{SC})$-generic condition
extending $p_{n}.$ Finally, let $q_{n}=\overline{p}_{n}\cup\left\{
\delta_{\gamma_{n}}+1\right\}  .$ This finishes the construction.

\qquad\qquad\qquad\ \ \ 

Define $q=\bigcup\limits_{n\in\omega}q_{n}\cup\left\{  \delta\right\}  .$ It
is straightforward to verify that $q\in\mathbb{SC}$ and extends $p.$ Moreover,
$q$ is $(M,\mathbb{SC})$-generic because it is $(M_{\varepsilon_{n}%
},\mathbb{SC})$-generic for every $n\in\omega$ (see Lemma
\ref{Lema union genericidad}). Since $\dot{B}$ is forced to be uncountable, we
can find $r\leq q$ and $\beta>\delta$ such that $r\Vdash$\textquotedblleft%
$\beta\in\dot{B}$\textquotedblright.

\qquad\qquad\ \qquad\ \ \ 

Lemma \ref{Lemma lambas converjen} implies that the sequence $\langle
\lambda(\delta_{\gamma_{m}},\delta)\rangle_{m\in\omega}$ converges to
$\delta.$ In particular, $\lambda(\delta_{\gamma_{m}},\delta)>\lambda\left(
\delta,\beta\right)  $ holds for almost all $m\in\omega.$ Moreover, since in
our enumeration $T_{\delta}=\left\{  s_{n}\mid n\in\omega\right\}  $ every
element appears infinitely many often, it is possible to find an $n\in\omega$
such that $\lambda\left(  \delta,\beta\right)  <\lambda(\delta_{\gamma_{n}%
},\delta)$ and $s_{n}=\rho_{0\beta}\upharpoonright\delta.$ Given that $q$
$\dot{B}$-freezes $s_{n},$ we conclude that $q_{n}$ $\dot{B}$-freezes
$s_{n}\upharpoonright\delta_{\varepsilon_{n}},$ and therefore $z_{n}%
=s_{n}\upharpoonright\delta_{\varepsilon_{n}}.$ Since $r$ is $(M_{\mu_{n}%
},\mathbb{SC})$-totally generic, there is $\alpha<\delta_{\mu_{n}}$ such that
$r\Vdash$\textquotedblleft$\alpha\in\dot{B}$\textquotedblright, $t_{n}%
\subseteq\rho_{0\alpha}$ and \textsf{Tr}$(\triangle_{n},\delta_{\gamma_{n}%
})\setminus\left\{  \delta_{\gamma_{n}}\right\}  =$ \textsf{Tr}$(\triangle
_{n},\delta_{\gamma_{n}})\cap M_{\mu_{n}}\subseteq\alpha.$ We will now compute
$\left[  \alpha\beta\right]  .$ First note that:\bigskip

\hfill%
\begin{tabular}
[c]{lll}%
$\triangle\left(  \alpha,\beta\right)  $ & $=$ & $\triangle\left(  t_{n}%
,s_{n}\right)  $\\
& $=$ & $\triangle\left(  t_{n},z_{n}\right)  $\\
& $=$ & $\triangle_{n}$%
\end{tabular}
\hfill\ 

\qquad\ \ \ \qquad\qquad\ \ \ \qquad\medskip

Recall that $\lambda\left(  \delta,\beta\right)  <\lambda(\delta_{\gamma_{n}%
},\delta)<\triangle_{n}.$ By Proposition \ref{Prop juntar trazas}:\bigskip

\hfill%
\begin{tabular}
[c]{lll}%
\textsf{Tr}$(\triangle_{n},\beta)$ & $=$ & \textsf{Tr}$(\delta,\beta)\cup$
\textsf{Tr}$(\triangle_{n},\delta)$\\
& $=$ & \textsf{Tr}$(\delta,\beta)\cup$ \textsf{Tr}$(\delta_{\gamma_{n}%
},\delta)\cup$ \textsf{Tr}$(\triangle_{n},\delta_{\gamma_{n}})$%
\end{tabular}
\hfill\ 

\ \ \ \ \ \medskip

Since \textsf{Tr}$(\triangle_{n},\delta_{\gamma_{n}})\setminus\left\{
\delta_{\gamma_{n}}\right\}  \subseteq\alpha<\delta_{\gamma_{n}},$ it follows
that \newline$\left[  \alpha\beta\right]  =$ \textsf{min}$\{$\textsf{Tr}%
$(\triangle_{n},\beta)\setminus\alpha\}=\delta_{\gamma_{n}}.$ Finally, recall
that $r\Vdash$\textquotedblleft$\alpha,\beta\in\dot{B}$\textquotedblright\ and
$\delta_{\gamma_{n}}\notin r.$ This concludes the proof for the square-bracket
operation induced by a \textsf{C}-sequence.

\qquad\qquad\ \qquad\ \ \ \ 

We now consider the case of the operation induced by a sequence of reals
$\mathcal{R}=\left\{  r_{\alpha}\mid\alpha\in\omega_{1}\right\}
\subseteq2^{\omega}$ and a nice sequence $e=\left\langle e_{\beta}\mid\beta
\in\omega_{1}\right\rangle $ for which $T=T\left(  e\right)  $ is an Aronszajn
tree. We are assuming that $\mathcal{R},e\in M_{0}.$ For notational
convenience in this part of the proof, we will write $\left[  \gamma
\xi\right]  $ for $\left[  \gamma\xi\right]  _{\mathcal{R},e}$ and
$\triangle\left(  \gamma,\xi\right)  $ for $\triangle_{\mathcal{R}}\left(
\gamma,\xi\right)  $ (for any $\gamma,\xi\in\omega_{1}$). Fix an enumeration
$T_{\delta}=\left\{  s_{n}\mid n\in\omega\right\}  .$

\qquad\ \qquad\ \ \ \ \ \qquad\ \ \ \ 

Let $\dot{X}$ be the $\mathbb{SC}$-name for the set $\{r_{\alpha}\mid\alpha
\in\dot{B}\}.$ By Lemma \ref{Lema es Sacks tree}, its kernel tree is forced to
be a Sacks tree. Furthermore, since $\mathbb{SC}$ is $\sigma$-closed, (by
extending $p$ if needed) we may assume that there is a Sacks tree $S$ such
that $p\Vdash$\textquotedblleft$S=S(\dot{X})$\textquotedblright. Since both
$p$ and $\dot{B}$ are in $M_{0},$ it follows that $S\in M_{0}.$ We will now
recursively construct a sequence $\langle\gamma_{n},\mu_{n},q_{n},m_{n}%
\rangle_{n\in\omega}$ satisfying the following properties:

\begin{enumerate}
\item $\omega n<\mu_{n}<\gamma_{n}<\omega^{2}.$

\item $\gamma_{n}<\mu_{n+1}.$

\item $q_{n+1}\leq q_{n}\leq p$ and $q_{n}\in M.$

\item $q_{n}$ is $(M_{\mu_{n}},\mathbb{SC})$-generic.

\item \textsf{max}$\left(  q_{n}\right)  =\delta_{\gamma_{n}}+1$ and
$\delta_{\gamma_{n}}\notin q_{n}$ (in particular, $q_{n}$ is not
$(M_{\gamma_{n}},\mathbb{SC})$-generic).

\item $s_{n}\left(  \delta_{\gamma_{n}}\right)  <m_{n}$ and\ \textsf{split}%
$_{s_{n}\left(  \delta_{\gamma_{n}}\right)  }\left(  S\right)  \subseteq
2^{<m_{n}}.$

\item $\{\xi\in M_{\gamma_{n}}\mid s_{n}\left(  \xi\right)  \leq
m_{n}\}\subseteq\delta_{\mu_{n}}.$
\end{enumerate}

\qquad\ \ \ \qquad\ \ \ 

The construction proceeds by recursion. For notational convenience, let
$q_{-1}=p,\mu_{-1}=\gamma_{-1}=0.$ Suppose we have constructed the sequence up
to stage $n-1,$ we now define the items for stage $n.$ Since $\omega^{2}$ is a
limit of limit ordinals, we can find $\gamma_{n}<\omega^{2}$ a limit ordinal
such that $\gamma_{n-1},$ $\omega n<\gamma_{n}$ and $q_{n-1}\in M_{\gamma_{n}%
}.$ Choose $m_{n}$ such that \textsf{split}$_{s_{n}\left(  \delta_{\gamma_{n}%
}\right)  }\left(  S\right)  \subseteq2^{<m_{n}}$. Note that $s_{n}\left(
\delta_{\gamma_{n}}\right)  <m_{n}.$ Since $\gamma_{n}$ is limit, we can now
find $\mu_{n}<\gamma_{n}$ such that $\gamma_{n-1},\omega n<\mu_{n},$
$q_{n-1}\in M_{\mu_{n}}$ and $\{\xi\in M_{\gamma_{n}}\mid s_{n}\left(
\xi\right)  \leq m_{n}\}\subseteq\delta_{\mu_{n}}.$ This last requirement is
possible because $s_{n}$ is an injective function. We can now apply Lemma
\ref{Lemma SC basic generic} and find $p_{n}\in M_{\gamma_{n}}$ an
$(M_{\mu_{n}},\mathbb{SC})$-generic condition extending $q_{n-1}$. Define
$q_{n}=p_{n}\cup\{\delta_{\gamma_{n}}+1\}.$ This finishes the construction.

\qquad\ \qquad\ \qquad\ \qquad\ 

Define $q=\bigcup\limits_{n\in\omega}q_{n}\cup\left\{  \delta\right\}  .$
Since $q$ is $(M_{\mu_{n}},\mathbb{SC})$-generic for every $n\in\omega,$ Lemma
\ref{Lema union genericidad} implies that $q$ is $(M,\mathbb{SC})$-generic.
Choose $r\leq q$ and $\beta>\delta$ such that $r\Vdash$\textquotedblleft%
$\beta\in\dot{B}$\textquotedblright. It follows by Lemma
\ref{Lemma Kernel modelo} that $r_{\beta}\in\left[  S\right]  .$

\qquad\ \qquad\ \qquad\ \qquad\ \ \ 

Let $n\in\omega$ be such that $s_{n}=e_{\beta}\upharpoonright\delta$ and $t\in
S$ be the only node satisfying $t\subseteq r_{\beta}$ and $t\in$
\textsf{split}$_{s_{n}\left(  \delta_{\gamma_{n}}\right)  }\left(  S\right)
.$ Define $\triangle=\left\vert t\right\vert $ and note that $s_{n}\left(
\delta_{\gamma_{n}}\right)  \leq\triangle<m_{n}$ . Since $t$ is a splitting
node of $S,$ the node $z=t^{\frown}\left(  1-r_{\beta}\left(  \triangle
\right)  \right)  $ also belongs to $S.$ Consequently, we have that $r\Vdash
$\textquotedblleft$\exists^{\omega_{1}}\alpha\in\dot{B}\left(  z\subseteq
r_{\alpha}\right)  $\textquotedblright. We now use the fact that $r$ is
$(M_{\mu_{n}},\mathbb{SC})$-totally generic to find $\alpha\in M_{\mu_{n}}$
such that $z\subseteq r_{\alpha},$ $r\Vdash$\textquotedblleft$\alpha\in\dot
{B}$\textquotedblright\ and $\{\xi<\delta_{\mu_{n}}\mid e_{\beta}\left(
\xi\right)  \leq m_{n}\}\subseteq\alpha.$

\qquad\qquad\qquad\qquad\qquad\ \ \ 

We claim that $\left[  \alpha\beta\right]  =\delta_{\mu_{n}}.$ Recall that
$\left[  \alpha\beta\right]  $ is the least $\xi\in\left[  \alpha
,\beta\right]  $ for which $e_{\beta}\left(  \xi\right)  \leq\triangle\left(
\alpha,\beta\right)  =\triangle.$ On one hand, we already knew that $e_{\beta
}(\delta_{\gamma_{n}})=s_{n}\left(  \delta_{\gamma_{n}}\right)  \leq\triangle
$. On the other hand, for $\xi<\delta_{\gamma_{n}}$ with $e_{\beta}\left(
\xi\right)  \leq\triangle,$ we have the following:\medskip

\hfill%
\begin{tabular}
[c]{lll}%
$e_{\beta}\left(  \xi\right)  \leq\triangle$ & $\longrightarrow$ & $e_{\beta
}\left(  \xi\right)  <m_{n}$\\
& $\longrightarrow$ & $s_{n}\left(  \xi\right)  <m_{n}$\\
& $\longrightarrow$ & $\xi<\delta_{\mu_{n}}$%
\end{tabular}
\hfill\ 

\qquad\ \qquad\ \ 

Since $\xi<\delta_{\mu_{n}}$ and $e_{\beta}\left(  \xi\right)  \leq m_{n},$ it
follows that $\xi<\alpha.$ We conclude that $\left[  \alpha\beta\right]
=\delta_{\mu_{n}}.$ Finally, recall that $r\Vdash$\textquotedblleft%
$\alpha,\beta\in\dot{B}$\textquotedblright\ and $\delta_{\gamma_{n}}\notin r.$
This concludes the proof for this square-bracket operation.

\qquad\ \qquad\ \qquad\ \ \ \ \qquad\ 

The last case is when the operation is induced by a Hausdorff, special
Aronszajn tree $T$ and $a:T\longrightarrow\omega$ a specialization mapping.
For notational convenience, we now write $\left[  st\right]  $ for $\left[
st\right]  _{T,r}$. We can assume that $T,a\in M_{0}.$ One more change is
needed, now $\dot{B}$ is forced to be an uncountable subset of $T.$ Fix an
enumeration $T_{\delta}=\left\{  s_{n}\mid n\in\omega\right\}  $. We will now
recursively construct a sequence $\langle\varepsilon_{n},\gamma_{n},\mu
_{n},q_{n},z_{n},w_{n},m_{n},\triangle_{n}\rangle_{n\in\omega}$ satisfying the
following properties:

\begin{enumerate}
\item $\omega n<\varepsilon_{n}<\mu_{n}<\gamma_{n}<\omega^{2}.$

\item $\gamma_{n}<\varepsilon_{n+1}.$

\item $q_{n+1}\leq q_{n}\leq p$ and $q_{n}\in M.$

\item $q_{n}$ is both $(M_{\varepsilon_{n}},\mathbb{SC})$-generic and
$(M_{\mu_{n}},\mathbb{SC})$-generic.

\item \textsf{max}$\left(  q_{n}\right)  =\delta_{\gamma_{n}}+1$ and
$\delta_{\gamma_{n}}\notin q_{n}$ (in particular, $q_{n}$ is not
$(M_{\gamma_{n}},\mathbb{SC})$-generic).

\item $m_{n}=a(s_{n}\upharpoonright\delta_{\gamma_{n}}).$

\item $z_{n}\in T_{\delta_{\varepsilon_{n}}}$ and $q_{n}$ $\dot{B}$-freezes
it. Moreover, if $q_{n}$ $\dot{B}$-freezes $s_{n}\upharpoonright
\delta_{\varepsilon_{n}},$ then $z_{n}=s_{n}\upharpoonright\delta
_{\varepsilon_{n}}.$

\item $w_{n}\in T\cap M_{\varepsilon_{n}}$ and is incompatible with $z_{n}.$

\item $w_{n}\Vdash$\textquotedblleft$\exists^{\omega_{1}}t\in\dot{B}\left(
w_{n}\subseteq t\right)  $\textquotedblright.

\item $\triangle_{n}=\triangle\left(  z_{n},w_{n}\right)  .$

\item $\left\{  \xi<\delta_{\gamma_{n}}\mid a\left(  s_{n}\upharpoonright
\xi\right)  \leq m_{n}\right\}  \subseteq\triangle_{n}.$

\item $\left\{  \xi<\delta_{\gamma_{n}}\mid a\left(  s_{n}\upharpoonright
\xi\right)  \leq a\left(  z_{n}\wedge w_{n}\right)  \right\}  \subseteq
\delta_{\mu_{n}}.$
\end{enumerate}

\qquad\ \qquad\ \qquad\ \qquad\ \qquad\ 

The construction proceeds by recursion. For notational convenience, let
$q_{-1}=p,\varepsilon_{-1}=\mu_{-1}=\gamma_{-1}=0.$ Suppose we have
constructed the sequence up to stage $n-1,$ we now define the items for stage
$n.$ Since $\omega^{2}$ is a limit of limit ordinals, we can find $\gamma
_{n}<\omega^{2}$ a limit ordinal such that $\gamma_{n-1},$ $\omega
n<\gamma_{n}$ and $q_{n-1}\in M_{\gamma_{n}}.$ Define $m_{n}$ $=a(s_{n}%
\upharpoonright\delta_{\gamma_{n}})$ and find $\eta<\delta_{\gamma_{n}}$ such
that $\left\{  \xi<\delta_{\gamma_{n}}\mid a\left(  s_{n}\upharpoonright
\xi\right)  \leq m_{n}\right\}  \subseteq\eta.$ Since $\gamma_{n}$ is a limit
ordinal, we can find $\varepsilon_{n}<\gamma_{n}$ such that $\gamma_{n-1},$
$\omega n<\varepsilon_{n}$ and $\eta,q_{n-1}\in M_{\varepsilon_{n}}.$ We can
now apply Lemma \ref{Lemma SC basic generic} and find $p_{n}\in M_{\gamma_{n}%
}$ an $(M_{\varepsilon_{n}},\mathbb{SC})$-generic condition extending
$q_{n-1}$ such that \textsf{max}$\left(  p_{n}\right)  =\delta_{\varepsilon
_{n}}.$ Since $p_{n}$ is $(M_{\varepsilon_{n}},\mathbb{SC})$-generic, Lemma
\ref{Lema SC decide y congela} implies that $p_{n}$ $\dot{B}$-decides
$s_{n}\upharpoonright\delta_{\varepsilon_{n}}.$ If $p_{n}$ $\dot{B}$-freezes
$s_{n}\upharpoonright\delta_{\varepsilon_{n}},$ we set $z_{n}=s_{n}%
\upharpoonright\delta_{\varepsilon_{n}}.$ If instead $p_{n}$ $\dot{B}$-kills
$s_{n}\upharpoonright\delta_{\varepsilon_{n}},$ we let $z_{n}$ be any node in
$T_{\delta_{\varepsilon_{n}}}$ that is $\dot{B}$-frozen by $p_{n}$ (the
existence of such a node is guaranteed by Lemma \ref{Lema SC decide y congela}).

\qquad\qquad\qquad

Recall that $p_{n}\Vdash$\textquotedblleft$\exists^{\omega_{1}}t\in\dot
{B}\left(  z_{n}\upharpoonright\eta\subseteq t\right)  $\textquotedblright%
\ and $p_{n}$ is an $(M_{\varepsilon_{n}},\mathbb{SC})$-totally generic
condition. We can use Proposition \ref{Prop Aronszajn no numerable} to find
two incompatible nodes $w^{0},w^{1}\in T\cap M_{\varepsilon_{n}}$ such that
$z_{n}\upharpoonright\eta\subseteq w^{0},w^{1}$ and $p_{n}\Vdash
$\textquotedblleft$\exists^{\omega_{1}}t\in\dot{B}\left(  w^{i}\subseteq
t\right)  $\textquotedblright\ for each $i\in2.$ Pick any $w_{n}\in\left\{
w^{0},w^{1}\right\}  $ that is incompatible with $z_{n}.$ Define
$\triangle_{n}=\triangle\left(  z_{n},w_{n}\right)  $ and note that
$\eta<\triangle_{n},$ so $\left\{  \xi<\delta_{\gamma_{n}}\mid a\left(
s_{n}\upharpoonright\xi\right)  \leq m_{n}\right\}  \subseteq\triangle_{n}.$
Since $\gamma_{n}$ is a limit ordinal, we can find $\mu_{n}<\gamma_{n}$ such
that $\varepsilon_{n}<\mu_{n},$ $p_{n}\in M_{\mu_{n}}$ and $\left\{
\xi<\delta_{\gamma_{n}}\mid a\left(  s_{n}\upharpoonright\xi\right)  \leq
a\left(  z_{n}\wedge w_{n}\right)  \right\}  \subseteq\delta_{\mu_{n}}.$ By
Lemma \ref{Lemma SC basic generic}, there is $\overline{p}_{n}\in
M_{\gamma_{n}}$ an $(M_{\mu_{n}},\mathbb{SC})$-generic condition extending
$p_{n}.$ We now define $q_{n}=\overline{p}_{n}\cup\left\{  \delta_{\gamma_{n}%
}+1\right\}  $ and finish the recursive construction.

\qquad\ \qquad\ \qquad\ 

Define $q=\bigcup\limits_{n\in\omega}q_{n}\cup\left\{  \delta\right\}  ,$
which is an $(M,\mathbb{SC})$-generic condition by Lemma
\ref{Lema union genericidad}. Choose $r\leq q$ and $s\in T$ with $\left\vert
s\right\vert >\delta$ such that $r\Vdash$\textquotedblleft$s\in\dot{B}%
$\textquotedblright. Let $n\in\omega$ such that $s_{n}=s\upharpoonright
\delta.$ Given that $r$ $\dot{B}$-freezes $s_{n},$ we conclude that $q_{n}$
$\dot{B}$-freezes $s_{n}\upharpoonright\delta_{\varepsilon_{n}}.$ Hence
$z_{n}=s_{n}\upharpoonright\delta_{\varepsilon_{n}}.$ Since $r$ is
$(M_{\mu_{n}},\mathbb{SC})$-totally generic, it follows that there is $t\in
T\cap M_{\mu_{n}}$ such that $w_{n}<_{T}t,$ $r\Vdash$\textquotedblleft%
$t\in\dot{B}$\textquotedblright\ and $\left\{  \xi<\delta_{\gamma_{n}}\mid
a\left(  s_{n}\upharpoonright\xi\right)  \leq a\left(  z_{n}\wedge
w_{n}\right)  \right\}  \subseteq\left\vert t\right\vert .$

\qquad\ \qquad\ \ \ \qquad\ \ 

We claim now that $\left[  ts\right]  =s\upharpoonright\delta_{\gamma_{n}}.$
Recall that $m_{n}=a\left(  s\upharpoonright\delta_{\gamma_{n}}\right)  $ and
note that $\triangle\left(  t,s\right)  =\triangle\left(  z_{n},w_{n}\right)
=\triangle_{n},$ so $t\wedge s$ is equal to $z_{n}\wedge w_{n}.$ Furthermore,
$\left\{  \xi<\delta_{\gamma_{n}}\mid a\left(  s_{n}\upharpoonright\xi\right)
\leq m_{n}\right\}  \subseteq\triangle_{n}$ and $m_{n}\leq a\left(  t\wedge
s\right)  .$ It then follows that $a\left(  s\upharpoonright\delta_{\gamma
_{n}}\right)  \leq a\left(  t\wedge s\right)  $. Moreover, if $\xi
<\delta_{\gamma_{n}}$ and $a\left(  s_{n}\upharpoonright\xi\right)
\leq\triangle_{n},$ then $\xi<\left\vert t\right\vert .$ We conclude that
$\left[  ts\right]  =s\upharpoonright\delta_{\gamma_{n}}.$ Finally, recall
that $r\Vdash$\textquotedblleft$\alpha,\beta\in\dot{B}$\textquotedblright\ and
$\delta_{\gamma_{n}}\notin r.$ This concludes the proof.
\end{proof}

\qquad\qquad\ \ \ \ \ \ \qquad\ \ \ \ \ \qquad\ \ \ \ \ \ \qquad\ \ \ \ \ 

We will now prove that a $\sigma$-closed forcing can not fix the failure of
the Ramsey club property.

\begin{lemma}
Let $[\cdot\cdot]$ be a square-bracket operation and $C\subseteq\omega_{1}$ a
club that is not $[\cdot\cdot]$-Ramsey. If $\mathbb{P}$ is a $\sigma$-closed
forcing, then $\mathbb{P}$ forces that $C$ is not $[\cdot\cdot]$-Ramsey.
\label{Lemma sigma closed no corrige}
\end{lemma}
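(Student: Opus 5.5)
We will argue by contradiction and build the witness set in the ground model by a countable fusion. Since $\mathbb{P}$ is $\sigma$-closed, it adds no new countable sequences of ground model elements. Hence $\omega_{1}$ is preserved, $C$ remains a club, and the operation $[\cdot\cdot]$ is the same function (computed from ground-model parameters) in the extension. Suppose some $p\in\mathbb{P}$ and a $\mathbb{P}$-name $\dot{W}$ satisfy
\[
p\Vdash\text{``}\dot{W}\in\left[\omega_{1}\right]^{\omega_{1}}\text{ and }\left[\alpha\beta\right]\in C\text{ for all }\alpha,\beta\in\dot{W}\text{''}.
\]
(For the tree operation $[\cdot\cdot]_{T,a}$, $\dot{W}$ is an uncountable subset of $T$ and the requirement is on the heights; the argument is identical.) We aim to produce an uncountable $W\in V$ with $\{[\alpha\beta]\mid\alpha,\beta\in W\}\subseteq C$, contradicting the hypothesis that $C$ is not $[\cdot\cdot]$-Ramsey.

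We construct by recursion on $\xi<\omega_{1}$ a decreasing sequence of conditions $\langle p_{\xi}\mid\xi<\omega_{1}\rangle$ below $p$ and ordinals $\alpha_{\xi}$ with $p_{\xi}\Vdash$``$\alpha_{\xi}\in\dot{W}$'' and $\alpha_{\xi}>\alpha_{\eta}$ for all $\eta<\xi$.

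At a successor stage, $p_{\xi}$ forces that $\dot{W}$ is uncountable. So we may extend $p_{\xi}$ to $p_{\xi+1}$ deciding some element $\alpha_{\xi+1}\in\dot{W}$ above $\sup_{\eta\leq\xi}\alpha_{\eta}$; this supremum is a countable ground-model ordinal.

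At a limit stage $\xi<\omega_{1}$, the sequence $\langle p_{\eta}\mid\eta<\xi\rangle$ is countable and decreasing. By $\sigma$-closure it has a lower bound, and we then extend that bound to decide a new element as at successor stages.

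Let $W=\{\alpha_{\xi}\mid\xi<\omega_{1}\}\in V$, which is uncountable because the $\alpha_{\xi}$ are strictly increasing. Given $\eta<\xi$, the condition $p_{\xi}$ extends $p_{\eta}$, so $p_{\xi}\Vdash$``$\alpha_{\eta},\alpha_{\xi}\in\dot{W}$''. Hence $p_{\xi}\Vdash$``$[\alpha_{\eta}\alpha_{\xi}]\in C$''. The value $[\alpha_{\eta}\alpha_{\xi}]$ and the set $C$ both lie in $V$, so $[\alpha_{\eta}\alpha_{\xi}]\in C$ holds in $V$. Thus $W$ witnesses that $C$ is $[\cdot\cdot]$-Ramsey in $V$, which is a contradiction.

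The main point requiring care is the recursion of length $\omega_{1}$. It needs only that every countable decreasing sequence has a lower bound, which is exactly $\sigma$-closure; we never need lower bounds for sequences of length $\omega_{1}$. The only other detail is that absoluteness of $[\cdot\cdot]$ is immediate here, since its value on ground-model ordinals is computed in $V$.
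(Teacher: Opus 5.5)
Your proposal is correct and follows essentially the same route as the paper. Both arguments run an $\omega_1$-length recursion of decreasing conditions, using $\sigma$-closure at limit stages, with each condition deciding a new, larger element of the name. The result is an uncountable ground-model set $W$ all of whose pairs are forced, and hence truly satisfy, $[\alpha\beta]\in C$. The only difference is phrasing: the paper instead picks a bad pair $\gamma_\alpha,\gamma_\beta$ from the non-Ramseyness of $C$ in $V$ and observes that $p_\beta$ would force a falsehood, which is the same argument.
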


\begin{proof}
Assume that there are $p\in\mathbb{P}$ and $\dot{B}$ a $\mathbb{P}$-name for
an uncountable subset of $\omega_{1}$ for which $p\Vdash$\textquotedblleft%
$\{\left[  \alpha\beta\right]  \mid\alpha,\beta\in\dot{B}\}\subseteq
C$\textquotedblright. Recursively, find $\left\{  \left(  p_{\alpha}%
,\gamma_{\alpha}\right)  \mid\alpha\in\omega_{1}\right\}  $ such that for
every $\alpha<\beta<\omega_{1},$ the following conditions hold:

\begin{enumerate}
\item $p_{\alpha}\leq p.$

\item $p_{\beta}\leq$ $p_{\alpha}$ and $\gamma_{\alpha}<\gamma_{\beta}.$

\item $p_{\alpha}\Vdash$\textquotedblleft$\gamma_{\alpha}\in\dot{B}%
$\textquotedblright.
\end{enumerate}

\qquad\ \ \ \qquad\ \ \ 

Let $E=\left\{  \gamma_{\alpha}\mid\alpha\in\omega_{1}\right\}  .$ Since $C$
is not $[\cdot\cdot]$-Ramsey, there are $\alpha<\beta$ such that $\left[
\gamma_{\alpha}\gamma_{\beta}\right]  \notin C.$ It follows that $p_{\beta}$
forces that $\gamma_{\alpha},\gamma_{\beta}$ are in $\dot{B},$ but $\left[
\gamma_{\alpha}\gamma_{\beta}\right]  $ is not in $C,$ which is a contradiction.
\end{proof}

\qquad\ \qquad\ \ \ \ \ 

The main result of the section now easily follows:

\begin{theorem}
The statement \textquotedblleft No Aronszajn square-bracket operation has the
Ramsey club property\textquotedblright\ is consistent with $\mathsf{\Diamond
.}$
\end{theorem}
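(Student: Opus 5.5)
We plan to obtain the model by a countable support iteration of $\mathbb{SC}$ of length $\omega_{2}$ over a ground model of $\Diamond$ (or of \textsf{GCH} together with $\Diamond$). The natural choice is to start from $V\models$ \textsf{GCH} $+\Diamond$ and let $\mathbb{P}_{\omega_{2}}$ be the countable support iteration $\langle\mathbb{P}_{\xi},\mathbb{\dot{Q}}_{\xi}\mid\xi<\omega_{2}\rangle$ with each $\mathbb{\dot{Q}}_{\xi}$ a name for $\mathbb{SC}$. Each iterand is $\sigma$-closed, so the iteration is $\sigma$-closed. Consequently it adds no new reals or countable sequences, preserves $\omega_{1}$, and preserves \textsf{CH}. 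By \textsf{CH} and a $\Delta$-system argument, it is $\omega_{2}$-\textsf{c.c.} (each initial segment has a dense subset of size $\omega_{1}$), so it preserves cardinals. Moreover, a $\sigma$-closed forcing preserves $\Diamond$, since a $\Diamond$-sequence from $V$ still guesses every new subset of $\omega_{1}$: one builds a descending $\omega_{1}$-sequence of conditions deciding initial segments of the name and uses stationarity of guesses in $V$. Hence $V[G]\models\Diamond$.

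Next we handle an arbitrary Aronszajn square-bracket operation $[\cdot\cdot]$ in $V[G_{\omega_{2}}]$. It is coded by a subset of $\omega_{1}$ (a \textsf{C}-sequence, a pair $(\mathcal{R},e)$, or a tree $T$ on $\omega_{1}$ with a map $a$). By the $\omega_{2}$-\textsf{c.c.}, its name involves only $\omega_{1}$ many antichains of size $\le\omega_{1}$, hence has support bounded below $\omega_{2}$. So $[\cdot\cdot]\in V[G_{\xi}]$ for some $\xi<\omega_{2}$. Being an Aronszajn operation is absolute upward here: the relevant trees ($T(\rho_{0})$, $T(e)$, or $T$) stay Aronszajn, because a $\sigma$-closed forcing adds no uncountable branch to an $\omega_{1}$-tree with countable levels. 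Indeed, it adds no new countable sequences, and an uncountable branch would give a new subset of $\omega_{1}$ all of whose initial segments are in the ground model; the standard $\sigma$-closed argument shows no such branch appears through an Aronszajn tree.

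In $V[G_{\xi+1}]=V[G_{\xi}][H]$, with $H$ being $\mathbb{SC}$-generic over $V[G_{\xi}]$, Proposition \ref{Prop SC destruye a V} applied with ground model $V[G_{\xi}]$ shows that the generic club $C_{\xi}$ is not $[\cdot\cdot]$-Ramsey. The tail $\mathbb{P}_{\omega_{2}}/G_{\xi+1}$ is (forced to be) a countable support iteration of $\sigma$-closed forcings, hence $\sigma$-closed in $V[G_{\xi+1}]$. By Lemma \ref{Lemma sigma closed no corrige}, $C_{\xi}$ remains non-$[\cdot\cdot]$-Ramsey in $V[G_{\omega_{2}}]$. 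Therefore no Aronszajn square-bracket operation has the Ramsey club property there.

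The main obstacle is bookkeeping rather than combinatorics. Two points need care. First, the bounded-support claim requires the $\omega_{2}$-\textsf{c.c.} of the iteration under \textsf{CH}, which is standard for countable support iterations of length $\omega_{2}$ of $\sigma$-closed posets of size $\aleph_{1}$. Second, the quotient must be $\sigma$-closed in $V[G_{\xi+1}]$, which again follows because it is a countable support iteration of $\sigma$-closed posets. An alternative that avoids iteration entirely is a countable support product of $\omega_{2}$ copies of $\mathbb{SC}$. This is $\sigma$-closed, and under \textsf{CH} it is $\omega_{2}$-\textsf{c.c.} For a given operation one picks a coordinate outside its support and factors the product as (product without that coordinate) $\times\,\mathbb{SC}$, exactly as in Theorem \ref{Teo todas fallan}, then applies Proposition \ref{Prop SC destruye a V}. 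I would present this product version, as it is cleaner.
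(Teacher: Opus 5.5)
Your first version is exactly the paper's proof. It is a countable support iteration of $\mathbb{SC}$ of length $\omega_2$ over $\Diamond$. Each operation is caught at some stage $\xi$ by the $\omega_2$-\textsf{c.c.}, Proposition \ref{Prop SC destruye a V} is applied at stage $\xi+1$, and the failure is carried through the $\sigma$-closed tail by Lemma \ref{Lemma sigma closed no corrige}.

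The countable support product $\prod^{CS}_{\omega_2}\mathbb{SC}$ that you say you would present is a genuinely different and also correct route. It replaces the chain condition theorem for countable support iterations with a direct $\Delta$-system argument under \textsf{CH}: supports are countable, and $\mathbb{SC}^R$ has size $\aleph_1$ for countable $R$. It also makes Lemma \ref{Lemma sigma closed no corrige} unnecessary. After factoring as $\prod^{CS}_{\omega_2\setminus\{\beta\}}\mathbb{SC}\times\mathbb{SC}$, the final model is literally $V[G'][H]$. So Proposition \ref{Prop SC destruye a V} applied over $V[G']$ already gives the conclusion in the final model.

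One step is not ``exactly as in Theorem \ref{Teo todas fallan}'' and should be stated. The $\beta$-th factor is $\mathbb{SC}$ as computed in $V$. To apply Proposition \ref{Prop SC destruye a V} over $V[G']$ you need $\mathbb{SC}^{V}=\mathbb{SC}^{V[G']}$. This holds because the other factor is $\sigma$-closed and adds no new countable closed subsets of $\omega_1$. The product lemma then makes $H$ generic over $V[G']$ for $\mathbb{SC}^{V[G']}$. For $\mathbb{BC}$ this was automatic, since its conditions are finite.

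The iteration computes $\mathbb{SC}$ in the current model by design, and it is the more flexible framework if other iterands are to be interleaved. The product is shorter.

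One logical slip in your iteration paragraph: the absoluteness you need runs downward. An operation that is Aronszajn in the final model must be Aronszajn in the intermediate model containing it. This is trivial, since an uncountable branch in the smaller model is one in the larger model. The upward preservation you argue is true but never used.
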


\begin{proof}
We start with a model of $\Diamond$ and \textsf{GCH. }Let $\mathbb{SC}%
_{\omega_{2}}$ be the countable support iteration of length $\omega_{2}$ of
the forcing $\mathbb{SC}.$ This iteration is $\sigma$-closed, preserves
$\Diamond$ (see Lemma V.5.8 and Exercise IV.7.46 of \cite{Kunen}) and is
$\omega_{2}$\textsf{-c.c.} (see Theorem 2.10 of \cite{AbrahamHandbook}). It
follows that every Aronszajn square-bracket operation will appear in an
intermediate extension. By Proposition \ref{Prop SC destruye a V}, any such
operation will fail to have the Ramsey club property in an intermediate model.
Finally, by Lemma \ref{Lemma sigma closed no corrige}, this failure is
preserved throughout the remainder of the iteration.
\end{proof}

\section{PFA and square-bracket operations induced by a sequence of reals
\label{Seccion PFA reals}}

So far we have studied the failure of the Ramsey club property. We now turn
our attention to proving that it can consistently hold. As mentioned in the
introduction, the second author (answering a question of Woodin) proved that
the \emph{Proper Forcing Axiom (\textsf{PFA}) } implies the Ramsey club
property holds for the square-bracket operations induced by special Aronszajn
trees (see Lemma 5.4.4 of \cite{Walks}). We will show that the same is true
for the other two types of square-bracket operations. This section focuses on
the Ramsey club property for the square-bracket operations defined from a
sequence of reals and a nice sequence. This proof is the simplest of the
three. For the remainder of this section, fix a club $C\subseteq$
\textsf{LIM}$\left(  \omega_{1}\right)  $, a sequence $\mathcal{R=}$ $\left\{
r_{\alpha}\mid\alpha\in\omega_{1}\right\}  \subseteq2^{\omega}$ and a nice
sequence $e=\left\langle e_{\beta}\mid\beta\in\omega_{1}\right\rangle .$ For
notational convenience, in this section we will write $\left[  \gamma
\xi\right]  $ for $\left[  \gamma\xi\right]  _{\mathcal{R},e}$ and
$\triangle\left(  \gamma,\xi\right)  $ for $\triangle_{\mathcal{R}}\left(
\gamma,\xi\right)  $ (for any $\gamma,\xi\in\omega_{1}$). Moreover, for
$\beta\in\omega_{1}$ and $n\in\omega,$ we define the set $F_{n}\left(
\beta\right)  =\left\{  \xi\leq\beta\mid e_{\beta}\left(  \xi\right)  \leq
n\right\}  $ (note that $\beta\in F_{n}\left(  \beta\right)  $). In this way,
it is clear that if $\alpha<\beta,$ then $\left[  \alpha\beta\right]  =$
\textsf{min}$\left(  F_{\triangle\left(  \alpha,\beta\right)  }\left(
\beta\right)  \setminus\alpha\right)  .$

\qquad\qquad\qquad\qquad\qquad\ \qquad\ \ \ \qquad\ \ \ 

We introduce more notation. We say that a set $E\subseteq\left[  \omega
_{1}\right]  ^{m}$ \emph{is unbounded }if for every $\xi<\omega_{1},$ there is
$b\in E$ with $\xi<$ \textsf{min}$\left(  b\right)  .$ For $\overline
{s}=\left\langle s_{i}\right\rangle _{i<m}\in\left(  2^{<\omega}\right)
^{m},$ define $\left\langle \overline{s}\right\rangle $ as the set of all
$\left(  f_{i}\right)  _{i<m}\in\left(  2^{\omega}\right)  ^{m}$ for which
$f_{i}\in\left\langle s_{i}\right\rangle $ for every $i<m.$ Given $Y\in\left[
\omega_{1}\right]  ^{<\omega},$ denote $\mathcal{R}_{Y}=\left\langle
r_{\alpha}\right\rangle _{\alpha\in Y}.$ The following lemma is probably
well-known, we provide a proof for the convenience of the reader.

\begin{lemma}
Let $m\in\omega,$ $E\subseteq\left[  \omega_{1}\right]  ^{m}$ and $M$ a
countable elementary submodel such that $E,\mathcal{R}\in M.$ Assume there is
$X=\left\{  \alpha_{i}\mid i<m\right\}  \in E$ with $X\cap M=\emptyset.$ For
every $n\in\omega,$ there exists $\overline{s}=\left\langle s_{i}\right\rangle
_{i<m}\in\left(  2^{<\omega}\right)  ^{m}$ such that for all $i,j<m,$ the
following conditions hold: \label{Lema PFA reales}

\begin{enumerate}
\item $\left\vert s_{i}\right\vert =\left\vert s_{j}\right\vert .$

\item $s_{i}\upharpoonright n=r_{\alpha_{i}}\upharpoonright n.$

\item $r_{\alpha}\notin\left\langle s_{i}\right\rangle $ for every $\alpha\in
X$.

\item The set $\{Y\in E\mid\mathcal{R}_{Y}\in\left\langle \overline
{s}\right\rangle \}$ is unbounded.
\end{enumerate}
\end{lemma}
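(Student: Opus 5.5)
The plan is to work with a product analogue of the kernel tree and use elementarity twice: once to show that $\mathcal{R}_X$ is a branch of this tree, and once to pull a witness into $M$. The witness will then branch off from $\mathcal{R}_X$ in every coordinate.

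Define $K$ as the set of all $\overline{t}=\left\langle t_{i}\right\rangle _{i<m}\in\left(2^{<\omega}\right)^{m}$ with $\left\vert t_{i}\right\vert =\left\vert t_{j}\right\vert$ for all $i,j<m$ and such that $\{Y\in E\mid\mathcal{R}_{Y}\in\left\langle \overline{t}\right\rangle \}$ is unbounded. Here $Y\in E$ is identified with its increasing enumeration, so that coordinates match those of $X$. Since $K$ is defined from $E$ and $\mathcal{R}$, we have $K\in M$.

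\emph{Step 1: $\mathcal{R}_X\upharpoonright l\in K$ for every $l\in\omega$.} The set $U_{l}=\{Y\in E\mid\mathcal{R}_{Y}\in\left\langle \mathcal{R}_{X}\upharpoonright l\right\rangle \}$ is definable from parameters in $M$: the node $\mathcal{R}_{X}\upharpoonright l$ is a finite object, hence in $M$. So $U_{l}\in M$. Suppose $U_{l}$ were bounded. Then by elementarity it would be bounded by some $\xi\in M$, so $\xi<\delta_{M}$. But $X\in U_{l}$, and $X\cap M=\emptyset$ gives $\mathsf{min}(X)\geq\delta_{M}$, a contradiction. Hence $\mathcal{R}_{X}\upharpoonright l\in K$ for all $l$.

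\emph{Step 2: the reals $r_{\alpha_i}$ lie outside $M$.} The sequence $\mathcal{R}$ is injective and belongs to $M$. So if $r_{\alpha_{i}}\in M$, then $\alpha_{i}\in M$ by elementarity, which is impossible. Thus $r_{\alpha_{i}}\notin M$ for each $i<m$.

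\emph{Step 3: choosing the separation level.} Fix $k\geq n$ so large that the reals $r_{\alpha_{i}}$ ($i<m$) are pairwise distinct already at level $k$, i.e.\ the nodes $r_{\alpha_{i}}\upharpoonright k$ are pairwise incompatible. Consider
\[
W=\{Y\in E\mid\mathcal{R}_{Y}\upharpoonright l\in K\text{ for every }l\in\omega,\text{ and }\mathcal{R}_{Y}\upharpoonright k=\mathcal{R}_{X}\upharpoonright k\}.
\]
Then $W\in M$, and $W\neq\emptyset$ because $X\in W$ by Step 1. By elementarity pick $Y\in W\cap M$. Every real $r_{Y(i)}$ lies in $M$, while $r_{\alpha_{i}}\notin M$ by Step 2. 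Hence $r_{Y(i)}\neq r_{\alpha_{i}}$ for each $i$, and we can choose $l\geq k$ with $r_{Y(i)}\upharpoonright l\neq r_{\alpha_{i}}\upharpoonright l$ for all $i<m$. Put $s_{i}=r_{Y(i)}\upharpoonright l$.

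\emph{Step 4: verification.}
\begin{enumerate}
\item All the $s_{i}$ have length $l$.
\item Since $l\geq k\geq n$ and $\mathcal{R}_{Y}\upharpoonright k=\mathcal{R}_{X}\upharpoonright k$, we get $s_{i}\upharpoonright n=r_{\alpha_{i}}\upharpoonright n$.
\item For $j\neq i$, $s_{i}$ extends $r_{\alpha_{i}}\upharpoonright k$, which is incompatible with $r_{\alpha_{j}}\upharpoonright k$. For $j=i$, the choice of $l$ gives $r_{\alpha_{i}}\notin\left\langle s_{i}\right\rangle$. So no $r_{\alpha}$ with $\alpha\in X$ lies in any $\left\langle s_{i}\right\rangle$.
\item $\overline{s}=\mathcal{R}_{Y}\upharpoonright l\in K$ because $Y\in W$, which is exactly the required unboundedness of $\{Y'\in E\mid\mathcal{R}_{Y'}\in\left\langle \overline{s}\right\rangle \}$.
\end{enumerate}

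The main obstacle is meeting condition 3 in every coordinate at once while keeping condition 4. A naive coordinate-by-coordinate splitting argument gets tangled, since branching in one coordinate may destroy unboundedness in the others. The trick that resolves this is to quantify over whole branches of $K$ via $W$ and reflect into $M$. Membership in $M$ then automatically separates every coordinate of $\mathcal{R}_{Y}$ from $\mathcal{R}_{X}$, via Step 2.
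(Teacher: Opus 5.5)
Your argument is correct, but it is more roundabout than the paper's. The paper uses only your Step~1 idea, applied to a second tuple rather than to $X$. It sets $D=\{Y\in E\mid \mathcal{R}_{Y}\in\langle \mathcal{R}_{X}\upharpoonright n\rangle\}$, observes that $D\in M$ and $X\in D$, and concludes that $D$ is unbounded. It then picks $Y=\{\beta_i\mid i<m\}\in D$ \emph{outside} $M$ with $Y\cap X=\emptyset$ (e.g.\ $\min(Y)>\max(X)$). Next it takes $l$ beyond the splitting levels of the finitely many distinct reals $r_\alpha$, $\alpha\in X\cup Y$, and puts $s_i=r_{\beta_i}\upharpoonright l$. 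Condition~4 holds for the same reason as in your Step~1: the set $\{Y'\in E\mid \mathcal{R}_{Y'}\in\langle\overline{s}\rangle\}$ is in $M$ and contains $Y$, whose minimum is at least $\delta_M$.

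So the obstacle you describe does not really arise. One chooses a whole tuple at once, and every $Y\in E$ lying above $\delta_M$ already gives nodes $\mathcal{R}_Y\upharpoonright l$ of your tree $K$ at every level. The branch set $W$, the reflection into $M$, and your Step~2 are therefore unnecessary. Separation from $X$ comes from $X\cap Y=\emptyset$ plus injectivity of $\mathcal{R}$, not from membership in $M$.

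What your route buys is a clean treatment of condition~3 in all coordinates. Your level $k$ handles $\alpha=\alpha_j$ with $j\neq i$, and $M$-membership handles $j=i$. The paper is terse exactly here: as written, its list of quantities required to be below $l$ omits the cross terms $\triangle(r_{\alpha_j},r_{\beta_i})$ for $i\neq j$. These must also be dominated by $l$ (or handled by a level like your $k$) for condition~3 to hold when $j\neq i$.
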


\begin{proof}
For every $i<m,$ denote $t_{i}=r_{\alpha_{i}}\upharpoonright n.$ Let $D$ be
the set of all $Y=\left\{  \beta_{i}\mid i<m\right\}  \in E$ such that
$r_{\beta_{i}}\in\left\langle t_{i}\right\rangle $ for every $i<m.$ Note that
$D\in M$ and $X\in D.$ Since $X\cap M=\emptyset,$ it follows by elementarity.
that $D$ is unbounded. In this way, we can find $Y=\left\{  \beta_{i}\mid
i<m\right\}  \in D$ such that $Y\cap M$ and $Y\cap X$ are both empty. Find
$l\in\omega$ such that $\triangle(r_{\alpha_{i}},r_{\beta_{i}}),$
$\triangle\left(  r_{\beta_{i}},r_{\beta_{j}}\right)  ,$ $\triangle\left(
r_{\alpha_{i}},r_{\alpha_{j}}\right)  <l$ for all $i,j<m$ and define
$s_{i}=r_{\beta_{i}}\mid l.$ It is easy to see that $\overline{s}=\left\{
s_{i}\mid i<m\right\}  $ is as desired.
\end{proof}

\qquad\ \ \ \ \ \qquad\ \qquad\ \ 

The following forcing notion was inspired by the one from Lemma 5.4.4 of
\cite{Walks}.

\begin{definition}
Define $\mathbb{P}\left(  C\right)  $ as the set of all $p=(\mathcal{M}%
,\mathcal{N})$ with the following properties:

\begin{enumerate}
\item $\mathcal{N}$ is a finite $\in$-chain of countable elementary submodels
of \textsf{H}$\left(  \omega_{2}\right)  $ such that $\mathcal{R},e$ and $C$
belong to all of them.

\item $\mathcal{M\subseteq N}.$

\item \textbf{Completeness condition. }If $M,L\in\mathcal{M}$ are distinct,
then $\left[  \delta_{M}\delta_{L}\right]  \in C.$

\item \textbf{Crossing models condition. }For every $M,L\in\mathcal{M}$
distinct and $N\in\mathcal{N},$ if $N\in M,$ then \textsf{min}$(F_{\triangle
\left(  \delta_{M},\delta_{L}\right)  }\left(  \delta_{M}\right)  \setminus
N)\in C.$
\end{enumerate}
\end{definition}

\qquad\ \qquad\ \qquad\ \ 

We highlight several key points regarding this notion. Let $p=(\mathcal{M}%
,\mathcal{N})$ be an element of $\mathbb{P(}C\mathbb{)}.$

\begin{enumerate}
\item If $N\in\mathcal{N},$ then $\delta_{N}\in C$ (see Proposition
\ref{Prop submodelos club y est}).

\item In the crossing models condition, the case $N=L$ is a consequence of the
completeness condition. Therefore, when verifying this condition for a
potential element of $\mathbb{P(}C\mathbb{)},$ we may always assume $N\neq L.$

\item In the crossing models condition, no relationship is assumed between
$M,N$ and $L$ other than $N\in M$ and $M\neq L.$ In particular, it is possible
that $N\in L\in M,$ $L\in N\in M$ or $N\in M\in L.$
\end{enumerate}

\qquad\ \qquad\ \ \ \qquad\ \ \ \ \ 

Let $p=(\mathcal{M}_{p},\mathcal{N}_{p})$ and $q=(\mathcal{M}_{q}%
,\mathcal{N}_{q})$ in $\mathbb{P}\left(  C\right)  .$ Define $p\leq q$ if
$\mathcal{M}_{q}\subseteq\mathcal{M}_{p},$ $\mathcal{N}_{q}\subseteq
\mathcal{N}_{p}$ and $\mathcal{M}_{q}=\mathcal{M}_{p}\cap\mathcal{N}_{q}.$ For
$p,q\in\mathbb{P}\left(  C\right)  ,$ define $q\sqsubseteq p$ if there is
$N\in\mathcal{N}_{p}$ such that $\mathcal{M}_{q}=\mathcal{M}_{p}\cap N$ and
$\mathcal{N}_{q}=\mathcal{N}_{p}\cap N.$ Clearly if $q\sqsubseteq p,$ then
$p\leq q.$

\qquad\ \qquad\ 

We now introduce several \textquotedblleft invariants\textquotedblright\ for
conditions in $\mathbb{P(}C\mathbb{)}$ which will be useful for analyzing the
combinatorial properties of the forcing.

\begin{definition}
Let $p=(\mathcal{M},\mathcal{N})\in\mathbb{P}\left(  C\right)  .$ Define the
following items:

\begin{enumerate}
\item \textsf{ht}$\left(  p\right)  =\left\{  \delta_{N}\mid N\in
\mathcal{N}\right\}  $ and \textsf{ht}$_{\mathcal{M}}\left(  p\right)
=\left\{  \delta_{M}\mid M\in\mathcal{M}\right\}  .$

\item $d_{p}$ is the smallest natural number that is larger than all of the following:

\begin{enumerate}
\item $\triangle(\delta_{M},\delta_{N})$ for $M,N\in\mathcal{N}$ distinct.

\item $e_{\delta_{M}}(\delta_{N})$ for $M,N\in\mathcal{N}$ with $N\in M.$
\end{enumerate}

\item $T_{p}=\left\langle r_{\delta_{N}}\upharpoonright d_{p}\right\rangle
_{N\in\mathcal{N}_{p}}.$

\item $S\left(  p\right)  =\bigcup\{F_{d_{p}}\left(  \delta_{N}\right)  \mid
N\in\mathcal{N}\}.$

\item $H\left(  p\right)  =(S\left(  p\right)  ,<,$\textsf{ht}$\left(
p\right)  ,$\textsf{ht}$_{\mathcal{M}}\left(  p\right)  ,d_{p},e,T_{p}%
,\triangle).$
\end{enumerate}
\end{definition}

\qquad\ \qquad\ \ \ 

It follows that $S\left(  p\right)  $ is a finite subset of $\omega_{1}$ that
contains \textsf{ht}$\left(  p\right)  .$ Let $p,q\in\mathbb{P}\left(
C\right)  $ and $f:S\left(  p\right)  \longrightarrow S\left(  q\right)  $ a
bijection. We say that $f$ \emph{is an isomorphism from }$H\left(  p\right)
$\emph{ to} $H\left(  q\right)  $ if for every $\alpha,\beta\in S\left(
p\right)  ,$ the following holds:

\begin{enumerate}
\item $\alpha<\beta$ if and only if $f\left(  \alpha\right)  <f\left(
\beta\right)  .$

\item $\alpha\in$ \textsf{ht}$\left(  p\right)  $ if and only if $f\left(
\alpha\right)  \in$ \textsf{ht}$\left(  q\right)  .$

\item $\alpha\in$ \textsf{ht}$_{\mathcal{M}}\left(  p\right)  $ if and only if
$f\left(  \alpha\right)  \in$ \textsf{ht}$_{\mathcal{M}}\left(  q\right)  .$

\item $d_{p}=d_{q}.$

\item $e_{\beta}\left(  \alpha\right)  =e_{f\left(  \beta\right)  }\left(
f\left(  \alpha\right)  \right)  .$

\item If $\alpha\in$ \textsf{ht}$\left(  p\right)  ,$ then $r_{\alpha
}\upharpoonright d_{p}=r_{f\left(  \alpha\right)  }\upharpoonright d_{q}.$

\item $\triangle\left(  \alpha,\beta\right)  =\triangle\left(  f\left(
\alpha\right)  ,f\left(  \beta\right)  \right)  .$
\end{enumerate}

\qquad\qquad\qquad\qquad\qquad\ \ \ \ 

We say that $H\left(  p\right)  $ and $H\left(  q\right)  $ are isomorphic if
there is an isomorphism between them. Since such an isomorphism is an
isomorphism between the finite linear orders $\left(  S\left(  p\right)
,<\right)  $ and $\left(  S\left(  q\right)  ,<\right)  $, it is necessarily
unique. It is easy to see that there are only countably many isomorphism types.

\begin{definition}
We say that $M$ is a \emph{big model} if the following conditions hold:

\begin{enumerate}
\item There is $\kappa>\omega_{2}$ a large enough regular cardinal such that
$M$ $\ $is a countable elementary submodel of \textsf{H}$\left(
\kappa\right)  .$

\item $C,\mathcal{R},e,\mathbb{P}\left(  C\right)  $ $\mathbb{\in}$ $M.$
\end{enumerate}
\end{definition}

\qquad\ \qquad\ \ \qquad\ \ \ 

For $M$ a big model, denote $\overline{M}=M\cap$ \textsf{H}$\left(  \omega
_{2}\right)  ,$ which is an elementary submodel of \textsf{H}$\left(
\omega_{2}\right)  .$ We now prove the following key proposition, which will
be used to establish the main result of this section.

\begin{proposition}
Let $M$ be a big model and $p=(\mathcal{M}_{p},\mathcal{N}_{p})\in
\mathbb{P}\left(  C\right)  .$ If $\overline{M}\in\mathcal{N}_{p},$ then $p$
is $(M,\mathbb{P}\left(  C\right)  )$-generic. \label{Prop Can Proper reales}
\end{proposition}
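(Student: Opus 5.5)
We use the characterization of generic conditions in Lemma \ref{equiv condicion generica}. Fix $D\subseteq\mathbb{P}(C)$ with $D\in M$ and $q\leq p$ with $q\in D$. We must find $r\in D\cap M$ compatible with $q$. Since $q\leq p$, $\overline{M}\in\mathcal{N}_q$. Let $q_M=(\mathcal{M}_q\cap\overline{M},\mathcal{N}_q\cap\overline{M})$; then $q_M\sqsubseteq q$, so $q\leq q_M$ and $q_M\in M$.

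Enumerate the models of $\mathcal{M}_q$ above $\overline{M}$ as $M_0\in\cdots\in M_{m-1}$ (those in $\mathcal{M}_q\setminus\overline{M}$) and let $X=\{\delta_{M_i}\mid i<m\}$. Consider the set $E$ of all $Y\in[\omega_1]^m$ arising as $\textsf{ht}_{\mathcal{M}}(r)\setminus\textsf{ht}_{\mathcal{M}}(q_M)$ for some $r\in D$ with $q_M\sqsubseteq r$ and $H(r)\cong H(q)$, where the isomorphism is the identity on $S(q_M)$. More precisely, we code the full isomorphism type relative to $q_M$. This set is in $M$, since its parameters are $D,q_M$ and the countable type. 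It contains $X$, and $X\cap M=\emptyset$. Apply Lemma \ref{Lema PFA reales} with $n=d_q$, working with the full tuple of heights of $\mathcal{N}_q\setminus\overline{M}$ and not only $X$. This yields $\overline{s}$ separating the new reals from those of $q$, with unboundedly many witnesses in $E$. By elementarity, pick $r\in D\cap M$ witnessing this: its new part lies above everything in $q_M$ and above the finite set $S(q)\cap M$, and its reals at heights extend $\overline{s}$. In particular, for $N\in\mathcal{N}_r\setminus\mathcal{N}_{q_M}$ and $N'\in\mathcal{N}_q\setminus\overline{M}$, the value $\triangle(\delta_N,\delta_{N'})$ equals the splitting level given by $\overline{s}$, and this level is at least $d_q=d_r$. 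We also arrange that $r\upharpoonright$ the part below matches $q_M$.

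Take $s=(\mathcal{M}_q\cup\mathcal{M}_r,\mathcal{N}_q\cup\mathcal{N}_r)$. Since $r\in\overline{M}$, the new models of $r$ lie below $\overline{M}$, so $\mathcal{N}_s$ is still an $\in$-chain. Checking $s\leq q,r$ is immediate. The real work, and the main obstacle, is verifying the completeness and crossing models conditions for pairs $M'\in\mathcal{M}_r\setminus\mathcal{M}_{q_M}$ and $L\in\mathcal{M}_q\setminus\overline{M}$, and for triples mixing both sides.

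For the pair $M'$, $L$ with $\delta_{M'}<\delta_L$, compute $[\delta_{M'}\delta_L]=\min(F_\triangle(\delta_L)\setminus\delta_{M'})$, where $\triangle=\triangle(\delta_{M'},\delta_L)$. We will show that this equals $\min(F_\triangle(\delta_L)\setminus N)$ for a suitable $N\in\mathcal{N}_q$ that is equal to or below $\overline{M}$. The finite set $F_\triangle(\delta_L)\cap\overline{M}$ is in $M$, and we chose $r$ with new heights above it. This holds provided $\triangle$ is controlled, which is the purpose of the isomorphism of $H$ with the fixed $d$. It lets us transfer to the pair $(\delta_{M''},\delta_L)$ in $q$, where $M''$ is the isomorphic copy of $M'$ inside $q$, and appeal to the crossing models condition of $q$ with $N=\overline{M}$. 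That condition gives membership in $C$.

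The symmetric case, and the crossing conditions where $N$ ranges over the new models of $r$, are handled similarly via the isomorphism $H(r)\cong H(q)$: values of $e$ and of $\triangle$ are preserved, so minima of $F$-sets translate. The delicate point I expect to struggle with is ensuring that $\triangle(\delta_{M'},\delta_L)$, which may exceed $d_q$, does not introduce new points of $F_\triangle(\delta_L)$ in the gap between $\sup(\overline{M}\cap\text{new part})$ and $\delta_{\overline{M}}$. I would first try to handle this by choosing $r$ after fixing the finite set $F_k(\delta_L)\cap\overline{M}$ for all $k$ up to the (finitely many possible) splitting levels determined by $\overline{s}$.
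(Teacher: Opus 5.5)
Your outline follows the paper's route: the set $E\in M$ built from isomorphism types of $H(\cdot)$, Lemma \ref{Lema PFA reales} with $n=d_q$ applied to the full tuple of new heights, a witness $r\in D\cap M$ with high new heights, and a direct check of $(\mathcal{M}_q\cup\mathcal{M}_r,\mathcal{N}_q\cup\mathcal{N}_r)$. As written, however, the verification has a gap exactly where you suspected, and it rests on a false claim.

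Not every cross value $\triangle(\delta_N,\delta_{N'})$ is $\geq d_q$. Let $h:S(q)\longrightarrow S(r)$ be the isomorphism, $\delta=\delta_M$, and $l$ the common length of the $s_i$.
\begin{itemize}
\item If $\delta_N\neq h(\delta_{N'})$, then $r_{\delta_N}\upharpoonright d_q=r_{h^{-1}(\delta_N)}\upharpoonright d_q$. This forces $\triangle(\delta_N,\delta_{N'})=\triangle(h^{-1}(\delta_N),\delta_{N'})<d_q$, which is what your non-matched completeness argument actually uses.
\item For a matched pair $\beta,h(\beta)$ one has $d_q\leq\triangle(h(\beta),\beta)<l$. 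The upper bound comes from clause 3 of the lemma: $r_{h(\beta)}\in\langle s_i\rangle$ but $r_\beta\notin\langle s_i\rangle$.
\end{itemize}
For matched pairs, putting the new heights of $r$ above $S(q)\cap M$ is not enough, because $F_\triangle(\beta)\cap M$ can contain points of $M$ outside $S(q)$. After $\bar{s}$ is fixed and before $Y$ is chosen, you must take $\eta<\delta$ with $F_l(\beta)\cap M\subseteq\eta$ for every height $\beta\notin M$ of $q$. Then choose $Y\in E\cap M$ with $\eta<\min(Y)$. This is your tentative fix, and it is exactly what the paper does. Membership in $C$ then does not come from the crossing models condition of $q$, which says nothing about this new value of $\triangle$. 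It comes from $d_q$ bounding $e$-values: $e_\beta(\delta)<d_q\leq\triangle$ puts $\delta$ into $F_\triangle(\beta)$, so $[h(\beta)\beta]=\min(F_\triangle(\beta)\setminus M)=\delta\in C$.

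The crossing models conditions are also not obtained just by translating minima through the isomorphism. Apart from triples living inside $q$ or inside $r$, translation only covers pairs with one height new in $q$, the other new in $r$, and $\triangle<d_q$; these transfer to $q$ or to $r$. The other cases need two further facts.
\begin{itemize}
\item[(i)] Suppose $\triangle\geq d_q$ and $N$ belongs to the same condition as the model of height $\beta$. Then $e_\beta(\delta_N)<d_q=d_r$ gives $\min(F_\triangle(\beta)\setminus N)=\delta_N\in C$.
\item[(ii)] Suppose $N$ is a new model of $r$ and $\beta\notin M$. Then $\triangle<l$ and $\eta<\delta_N$ give $\min(F_\triangle(\beta)\setminus N)=\min(F_\triangle(\beta)\setminus M)$. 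This equals $\delta$ if $\beta=\delta$ or $\triangle\geq d_q$. Otherwise it lies in $C$ by the crossing models condition of $q$ for $\overline{M}$, applied to $\beta$ and $h^{-1}(\alpha)$, or to $\alpha$ itself if $\alpha$ is already a height of $q$.
\end{itemize}
Case (ii) requires all new heights of $r$, including those of models in $\mathcal{N}_r\setminus\mathcal{M}_r$, to lie above $\eta$. So $E$ must record the whole tuple of new $\mathcal{N}$-heights, as you eventually indicate. With these repairs your argument coincides with the paper's proof.
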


\begin{proof}
Let $D\in M$ be an open dense set of $\mathbb{P}\left(  C\right)  $. By
extending $p$ if necessary, we may assume that $p\in D.$ We need to prove that
$p$ is compatible with an element of $D\cap M.$ Denote $\delta=\delta_{M},$
which is an element of $C.$ Define $X=$ \textsf{ht}$\left(  p\right)
\setminus M$ and enumerate $X=\left\{  \delta_{i}\mid i<m\right\}  $ in
increasing order (hence $\delta_{0}=\delta$).

\qquad\ \qquad\ \qquad\ \qquad\ \ 

Let $p_{M}=(\mathcal{M}_{p}\cap M,\mathcal{N}_{p}\cap M).$ It is easy to see
that $p_{M}\in\mathbb{P}\left(  C\right)  \cap M$ and $p_{M}\sqsubseteq p.$
Find $\eta_{0}<\delta$ such that $S\left(  p\right)  \cap M\subseteq\eta_{0}.$
We now define $E$ as the set of all $Y\in\left[  \omega_{1}\right]  ^{m}$ for
which there is $r\in D$ with the following properties:

\begin{enumerate}
\item $p_{M}\sqsubseteq r.$

\item \textsf{ht}$\left(  r\right)  =$ \textsf{ht}$\left(  p_{M}\right)  \cup
Y.$

\item $S\left(  r\right)  \cap\eta_{0}=S\left(  p\right)  \cap\eta_{0}$ (which
is the same as $S\left(  p\right)  \cap M$).

\item $H\left(  p\right)  $ and $H\left(  r\right)  $ are isomorphic and the
isomorphism is the identity on $S\left(  p\right)  \cap M.$
\end{enumerate}

\qquad\ \ \ 

We claim that $E\in M.$ Although the third and fourth clause involve $M$ and
$p,$ they only depend on the isomorphism type of $p$ (which is in $M$) and the
finite set $S\left(  p\right)  \cap M\in M.$ Consequently, both clauses can be
reformulated without reference to objects outside $M.$ For $Y\in E$ and $r$ as
above, we will say that $r$ is a witness for $Y\in E.$

\qquad\qquad\qquad\qquad\qquad\qquad

Apply Lemma \ref{Lema PFA reales} and find $\overline{s}=\left\langle
s_{i}\right\rangle _{i<m}$ with the following properties:

\begin{enumerate}
\item $\left\vert s_{i}\right\vert =\left\vert s_{j}\right\vert $ for all
$i,j<m.$

\item $r_{\delta_{i}}\upharpoonright d_{p}=s_{i}\upharpoonright d_{p}$ for
$i<m.$

\item $r_{\alpha}\notin\left\langle s_{i}\right\rangle $ for all $\alpha\in X$
and $i<m.$

\item The set $\{Y\in E\mid\mathcal{R}_{Y}\in\left\langle \overline
{s}\right\rangle \}$ is unbounded.
\end{enumerate}

\qquad\ \ \ 

Denote $l=\left\vert s_{0}\right\vert $ (which is the same as $\left\vert
s_{i}\right\vert $ for every $i<m$). Find $\eta\in\left(  \eta_{0}%
,\delta\right)  $ such that $F_{l}\left(  \beta\right)  \cap M\subseteq\eta$
for every $\beta\in X.$ We can now find $Y\in E\cap M$ with $\eta<$
\textsf{min}$\left(  Y\right)  $ and $\mathcal{R}_{Y}\in\left\langle
\overline{s}\right\rangle .$ By elementarity., there exists $r\in D\cap M$
that that is a witness of $Y\in E.$ Let $h:S\left(  p\right)  \longrightarrow
S\left(  r\right)  $ be the unique isomorphism from $H\left(  p\right)  $ to
$H\left(  r\right)  .$ Recall that $h$ is the identity on $M.$

\qquad\qquad\qquad\ \ \ 

We claim that $p$ and $r$ are compatible. It suffices to show that
$q=(\mathcal{M}_{p}\cup\mathcal{M}_{r},\mathcal{N}_{p}\cup\mathcal{N}_{r})$
belongs to $\mathbb{P(}C\mathbb{)}.$ Clearly $\mathcal{N}_{p}\cup
\mathcal{N}_{r}$ is a finite $\in$-chain and contains $\mathcal{M}_{p}%
\cup\mathcal{M}_{r}.$ It remains to verify that $q$ satisfies both the
completeness condition and the crossing models condition.

\qquad\ \qquad\ \ \ 

To verify that $q$ satisfies the completeness condition, it suffices to show
that for any $\alpha,\beta\in$ \textsf{ht}$_{\mathcal{M}}\left(  p\right)
\setminus M,$ we have that $\left[  h\left(  \alpha\right)  \beta\right]  \in
C.$ Denote $\triangle=\triangle\left(  h\left(  \alpha\right)  ,\beta\right)
.$ Recall that $h\left(  \alpha\right)  \in M$ while $\beta\notin M$, so
$h\left(  \alpha\right)  <\beta.$ We proceed by cases, depending on whether
$\alpha$ and $\beta$ are equal.$\medskip$

$%
\begin{tabular}
[c]{|l|}\hline
\textbf{Case:} $\alpha\neq\beta$\\\hline
\end{tabular}
\medskip$

It follows that $\triangle=\triangle\left(  h\left(  \alpha\right)
,\beta\right)  =\triangle\left(  \alpha,\beta\right)  $ and is smaller than
$d_{p},$ so $F_{\triangle}\left(  \beta\right)  \cap M\subseteq\eta
_{0}\subseteq h\left(  \alpha\right)  .$ We have the following:\newline

\hfill%
\begin{tabular}
[c]{lll}%
$\left[  h\left(  \alpha\right)  \beta\right]  $ & $=$ & \textsf{min}$\left(
F_{\triangle}\left(  \beta\right)  \setminus h\left(  \alpha\right)  \right)
$\\
& $=$ & \textsf{min}$\left(  F_{\triangle}\left(  \beta\right)  \setminus
M\right)  $\\
& $=$ & \textsf{min}$\left(  F_{\triangle\left(  \alpha,\beta\right)  }\left(
\beta\right)  \setminus M\right)  $%
\end{tabular}
\hfill\ 

\qquad\ \qquad\ \ 

If $\delta<\beta,$ then \textsf{min}$\left(  F_{\triangle\left(  \alpha
,\beta\right)  }\left(  \beta\right)  \setminus M\right)  \in C$ by the
crossing models condition of $p.$ On the other hand, if $\delta=\beta$, then
\textsf{min}$\left(  F_{\triangle\left(  \alpha,\beta\right)  }\left(
\beta\right)  \setminus M\right)  =\beta,$ which belongs to $C.\medskip$

$%
\begin{tabular}
[c]{|l|}\hline
\textbf{Case:} $\alpha=\beta$\\\hline
\end{tabular}
\ \medskip$

Here we have that $e_{\beta}\left(  \delta\right)  <d_{p}\leq\triangle<l$. It
follows that $F_{\triangle}\left(  \beta\right)  \cap M\subseteq F_{l}\left(
\beta\right)  \cap M\subseteq\eta\subseteq h\left(  \beta\right)  .$ We have
the following:\newline

\hfill%
\begin{tabular}
[c]{lll}%
$\left[  h\left(  \alpha\right)  \beta\right]  $ & $=$ & \textsf{min}$\left(
F_{\triangle}\left(  \beta\right)  \setminus h\left(  \beta\right)  \right)
$\\
& $=$ & \textsf{min}$\left(  F_{\triangle}\left(  \beta\right)  \setminus
M\right)  $\\
& $=$ & $\delta\in C.$%
\end{tabular}
\hfill\ 

\qquad\ \qquad\ \qquad\ \qquad\ \ 

This completes the proof that $q$ satisfies the completeness condition. We now
verify the crossing models condition. It suffices to show that for any
distinct $\alpha,\beta\in$ \textsf{ht}$_{\mathcal{M}}\left(  p\right)  \cup$
\textsf{ht}$_{\mathcal{M}}\left(  r\right)  $ and any $N\in\mathcal{N}_{p}%
\cup\mathcal{N}_{r}$ with $\delta_{N}<\beta$ and $\alpha\neq\delta_{N},$ we
have that \textsf{min}$\left(  F_{\triangle}\left(  \beta\right)  \setminus
N\right)  \in C,$ where $\triangle=\triangle\left(  \alpha,\beta\right)  $.
The proof proceeds by cases, depending on the membership of $N.\medskip$

$%
\begin{tabular}
[c]{|l|}\hline
\textbf{Case:} $N\in\mathcal{N}_{p}\cap\mathcal{N}_{r}$\\\hline
\end{tabular}
\ \medskip$

Note that if both $\alpha,\beta\in$ \textsf{ht}$\left(  p\right)  $ or both
$\alpha,\beta\in$ \textsf{ht}$\left(  r\right)  ,$ then the result follows
immediately since $p$ and $r$ are conditions in $\mathbb{P}\left(  C\right)
.$ The proof now proceeds by subcases, depending on the relationship between
$\triangle$ and $d_{p}.\medskip$

\textbf{Subcase:} $N\in\mathcal{N}_{p}\cap\mathcal{N}_{r}$ and $\triangle
<d_{p}.$

\qquad\ \qquad\ \ \qquad\ \ \ 

First assume that $\alpha\in$ \textsf{ht}$\left(  p\right)  \setminus$
\textsf{ht}$\left(  r\right)  $ (so $\alpha\notin M$) and $\beta\in$
\textsf{ht}$\left(  r\right)  \setminus$\textsf{ht}$\left(  p\right)  .$ It
follows from $\triangle<d_{p}$ that $h\left(  \alpha\right)  \neq\beta$ and
$\triangle=\triangle\left(  h\left(  \alpha\right)  ,\beta\right)  .$ Since
$N\in\mathcal{N}_{r},$ the crossing models property of $r$ implies that
\textsf{min}$\left(  F_{\triangle}\left(  \beta\right)  \setminus N\right)
\in C.$ We now assume that $\beta\in$ \textsf{ht}$\left(  p\right)  \setminus$
\textsf{ht}$\left(  r\right)  $ (so $\beta\notin M$) and $\alpha\in$
\textsf{ht}$\left(  r\right)  \setminus$\textsf{ht}$\left(  p\right)  .$ Once
again, since $\triangle<d_{p},$ we have that $h\left(  \beta\right)
\neq\alpha$ and $\triangle=\triangle\left(  h\left(  \beta\right)
,\alpha\right)  .$ Since $N\in\mathcal{N}_{p},$ the crossing models property
of $p$ implies that \textsf{min}$\left(  F_{\triangle}\left(  \beta\right)
\setminus N\right)  \in C.\medskip$

\textbf{Subcase:} $N\in\mathcal{N}_{p}\cap\mathcal{N}_{r}$ and $d_{p}%
\leq\triangle.$

\qquad\qquad\ \ \ 

We have that $r_{\alpha}\upharpoonright d_{p}=r_{\beta}\upharpoonright d_{p}.$
Note that $e_{\beta}\left(  \delta_{N}\right)  <d_{p}\leq\triangle.$ It
follows that \textsf{min}$\left(  F_{\triangle}\left(  \beta\right)  \setminus
N\right)  =\delta_{N}\in C.\medskip$

$%
\begin{tabular}
[c]{|l|}\hline
\textbf{Case:} $N\in\mathcal{N}_{r}\setminus\mathcal{N}_{p}$\\\hline
\end{tabular}
\ \medskip$

Note that if $\alpha,\beta\in$ \textsf{ht}$\left(  r\right)  ,$ then the
result follows immediately since $r$ is a condition in $\mathbb{P}\left(
C\right)  .$ The proof now proceeds by subcases, depending on whether $\beta$
is in \textsf{ht}$\left(  r\right)  $.\medskip

\textbf{Subcase: }$N\in\mathcal{N}_{r}\setminus\mathcal{N}_{p}$ and
$\beta\notin$ \textsf{ht}$\left(  r\right)  .$

\qquad\ \qquad\ \qquad\ \qquad\ \ 

Since $\beta\notin$ \textsf{ht}$\left(  r\right)  ,$ it follows that
$\beta\notin M.$ Moreover, $F_{\triangle}\left(  \beta\right)  \cap
M\subseteq\eta<\delta_{N}.$ In this way, \textsf{min}$\left(  F_{\triangle
}\left(  \beta\right)  \setminus N\right)  =$ \textsf{min}$\left(
F_{\triangle}\left(  \beta\right)  \setminus M\right)  .$ If $\beta=\delta,$
then \textsf{min}$\left(  F_{\triangle}\left(  \beta\right)  \setminus
M\right)  =\delta,$ which is in $C.$ Assume that $\beta\neq\delta$ (so
$\delta<\beta$). If $\triangle\geq d_{p},$ since $e_{\beta}\left(
\delta\right)  <d_{p},$ it follows that \textsf{min}$\left(  F_{\triangle
}\left(  \beta\right)  \setminus M\right)  =\delta.$ We now assume that
$\triangle<d_{p}.$ Let $\gamma\in$ \textsf{ht}$_{\mathcal{M}}\left(  p\right)
$ such that $\triangle\left(  \gamma,\beta\right)  =\triangle.$ We conclude
that \textsf{min}$\left(  F_{\triangle}\left(  \beta\right)  \setminus
M\right)  \in C$ by the crossing models property of $p.\medskip$

\textbf{Subcase: }$N\in\mathcal{N}_{r}\setminus\mathcal{N}_{p}$ and $\beta\in$
\textsf{ht}$\left(  r\right)  .$

\qquad\qquad\qquad\ \ \ \ 

Note that $\beta\in M.$ If $\alpha$ is also in $M,$ then $\alpha,\beta\in$
\textsf{ht}$\left(  r\right)  $ and the result follows since $r$ is a
condition$.$ Assume that $\alpha\notin M.$ If $h\left(  \alpha\right)
\neq\beta,$ then $\triangle=\triangle\left(  h\left(  \alpha\right)
,\beta\right)  $ and is smaller than $d_{p}.$ We have that \textsf{min(}%
$F_{\triangle}\left(  \beta\right)  \setminus N)=$ \textsf{min}$(F_{\triangle
\left(  h(\alpha),\beta\right)  }\left(  \beta\right)  \setminus N)$ and it is
in $C$ because of the crossing models property of $r.$ We now assume that
$h\left(  \alpha\right)  =\beta.$ It follows that $e_{\beta}\left(  \delta
_{N}\right)  <d_{p}\leq\triangle.$ In this way, \textsf{min(}$F_{\triangle
}\left(  \beta\right)  \setminus N)=\delta_{N},$ which is in $C.\medskip$

$%
\begin{tabular}
[c]{|l|}\hline
\textbf{Case:} $N\in\mathcal{N}_{p}\setminus\mathcal{N}_{r}$\\\hline
\end{tabular}
\ \medskip$

It must be the case that $N\notin M.$ Since $\delta_{N}<\beta,$ then
$\beta\notin M,$ so $\beta\in$ \textsf{ht}$\left(  p\right)  .$ If $\alpha$ is
also in \textsf{ht}$\left(  p\right)  ,$ the result follows immediately since
$p\in\mathbb{P}\left(  C\right)  .$ Assume that $\alpha\in$ \textsf{ht}%
$\left(  r\right)  \setminus$ \textsf{ht}$\left(  p\right)  .$ If $h\left(
\beta\right)  \neq\alpha,$ then $\triangle<d_{p}$ and $\triangle
=\triangle\left(  \beta,h^{-1}\left(  \alpha\right)  \right)  .$ It follows
that \textsf{min(}$F_{\triangle}\left(  \beta\right)  \setminus N)=$
\textsf{min}$(F_{\triangle\left(  \beta,h^{-1}\left(  \alpha\right)  \right)
}\left(  \beta\right)  \setminus N)$ and it is in $C$ because of the crossing
models property of $p.$ Assume that $h\left(  \beta\right)  =\alpha,$ hence
$d_{p}\leq\triangle,$ so $e_{\beta}\left(  \delta_{N}\right)  <d_{p}%
\leq\triangle.$ We conclude that \textsf{min(}$F_{\triangle}\left(
\beta\right)  \setminus N)=\delta,$ which is in $C.$ We finally finished the proof.
\end{proof}

\qquad\ \qquad\ \qquad\ \ \ 

We conclude the following:

\begin{corollary}
$\mathbb{P}\left(  C\right)  $ is a proper forcing.
\end{corollary}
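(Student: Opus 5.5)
The corollary will follow directly from Proposition \ref{Prop Can Proper reales}, once we know that every condition in a big model can be extended by adding that model's trace on \textsf{H}$\left(\omega_{2}\right)$.

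Fix a big model $M$ and $p=(\mathcal{M}_{p},\mathcal{N}_{p})\in\mathbb{P}\left(C\right)\cap M$. It suffices to find $q\leq p$ with $\overline{M}\in\mathcal{N}_{q}$; the proposition then makes $q$ an $(M,\mathbb{P}\left(C\right))$-generic condition. Properness is by definition only required for countable elementary submodels of a sufficiently large \textsf{H}$\left(\kappa\right)$ containing $\mathbb{P}\left(C\right)$. Any such model can be enlarged to contain $C,\mathcal{R},e$, so restricting attention to big models is harmless.

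The natural candidate is $q=(\mathcal{M}_{p},\mathcal{N}_{p}\cup\{\overline{M}\})$, putting $\overline{M}$ only in the $\mathcal{N}$-part. First, $\overline{M}$ is a countable elementary submodel of \textsf{H}$\left(\omega_{2}\right)$ containing $\mathcal{R},e,C$. Next, since $p\in M$ and $p$ is finite, every $N\in\mathcal{N}_{p}$ lies in $M\cap$ \textsf{H}$\left(\omega_{2}\right)=\overline{M}$. Hence $\mathcal{N}_{p}\cup\{\overline{M}\}$ is a finite $\in$-chain with $\overline{M}$ on top, and $\mathcal{M}_{p}\subseteq\mathcal{N}_{q}$. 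The completeness condition only involves $\mathcal{M}_{q}=\mathcal{M}_{p}$, so it is inherited from $p$.

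The only point requiring care is the crossing models condition for the new model. We need: for distinct $M',L\in\mathcal{M}_{p}$ with $\overline{M}\in M'$, that \textsf{min}$(F_{\triangle(\delta_{M'},\delta_{L})}(\delta_{M'})\setminus\overline{M})\in C$. This is vacuous, because every $M'\in\mathcal{M}_{p}$ lies in $\overline{M}$ and so cannot contain $\overline{M}$ (an $\in$-chain has no such loops). Instances with $N\in\mathcal{N}_{p}$ are those of $p$. Finally, $q\leq p$ holds because $\mathcal{M}_{p}\subseteq\mathcal{M}_{q}$, $\mathcal{N}_{p}\subseteq\mathcal{N}_{q}$ and $\mathcal{M}_{q}\cap\mathcal{N}_{p}=\mathcal{M}_{p}$.

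So the real work lies entirely in Proposition \ref{Prop Can Proper reales}, and the corollary is a routine verification.
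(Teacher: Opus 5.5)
Your proposal is correct and is the paper's argument: the paper likewise extends a condition in $M\cap\mathbb{P}(C)$ by adding $\overline{M}$ to its $\mathcal{N}$-part only, then invokes Proposition \ref{Prop Can Proper reales}. It leaves implicit the checks you carry out, namely that $\mathcal{N}_p\subseteq\overline{M}$ and that the new crossing models instances are vacuous by well-foundedness. One small correction to your side remark: the restriction to big models is justified by the standard fact that properness need only be checked for club many countable elementary submodels (for example, those containing $C,\mathcal{R},e$), not by enlarging a given model, since genericity over a larger model does not in general pass down to a smaller one.
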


\begin{proof}
Let $M$ be a big model and $q\in M\cap\mathbb{P}\left(  C\right)  .$ Define
$p=(\mathcal{M}_{q},\mathcal{N}_{q}\cup\left\{  \overline{M}\right\}  ).$ It
follows that $p\leq q$ and it is $(M,\mathbb{P}\left(  C\right)  )$-generic by
Proposition \ref{Prop Can Proper reales}.
\end{proof}

\qquad\ \qquad\ \qquad\ \ \ \ 

The main result of this section now easily follows. This is the same argument
as the one from Lemma 5.4.4 of \cite{Walks}.

\begin{theorem}
\textsf{PFA }implies that every square-bracket operation of the form
$[\cdot\cdot]_{\mathcal{R},e}$ has the Ramsey club property.
\label{PFA para reales}
\end{theorem}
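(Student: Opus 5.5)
We apply \textsf{PFA} to the proper forcing $\mathbb{P}(C)$, following the argument of Lemma 5.4.4 of \cite{Walks}. We may assume $C\subseteq$ \textsf{LIM}$(\omega_{1})$, since intersecting with a club does not hurt. For each $\xi<\omega_{1}$ let
\[
D_{\xi}=\{p\in\mathbb{P}(C)\mid \exists M\in\mathcal{M}_{p}\,(\delta_{M}>\xi)\}.
\]
The plan is to meet these $\aleph_{1}$ many dense sets with a filter $G$ and read off the desired uncountable set from $G$.

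First, density of $D_{\xi}$. Given $q\in\mathbb{P}(C)$, take a big model $M$ with $q,\xi\in M$. Put $p=(\mathcal{M}_{q}\cup\{\overline{M}\},\mathcal{N}_{q}\cup\{\overline{M}\})$. It is an $\in$-chain because $q\in M$, so every model of $\mathcal{N}_{q}$ lies in $\overline{M}$. We must check that $p\in\mathbb{P}(C)$.
\begin{itemize}
\item[] \emph{Completeness.} For $L\in\mathcal{M}_{q}$ we have $\delta_{L}<\delta=\delta_{\overline{M}}$. The finite set $F_{n}(\delta)\cap\overline{M}$ is contained in $\delta_{L}$ for the relevant $n$ only if we know more, so instead compute directly: $[\delta_{L}\delta]=\min(F_{\triangle}(\delta)\setminus\delta_{L})$. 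I will handle this through the crossing condition for $N=L$, which is the point to watch.
\item[] \emph{Crossing with $\overline{M}$ on top.} We need $\min(F_{\triangle(\delta,\delta_{L})}(\delta)\setminus N)\in C$ for every $N\in\mathcal{N}_{q}$ (and $N=L$). This can fail for an arbitrary $M$. To fix it, I will choose $M$ so that $e_{\delta}$ is controlled: $F_{n}(\delta)\setminus\{\delta\}$ is finite, so if its elements $\geq\delta_{N}$ lie outside $C$ we are stuck.
\end{itemize}
The cleaner route is to replace the extension by one model with an argument via genericity. Since $M$ is big and $q\in M$, Proposition \ref{Prop Can Proper reales} makes $p'=(\mathcal{M}_{q},\mathcal{N}_{q}\cup\{\overline{M}\})$ an $(M,\mathbb{P}(C))$-generic condition. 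This gives properness, but it does not yet put a new model into $\mathcal{M}$.

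To get density I will argue by contradiction. Suppose $q$ has no extension in $D_{\xi}$. Force with $\mathbb{P}(C)$ below $q$. By properness $\omega_{1}$ is preserved, and the set $W=\{\delta_{M}\mid M\in\bigcup_{p\in G}\mathcal{M}_{p}\}$ is then bounded by $\xi$. Now take a continuous $\in$-chain $\langle N_{\alpha}\mid\alpha<\omega_{1}\rangle$ of submodels of \textsf{H}$(\omega_{2})$ containing $q$. Its heights form a club, so some $\delta_{N_{\alpha}}$ with $\alpha$ limit lies in $C'$. Adding $N_{\alpha}$ to both $\mathcal{M}$ and $\mathcal{N}$ of a condition whose models all lie in $N_{\alpha}$ works exactly when
\[
\min(F_{\triangle}(\delta_{N_{\alpha}})\setminus N)\in C
\]
for the finitely many relevant pairs. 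The finitely many points of $F_{n}(\delta_{N_{\alpha}})$ below $\delta_{N_{\alpha}}$, for $n<d_{q}$-type bounds, are what must be avoided. Choosing $\alpha$ so that $N_{\alpha}$ is large makes them fall below $\delta_{N}$ only for $N$ below, and a pressing-down (Fodor) argument on the stationary set of such $\alpha$ fixes these finite sets $F_{n}(\delta_{N_{\alpha}})\cap\delta_{N_{\alpha}}$ below a single level. The top model of $q$ can then be placed above that level.

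This density step is the main obstacle. I expect to carry it out by combining this Fodor argument with the isomorphism-type amalgamation from Proposition \ref{Prop Can Proper reales}. Concretely, take two conditions with isomorphic $H(\cdot)$ and separated heights, and show their union is a condition, reusing the case analysis of that proof. This gives that uncountably many heights can be added.

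Once every $D_{\xi}$ is dense, \textsf{PFA} gives a filter $G$ meeting all $D_{\xi}$. Let $W=\{\delta_{M}\mid M\in\mathcal{M}_{p},\,p\in G\}$, which is uncountable. For $\delta_{M},\delta_{L}\in W$ coming from $p_{1},p_{2}\in G$, a common extension $p\in G$ contains both $M$ and $L$ in $\mathcal{M}_{p}$: ordering preserves $\mathcal{M}$, since $\mathcal{M}_{q}=\mathcal{M}_{p}\cap\mathcal{N}_{q}$. The completeness condition of $p$ then gives $[\delta_{M}\delta_{L}]\in C$. Hence $W$ witnesses that $C$ is $[\cdot\cdot]_{\mathcal{R},e}$-Ramsey.
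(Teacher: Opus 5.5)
\textbf{There is a genuine gap.} Your whole argument rests on the density of each $D_\xi$ in all of $\mathbb{P}(C)$. You label this ``the main obstacle'' and never prove it, and the sketch you give does not work.

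For an arbitrary $q$, any extension that puts a new model of height $\delta>\xi$ into $\mathcal{M}$ must satisfy three requirements:
\begin{enumerate}
\item $[\delta_L\delta]\in C$ for every $L\in\mathcal{M}_q$.
\item $\min(F_{\triangle(\delta,\delta_L)}(\delta)\setminus N)\in C$ for every $L\in\mathcal{M}_q$ and $N\in\mathcal{N}_q$.
\item The crossing conditions in which the new model plays the role of $L$: $\min(F_{\triangle(\delta_{L'},\delta)}(\delta_{L'})\setminus N)\in C$ for $L'\in\mathcal{M}_q$ and $N\in L'$.
\end{enumerate}
The third requirement depends on $r_\delta$ and on $e_{\delta_{L'}}$. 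No pressing-down on $e_\delta$ touches it.

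Your Fodor step has its own problem. It only yields a stationary set on which the finite sets $F_n(\delta)\cap\delta$ lie below some fixed $\eta$, and that $\eta$ may be above the heights of the models of $q$. The step ``the top model of $q$ can then be placed above that level'' is not available, because $q$ is given and its models cannot be moved. The proof by contradiction (forcing below $q$ so that $W$ is bounded by $\xi$) never actually reaches a contradiction. Finally, the amalgamation in Proposition \ref{Prop Can Proper reales} only amalgamates a condition containing $\overline{M}$ in $\mathcal{N}$ with an isomorphic copy inside $M$. It gives no way to add new $\mathcal{M}$-models above an arbitrary $q$.

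\textbf{The missing idea.} You do not need density below every $q$, only below one suitable condition. Take a big model $M$ and let $p=(\{\overline{M}\},\{\overline{M}\})$. This is exactly your first attempt with $q$ the trivial condition, and there the completeness problem you hit disappears: with a single model in $\mathcal{M}$, both the completeness and crossing conditions are vacuous. The argument then runs as follows.
\begin{enumerate}
\item By Proposition \ref{Prop Can Proper reales}, $p$ is $(M,\mathbb{P}(C))$-generic, and clearly $p\Vdash\delta_M\in W(\dot G)$.
\item The name $W(\dot G)$ belongs to $M$, and genericity gives $M[\dot G]\cap\omega_1=\delta_M$. So $p$ forces $W(\dot G)\not\subseteq M[\dot G]$, and by Lemma \ref{Lema modelo contension} it forces $W(\dot G)$ to be uncountable.
\item Hence each $D_\xi$ is dense below $p$. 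Given $q\leq p$, find $r\leq q$ and $s$ with a model of height $>\xi$ in $\mathcal{M}_s$ such that $r\Vdash s\in\dot G$. A common extension of $r$ and $s$ then lies in $D_\xi$, since the ordering keeps $\mathcal{M}$-models.
\item Apply \textsf{PFA} to $\mathbb{P}(C)$ below $p$.
\end{enumerate}
Your last paragraph is correct: the $\in$-chain remark, and reading off $W$ from the filter via the completeness condition and the fact that $\leq$ preserves $\mathcal{M}$. With the step above in place, it completes the proof exactly as in the paper.
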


\begin{proof}
Assume \textsf{PFA }and let $C,$ $\mathcal{R}$ and $e$ as before, with
$\mathbb{P}\left(  C\right)  $ the corresponding forcing notion. Given
$G\subseteq\mathbb{P}\left(  C\right)  $ a filter, define the set $W\left(
G\right)  =\bigcup\{\delta_{M}\mid\exists p\in G\left(  M\in\mathcal{M}%
_{p}\right)  \}.$ It follows from the definition of the forcing that $\left\{
\left[  \alpha\beta\right]  \mid\alpha,\beta\in W\left(  G\right)  \right\}
\subseteq C.$ To complete the proof, it suffices to show that there is
$p\in\mathbb{P}\left(  C\right)  $ that forces that $W(\dot{G})$ is
uncountable (where $\dot{G}$ is the canonical name of the generic filter).

\qquad\ \ \ \ \qquad\ \ 

Let $M$ be a big model. Define $p=(\{\overline{M}\},\{\overline{M}\}).$ Since
$p$ is $(M,\mathbb{P}\left(  C\right)  )$-generic and $p\Vdash$%
\textquotedblleft$\delta_{M}\in W(\dot{G})$\textquotedblright, it follows from
Lemma \ref{Lema modelo contension} that $p$ forces that $W(\dot{G})$ is uncountable.
\end{proof}

\section{PFA and square-bracket operations induced by C-sequences
\label{Seccion PFA walks}}

It remains to prove that \textsf{PFA }implies the Ramsey club property for the
square-bracket operations arising from \textsf{C}-sequences. The proof for
this case appears to be the hardest of the three. For the remainder of this
section, fix a club $C\subseteq$ \textsf{LIM}$\left(  \omega_{1}\right)  $ and
$\mathcal{C}=\left\{  C_{\alpha}\mid\alpha\in\omega_{1}\right\}  $ a
\textsf{C}-sequence. For notational convenience, in this section we will write
$\left[  \gamma\xi\right]  $ for $\left[  \gamma\xi\right]  _{\mathcal{C}}$
and $\triangle\left(  \gamma,\xi\right)  $ for $\triangle_{\mathcal{C}}\left(
\gamma,\xi\right)  $ (for any $\gamma,\xi\in\omega_{1}$). We need to prove
some preliminary results.

\begin{definition}
Let $X\subseteq\omega_{1}.$ We define the following:
\label{Def estructura generada}

\begin{enumerate}
\item \textsf{Tr}$\left[  X\right]  =\bigcup\{$\textsf{Tr}$(\alpha,\beta
)\mid\alpha,\beta\in X\}.$

\item $\triangle\left(  X\right)  =\{\triangle\left(  \alpha,\beta\right)
\mid\alpha,\beta\in X\}.$

\item $\lambda\left(  X\right)  =\left\{  \lambda\left(  \alpha,\beta\right)
\mid\alpha,\beta\in X\right\}  .$

\item $S\left(  X\right)  $ is the smallest set that contains $X$ and is
closed under the previous operations.
\end{enumerate}
\end{definition}

\qquad\ \qquad\ \qquad\ 

We now have the following:

\begin{proposition}
If $X\subseteq\omega_{1}$ is finite, then $S\left(  X\right)  $ is also finite.
\end{proposition}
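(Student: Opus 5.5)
The plan is to argue by induction on $\beta=\mathsf{max}(X)$, proving the stronger statement that for every finite $X$ the closure $S(X)$ is finite and satisfies $S(X)\subseteq\beta+1$. The containment is immediate, since each operation produces ordinals no larger than its arguments:
\begin{enumerate}
\item $\mathsf{Tr}(\alpha,\beta)\subseteq[\alpha,\beta]$;
\item $\triangle(\alpha,\beta)\leq\mathsf{min}\{\alpha,\beta\}$;
\item $\lambda(\alpha,\beta)<\alpha$.
\end{enumerate}
Write $X'=X\setminus\{\beta\}$. By induction $S(X')$ is finite. The difficulty is that $\beta$ interacts with points of $S(X)$ that are themselves generated only after $\beta$ has been used, so one cannot just adjoin finitely many new points once.

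To control $\beta$, I would factor walks from $\beta$ through a single smaller ordinal. Let $\gamma_{0}=\mathsf{max}(X')$, or $0$ if $X'=\emptyset$. Look at the finite trace $\mathsf{Tr}(\gamma_{0},\beta)=\{\beta=\beta_{0}>\beta_{1}>\dots>\beta_{k}=\gamma_{0}\}$, and put these points into a set $Z$. The key claim is that for every $\alpha\leq\gamma_{0}$, Proposition \ref{Prop juntar trazas} gives
\[
\mathsf{Tr}(\alpha,\beta)=\mathsf{Tr}(\beta_{i},\beta)\cup\mathsf{Tr}(\alpha,\beta_{i}),
\]
where $i$ is chosen so that $\lambda(\beta_{i},\beta)<\alpha\leq\beta_{i}$. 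The corresponding full codes $\rho_{0}$ factor in the same way, and hence so do the $\lambda$ and $\triangle$ values involving $\beta$. Since $\lambda(\beta_{i},\beta)\leq\lambda(\gamma_{0},\beta)$ gets refined as $i$ increases, the only way $\alpha$ escapes every such factorization is $\alpha\leq\lambda(\beta_{j},\beta)$ for the relevant $j$. In that case I would iterate: adjoin the finitely many points $\lambda(\beta_{j},\beta)$ and the trace from $\beta$ down to them. This yields a finite set $Z\ni\beta$ with two properties. First, every walk from $\beta$ to a point below $\beta$ factors through a point of $Z\setminus\{\beta\}$ together with a walk from that point. Second, $\triangle(\alpha,\beta)$ and $\lambda(\alpha,\beta)$ are computable from finitely many values attached to $Z$ and from $\triangle,\lambda$ applied to pairs below $\beta$.

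With this in hand, $S(X)\subseteq Z\cup S((X'\cup Z)\setminus\{\beta\})\cup F$, where $F$ is a finite set of $\lambda$ and $\triangle$ values computed from $\beta$ against $Z$. The middle set is the closure of a finite set with maximum $<\beta$, so it is finite by the induction hypothesis. For $\triangle$, I would use that $\triangle(\alpha,\beta)$ compares $\rho_{0\alpha}$ and $\rho_{0\beta}$. Since $\rho_{0\beta}$ below $\beta_{i}$ is $\rho_{0}(\beta_{i},\beta)^{\frown}\rho_{0\beta_{i}}$ above the point $\lambda(\beta_{i},\beta)$, the value $\triangle(\alpha,\beta)$ equals either $\triangle(\alpha,\beta_{i})$ or one of finitely many ordinals bounded by the $\lambda(\beta_{i},\beta)$.

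The main obstacle is the second property of $Z$. Showing that the descending process of adjoining $\lambda(\beta_{j},\beta)$ terminates uses well-foundedness of the ordinals, since each new threshold is strictly smaller than the previous one. The delicate step is verifying that every $\alpha\in S(X)$ falls into an interval $(\lambda(\beta_{i},\beta),\beta_{i}]$ for some $\beta_{i}\in Z$, or else equals one of those finitely many thresholds. I would try to prove this by a careful case analysis on the first step of the walk from $\beta$, that is, on $\mathsf{min}(C_{\beta}\setminus\alpha)$, which takes only finitely many values as $\alpha$ ranges above $\lambda$.
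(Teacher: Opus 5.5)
There is a genuine gap, and it sits exactly where the content of the proposition lies: the values $\triangle(\alpha,\beta)$ for $\alpha\in S(X)\cap\beta$.

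\textbf{The trace and $\lambda$ parts are fine, and easier than your construction of $Z$ suggests.} Let $\zeta=\min(C_\beta\setminus\max X')$. Every point of $S(X)\setminus\{\beta\}$ is $\le\zeta$, since no operation exceeds its larger argument and the first step from $\beta$ toward any $\alpha\le\zeta$ is $\le\zeta$. For such $\alpha$, the walk from $\beta$ first lands in the finite set $C_\beta\cap(\zeta+1)$. So $\mathsf{Tr}(\alpha,\beta)$ and $\lambda(\alpha,\beta)$ come from that finite set together with traces and $\lambda$'s of pairs below $\beta$. (Your first property of $Z$, stated for all points below $\beta$, fails for limit $\beta$, because $\min(C_\beta\setminus\alpha)$ takes infinitely many values.)

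\textbf{Your reduction of $\triangle$ is incorrect.} For $\xi\in(\lambda(\beta_i,\beta),\beta_i)$ with $\beta_i<\beta$ you have $\rho_{0\beta}(\xi)=\rho_0(\beta_i,\beta)^{\frown}\rho_{0\beta_i}(\xi)$. This is strictly longer than $\rho_{0\beta_i}(\xi)$, so $\rho_{0\alpha}(\xi)$ agreeing with $\rho_{0\beta}(\xi)$ rules out agreeing with $\rho_{0\beta_i}(\xi)$. Hence $\triangle(\alpha,\beta)$ cannot be read off from $\triangle(\alpha,\beta_i)$. When it lies below $\lambda(\beta_i,\beta)$, it is the first disagreement of $\rho_{0\alpha}$ with $\rho_{0\beta}$; that depends on $\alpha$ and is not one of finitely many ordinals fixed by $Z$.

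\textbf{Why the induction does not close.} The map $\alpha\mapsto\triangle(\alpha,\beta)$ acts as an extra regressive operation that keeps producing new points below $\beta$. Each new point must again be walked to from $\beta$, compared with $\beta$, combined with old points, and so on. Your induction hypothesis only controls closures of finite subsets of $\beta$ under operations among points below $\beta$, and $S(X)\cap\beta$ is not such a closure. The inclusion $S(X)\subseteq Z\cup S((X'\cup Z)\setminus\{\beta\})\cup F$ would need $F$ to contain $\triangle(\alpha,\beta)$ for every $\alpha$ in the middle set, which in turn enlarges the middle set. Showing this feedback loop terminates is the whole proposition, and it is exactly the ``delicate step'' you defer.

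\textbf{The missing idea: localize at an accumulation point, not at the maximum.} This is what the paper does.
\begin{itemize}
\item Write $S(X)=\bigcup_n X_n$ by stages. If $S(X)$ were infinite, it would have a largest accumulation point $\gamma$.
\item The finitely many points of $S(X)$ that are $\ge\gamma$ all appear by some stage $n$.
\item Pick $\gamma_m\in C_\gamma$ above $X_{n+1}\cap\gamma$ and above $\lambda(\gamma,\beta)$ for each such $\beta>\gamma$.
\end{itemize}
By induction on stages, nothing ever enters $(\gamma_m,\gamma)$:
\begin{itemize}
\item values on pairs from the top part are already in $X_{n+1}$;
\item $\triangle$ and $\lambda$ of a pair whose smaller element is $\le\gamma_m$ are $\le\gamma_m$;
\item a walk from some $\beta\ge\gamma$ down to $\alpha\le\gamma_m$ either passes through $\gamma$ and then steps to $\min(C_\gamma\setminus\alpha)\le\gamma_m$, or skips $\gamma$ and drops to a point $\le\lambda(\gamma,\beta)<\gamma_m$.
\end{itemize}
This contradicts $\gamma$ being an accumulation point. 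At an accumulation point the regressive operations are harmless; at the maximum they are exactly what your induction cannot control.
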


\begin{proof}
Let $X\subseteq\omega_{1}$ be a finite set. Define recursively a sequence of
sets by $X_{0}=X$ and $X_{n+1}=X_{n}\cup$ \textsf{Tr}$\left[  X_{n}\right]
\cup\triangle\left(  X_{n}\right)  \cup\lambda\left(  X_{n}\right)  .$ It is
straightforward to verify that $S\left(  X\right)  =\bigcup\limits_{n\in
\omega}X_{n}$ and that $X_{n}\subseteq X_{m}$ whenever $n\leq m.$ It follows
that $\left\vert S\left(  X\right)  \right\vert \leq\omega$ (this can also be
deduced from the observation that $S\left(  X\right)  \subseteq$
\textsf{max}$\left(  X\right)  +1$). Assume that $S\left(  X\right)  $ is
infinite. Let $S^{\prime}\left(  X\right)  $ denote the set of accumulation
points of $S\left(  X\right)  .$ Since $S\left(  X\right)  $ is countably
infinite, we conclude that $S^{\prime}\left(  X\right)  $ has a maximum, which
we will $\gamma.$ Note that $\gamma$ is a limit ordinal.

\qquad\qquad\qquad\qquad

Denote $Y=S\left(  X\right)  \setminus\gamma.$ Since $\gamma$ is the largest
accumulation point of $S\left(  X\right)  ,$ the set $Y$ must be finite.
Consequently, there exists $n\in\omega$ such that $Y\subseteq X_{n}.$ Now,
choose $\gamma_{m}\in C_{\gamma}$ such that:

\begin{enumerate}
\item $X_{n+1}\cap\gamma\subseteq\gamma_{m}.$

\item $\lambda\left(  \gamma,\beta\right)  <\gamma_{m}$ for all $\beta\in Y$
with $\gamma<\beta.$
\end{enumerate}

\qquad\ \ \qquad\ \ 

We claim that $S\left(  X\right)  \cap\left(  \gamma_{m},\gamma\right)  $ is
empty. This will be a contradiction since $\gamma$ is an accumulation point of
$S\left(  X\right)  .$ We will prove by induction that $X_{l}\cap\left(
\gamma_{m},\gamma\right)  =\emptyset.$ This is trivial in case $l\leq n+1.$
Assume now that $n+1\leq l$ and $X_{l}\cap\left(  \gamma_{m},\gamma\right)
=\emptyset,$ we will show that the same is true for $l+1.$ Since $\lambda$ and
$\triangle$ are regressive functions and $X_{l}\setminus\gamma=Y,$ it follows
that $\lambda\left[  X_{l}\right]  $ and $\triangle\left(  X_{l}\right)  $ are
disjoint with $\left(  \gamma_{m},\gamma\right)  .$ It remains to prove that
if $\beta\in Y$ and $\alpha\in X_{l}\cap\gamma,$ then \textsf{Tr}%
$(\alpha,\beta)\cap\left(  \gamma_{m},\gamma\right)  =\emptyset.$ We proceed
by cases depending wether $\gamma$ is in \textsf{Tr}$(\alpha,\beta)$ or not.

\qquad\qquad\ \ 

First assume that $\gamma\in$ \textsf{Tr}$(\alpha,\beta).$ Since $\alpha
\leq\gamma_{m},$ the walk from $\beta$ to $\alpha,$ must pass through $\gamma$
and then jump to \textsf{min}$\left(  C_{\gamma}\setminus\alpha\right)
\leq\gamma_{m}.$ It follows that \textsf{Tr}$(\alpha,\beta)\cap\left(
\gamma_{m},\gamma\right)  =\emptyset$. Now assume that $\gamma\notin$
\textsf{Tr}$(\alpha,\beta).$ Let \textsf{Tr}$\left(  \alpha,\beta\right)
=\left\{  \beta_{0},...,\beta_{n}\right\}  $ where $\beta_{0}=\beta,$
$\beta_{n}=\alpha$ and $\beta_{i+1}=$ \textsf{stp}$\left(  \alpha,\beta
_{i}\right)  .$ Let $i$ such that $\beta_{i}>\gamma$ but $\gamma<\beta_{i+1}.$
Proposition \ref{Prop juntar trazas} implies that $\beta_{i+1}\leq
\lambda\left(  \gamma,\beta\right)  .$ By our choice of $\gamma_{m},$ we have
that $\lambda\left(  \gamma,\beta\right)  <\gamma_{m}$, hence $\beta
_{i+1}<\gamma_{m}.$ We conclude again that \textsf{Tr}$(\alpha,\beta
)\cap\left(  \gamma_{m},\gamma\right)  =\emptyset$.
\end{proof}

\qquad\ \qquad\ \qquad\ \ \ \ \ \qquad\ \ 

Let $T$ be an Aronszajn tree, $A\subseteq T^{n+1},$ $\gamma\in\omega_{1}$ and
$F=\left(  t_{0},...,t_{n}\right)  \in\left(  T_{\gamma}\right)  ^{n+1}$ (we
do not require that $t_{i}\neq t_{j}$ for $i\neq j$). The game \emph{G}%
$_{F}\left(  T,A\right)  $ is defined as follows:

\begin{center}%
\begin{tabular}
[c]{|l|l|l|l|l|l|l|l|}\hline
$\mathsf{I}$ & $\xi_{0}$ &  & $\xi_{1}$ &  & $...$ & $\xi_{n}$ & \\\hline
$\mathsf{II}$ &  & $s_{0}$ &  & $s_{1}$ &  &  & $s_{n}$\\\hline
\end{tabular}

\end{center}

\qquad\ \qquad\ \ \qquad\ \ 

The game lasts $n+1$ rounds. In round $i,$ Player $\mathsf{I}$ plays $\xi
_{i}\in\omega_{1}$ and Player $\mathsf{II}$ responds with $s_{i}\in T,$
subject to the following rules:

\begin{enumerate}
\item $\xi_{0}<\xi_{1}<...<\xi_{n}.$

\item $\xi_{i}<\left\vert s_{i}\right\vert $ for every $i\leq n.$

\item $t_{i}=s_{i}\upharpoonright\gamma$ for every $i\leq n.$
\end{enumerate}

\qquad\ \ \ \qquad\ \ 

\emph{Player} $\mathsf{II}$ \emph{wins the game }if the resulting
sequence\emph{ }$\left(  s_{0},...,s_{n}\right)  $ is in $A.$ Player
$\mathsf{I}$ is a man and Player $\mathsf{II}$ is a woman. Since this is a
clopen game, it is determined by the\ Gale-Stewart Theorem (see Theorem 20.1
of \cite{Kechris}).

\begin{proposition}
Let $T$ be an Aronszajn tree, $A\subseteq T^{n+1}$ and $\left\{
M_{0},...,M_{n}\right\}  $ an $\in$-chain of countable elementary submodels
such that $T,A\in M_{0}.$ Assume there is $\left(  s_{0},..,s_{n}\right)  \in
A$ such that $\delta_{M_{0}}\leq\left\vert s_{0}\right\vert <\delta_{M_{1}%
}\leq\left\vert s_{1}\right\vert \leq...\leq\delta_{M_{n}}\leq\left\vert
s_{n}\right\vert .$ For every $\gamma\in\omega_{1},$ there is $F\in\left(
T_{\gamma}\right)  ^{n+1}$ such that Player $\mathsf{II}$ has a winning
strategy in \emph{G}$_{F}\left(  T,A\right)  .$
\end{proposition}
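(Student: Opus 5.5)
We argue by contradiction using determinacy. Fix $\gamma$ and suppose that for every $F\in(T_\gamma)^{n+1}$ Player $\mathsf{II}$ has no winning strategy in \emph{G}$_F(T,A)$. Since $T_\gamma$ is countable, there are only countably many such $F$. By the Gale--Stewart Theorem each game is determined, so for each $F$ Player $\mathsf{I}$ has a winning strategy $\sigma_F$. The definition of these strategies uses only $T$, $A$ and $\gamma$. The plan is to put $\gamma$ into the models (or to replace $\gamma$ by a level below $\delta_{M_0}$, reducing the general case to this one) and then play the strategies against the given tuple $(s_0,\dots,s_n)\in A$ to get a contradiction.

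First I will reduce to $\gamma<\delta_{M_0}$. If the conclusion holds for some $\gamma'$, then it holds for every $\gamma\le\gamma'$: given $F'$ and a winning strategy at $\gamma'$, take $F=F'\upharpoonright\gamma$. Player $\mathsf{II}$ then uses the same strategy, and the restriction rule is preserved because $s_i\upharpoonright\gamma=(s_i\upharpoonright\gamma')\upharpoonright\gamma$. Increasing $\gamma$ is the harder direction and needs a different argument. Note, however, that the hypothesis is also witnessed by a tuple whose heights satisfy the same interleaving with respect to a chain of models $M_i'$ that contain $\gamma$, since we may enlarge the models. Concretely, I will argue by elementarity: the statement ``there is $F\in(T_\gamma)^{n+1}$ such that $\mathsf{II}$ wins'' will be derived in a model containing $\gamma$, where it is absolute. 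So I take $\gamma\in M_0$ as the main case. If this turns out to be an issue for $\gamma\ge\delta_{M_0}$, I will instead observe that the hypothesis remains true after replacing each $M_i$ by a suitable larger model $M_i'\ni\gamma$ that still satisfies the interleaving (choose the chain $M_0'$ with $\gamma\in M_0'$). That step may fail in general, and it is the point I would double-check.

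Now assume $\gamma\in M_0$ and let $F=(s_0\upharpoonright\gamma,\dots,s_n\upharpoonright\gamma)$; this is meaningful since $\gamma<\delta_{M_0}\le|s_i|$. Because $F\in M_0$, elementarity lets us take $\sigma_F\in M_0\subseteq M_i$ for all $i$. We then play the game in which $\mathsf{II}$ answers $s_i$ in round $i$. At round $i$, the previous moves $s_0,\dots,s_{i-1}$ have heights below $\delta_{M_i}$, but they are not necessarily elements of $M_i$. This is the main obstacle: the response $\xi_i=\sigma_F(s_0,\dots,s_{i-1})$ has to be below $\delta_{M_i}\le|s_i|$ for $s_i$ to be a legal move.

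To handle this, I will use the models more carefully. For round $i$, the sequence $s_0,\dots,s_{i-1}$ lies in $T\upharpoonright\delta_{M_i}$, and $T\upharpoonright\delta_{M_i}\subseteq M_i$ because the levels of $T$ are countable and $T\in M_i$. Hence $(s_0,\dots,s_{i-1})\in M_i$, so $\xi_i=\sigma_F(s_0,\dots,s_{i-1})\in M_i\cap\omega_1=\delta_{M_i}$. Thus $\xi_i<\delta_{M_i}\le|s_i|$. The moves $\xi_i$ increase since they are given by $\mathsf{I}$'s legal strategy. The restriction rule holds by the choice of $F$. Therefore $(s_0,\dots,s_n)$ is a legal play consistent with $\sigma_F$, and it ends in $A$, so $\mathsf{II}$ wins. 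This contradicts the fact that $\sigma_F$ is winning for $\mathsf{I}$. It follows that $\mathsf{II}$ has a winning strategy in \emph{G}$_F(T,A)$ for this $F$, first for $\gamma\in M_0$ and then, by the restriction remark above, for all smaller $\gamma$.
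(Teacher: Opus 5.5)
Your core argument is the paper's. You fix $\gamma<\delta_{M_0}$, take $F=(s_0\upharpoonright\gamma,\dots,s_n\upharpoonright\gamma)\in M_0$, use elementarity to get a winning strategy for $\mathsf{I}$ inside $M_0$, and play it against $s_0,\dots,s_n$. Your legality check is correct and supplies what the paper calls ``easy to see'': $T\upharpoonright\delta_{M_i}\subseteq M_i$, so each $\xi_i\in M_i\cap\omega_1=\delta_{M_i}\le|s_i|$. That check uses $|s_{i-1}|<\delta_{M_i}$ for every $i$. This is the intended reading of the hypothesis, and the paper's proof uses it too, although as displayed only $|s_0|<\delta_{M_1}$ is written strictly.

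The gap is the passage from $\gamma<\delta_{M_0}$ to arbitrary $\gamma\in\omega_1$. Your last sentence only gives the conclusion for $\gamma\in M_0$. The downward-restriction remark adds nothing here, because every smaller $\gamma$ is already in $M_0$. Your fallback of enlarging the models to contain $\gamma$ while keeping the given tuple fails. If $\gamma\ge|s_0|$, any $M_0'\ni\gamma$ has $\delta_{M_0'}>\gamma\ge|s_0|$, so the interleaving $\delta_{M_0'}\le|s_0|$ breaks; $\gamma=\delta_{M_0}=|s_0|$ is already an example.

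What is missing is the elementarity transfer behind the paper's ``by elementarity, it suffices to prove the conclusion for $\gamma<\delta_{M_0}$''. Let $\Gamma$ be the set of $\gamma<\omega_1$ such that some $F\in(T_\gamma)^{n+1}$ admits a winning strategy for $\mathsf{II}$ in $G_F(T,A)$. Then:
\begin{itemize}
\item $\Gamma$ is definable from $T$ and $A$, so $\Gamma\in M_0$.
\item Your argument shows $M_0\cap\omega_1\subseteq\Gamma$, hence $M_0\models\omega_1\subseteq\Gamma$.
\item By elementarity, $\Gamma=\omega_1$.
\end{itemize}
Equivalently, $\Gamma$ is downward closed by your restriction remark and cannot be bounded: otherwise $\sup\Gamma$ would belong to $M_0$ and lie below $\delta_{M_0}$, although $\delta_{M_0}\subseteq\Gamma$. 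With this step added, your proof is complete.
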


\begin{proof}
By elementarity., it suffices to prove the conclusion of the proposition for
$\gamma<\delta_{M_{0}}.$ Let $F=\left(  s_{0}\upharpoonright\gamma
,...,s_{n}\upharpoonright\gamma\right)  .$ We claim that Player $\mathsf{II}$
has a winning strategy in the game \emph{G}$_{F}\left(  T,A\right)  .$ Assume
this is false, so Player $\mathsf{I}$ has a winning strategy in the game. By
elementarity., there is $\sigma\in M_{0}$ a winning strategy for this game.
Consider the run of the game in where Player $\mathsf{I}$ follows $\sigma$ and
Player $\mathsf{II}$ plays $s_{i}$ at round $i.$ It is easy to see that this
is a valid run of the game in which Player $\mathsf{II}$ wins, contradicting
the assumption that $\sigma$ is a winning strategy for Player $\mathsf{I.}$
\end{proof}

\qquad\ \qquad\ \ 

With the proposition, we can prove the following corollary:

\begin{corollary}
Let $T$ be an Aronszajn tree, $A\subseteq T^{n+1}$ and $\left\{
M_{0},...,M_{n}\right\}  $ an $\in$-chain of countable elementary submodels
such that $T,A\in M_{0}.$ Assume there is $\left(  s_{0},..,s_{n}\right)  \in
A$ such that $\delta_{M_{0}}\leq\left\vert s_{0}\right\vert <\delta_{M_{1}%
}\leq\left\vert s_{1}\right\vert \leq...\leq\delta_{M_{n}}\leq\left\vert
s_{n}\right\vert .$ There are $\gamma$ and $F$ with the following properties:
\label{Corol del juego}

\begin{enumerate}
\item $\gamma<\delta_{M_{0}}.$

\item $F=\left(  t_{0},...,t_{n}\right)  \in T^{n+1}$ and $t_{i}\neq
s_{j}\upharpoonright\gamma$ for every $i,j\leq n.$

\item Player $\mathsf{II}$ has a winning strategy in \emph{G}$_{F}\left(
T,A\right)  .$
\end{enumerate}
\end{corollary}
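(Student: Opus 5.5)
We derive this from the preceding proposition together with an elementarity argument in $M_0$. Fix the witness $(s_0,\dots,s_n)\in A$. Consider the set
\[
\Gamma=\{\gamma\in\omega_1\mid \exists F\in (T_\gamma)^{n+1}\ (\text{Player }\mathsf{II}\text{ has a winning strategy in }G_F(T,A))\}.
\]
By the previous proposition, $\Gamma=\omega_1$. What we need is more: a $\gamma<\delta_{M_0}$ together with an $F$ at level $\gamma$ whose coordinates all avoid the restrictions $s_j\upharpoonright\gamma$.

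First step: apply the previous proposition at a level $\gamma\ge\delta_{M_0}$. This yields a tuple $F'$ at that level with a winning strategy for $\mathsf{II}$, but the level is above the model. So instead we work at levels inside $M_0$ and use the Aronszajn property to separate nodes.

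The plan is to define, inside $M_0$, the set
\[
\mathcal{W}=\{F\in T^{<\omega_1\text{-levels}}\mid \mathsf{II}\text{ wins }G_F(T,A)\}\in M_0 .
\]
The proof of the previous proposition shows $(s_0\upharpoonright\gamma,\dots,s_n\upharpoonright\gamma)\in\mathcal{W}$ for every $\gamma<\delta_{M_0}$. It also shows that for \emph{every} $\gamma$ (for example $\gamma=\delta_{M_0}$) the tuple $(s_0\upharpoonright\gamma,\dots)$ is winning for $\mathsf{II}$, via the same elementarity argument applied to the truncated tuple at a level below $\delta_{M_0}$. The truncation property is key here: if $\mathsf{II}$ wins $G_F$ and $F'$ extends $F$ coordinatewise, then $\mathsf{II}$ wins $G_{F}$ as well.

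So the set $\mathcal{W}_\gamma$ of winning tuples at level $\gamma$ is nonempty for each $\gamma$, and it is closed downward under restriction. Hence $\mathcal{W}$ is a subtree of the product tree $T^{\otimes(n+1)}$, and it lies in $M_0$.

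Main step: $\mathcal{W}$ contains nodes at cofinally many levels, since it contains the tuple $\bar s\upharpoonright\delta_{M_0}$, which is not in $M_0$. Hence $\mathcal{W}$ is uncountable by Lemma \ref{Lema modelo contension}. Now apply Proposition \ref{Prop Aronszajn distribuido} with $k=n$ inside $M_0$ (a height-increasing uncountable family from $\mathcal{W}$ can be chosen in $M_0$ by elementarity), or more directly its submodel version Proposition \ref{Prop Aronszajn y submodelo} applied to the tree $T^{\otimes(n+1)}$. This produces a $\gamma<\delta_{M_0}$ and a tuple $F=(t_0,\dots,t_n)\in\mathcal{W}\cap M_0$ at level $\gamma$ whose coordinates $t_i$ are all distinct from every $s_j\upharpoonright\gamma$. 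In the $k$-distributed formulation: take a level $\gamma\in M_0$ at which $\mathcal{W}$ is $n$-distributed. List the $n+1$ tuples $\bar u_i$ whose every coordinate is $s_i\upharpoonright\gamma$; distributedness then gives an element of $\mathcal{W}$ above level $\gamma$, in $M_0$ by elementarity, whose restriction to $\gamma$ is coordinatewise disjoint from all of the $\bar u_i$. Restricting that element to level $\gamma$ gives $F$, and $F\in\mathcal{W}$ by downward closure.

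The main obstacle is making the disjointness requirement match exactly. Condition 2 requires $t_i\ne s_j\upharpoonright\gamma$ for all pairs $i,j$, while the paper's $k$-distributed definition controls $\bar s_\alpha\upharpoonright\delta\cap\bar u_i=\emptyset$ as sets of coordinates. This is precisely why the $\bar u_i$ are chosen as constant tuples $(s_i\upharpoonright\gamma,\dots,s_i\upharpoonright\gamma)$. A secondary check is that the level $\gamma$ produced can be taken below $\delta_{M_0}$. This follows because $\mathcal{W}$ and $T$ lie in $M_0$, so the least such $\gamma$ is definable in $M_0$.
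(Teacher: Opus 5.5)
Your main line is the paper's proof. You take winning tuples at every level, code them as a height-increasing family $B\in M_0$, find a level $\gamma\in M_0$ where $B$ is distributed (Proposition \ref{Prop Aronszajn distribuido}), and pick a member of $B$ whose projection to $\gamma$ avoids every $s_j\upharpoonright\gamma$. The differences are minor. You restrict that member to level $\gamma$ itself and use downward closure of winning tuples. The paper instead takes $F=F_\gamma$ at a higher level $\gamma\in M_0$, and uses that distinct projections to $\gamma_0$ force distinct nodes. Your constant tuples $\bar u_j=(s_j\upharpoonright\gamma,\dots,s_j\upharpoonright\gamma)$ also make the match with the definition of distributedness explicit, which the paper leaves implicit.

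Two points need correcting.

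\textbf{The claim about $\gamma=\delta_{M_0}$ is false.} You assert that $\bar s\upharpoonright\gamma$ is winning for $\mathsf{II}$ for every $\gamma$, in particular for $\gamma=\delta_{M_0}$. This is not true. The previous proposition's argument needs $F\in M_0$, so that a winning strategy for $\mathsf{I}$ can be found inside $M_0$. Winning is inherited downward, not upward. Your truncation sentence is garbled; the valid direction, which is the one you actually use, is: if $\mathsf{II}$ wins $G_{F'}$ and $F'$ extends $F$, then she wins $G_F$.

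For a counterexample, take $n=0$, $T=T(\rho_0)$ with the \textsf{C}-sequence in $M_0$, and $A=\{\rho_{0\alpha}\mid\alpha\in\mathsf{LIM}(\omega_1)\}$, which is an antichain by Lemma \ref{Lema limite anticadena}. Let $s_0=\rho_{0\delta_{M_0}}$. Then player $\mathsf{I}$ wins $G_{(s_0)}(T,A)$ by opening with $\xi_0=\delta_{M_0}$.

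So you cannot derive the uncountability of $\mathcal{W}$ from $\bar s\upharpoonright\delta_{M_0}\in\mathcal{W}$ plus Lemma \ref{Lema modelo contension}. It comes directly from the previous proposition (your observation $\Gamma=\omega_1$), which is exactly how the paper obtains $B$.

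\textbf{The shortcut does not give enough.} Applying Proposition \ref{Prop Aronszajn y submodelo} to $T^{\otimes(n+1)}$ only yields a tuple incomparable with $\bar s$ in the product tree. That means it differs from $\bar s$ in at least one coordinate, compared index by index. It does not give all the inequalities $t_i\neq s_j\upharpoonright\gamma$ for $i,j\le n$. Keep the distributedness argument.
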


\begin{proof}
By the previous proposition, we know there is $B=\left\{  F_{\alpha}\mid
\alpha\in\omega_{1}\right\}  $ such that for every $\alpha\in\omega_{1},$ we
have that $F_{\alpha}\in\left(  T_{\alpha}\right)  ^{n+1}$ and Player
$\mathsf{II}$ has a winning strategy in \emph{G}$_{F_{\alpha}}\left(
T,A\right)  .$ By elementarity. we may even assume that $B\in M_{0}.$
Proposition \ref{Prop Aronszajn distribuido} implies that there is $\gamma
_{0}\in M_{0}$ such that $B$ is $\left(  n+1\right)  $-distributed in at level
$\gamma_{0}.$ We can now find $\gamma\in M_{0}$ with $\gamma>\gamma_{0}$ such
that the $\gamma_{0}$ projections of $F_{\gamma}$ and $\left(  s_{0}%
,..,s_{n}\right)  $ are disjoint. It follows that $\gamma$ and $F$ are as desired.
\end{proof}

\qquad\ \ \qquad\ \ \ \ 

We will now define the forcing notion required to prove the main result of
this section. It is similar to the one of the previous section and is again
based on the forcing from Lemma 5.4.4 of \cite{Walks}, but this version is the
most complex of the three.

\begin{definition}
Define $\mathbb{P}\left(  C\right)  $ as the set of all $p=(\mathcal{M}%
,\mathcal{N})$ with the following properties:

\begin{enumerate}
\item $\mathcal{N}$ is a finite $\in$-chain of countable elementary submodels
of \textsf{H}$\left(  \omega_{2}\right)  $ and both $C$ and $\mathcal{C}$
belong to all of them.

\item $\mathcal{M\subseteq N}.$

\item \textbf{Completeness condition. }If $M,L\in\mathcal{M}$ are distinct,
then $\left[  \delta_{M}\delta_{L}\right]  \in C.$

\item \textbf{First} \textbf{crossing models condition. }If $M\in\mathcal{M}$
and $N,L\in\mathcal{N}$ are such that $N\in L\in M,$ then \textsf{min}%
$($\textsf{Tr}$(\delta_{N},\delta_{M})\setminus L)\in C.$

\item \textbf{Second crossing models condition. }Let $M,L\in\mathcal{M}$
distinct and $N\in\mathcal{N}$ with $N\in M.$ If $\triangle=\triangle\left(
\delta_{M},\delta_{L}\right)  \in N,$ then \textsf{min}$($\textsf{Tr}%
$(\triangle,\delta_{M})\setminus N)\in C.$
\end{enumerate}
\end{definition}

\qquad\ \qquad\ \qquad\ \ 

Similar to the forcing from the previous section, in the second crossing
models condition, the case $N=L$ is a consequence of the completeness
condition. Therefore, when verifying this condition for a potential element of
$\mathbb{P(}C\mathbb{)},$ we may always assume $N\neq L.$ Furthermore, in this
same condition, no relationship is assumed between $M,N$ and $L$ other than
$N\in M$ and $M\neq L.$ In particular, it is possible that $N\in L\in M,$
$L\in N\in M$ or $N\in M\in L.$

\qquad\ \ \ \qquad\ \ \ 

Let $p=(\mathcal{M}_{p},\mathcal{N}_{p})$ and $q=(\mathcal{M}_{q}%
,\mathcal{N}_{q})$ in $\mathbb{P}\left(  C\right)  .$ Define $p\leq q$ if
$\mathcal{M}_{q}\subseteq\mathcal{M}_{p},$ $\mathcal{N}_{q}\subseteq
\mathcal{N}_{p}$ and $\mathcal{M}_{q}=\mathcal{M}_{p}\cap\mathcal{N}_{q}.$ For
$p,q\in\mathbb{P}\left(  C\right)  ,$ define $q\sqsubseteq p$ if there is
$N\in\mathcal{N}_{p}$ such that $\mathcal{M}_{q}=\mathcal{M}_{p}\cap N$ and
$\mathcal{N}_{q}=\mathcal{N}_{p}\cap N.$ Clearly if $q\sqsubseteq p,$ then
$p\leq q.$

\qquad\ \qquad\ 

We now introduce the invariants\ for the elements of $\mathbb{P(}C\mathbb{)}$.

\begin{definition}
Let $p=(\mathcal{M},\mathcal{N})\in\mathbb{P}\left(  C\right)  .$ Define the
following items:

\begin{enumerate}
\item \textsf{ht}$\left(  p\right)  =\left\{  \delta_{N}\mid N\in
\mathcal{N}\right\}  $ and \textsf{ht}$_{\mathcal{M}}\left(  p\right)
=\left\{  \delta_{M}\mid M\in\mathcal{M}\right\}  $

\item $S\left(  p\right)  =S($\textsf{ht}$\left(  p\right)  )$ (see Definition
\ref{Def estructura generada}).

\item $H\left(  p\right)  =(S\left(  p\right)  ,<,$\textsf{ht}$_{\mathcal{M}%
}(p),$\textsf{ht}$(p),$\textsf{Tr}$\mathsf{,}$\textsf{\thinspace}%
$\triangle,\lambda,C).$
\end{enumerate}
\end{definition}

\qquad\ \qquad\ \ \ 

Let $p,q\in\mathbb{P}\left(  C\right)  $ and $h:S\left(  p\right)
\longrightarrow S\left(  q\right)  $ a bijection. We say that $h$ \emph{is an
isomorphism from }$H\left(  p\right)  $\emph{ to} $H\left(  q\right)  $ if for
every $\alpha,\beta\in S\left(  p\right)  ,$ the following holds:

\begin{enumerate}
\item $\alpha<\beta$ if and only if $h\left(  \alpha\right)  <h\left(
\beta\right)  .$

\item $\alpha\in$ \textsf{ht}$\left(  p\right)  $ if and only if $h\left(
\alpha\right)  \in$ \textsf{ht}$\left(  q\right)  .$

\item $\alpha\in$ \textsf{ht}$_{\mathcal{M}}\left(  p\right)  $ if and only if
$h\left(  \alpha\right)  \in$ \textsf{ht}$_{\mathcal{M}}\left(  q\right)  .$

\item $h[$\textsf{Tr}$(\alpha,\beta)]=$ \textsf{Tr}$(h\left(  \alpha\right)
,h\left(  \beta\right)  ).$

\item $h\left(  \triangle\left(  \alpha,\beta\right)  \right)  =\triangle
\left(  h\left(  \alpha\right)  ,h\left(  \beta\right)  \right)  .$

\item $h\left(  \lambda\left(  \alpha,\beta\right)  \right)  =\lambda\left(
h\left(  \alpha\right)  ,h\left(  \beta\right)  \right)  .$

\item $\alpha\in C$ if and only if $h\left(  \alpha\right)  \in C.$
\end{enumerate}

\qquad\qquad\qquad\qquad\qquad\ \ \ \ 

We say that $H\left(  p\right)  $ and $H\left(  q\right)  $ are isomorphic if
there is an isomorphism between them. As in the previous section, there is at
most one isomorphism from $H\left(  p\right)  $ to $H\left(  q\right)  $ and
there are only countably many isomorphism types.

\begin{definition}
We say that $M$ is a \emph{big model} if the following conditions hold:

\begin{enumerate}
\item There is $\kappa>\omega_{2}$ a large enough regular cardinal such that
$M$ $\ $is a countable elementary submodel of \textsf{H}$\left(
\kappa\right)  .$

\item $C,\mathcal{C},\mathbb{P}\left(  C\right)  $ $\mathbb{\in}$ $M.$
\end{enumerate}
\end{definition}

\qquad\ \qquad\ \ \qquad\ \ \ 

For $M$ a big model, denote $\overline{M}=M\cap$ \textsf{H}$\left(  \omega
_{2}\right)  ,$ which is an elementary submodel of \textsf{H}$\left(
\omega_{2}\right)  .$ We now prove the following

\begin{proposition}
Let $M$ be a big model and $p=(\mathcal{M}_{p},\mathcal{N}_{p})\in
\mathbb{P}\left(  C\right)  .$ If $\overline{M}\in\mathcal{N}_{p},$ then $p$
is $(M,\mathbb{P}\left(  C\right)  )$-generic.
\end{proposition}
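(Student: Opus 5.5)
We follow the scheme of Proposition \ref{Prop Can Proper reales}, replacing the ``reals'' combinatorics (Lemma \ref{Lema PFA reales}) by the Aronszajn-tree game of Corollary \ref{Corol del juego} applied to $T(\rho_0)$.

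\textbf{Reduction and the set $A$.} Let $D\in M$ be open dense; extending $p$ we may assume $p\in D$, still with $\overline{M}\in\mathcal{N}_p$. Put $\delta=\delta_M$, $p_M=(\mathcal{M}_p\cap M,\mathcal{N}_p\cap M)\in M$, so $p_M\sqsubseteq p$. Enumerate $X=\textsf{ht}(p)\setminus M=\{\delta_0<\dots<\delta_n\}$, so $\delta_0=\delta$. Choose $\eta_0<\delta$ with $S(p)\cap M\subseteq\eta_0$. Let $A\subseteq T(\rho_0)^{n+1}$ be the set of tuples $(\rho_{0\beta_0},\dots,\rho_{0\beta_n})$ for which there is $r\in D$ satisfying:
\begin{itemize}
\item $p_M\sqsubseteq r$;
\item $\textsf{ht}(r)=\textsf{ht}(p_M)\cup\{\beta_0,\dots,\beta_n\}$;
\item $S(r)\cap\eta_0=S(p)\cap\eta_0$;
\item $H(r)\cong H(p)$ via an isomorphism fixing $S(p)\cap M$.
\end{itemize}
As before, $A\in M$, since it only uses the isomorphism type of $H(p)$ and the finite set $S(p)\cap M$. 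The models of $\mathcal{N}_p\setminus M$, each intersected with $M$ enlarged to a big model, give an $\in$-chain in which $(\rho_{0\delta_i})_i\in A$ is positioned as Corollary \ref{Corol del juego} requires. That corollary yields $\gamma\in M$ and $F=(t_0,\dots,t_n)\in M$ with $t_i\neq\rho_{0\delta_j}\upharpoonright\gamma$, and a winning strategy $\tau\in M$ for Player $\mathsf{II}$ in $G_F(T(\rho_0),A)$.

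\textbf{Choosing $r$ inside $M$.} Run the game inside $M$. Player $\mathsf{I}$ plays ordinals above $\eta_0$, above $\gamma$, and above the finite set of ``relevant'' trace and $\lambda$ points: every point of $\textsf{Tr}(\xi,\delta_j)\cap M$ and every $\lambda(\cdot,\delta_j)$ for $\xi$ among the splitting levels $\triangle(t_i,\rho_{0\delta_j})<\gamma$. These are finitely many, and they are bounded below $\delta$ by Lemma \ref{Lemma lambas converjen} and Proposition \ref{Prop juntar trazas}. Each round's move is chosen after the previous responses, so that new $\lambda$ values are also cleared. The resulting tuple lies in $A\cap M$; by elementarity take a witness $r\in D\cap M$ with isomorphism $h$.

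The key design feature is that $t_i\ne\rho_{0\delta_j}\upharpoonright\gamma$. It forces every cross-splitting $\triangle(h(\alpha),\beta)$, for $\alpha,\beta\in X$, to equal $\triangle(t_i,\rho_{0\delta_j}\upharpoonright\gamma)<\gamma$, which is an ordinal in $M$ that we controlled. By Proposition \ref{Prop juntar trazas}, each walk $\textsf{Tr}(\triangle,\delta_j)$ then factors through $\delta$ (or through the relevant $\delta_N$) at a point below which we have already passed all of $S(r)$.

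\textbf{Verifying $q=(\mathcal{M}_p\cup\mathcal{M}_r,\mathcal{N}_p\cup\mathcal{N}_r)\in\mathbb{P}(C)$.} We check three conditions.
\begin{itemize}
\item \emph{Completeness.} For $\alpha'=h(\alpha)\in M$ and $\beta\in\textsf{ht}_{\mathcal{M}}(p)\setminus M$, the value $[\alpha'\beta]=\min(\textsf{Tr}(\triangle,\beta)\setminus\alpha')$ equals $\min(\textsf{Tr}(\triangle,\beta)\setminus M)$. This lies in $C$ by the second crossing models condition for $p$ (with $N=\overline{M}$), or equals $\delta\in C$ when the walk passes through $\delta$.
\item \emph{First crossing models condition.} We split cases according to whether $N,L,M'$ lie in $\mathcal{N}_p$ or $\mathcal{N}_r$. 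Pure cases follow from $p$ or $r$. Mixed cases reduce, via the isomorphism $h$ (which preserves \textsf{Tr}, $\lambda$ and $C$ on $S(p)$), to a configuration inside $p$; alternatively the walk from $\delta_{M'}\notin M$ to $\delta_N\in M$ passes through $\delta$ first, so $\min(\textsf{Tr}\setminus L)$ is $\delta$ or is computed in $p$.
\item \emph{Second crossing models condition.} We use the same case split as in the reals section.
\end{itemize}

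\textbf{Main obstacle.} The main obstacle is the mixed case of the second crossing condition in which $\triangle(\delta_{M'},\delta_L)$ comes from $r$ but $N\in\mathcal{N}_p$. Here one must know that the walk from $\delta_{M'}$ down to $\triangle$ meets no new point of $S(r)$ between $\delta_N$-levels. This is exactly why Player $\mathsf{I}$'s moves must dominate the $\lambda$ values, and why the closure $S(\cdot)$ includes $\lambda$. I would first try to prove a ``gluing lemma'': for $\beta\in X$ and $\xi\in S(r)$, one has $\textsf{Tr}(\xi,\beta)=\textsf{Tr}(\delta,\beta)\cup\textsf{Tr}(\xi,\delta)$ whenever $\lambda(\delta,\beta)<\xi$, which is ensured by the choice of $\eta_0$. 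With that lemma, every mixed case transfers through $h$ to a statement about $p$ alone.
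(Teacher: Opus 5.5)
Your architecture matches the paper's: the set of admissible tuples built from the isomorphism type of $H(p)$, Corollary \ref{Corol del juego} applied along the chain $\mathcal{N}_p\setminus M$ (no enlargement of these models is needed, since $A\in\overline{M}\subseteq N$ for each of them), a run of the game inside $M$, and then a case analysis. However, your set $A$ omits the one clause that makes the cross-splittings controllable, and without it the verification of $q$ breaks down.

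\textbf{Why the argument fails as written.} Corollary \ref{Corol del juego} only gives $t_i\neq\rho_{0\delta_j}\upharpoonright\gamma$. Nothing prevents $t_i$ from leaving every $\rho_{0\delta_j}$ at a very low level; its proof in fact picks $F$ whose projection to a distributivity level $\gamma_0$ avoids all the $\rho_{0\delta_j}\upharpoonright\gamma_0$. So $\triangle(h(\alpha),\beta)=\triangle(t_i,\rho_{0\beta})$ is an essentially arbitrary ordinal below $\gamma$. In general it differs from $\triangle(\alpha,\beta)$ and may be $\le\lambda(\delta,\beta)$.

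That this ordinal lies in $M$ only lets Player $\mathsf{I}$ clear $\textsf{Tr}(\triangle,\beta)\cap M$, which gives $[h(\alpha)\beta]=\min(\textsf{Tr}(\triangle,\beta)\setminus\delta)$. For $\beta>\delta$ nothing puts this point in $C$:
\begin{itemize}
\item the second crossing models condition of $p$ with $N=\overline{M}$ concerns only $\triangle(\delta_{M'},\delta_L)$ for $M',L\in\mathcal{M}_p$;
\item the first crossing models condition concerns only walks down to heights of models;
\item if $\triangle\le\lambda(\delta,\beta)$, the walk need not visit $\delta$ at all.
\end{itemize}
Already the pair $(h(\beta),\beta)$ with $\beta\in\textsf{ht}_{\mathcal{M}}(p)$, $\beta>\delta$, cannot be verified.

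These cross-splittings lie in neither $S(p)$ nor $S(r)$, so the isomorphism $h$ cannot transport information about them. Your ``gluing lemma'' is just Proposition \ref{Prop juntar trazas}, and it needs exactly the lower bound $\lambda(\delta,\beta)<\triangle$ that your construction does not provide. The same defect recurs in the mixed subcases of the second crossing models condition. For example, when $\triangle(\alpha,\beta)\in N\in\mathcal{N}_p\cap M$ with $\alpha\in X$ and $\beta\in Y$, one must identify $\triangle(\alpha,\beta)$ with $\triangle(h(\alpha),\beta)$ in order to invoke the condition for $r$.

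\textbf{The missing clause.} The paper's definition of $E$ also requires the isomorphism $h$ to satisfy $\rho_{0\alpha}\upharpoonright\eta_0=\rho_{0h(\alpha)}\upharpoonright\eta_0$ for every $\alpha\in X$. This clause is still expressible in $M$, because the nodes $\rho_{0\alpha}\upharpoonright\eta_0$ lie on a countable level. Moreover, $X$ still belongs to the set. Any $F$ with $t_i\neq\rho_{0\delta_j}\upharpoonright\gamma$ on which Player $\mathsf{II}$ wins must then have $\gamma>\eta_0$ and $t_i\upharpoonright\eta_0=\rho_{0\delta_i}\upharpoonright\eta_0$.

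This yields the dichotomy behind every case of the paper's verification:
\begin{itemize}
\item If $\triangle(\alpha,\beta)<\delta$, then $\triangle(\alpha,\beta)\in S(p)\cap M\subseteq\eta_0$. Hence $\triangle(h(\alpha),\beta)=\triangle(\alpha,\beta)$, and the crossing conditions of $p$ or $r$ apply verbatim.
\item If $\rho_{0\alpha}\upharpoonright\delta=\rho_{0\beta}\upharpoonright\delta$, then $\eta_0\le\triangle(h(\alpha),\beta)<\gamma$. Since $\lambda(\delta,\beta)\in S(p)\cap M\subseteq\eta_0$, Proposition \ref{Prop juntar trazas} gives $\textsf{Tr}(\triangle,\beta)=\textsf{Tr}(\delta,\beta)\cup\textsf{Tr}(\triangle,\delta)$. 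Together with Player $\mathsf{I}$'s first move lying above $\textsf{Tr}(\triangle,\beta)\cap\delta$, the relevant minimum is then either $\delta$ or governed by a first crossing models condition (of $p$ or $r$).
\end{itemize}

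With this clause added, and with Player $\mathsf{I}$ clearing $\textsf{Tr}(\varepsilon_k,\gamma')\cap M$ for $\gamma'\in X$ in later rounds as in Claim \ref{Claim PFA C suc cosa Y}, your outline becomes the paper's proof.
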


\begin{proof}
For convenience, we will denote $T=T\left(  \rho_{0}\right)  .$ To simplify
notation, a subset $W\subseteq\omega_{1}$ will occasionally be identified with
the corresponding set $\{\rho_{0\alpha}\mid\alpha\in W\}.$

\qquad\ \qquad\ \qquad\ \qquad\ \qquad\ \qquad\ 

Let $D\in M$ be an open dense subset of $\mathbb{P}\left(  C\right)  $. By
extending $p$ if necessary, we may assume that $p\in D.$ We need to prove that
$p$ is compatible with an element of $D\cap M.$ Denote $\delta=\delta_{M},$
which is an element of $C.$ Define $X=$ \textsf{ht}$\left(  p\right)
\setminus M$ and let $m$ be the size of $X.$ Note that $\delta$ is the
smallest element of $X.$ Let $p_{M}=(\mathcal{M}_{p}\cap M,$ $\mathcal{N}%
_{p}\cap M).$ It is easy to see that $p_{M}\in\mathbb{P}\left(  C\right)  \cap
M$ and $p_{M}\sqsubseteq p.$

\qquad\ \qquad\ \qquad\ \ \ 

Find $\eta_{0}<\delta$ such that $S\left(  p\right)  \cap M\subseteq\eta_{0}.$
We now define $E$ as the set of all $Y\in\left[  \omega_{1}\right]  ^{m}$ for
which there is $r\in D$ with the following properties:

\begin{enumerate}
\item $p_{M}\sqsubseteq r.$

\item \textsf{ht}$\left(  r\right)  =$ \textsf{ht}$\left(  p_{M}\right)  \cup
Y.$

\item $S\left(  r\right)  \cap\eta_{0}=S\left(  p\right)  \cap\eta_{0}$ (which
is the same as $S\left(  p\right)  \cap M$).

\item $H\left(  p\right)  $ and $H\left(  r\right)  $ are isomorphic and if
$h:S\left(  p\right)  \longrightarrow S\left(  r\right)  $ denote the (unique)
isomorphism, then:

\begin{enumerate}
\item $h$ is the identity mapping on $S\left(  p\right)  \cap M.$

\item $\rho_{0\alpha}\upharpoonright\eta_{0}=\rho_{0h\left(  \alpha\right)
}\upharpoonright\eta_{0}$ for all $\alpha\in X.$
\end{enumerate}
\end{enumerate}

\qquad\ \qquad\ \qquad\ \ \ 

Note that $X\in E.$ Moreover, $E$ is in $M.$ This is because the definition of
$E$ can can be reformulated without reference to objects outside $M.$ For
$Y\in E$ and $r$ as above, we will say that $r$ is a witness of $Y\in E.$

\begin{claim}
There are $\eta_{1}$ and $F=\left(  t_{0},...,t_{m-1}\right)  $ with the
following properties:

\begin{enumerate}
\item $\eta_{1}\in M$ and $\eta_{0}<\eta_{1}<\delta.$

\item $F=\left(  t_{0},...,t_{m-1}\right)  \in\left(  T_{\eta_{1}}\right)
^{m}$ and $t_{i}\neq\rho_{0\alpha}\upharpoonright\eta_{1}$ for every
$\alpha\in X$ and $i<m.$

\item Player $\mathsf{II}$ has a winning strategy in the game \emph{G}%
$_{F}\left(  T,E\right)  .$
\end{enumerate}
\end{claim}

\qquad\ \ \ \qquad\ 

Indeed, the existence of $F$ and $\eta_{1}$ follows by Corollary
\ref{Corol del juego}. We can now find $\eta$ with the following properties:

\begin{enumerate}
\item $\eta_{1}<\eta<\delta.$

\item If $\alpha,\beta\in X$ and $t\in F,$ then \textsf{Tr}$(\triangle
(t,\rho_{0\alpha}),\beta)\cap\delta\subseteq\eta.$
\end{enumerate}

\qquad\ \qquad\ \ 

We now have the following:

\begin{claim}
There is $Y\in E\cap M$ with the following properties:
\label{Claim PFA C suc cosa Y}

\begin{enumerate}
\item $\eta<$ \textsf{min}$\left(  Y\right)  .$

\item Every element of $Y$ extends an element of $F.$

\item If $\gamma\in X$ and $\alpha,\beta\in Y$ are such that $\alpha<\beta,$
then \textsf{Tr}$(\alpha,\gamma)\cap M\subseteq\beta.$
\end{enumerate}
\end{claim}

\qquad\ \ \ \ \ 

Let $\sigma\in M$ be a winning strategy for Player $\mathsf{II}$ in the game
\emph{G}$_{F}\left(  T,E\right)  .$ Consider the following run of the game (in
which Player $\mathsf{II}$ is following her strategy\footnote{Note that in
order for $\sigma$ to be a winning strategy, it is necessary that all the
moves of Player $\mathsf{II}$ are of the form $\rho_{0\varepsilon}$ for
$\varepsilon<\omega_{1}$ (otherwise she inmediately loses the game).} $\sigma
$).\newline

\hfill%
\begin{tabular}
[c]{|l|l|l|l|l|l|}\hline
$\mathsf{I}$ & $\eta$ &  & $\xi_{0}$ &  & $...$\\\hline
$\mathsf{II}$ &  & $\rho_{0\varepsilon_{0}}$ &  & $\rho_{0\varepsilon_{1}}$ &
\\\hline
\end{tabular}
\hfill\ 

\qquad\ \qquad\ \ 

In which Player $\mathsf{I}$ plays as follows:

\begin{enumerate}
\item His first move is $\eta.$

\item After Player $\mathsf{II}$ played her move (say $\rho_{0\varepsilon_{0}%
}$), Player $\mathsf{I}$ chooses $\xi_{0}\in M$ that is larger than $\eta,$
$\varepsilon_{0}$ and $\bigcup\limits_{\gamma\in X}$\textsf{Tr}$(\varepsilon
_{0},\gamma)\cap M.$

\item After Player $\mathsf{II}$ played her move (say $\rho_{0\varepsilon_{1}%
}$), Player $\mathsf{I}$ chooses $\xi_{1}\in M$ that is larger than
$\varepsilon_{1}$ and $\bigcup\limits_{\gamma\in X}$\textsf{Tr}$(\varepsilon
_{1},\gamma)\cap M.$

\item[$\vdots$] $\vdots\hfill\vdots\hfill\vdots$
\end{enumerate}

\qquad\ \ \qquad

Since $\sigma$ is a winning strategy for Player $\mathsf{II,}$ it follows that
$Y=\left(  \varepsilon_{0},...,\varepsilon_{m-1}\right)  $ \ is in $E$ and
clearly has the desired properties. This finishes the proof of the claim.

\qquad\qquad\qquad

Fix $Y$ as in the claim. By elementarity., there exists $r\in D\cap M$ that is
a witnesses of $Y\in E.$ Let $h:S\left(  p\right)  \longrightarrow S\left(
r\right)  $ be the unique isomorphism from $H\left(  p\right)  $ to $H\left(
r\right)  .$ Recall that $h$ is the identity on $M.$ We claim that $p$ and $r$
are compatible. It suffices to show that $q=(\mathcal{M}_{p}\cup
\mathcal{M}_{r},\mathcal{N}_{p}\cup\mathcal{N}_{r})$ belongs to $\mathbb{P(}%
C\mathbb{)}.$ Clearly $\mathcal{N}_{p}\cup\mathcal{N}_{r}$ is a finite $\in
$-chain and contains $\mathcal{M}_{p}\cup\mathcal{M}_{r}.$

\qquad\ \qquad\ \ \ 

To verify that $q$ satisfies the completeness condition, it suffices to show
that for any $\alpha,\beta\in$ \textsf{ht}$_{\mathcal{M}}\left(  p\right)
\setminus M,$ we have that $\left[  h\left(  \alpha\right)  \beta\right]  \in
C.$ Denote $\triangle=\triangle\left(  h\left(  \alpha\right)  ,\beta\right)
.$ Recall that $h\left(  \alpha\right)  \in M$ while $\beta\notin M$, so
$h\left(  \alpha\right)  <\beta.$ It follows that $\left[  h\left(
\alpha\right)  \beta\right]  =$ \textsf{min}$($\textsf{Tr}$(\triangle
,\beta)\setminus h\left(  \alpha\right)  $). We proceed by cases, depending on
whether $\rho_{0\alpha}\upharpoonright\delta$ and $\rho_{0\beta}%
\upharpoonright\delta$ are equal.$\medskip$

$%
\begin{tabular}
[c]{|l|}\hline
\textbf{Case:} $\rho_{0\alpha}\upharpoonright\delta\neq\rho_{0\beta
}\upharpoonright\delta$ (equivalently, $\triangle\left(  \alpha,\beta\right)
<\delta$)\\\hline
\end{tabular}
\ \medskip$

In this case, we have that $\triangle=\triangle\left(  \alpha,\beta\right)  $
and \textsf{Tr}$(\triangle,\beta)\cap M\subseteq\eta_{0}\subseteq h\left(
\alpha\right)  .$ It follows that \textsf{Tr}$(\triangle,\beta)\setminus
h\left(  \alpha\right)  =$ \textsf{Tr}$(\triangle,\beta)\setminus\delta$ and
therefore \newline$\left[  h\left(  \alpha\right)  \beta\right]  =$
\textsf{min}$($\textsf{Tr}$(\triangle,\beta)\setminus\delta)$. The proof now
proceeds by subcases, depending on whether $\beta$ is equal to $\delta
$.\medskip

\textbf{Subcase: }$\rho_{0\alpha}\upharpoonright\delta\neq\rho_{0\beta
}\upharpoonright\delta$ and $\beta=\delta.$

\ \ \ \ \ 

We have that \textsf{Tr}$(\triangle,\beta)\setminus\delta=\left\{
\delta\right\}  ,$ so $\left[  h\left(  \alpha\right)  \beta\right]
=\delta\in C.\medskip$

\textbf{Subcase: }$\rho_{0\alpha}\upharpoonright\delta\neq\rho_{0\beta
}\upharpoonright\delta$ and $\beta\neq\delta.$

\qquad\qquad\qquad\ \ \ \ 

We know that $\left[  h\left(  \alpha\right)  \beta\right]  =$ \textsf{min}%
$($\textsf{Tr}$(\triangle,\beta)\setminus\delta),$ which is in $C$ by the
second crossing models condition of $p.\medskip$

$%
\begin{tabular}
[c]{|l|}\hline
\textbf{Case:} $\rho_{0\alpha}\upharpoonright\delta=\rho_{0\beta
}\upharpoonright\delta$ (equivalently, $\delta\leq\triangle\left(
\alpha,\beta\right)  $)\\\hline
\end{tabular}
\ \medskip$

The proof now proceeds by subcases, depending on the relationship of $\alpha$
and $\beta$ with $\delta$.\medskip

\textbf{Subcase:}\ \ \ $\rho_{0\alpha}\upharpoonright\delta=\rho_{0\beta
}\upharpoonright\delta$ and $\alpha=\delta$ or $\beta=\delta.$

\qquad\qquad\qquad\ \ 

Lemma \ref{Lema limite anticadena} implies that if $\rho_{0\alpha
}\upharpoonright\delta=\rho_{0\beta}\upharpoonright\delta$ and $\alpha=\delta$
or $\beta=\delta,$ then $\alpha=\beta=\delta.$ We have that \textsf{Tr}%
$(\triangle\left(  h\left(  \delta\right)  ,\delta\right)  ,\delta)\cap
\delta\subseteq\eta\subseteq h\left(  \delta\right)  ,$ so $\left[  h\left(
\delta\right)  \delta\right]  =\delta\in C.\medskip$

\textbf{Subcase:}\ \ \ $\rho_{0\alpha}\upharpoonright\delta=\rho_{0\beta
}\upharpoonright\delta$ and $\alpha,\beta\neq\delta.$

\qquad\ \qquad\ \qquad\ 

Recall that $\lambda\left(  \delta,\beta\right)  <\eta_{0}<\triangle.$
Proposition \ref{Prop juntar trazas} implies that $\delta\in$ \textsf{Tr}%
$(\triangle,\beta).$ Since \textsf{Tr}$(\triangle,\beta)\cap M\subseteq
\eta\subseteq h\left(  \alpha\right)  ,$ it follows that $\left[  h\left(
\alpha\right)  \beta\right]  =\delta\in C.$

\qquad\ \qquad\ \qquad\ \ \qquad\ \qquad\ \ 

This completes the proof that $q$ satisfies the completeness condition. We now
verify the first crossing models condition. It suffices to show that for every
$\alpha<\beta<\gamma$ with $\gamma\in$ \textsf{ht}$_{\mathcal{M}}\left(
p\right)  \setminus M$ and $\alpha,\beta\in$ \textsf{ht}$\left(  p\right)
\cup$ \textsf{ht}$\left(  r\right)  ,$ we have that \textsf{min}$($%
\textsf{Tr}$(\alpha,\gamma)\setminus\beta)\in C.$ The proof proceeds by
cases.$\medskip$

$%
\begin{tabular}
[c]{|l|}\hline
\textbf{Case:} \textbf{ }$\alpha\in$ \textsf{ht}$\left(  p\right)  $\\\hline
\end{tabular}
\ \medskip$

If $\beta$ is also in \textsf{ht}$\left(  p\right)  ,$ then there is nothing
to do since $p\in\mathbb{P}\left(  C\right)  .$ Assume that $\beta\in$
\textsf{ht}$\left(  r\right)  \setminus$ \textsf{ht}$\left(  p\right)  .$ Note
that \textsf{Tr}$(\alpha,\gamma)\cap M\subseteq\eta_{0}\subseteq\beta.$ It
follows that \newline\textsf{min}$($\textsf{Tr}$(\alpha,\gamma)\setminus
\beta)=$ \textsf{min}$($\textsf{Tr}$(\alpha,\gamma)\setminus\delta)$, which is
in $C$ by the first crossing models condition of $p.\medskip$

$%
\begin{tabular}
[c]{|l|}\hline
\textbf{Case:} $\alpha\in$ \textsf{ht}$\left(  r\right)  \setminus$
\textsf{ht}$\left(  p\right)  $ and $\beta\in$ \textsf{ht}$\left(  p\right)
$\\\hline
\end{tabular}
\ \medskip$

Note that $\delta\leq\beta.$ Since $\lambda\left(  \delta,\gamma\right)
<\eta_{0}<\alpha,$ Proposition \ref{Prop juntar trazas} implies that
\textsf{Tr}$(\delta,\gamma)$ is an initial segment of \textsf{Tr}%
$(\alpha,\gamma).$ It follows that \newline\textsf{min}$($\textsf{Tr}%
$(\alpha,\gamma)\setminus\beta)=$ \textsf{min}$($\textsf{Tr}$(\delta
,\gamma)\setminus\delta)$, which is in $C$ by the first crossing models
condition of $p.\medskip$

$%
\begin{tabular}
[c]{|l|}\hline
\textbf{Case:} $\alpha\in$ \textsf{ht}$\left(  r\right)  \setminus$
\textsf{ht}$\left(  p\right)  $ and $\beta\in$ \textsf{ht}$\left(  r\right)
$\\\hline
\end{tabular}
\ \medskip$

Since $\lambda\left(  \delta,\gamma\right)  <\eta_{0}<\alpha,$ Proposition
\ref{Prop juntar trazas} implies that $\delta\in$ \textsf{Tr}$(\alpha
,\gamma).$ Furthermore, since \textsf{Tr}$(\alpha,\gamma)\cap M\subseteq\beta$
(see Claim \ref{Claim PFA C suc cosa Y}), it follows that \newline%
\textsf{min}$($\textsf{Tr}$(\alpha,\gamma)\setminus\beta)=\delta\in C.$

\qquad\qquad\qquad\qquad

This completes the proof that $q$ satisfies the first crossing models
condition. We now verify the second crossing models condition. It suffices to
show that for every $N\in$ $\mathcal{N}_{p}\cup\mathcal{N}_{r}$ and
$\alpha,\beta\in$ \textsf{ht}$_{\mathcal{M}}\left(  p\right)  \cup$
\textsf{ht}$_{\mathcal{M}}\left(  r\right)  $ are such that $\triangle
=\triangle\left(  \alpha,\beta\right)  \in N$ and $\delta_{N}<\beta,$
$\alpha\neq\delta_{N},$ we have that \textsf{min}$($\textsf{Tr}$(\triangle
,\beta)\setminus N)\in C.$ As the reader probably guessed, the proof proceeds
by cases.$\medskip$

$%
\begin{tabular}
[c]{|l|}\hline
\textbf{Case:} $N\in\mathcal{N}_{p}\cap\mathcal{N}_{r}$ (equivalently,
$\mathcal{N\in N}_{p}\cap M$)\\\hline
\end{tabular}
\ \medskip$

If either $\alpha,\beta\in$ \textsf{ht}$\left(  p\right)  $ or $\alpha
,\beta\in$ \textsf{ht}$\left(  r\right)  ,$ then there is nothing to do since
both $p$ and $r$ are conditions, so we assume this is not the case.\medskip

\textbf{Subcase:} $N\in\mathcal{N}_{p}\cap\mathcal{N}_{r},$ $\alpha\in$
\textsf{ht}$\left(  p\right)  \setminus$ \textsf{ht}$\left(  r\right)  $ and
$\beta\in$ \textsf{ht}$\left(  r\right)  \setminus$ \textsf{ht}$\left(
p\right)  .$

\qquad\ \qquad\ \qquad\ \ 

Note that $\alpha\notin M.$ Moreover, since $\triangle\in N,$ it follows that
$\triangle<\eta_{0},$ which implies that $\triangle=\triangle\left(  h\left(
\alpha\right)  ,\beta\right)  .$ In this way, \textsf{min}$($\textsf{Tr}%
$(\triangle,\beta)\setminus N)=$ \textsf{min}$($\textsf{Tr}$(\triangle\left(
h\left(  \alpha\right)  ,\beta\right)  ,\beta)\setminus N),$ which is in $C$
by the second crossing models condition of $r.\medskip$

\textbf{Subcase:} $N\in\mathcal{N}_{p}\cap\mathcal{N}_{r},$ $\alpha\in$
\textsf{ht}$\left(  r\right)  \setminus$ \textsf{ht}$\left(  p\right)  $ and
$\beta\in$ \textsf{ht}$\left(  p\right)  \setminus$ \textsf{ht}$\left(
r\right)  .$

\qquad\qquad\ \ \qquad\ \ 

Note that $\beta\notin M.$ Since $\triangle<\eta_{0},$ it follows that
$\triangle=\triangle(h^{-1}\left(  \alpha\right)  ,\beta).$ In this way,
\newline\textsf{min}$($\textsf{Tr}$(\triangle,\beta)\setminus N)=$
\textsf{min}$($\textsf{Tr}$(\triangle\left(  h^{-1}\left(  \alpha\right)
,\beta\right)  ,\beta)\setminus N),$ which is in $C$ by the second crossing
models condition of $p.\medskip$

$%
\begin{tabular}
[c]{|l|}\hline
\textbf{Case:} $N\in\mathcal{N}_{r}\setminus\mathcal{N}_{p}$\\\hline
\end{tabular}
\ \medskip$

If both $\alpha,\beta$ belong to \textsf{ht}$\left(  r\right)  ,$ then there
is nothing to do, so we assume this is not the case.\medskip

\textbf{Subcase: }$N\in\mathcal{N}_{r}\setminus\mathcal{N}_{p}$, $\alpha\in$
\textsf{ht}$\left(  r\right)  $ and $\beta\notin$ \textsf{ht}$\left(
r\right)  .$

\qquad\ \ 

Note that $\beta\notin M.$ Let $\overline{\alpha}=h^{-1}\left(  \alpha\right)
.$ First consider the case where $\rho_{0\beta}\upharpoonright\delta\neq
\rho_{0\overline{\alpha}}\upharpoonright\delta.$ It follows that
$\triangle=\triangle(\overline{\alpha},\beta)$ and \textsf{Tr}$(\triangle
,\beta)\cap M\subseteq\eta_{0}\subseteq\eta.$ Moreover, \textsf{min}%
$($\textsf{Tr}$(\triangle,\beta)\setminus N)=$ \textsf{min}$($\textsf{Tr}%
$(\triangle\left(  \overline{\alpha},\beta\right)  ,\beta)\setminus M).$ If
$\beta=\delta,$ then this minimum is $\delta,$ which is in $C.$ If
$\delta<\beta,$ the conclusion follows by the second crossing models of $p.$
We now assume that\ $\rho_{0\beta}\upharpoonright\delta=\rho_{0\overline
{\alpha}}\upharpoonright\delta.$ We have that $\lambda\left(  \delta
,\beta\right)  <\eta_{0}<\triangle.$ Proposition \ref{Prop juntar trazas}
implies that $\delta\in$ \textsf{Tr}$(\triangle,\beta).$ Moreover,
\newline\textsf{Tr}$(\triangle,\beta)\cap M\subseteq\eta\subseteq N.$ It
follows that \textsf{min}$($\textsf{Tr}$(\triangle,\beta)\setminus
N)=\delta\in C.\medskip$

\textbf{Subcase: }$N\in\mathcal{N}_{r}\setminus\mathcal{N}_{p}$, $\alpha
\notin$ \textsf{ht}$\left(  r\right)  $ and $\beta$ $\in$ \textsf{ht}$\left(
r\right)  .$

\qquad\ \ \ \qquad\ \qquad\ \ 

Note that $\alpha\notin M.$ Let $\overline{\beta}=h^{-1}\left(  \beta\right)
.$ First consider the case where $\rho_{0\alpha}\upharpoonright\delta\neq
\rho_{0\overline{\beta}}\upharpoonright\delta.$ It follows that $\triangle
=\triangle(h\left(  \alpha\right)  ,\beta).$ Moreover, \textsf{min}%
$($\textsf{Tr}$(\triangle,\beta)\setminus N)=$ \textsf{min}$($\textsf{Tr}%
$(\triangle\left(  h\left(  \alpha\right)  ,\beta\right)  ,\beta)\setminus
N),$ which is in $C$ by the second crossing models condition of $r.$ We now
assume that\ $\rho_{0\alpha}\upharpoonright\delta=\rho_{0\overline{\beta}%
}\upharpoonright\delta.$ Since $\lambda(\delta,\overline{\beta})\in M$ and $h$
is an isomorphism that is the identity in $M,$ we have the following:\newline

\hfill%
\begin{tabular}
[c]{lll}%
$\lambda(\delta,\overline{\beta})$ & $=$ & $h(\lambda(\delta,\overline{\beta
}))$\\
& $=$ & $\lambda(h\left(  \delta\right)  ,h(\overline{\beta}))$\\
& $=$ & $\lambda(h\left(  \delta\right)  ,\beta)$%
\end{tabular}
\hfill\ 

\qquad\ \qquad\ \qquad\ \qquad\ \ \ \ 

Since $\lambda(\delta,\overline{\beta})<\eta_{0}<\triangle,$ it follows that
$\lambda(h\left(  \delta\right)  ,\beta)<\triangle.$ Proposition
\ref{Prop juntar trazas} implies that \textsf{Tr}$(h\left(  \delta\right)
,\beta)$ is an initial segment of \textsf{Tr}$(\triangle,\beta).$ Since
$h\left(  \delta\right)  \leq\delta_{N},$ we get that \textsf{min}%
$($\textsf{Tr}$(\triangle,\beta)\setminus N)=$ \textsf{min}$($\textsf{Tr}%
$(h\left(  \delta\right)  ,\beta)\setminus N),$ which is in $C$ by the first
crossing models condition of $p.\medskip$

\textbf{Subcase: }$N\in\mathcal{N}_{r}\setminus\mathcal{N}_{p}$ and
$\alpha,\beta\notin$ \textsf{ht}$\left(  r\right)  .$

\qquad\ \qquad\ \ 

Since $\triangle\in N,$ it follows that $\triangle\in M,$ which implies that
$\triangle<\eta_{0}.$ Moreover, \textsf{Tr}$(\triangle,\beta)\cap
M\subseteq\eta_{0}\in N.$ It follows that \textsf{min}$($\textsf{Tr}%
$(\triangle,\beta)\setminus N)=$ \textsf{min}$($\textsf{Tr}$(\triangle
,\beta)\setminus M),$ which is in $C$ by the second crossing models condition
of $p.\medskip$

$%
\begin{tabular}
[c]{|l|}\hline
\textbf{Case:} $N\in\mathcal{N}_{p}\setminus\mathcal{N}_{r}$\\\hline
\end{tabular}
\ \medskip$

Since $\delta_{N}<\beta,$ it follows that $\beta\in$ \textsf{ht}$\left(
p\right)  \setminus$ \textsf{ht}$\left(  r\right)  .$ If $\alpha$ is also in
\textsf{ht}$\left(  p\right)  ,$ then there is nothing to do, so assume that
$\alpha\in$ \textsf{ht}$\left(  r\right)  \setminus$ \textsf{ht}$\left(
p\right)  .$ Let $\overline{\alpha}=h^{-1}\left(  \alpha\right)  .$ If
$\rho_{0\overline{\alpha}}\upharpoonright\delta\neq\rho_{0\beta}%
\upharpoonright\delta,$ then $\triangle=\triangle(\overline{\alpha},\beta),$
so \textsf{min}$($\textsf{Tr}$(\triangle,\beta)\setminus N)$ is equal to
\textsf{min}$($\textsf{Tr}$(\triangle(\overline{\alpha},\beta),\beta)\setminus
N)$ and it is in $C$ by the second crossing models condition of $p.$ Assume
now that $\rho_{0\overline{\alpha}}\upharpoonright\delta=\rho_{0\beta
}\upharpoonright\delta.$ It follows that $\lambda(\delta_{N},\beta
)<\triangle.$ Proposition \ref{Prop juntar trazas} implies that $\delta_{N}%
\in$ \textsf{Tr}$(\triangle,\beta),$ so \textsf{min}$($\textsf{Tr}%
$(\triangle,\beta)\setminus N)=\delta_{N}\in C.$ At last we finished the proof.
\end{proof}

\qquad\ \qquad\ \qquad\ \qquad\ \qquad\ 

Arguing as in the previous section, we obtain the following result:

\begin{theorem}
\textsf{PFA }implies that every square-bracket operation of the form
$[\cdot\cdot]_{\mathcal{C}}$ has the Ramsey club property.
\end{theorem}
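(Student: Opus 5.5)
The plan is to argue exactly as in the proof of Theorem \ref{PFA para reales}, now using the forcing $\mathbb{P}(C)$ defined in this section together with the proposition just proved. That proposition says that $p$ is $(M,\mathbb{P}(C))$-generic whenever $M$ is a big model and $\overline{M}\in\mathcal{N}_p$. Assume \textsf{PFA} and fix a \textsf{C}-sequence $\mathcal{C}$ and a club $C$. We may assume $C\subseteq$ \textsf{LIM}$(\omega_1)$, shrinking $C$ if necessary, since a complete subgraph for a smaller club also works for $C$.

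\emph{Properness.} Let $M$ be a big model and $q\in M\cap\mathbb{P}(C)$. Put $p=(\mathcal{M}_q,\mathcal{N}_q\cup\{\overline{M}\})$. This is a condition:
\begin{enumerate}
\item $\mathcal{N}_q\subseteq\overline{M}$ because $q\in M$ is finite, so $\mathcal{N}_q\cup\{\overline{M}\}$ is an $\in$-chain;
\item the completeness condition is inherited from $q$, since $\mathcal{M}$ is unchanged;
\item both crossing models conditions involve a model $M'\in\mathcal{M}_q$ with $N\in M'$ (and $N\in L\in M'$ in the first one), and $\overline{M}$ cannot lie below any $M'\in\mathcal{M}_q$, so those instances are exactly the ones already holding in $q$.
\end{enumerate}
Also $p\leq q$, since $\mathcal{M}_p\cap\mathcal{N}_q=\mathcal{M}_q$. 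By the previous proposition $p$ is $(M,\mathbb{P}(C))$-generic, hence $\mathbb{P}(C)$ is proper.

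\emph{Defining $W$.} For a filter $G$ set $W(G)=\{\delta_N\mid\exists p\in G\,(N\in\mathcal{M}_p)\}$. Take $\alpha=\delta_N$ and $\beta=\delta_L$ distinct in $W(G)$, witnessed by $p,p'\in G$. A common extension $s\in G$ satisfies $\mathcal{M}_p,\mathcal{M}_{p'}\subseteq\mathcal{M}_s$, so the completeness condition of $s$ gives $[\alpha\beta]\in C$. Hence $\{[\alpha\beta]\mid\alpha,\beta\in W(G)\}\subseteq C$.

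\emph{Uncountability.} Let $M$ be a big model and $p=(\{\overline{M}\},\{\overline{M}\})$. It is a condition, since all crossing and completeness conditions are vacuous, and it is $(M,\mathbb{P}(C))$-generic by the proposition. Also $p\Vdash$``$\delta_M\in W(\dot G)$''. Now $W(\dot G)$ is defined in $M$ and $\delta_M\notin M$. In the extension $M[G]$ is elementary with $M[G]\cap\omega_1=\delta_M$, so Lemma \ref{Lema modelo contension} shows that $p$ forces $W(\dot G)$ to be uncountable.

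\emph{Applying \textsf{PFA}.} Choose $\omega_1$ many dense sets, indexed by $\xi<\omega_1$. Each set $D_\xi$ consists of the conditions that either decide some element of $W(\dot G)$ above $\xi$ with a witnessing model in $\mathcal{M}$, or are incompatible with $p$. These sets are dense below $p$ because $p$ forces $W(\dot G)$ to be unbounded. A filter $G\ni p$ meeting all $D_\xi$ then gives an uncountable $W(G)$, and $W(G)$ is a complete subgraph of $G([\cdot\cdot]_{\mathcal{C}},C)$. The only substantive step is the genericity proposition, which is already established, so no real obstacle remains; the one delicate point is checking that adding $\overline{M}$ keeps both crossing conditions, and this holds because $\overline{M}$ sits on top of the chain.
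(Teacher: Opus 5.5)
Your proposal is correct and takes essentially the same approach as the paper, which proves this theorem by repeating the argument of Theorem \ref{PFA para reales}. That argument gets properness from adding $\overline{M}$ on top of $\mathcal{N}_q$ and invoking the genericity proposition, gets completeness of $W(G)$ from the completeness condition, and gets uncountability from the generic condition $(\{\overline{M}\},\{\overline{M}\})$ together with Lemma \ref{Lema modelo contension}, before applying \textsf{PFA}. Your extra checks (that adding $\overline{M}$ preserves both crossing models conditions, and the explicit dense sets $D_\xi$) are details the paper leaves implicit.
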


\qquad\ \ \ 

With this result and Theorems \ref{PFA para reales} and
\ref{Teorema PFA Aronszajn}, we conclude:

\begin{corollary}
\textsf{PFA }implies that every square-bracket operation has the Ramsey club property.
\end{corollary}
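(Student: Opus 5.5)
The plan is to repeat the argument of Theorem \ref{PFA para reales} verbatim, using the just-proved proposition for $\mathbb{P}(C)$ in the \textsf{C}-sequence setting in place of Proposition \ref{Prop Can Proper reales}. Fix a club $C$; without loss of generality $C\subseteq$ \textsf{LIM}$(\omega_{1})$, since shrinking $C$ only makes the conclusion harder. Fix also a \textsf{C}-sequence $\mathcal{C}$ and let $\mathbb{P}(C)$ be the forcing just defined.

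First, $\mathbb{P}(C)$ is proper. Given a big model $M$ and $q\in M\cap\mathbb{P}(C)$, put $p=(\mathcal{M}_{q},\mathcal{N}_{q}\cup\{\overline{M}\})$. We must check that $p\in\mathbb{P}(C)$. The chain condition holds because $q\in\overline{M}$, so every model of $\mathcal{N}_{q}$ is a member of $\overline{M}$. The completeness condition is inherited from $q$. The two crossing models conditions only need to be verified when $\overline{M}$ plays the role of $N$ or $L$. But $\overline{M}$ is not in $\mathcal{M}_{p}$, and no $M'\in\mathcal{M}_{p}$ has $\overline{M}\in M'$, so these instances are vacuous. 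Also $p\leq q$, because $\mathcal{M}_{q}=\mathcal{M}_{p}\cap\mathcal{N}_{q}$. The preceding proposition then gives that $p$ is $(M,\mathbb{P}(C))$-generic.

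Next, apply \textsf{PFA} to the $\aleph_{1}$ dense sets
\[
D_{\xi}=\{p\mid \exists M\in\mathcal{M}_{p}\,(\delta_{M}>\xi)\},\qquad \xi<\omega_{1},
\]
working below a condition that forces these sets are met cofinally. Concretely, take a big model $M$ and the condition $p=(\{\overline{M}\},\{\overline{M}\})$, which is trivially in $\mathbb{P}(C)$ and is $(M,\mathbb{P}(C))$-generic. Let $W(\dot G)=\{\delta_{M'}\mid \exists p'\in\dot G\,(M'\in\mathcal{M}_{p'})\}$. Then $p\Vdash$``$\delta_{M}\in W(\dot G)$'' with $W(\dot G)$ a name in $M$, so by genericity and Lemma \ref{Lema modelo contension}, $p$ forces $W(\dot G)$ to be uncountable. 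Hence the sets $D_{\xi}\restriction p$ are dense below $p$, and \textsf{PFA} yields a filter $G$ meeting all of them, so $W(G)$ is uncountable. The same argument also works with the $\aleph_{1}$ dense sets deciding membership.

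Finally, we show that $W(G)$ is a complete subgraph. Let $\alpha\neq\beta$ be in $W(G)$, witnessed by $p_{1},p_{2}\in G$. A common extension $p_{3}\in G$ contains both models in $\mathcal{M}_{p_{3}}$. This uses the ordering: $\mathcal{M}_{p_{i}}\subseteq\mathcal{M}_{p_{3}}$. The completeness condition of $p_{3}$ then gives $[\alpha\beta]\in C$. We also need distinct models to have distinct heights; this holds because the models in an $\in$-chain have strictly increasing $\delta$. Therefore $C$ is $[\cdot\cdot]_{\mathcal{C}}$-Ramsey.

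The only real obstacle is the genericity proposition, which is already established. What remains is routine bookkeeping, in particular checking that adding $\overline{M}$ to $\mathcal{N}$ preserves both crossing models conditions, which holds vacuously as explained above.
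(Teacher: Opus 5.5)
Your argument for the $[\cdot\cdot]_{\mathcal{C}}$ case is correct, and it is exactly what the paper means by ``arguing as in the previous section.'' Adding $\overline{M}$ to $\mathcal{N}_q$ is harmless because no model in $\mathcal{M}_q$ can contain $\overline{M}$. The condition $(\{\overline{M}\},\{\overline{M}\})$ forces $W(\dot G)$ to be uncountable, so your sets $D_\xi$ are dense below it. Finally, the completeness condition of a common extension inside the filter gives $[\alpha\beta]_{\mathcal{C}}\in C$.

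However, this only proves the theorem immediately preceding the corollary. In this paper ``square-bracket operation'' always means one of three classes: $[\cdot\cdot]_{\mathcal{C}}$, $[\cdot\cdot]_{\mathcal{R},e}$ and $[\cdot\cdot]_{T,a}$. The corollary asserts the Ramsey club property for all of them. Your proposal ends with ``$C$ is $[\cdot\cdot]_{\mathcal{C}}$-Ramsey'' and never addresses the other two classes. Nothing in your argument transfers to them automatically. Your $\mathbb{P}(C)$ and its genericity proposition are built for walks: the crossing models conditions are phrased with \textsf{Tr} and $\triangle_{\mathcal{C}}$, and the amalgamation uses Proposition \ref{Prop juntar trazas} and the game on $T(\rho_0)$. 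As written, the proposal covers one third of the statement.

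The missing step is the assembly the paper actually performs. For $[\cdot\cdot]_{\mathcal{R},e}$, cite Theorem \ref{PFA para reales}, whose forcing uses the crossing condition built from the sets $F_{\triangle}(\delta_M)$. For $[\cdot\cdot]_{T,a}$, cite Theorem \ref{Teorema PFA Aronszajn}, i.e.\ Lemma 5.4.4 of \cite{Walks}. With those two citations added to your treatment of the $\mathcal{C}$-sequence case, every square-bracket operation is covered and the corollary follows.
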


\section{Open questions}

We believe there remains much to be discovered on the topics explored in this
paper. In this final section, we present several open questions that we have
been unable to solve, hoping they may guide future research on the topic.

\qquad\ \ \ 

We proved that \textsf{MA }is consistent with the failure of the Ramsey club
property for the square-bracket operations induced by \textsf{C}-sequences. We
want to know whether this is also true for the other types of square-bracket operations.

\begin{problem}
Is the statement \textquotedblleft No square-bracket operation has the Ramsey
club property\textquotedblright\ consistent with \textsf{MA?}
\end{problem}

\qquad\ \qquad\ \ \qquad

As noted earlier, a positive resolution of the problem above would follow if
we could prove Proposition \ref{Prop ccc no arregla} for the other
square-bracket operations. We conjecture this is the case, but were unable to
prove it.

\qquad\qquad\qquad\qquad\qquad\qquad

In Proposition \ref{Prop SC destruye a V} for the case of the square-bracket
operation induced by a set of reals and a nice sequence, we required the
additional hypothesis that the tree generated by the nice sequence is
Aronszajn. It is unclear if this assumption is necessary.

\begin{problem}
Is the statement \textquotedblleft No square-bracket operation induced by a
set of reals and a nice sequence has the Ramsey club
property\textquotedblright\ consistent with $\mathsf{\Diamond?}$
\end{problem}

\qquad\ \ \qquad\qquad\qquad

We conjecture that Proposition \ref{Prop SC destruye a V} holds for all
square-bracket operations. A proof of this would solve the previous question.

\qquad\qquad\ \qquad\ \ 

Probably the most interesting problem on this topic is the following: \qquad\ \ \ \ 

\begin{problem}
Is \textsf{CH }consistent with the statement \textquotedblleft every
square-bracket operation has the Ramsey club property\textquotedblright? What
about other guessing principles like $\Diamond,\Diamond^{\ast},\Diamond^{+}$
or $\Diamond^{\#}$?
\end{problem}

\qquad\ \ \ \qquad\qquad\ \ \ 

It is possible that the method developed by Asper\'{o} and Mota in
\cite{FewNewReals} provide a path to a positive answer for the first part of
question. It would also be interesting to know if the \emph{P-ideal Dichotomy}
(\textsf{PID) }introduced by the second author in
\cite{ADichotomyforPIdealsofCountableSets}\textsf{ }(see also
\cite{PartitionPropertiesCompatiblewithCH} and \cite{NotesonForcingAxioms})
has any influence on this topic.

\begin{problem}
Does \textsf{PID }imply that every square-bracket operation has the Ramsey
club property?
\end{problem}

\qquad\ \qquad\ \ \ \qquad\ \ \ 

We would like to know if there are models where there are two square-bracket
operations that behave different in respect to the Ramsey club property.

\begin{problem}
Is it consistent that there exists two \textsf{C}-sequences $\mathcal{C}$ and
$\mathcal{D}$ such that $[\cdot\cdot]_{\mathcal{C}}$ has the Ramsey club
property but $[\cdot\cdot]_{\mathcal{D}}$ does not? (A similar question
applies to the other classes of square-bracket operations).
\end{problem}

\qquad\qquad\qquad

It would also be interesting to compare the behavior between different classes
of square-bracket operations. For example:

\begin{problem}
Is it consistent that every square-bracket operation induced by a
\textsf{C}-sequence has the Ramsey club property, while no square-bracket
operation induced by a sequence of reals and a nice sequence has such property?
\end{problem}

\qquad\ \qquad\ \ 

Naturally, we are interested in all variants of the previous question.

\qquad\ \qquad\ \ \ \ \ \qquad\ \ \ 

The influence of the underlying combinatorial structure of the square-bracket
operation is not completely understood. For example, we could ask the following:

\begin{problem}
Let $\mathcal{R=}$ $\left\{  r_{\alpha}\mid\alpha\in\omega_{1}\right\}
\subseteq2^{\omega}$ and $e=\left\langle e_{\beta}\mid\beta\in\omega
_{1}\right\rangle $ a nice sequence. How does the combinatorial properties of
$\mathcal{R}$ reflect on the properties of $[\cdot\cdot]_{\mathcal{R},e}$? For
example, is there anything interesting to be said in case $\mathcal{R}$ is a
Luzin or a Sierpi\'{n}ski set?
\end{problem}

Fix a square-bracket operation $[\cdot\cdot]$. For a subset$\ A\subseteq
\omega_{1},$ define $C\left(  A\right)  =\{\left[  \alpha\beta\right]
\mid\alpha,\beta\in A\}.$ It is natural to consider the following problem:

\begin{problem}
What can we say about the following statement: \textquotedblleft For every
$A,B\in\left[  \omega_{1}\right]  ^{\omega_{1}}$ there is $D\in\left[
\omega_{1}\right]  ^{\omega_{1}}$ such that $C\left(  D\right)  \subseteq
C\left(  A\right)  \cap C\left(  B\right)  $\textquotedblright.
\end{problem}

\qquad\ \qquad\ \ 

Of course, such statement follows from \textsf{PFA}, but we do not know much
more than that. Since we use the Proper Forcing Axiom to prove that all
square-bracket operations have the Ramsey club property, it is natural to ask
the following questions:

\begin{problem}
Is the statement \textquotedblleft all square-bracket operations have the
Ramsey club property\textquotedblright\ consistent with an arbitrarily large continuum?
\end{problem}

\begin{problem}
Is it possible to force that all square-bracket operations have the Ramsey
club property without any Large Cardinal hypothesis?
\end{problem}

\def\cprime{$'$}

Osvaldo Guzm\'{a}n

Centro de Ciencias Matem\'{a}ticas, UNAM.

oguzman@matmor.unam.mx

\qquad\qquad\qquad\ \ 

Stevo Todorcevic

Department of Mathematics, University of Toronto, Canada

stevo@math.toronto.edu

\qquad\ \qquad\ \ \ \qquad\ \ \ 

Institut de Math\'{e}matiques de Jussieu, CNRS, Paris, France

stevo.todorcevic@imj-prg.fr

\qquad\qquad\ \qquad\ \ \ 

Mthematical Institute of the Serbian Academy of Sciences and Arts, Belgrade, Serbia

stevo.todorcevic@sanu.ac.rs


\begin{thebibliography}{10}

\bibitem{AbrahamHandbook}
Uri Abraham.
\newblock Proper forcing.
\newblock In {\em Handbook of set theory. {V}ols. 1, 2, 3}, pages 333--394.
  Springer, Dordrecht, 2010.

\bibitem{ForcingClubs}
Uri Abraham and Saharon Shelah.
\newblock Forcing closed unbounded sets.
\newblock {\em J. Symbolic Logic}, 48(3):643--657, 1983.

\bibitem{PartitionPropertiesCompatiblewithCH}
Uri Abraham and Stevo Todor\v{c}evi\'{c}.
\newblock Partition properties of {$\omega_1$} compatible with {CH}.
\newblock {\em Fund. Math.}, 152(2):165--181, 1997.

\bibitem{AddingBaumgartnerClubs}
David Asper\'{o}.
\newblock Adding many {B}aumgartner clubs.
\newblock {\em Arch. Math. Logic}, 56(7-8):797--810, 2017.

\bibitem{AsperoMota}
David Asper\'{o} and Miguel~Angel Mota.
\newblock Forcing consequences of {PFA} together with the continuum large.
\newblock {\em Trans. Amer. Math. Soc.}, 367(9):6103--6129, 2015.

\bibitem{AGeneralizationofMA}
David Asper\'{o} and Miguel~Angel Mota.
\newblock A generalization of {M}artin's axiom.
\newblock {\em Israel J. Math.}, 210(1):193--231, 2015.

\bibitem{SeparatingClubPrinciples}
David Asper\'{o} and Miguel~Angel Mota.
\newblock Separating club-guessing principles in the presence of fat forcing
  axioms.
\newblock {\em Ann. Pure Appl. Logic}, 167(3):284--308, 2016.

\bibitem{FewNewReals}
David Asper\'{o} and Miguel~Angel Mota.
\newblock Few new reals.
\newblock {\em J. Math. Log.}, 24(2):Paper No. 2350009, 35, 2024.

\bibitem{AddingaClub}
J.~E. Baumgartner, L.~A. Harrington, and E.~M. Kleinberg.
\newblock Adding a closed unbounded set.
\newblock {\em J. Symbolic Logic}, 41(2):481--482, 1976.

\bibitem{Allw1DenseSetsofRealscanbeIsomorphic}
James~E. Baumgartner.
\newblock All {$\aleph _{1}$}-dense sets of reals can be isomorphic.
\newblock {\em Fund. Math.}, 79(2):101--106, 1973.

\bibitem{IteratedForcing}
James~E. Baumgartner.
\newblock Iterated forcing.
\newblock In {\em Surveys in set theory}, volume~87 of {\em London Math. Soc.
  Lecture Note Ser.}, pages 1--59. Cambridge Univ. Press, Cambridge, 1983.

\bibitem{ApplicationsofPFA}
James~E. Baumgartner.
\newblock Applications of the proper forcing axiom.
\newblock In {\em Handbook of set-theoretic topology}, pages 913--959.
  North-Holland, Amsterdam, 1984.

\bibitem{BaumgartnerTesis}
James~Earl Baumgartner.
\newblock {\em Results and independence proofs in combinatorial set theory}.
\newblock ProQuest LLC, Ann Arbor, MI, 1970.
\newblock Thesis (Ph.D.)--University of California, Berkeley.

\bibitem{TopicsinSetTheory}
M.~Bekkali.
\newblock {\em Topics in set theory}, volume 1476 of {\em Lecture Notes in
  Mathematics}.
\newblock Springer-Verlag, Berlin, 1991.
\newblock Lebesgue measurability, large cardinals, forcing axioms,
  rho-functions, Notes on lectures by Stevo Todor\v{c}evi\'{c}.

\bibitem{FirstOmegaAlephs}
Jeffrey Bergfalk.
\newblock The first omega alephs: from simplices to trees of trees to higher
  walks.
\newblock {\em Adv. Math.}, 393:Paper No. 108083, 74, 2021.

\bibitem{AlanSubmodelos}
Alan Dow.
\newblock An introduction to applications of elementary submodels to topology.
\newblock {\em Topology Proc.}, 13(1):17--72, 1988.

\bibitem{IteratedForcingandCH}
Todd Eisworth, Justin~Tatch Moore, and David Milovich.
\newblock Iterated forcing and the continuum hypothesis.
\newblock In {\em Appalachian set theory 2006--2012}, volume 406 of {\em London
  Math. Soc. Lecture Note Ser.}, pages 207--244. Cambridge Univ. Press,
  Cambridge, 2013.

\bibitem{TasteofGoldstern}
Martin Goldstern.
\newblock A taste of proper forcing.
\newblock In {\em Set theory ({C}ura\c{c}ao, 1995; {B}arcelona, 1996)}, pages
  71--82. Kluwer Acad. Publ., Dordrecht, 1998.

\bibitem{Rendezvous}
Michael Hrusak.
\newblock {\em Rendezvous with madness}.
\newblock ProQuest LLC, Ann Arbor, MI, 1999.
\newblock Thesis (Ph.D.)--York University (Canada).

\bibitem{HrusakEstrellitas}
Michael Hru\v{s}\'{a}k and Carlos Mart\'{\i}nez~Ranero.
\newblock Some remarks on non-special coherent {A}ronszajn trees.
\newblock {\em Acta Univ. Carolin. Math. Phys.}, 46(2):33--40, 2005.

\bibitem{MultipleForcing}
T.~Jech.
\newblock {\em Multiple forcing}, volume~88 of {\em Cambridge Tracts in
  Mathematics}.
\newblock Cambridge University Press, Cambridge, 1986.

\bibitem{Kechris}
Alexander~S. Kechris.
\newblock {\em Classical descriptive set theory}, volume 156 of {\em Graduate
  Texts in Mathematics}.
\newblock Springer-Verlag, New York, 1995.

\bibitem{oldKunen}
Kenneth Kunen.
\newblock {\em Set theory}, volume 102 of {\em Studies in Logic and the
  Foundations of Mathematics}.
\newblock North-Holland Publishing Co., Amsterdam-New York, 1980.
\newblock An introduction to independence proofs.

\bibitem{Kunen}
Kenneth Kunen.
\newblock {\em Set theory}, volume~34 of {\em Studies in Logic (London)}.
\newblock College Publications, London, 2011.

\bibitem{KnasterandFriendsI}
Chris Lambie-Hanson and Assaf Rinot.
\newblock Knaster and friends {I}: closed colorings and precalibers.
\newblock {\em Algebra Universalis}, 79(4):Paper No. 90, 39, 2018.

\bibitem{KomjathInaccesible}
Hossein Lamei~Ramandi and Stevo Todorcevic.
\newblock Can you take {K}omjath's inaccessible away?
\newblock {\em Ann. Pure Appl. Logic}, 175(7):Paper No. 103452, 24, 2024.

\bibitem{forcingwithoutcombinatorics}
Alan~H. Mekler.
\newblock C.c.c. forcing without combinatorics.
\newblock {\em J. Symbolic Logic}, 49(3):830--832, 1984.

\bibitem{GroupsfromWalks}
Stepan Milovsevi\'c and Stevo Todorcevic.
\newblock Groups from walks on countable ordinals.
\newblock {\em Topology Appl.}, 373:Paper No. 109480, 13, 2025.

\bibitem{FiveElementBasis}
Justin~Tatch Moore.
\newblock A five element basis for the uncountable linear orders.
\newblock {\em Ann. of Math. (2)}, 163(2):669--688, 2006.

\bibitem{Lspace}
Justin~Tatch Moore.
\newblock A solution to the {$L$} space problem.
\newblock {\em J. Amer. Math. Soc.}, 19(3):717--736, 2006.

\bibitem{LSpacewithseparablesquare}
Justin~Tatch Moore.
\newblock An {L} space with a {$d$}-separable square.
\newblock {\em Topology Appl.}, 155(4):304--307, 2008.

\bibitem{LindeloffGroup}
Yinhe Peng and Liuzhen Wu.
\newblock A {L}indel\"of group with non-{L}indel\"of square.
\newblock {\em Adv. Math.}, 325:215--242, 2018.

\bibitem{LVectorSpaces}
Yinhe Peng and Liuzhen Wu.
\newblock L vector spaces and {L} fields.
\newblock {\em Sci. China Math.}, 67(10):2195--2216, 2024.

\bibitem{CombinatorialPropertyofpfunctions}
D.~Raghavan and S.~Todorcevic.
\newblock A combinatorial property of rho-functions.
\newblock {\em Acta Math. Hungar.}, 167(1):355--363, 2022.

\bibitem{RectangularSquareBracket}
Assaf Rinot and Stevo Todorcevic.
\newblock Rectangular square-bracket operation for successor of regular
  cardinals.
\newblock {\em Fund. Math.}, 220(2):119--128, 2013.

\bibitem{ComplicatedColoringsRevisited}
Assaf Rinot and Jing Zhang.
\newblock Complicated colorings, revisited.
\newblock {\em Ann. Pure Appl. Logic}, 174(4):Paper No. 103243, 11, 2023.

\bibitem{WalksandselectiveUltrafilters}
S.~Todorcevic.
\newblock Walks on countable ordinals and selective ultrafilters.
\newblock {\em Bull. Cl. Sci. Math. Nat. Sci. Math.}, (35):65--78, 2010.

\bibitem{PartitioningPairs}
Stevo Todorcevic.
\newblock Partitioning pairs of countable ordinals.
\newblock {\em Acta Math.}, 159(3-4):261--294, 1987.

\bibitem{PartitionProblems}
Stevo Todor{\v{c}}evi{\'c}.
\newblock {\em Partition problems in topology}, volume~84 of {\em Contemporary
  Mathematics}.
\newblock American Mathematical Society, Providence, RI, 1989.

\bibitem{Walks}
Stevo Todorcevic.
\newblock {\em Walks on ordinals and their characteristics}, volume 263 of {\em
  Progress in Mathematics}.
\newblock Birkh\"{a}user Verlag, Basel, 2007.

\bibitem{NotesonForcingAxioms}
Stevo Todorcevic.
\newblock {\em Notes on forcing axioms}, volume~26 of {\em Lecture Notes
  Series. Institute for Mathematical Sciences. National University of
  Singapore}.
\newblock World Scientific Publishing Co. Pte. Ltd., Hackensack, NJ, 2014.

\bibitem{PIDandTukey}
Stevo Todorcevic.
\newblock P-ideal dichotomy and {T}ukey order.
\newblock {\em Fund. Math.}, 267(2):181--194, 2024.

\bibitem{StevoHandbook}
S.~Todor\v{c}evi\'{c}.
\newblock Trees and linearly ordered sets.
\newblock In {\em Handbook of set-theoretic topology}, pages 235--293.
  North-Holland, Amsterdam, 1984.

\bibitem{5CofinalTypes}
Stevo Todor\v{c}evi\'{c}.
\newblock Directed sets and cofinal types.
\newblock {\em Trans. Amer. Math. Soc.}, 290(2):711--723, 1985.

\bibitem{ADichotomyforPIdealsofCountableSets}
Stevo Todor\v{c}evi\'{c}.
\newblock A dichotomy for {P}-ideals of countable sets.
\newblock {\em Fund. Math.}, 166(3):251--267, 2000.

\bibitem{PmaxBook}
W.~Hugh Woodin.
\newblock {\em The axiom of determinacy, forcing axioms, and the nonstationary
  ideal}, volume~1 of {\em De Gruyter Series in Logic and its Applications}.
\newblock Walter de Gruyter GmbH \& Co. KG, Berlin, revised edition, 2010.

\bibitem{RectangleRefiningProperty}
Teruyuki Yorioka.
\newblock Some weak fragments of {M}artin's axiom related to the rectangle
  refining property.
\newblock {\em Arch. Math. Logic}, 47(1):79--90, 2008.

\bibitem{CharacterizationofClubForcing}
Jindrich Zapletal.
\newblock Characterization of the club forcing.
\newblock In {\em Papers on general topology and applications ({G}orham, {ME},
  1995)}, volume 806 of {\em Ann. New York Acad. Sci.}, pages 476--484. New
  York Acad. Sci., New York, 1996.

\bibitem{CharacterizationofDefinableForcings}
Jind\v{r}ich Zapletal.
\newblock A classification of definable forcings on {$\omega_1$}.
\newblock {\em Fund. Math.}, 153(2):141--144, 1997.

\end{thebibliography}
\end{document}